\title{Harmonic Maps of Conic Surfaces With Cone Angles Less Than $2\pi$}
\author{Jesse Gell-Redman}
\begin{document}

\maketitle


\begin{abstract}
We prove the existence and uniqueness of harmonic maps
  in degree one homotopy classes of closed, orientable surfaces of
  positive genus, where the target has non-positive gauss curvature and
  conic points with cone angles less than $2\pi$.  For a homeomorphism $w$ of such a surface, we prove
  existence and uniqueness of minimizers in the homotopy class of $w$ relative to
  the inverse images of the cone points with cone angles less than or
  equal to $\pi$.   We
  show that such maps are homeomorphisms and that they depend smoothly on the
  target metric.  For fixed geometric data, the space of
  minimizers in relative degree one homotopy classes is a complex
  manifold of (complex) dimension equal to the number of cone points
  with cone angles less than or equal to $\pi$.

When the genus is zero, we prove the same relative minimization
provided there are at least three cone points of cone angle less than
or equal to $\pi$.

\end{abstract}

\section{Introduction} 
\label{cha:introduction}

The study of harmonic maps into singular spaces, initiated in \cite{gs}, has reached a refined state; beyond general existence and uniqueness theorems, there are regularity and compactness results in the presence of minimal regularity assumptions on the spaces involved, see, among many others, \cite{ks}, \cite{mese}, \cite{dm},  \cite{w2}, and \cite{ef}.

Particularly detailed results are available in the case of maps of surfaces.  In \cite{k}, Kuwert studied degree one harmonic maps of closed Riemann surfaces into flat, conic surfaces with cone angles bigger than $2\pi$.  He showed that the minimizing maps can be obtained as limits of diffeomorphisms, and that the inverse image under a degree one harmonic map of each point in the singular set is the union of a finite number of vertical arcs of the Hopf differential.  Away from this inverse image the map is a local diffeomorphism.  In \cite{mese}, Mese proved the same when the target is a metric space with bounded curvature, in particular when it is a conic surface with cone angles bigger than $2\pi$.

In this paper, we study two main topics: energy minimizing maps from Riemann surfaces
into conic surfaces with cone angles less than $2\pi$, and energy
minimizing maps from \textit{punctured} Riemann surfaces into conic
surfaces with cone angles less than or equal to $\pi$. We state our results somewhat sketchily at first, assuming for
simplicity's sake that there is just one cone point.  The energy
functional is conformally invariant with respect to the domain metric (see \S \ref{confinvsect}), so we
state all results in terms of conformal structures, $\mathfrak{c}$, on
the domain.  First, we have
\begin{thm} \label{mthm1}
Let $\Sigma$ be a closed surface of genus $> 0$, equipped with a
conformal structure $\mathfrak{c}$ and a Riemannian metric $G$, with a
conic point $p$ of cone angle less than $2\pi$ and non-positive Gauss
curvature away from $p$.  Let $\phi : \Sigma \lra \Sigma$ be a
homeomorphism.  Then there is a unique map $u: \Sigma \lra \Sigma$
which minimizes energy in the homotopy class of $\phi$.  This map
satisfies
	\begin{align*}
		u^{-1}(p) \mbox{ is a single point}
	\end{align*}
and $u : \Sigma - u^{-1}(p) \lra \Sigma - p$ is a diffeomorphism.
\end{thm}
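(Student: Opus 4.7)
The plan combines variational energy minimization with a local analysis of harmonic maps at the cone point, in the spirit of Kuwert \cite{k} and Mese \cite{mese} but adapted to the case of cone angle less than $2\pi$, where the target has (distributional) positive curvature concentrated at $p$ and is \emph{not} globally CAT$(0)$.

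For existence, I would work in the Korevaar--Schoen $W^{1,2}$-class of maps in the homotopy class of $\phi$; the energy functional is well-defined \cite{ks} and lower semicontinuous. A minimizing sequence $\{u_n\}$ is equibounded in $W^{1,2}$, and away from accumulation of $u_n^{-1}(p)$ it takes values in the smooth non-positively curved region $\Sigma - p$, so standard $\epsilon$-regularity gives equicontinuity on compact subsets and a subsequential $C^0$-limit $u$ there. The delicate global point is to confirm that $u$ lies in the homotopy class of $\phi$: because $\phi$ has degree one and any concentration of energy at the cone point is a codimension-two phenomenon, an area lower bound rules out the kind of collapse that would change the homotopy class.

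For the structure of the minimizer, write $F = u^{-1}(p)$. Away from $F$ the map is smooth by classical Eells--Sampson--Hartman theory, and the Hopf differential $\Phi = (u^*G)^{(2,0)}$ is holomorphic on $\Sigma - F$. The decisive analytic lemma, where the cone-angle hypothesis enters, is that a finite-energy harmonic map into a conic surface of cone angle less than $2\pi$ admits a specific local expansion near each point of $F$ that forces $\Phi$ to extend holomorphically across $F$ with bounded pointwise norm. A zero-counting argument for $\Phi$ on the positive-genus surface then shows that $du$ is non-degenerate off $F$, so $u\colon \Sigma-F\to\Sigma-p$ is a local diffeomorphism; since $u$ is proper of degree one, it is a global diffeomorphism and $F$ consists of a single point.

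For uniqueness, the standard geodesic-homotopy convexity argument does not apply directly. Once both minimizers are known to be diffeomorphisms with a single-point preimage of $p$, however, one can lift to the universal cover of $\Sigma - p$, which is topologically a disk carrying the pullback of $G$ as a genuine non-positively curved Riemannian metric; the equivariant lifts of two minimizers are harmonic with the same asymptotic behavior, and classical Hartman-type uniqueness forces them to agree. The hardest step throughout is the local analysis at the cone point: the interplay of positive concentrated curvature with the finite-energy constraint is what makes $F$ small, and proving that the Hopf differential extends holomorphically across $F$---equivalently, that $u$ is a local diffeomorphism there---is the technical heart of the argument and is precisely where the cone-angle-less-than-$2\pi$ hypothesis does its main work.
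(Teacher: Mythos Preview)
Your approach diverges substantially from the paper's and contains a genuine gap at its analytic heart. The paper does not use direct minimization at all; it runs a continuity method, connecting $\mathfrak{c}$ to $[G]$ by a path of conformal structures and showing that the set of parameters admitting a solution to an augmented harmonic-map equation (\ref{hme2}) is open (Implicit Function Theorem for the tension field, analyzed as a $b$-elliptic operator; Proposition~\ref{mainlemma}) and closed (uniform energy-density bounds plus a blow-up/rescaling argument contradicting a classification of harmonic maps of the model cone; Theorem~\ref{closedthm}). Minimality of the resulting harmonic diffeomorphism is then \emph{deduced} from a Coron--Helein decomposition of $u^*G$ (Proposition~\ref{umin}), not used as input.

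Your ``decisive analytic lemma'' is false as stated. You assert that for a finite-energy harmonic map into a cone of angle less than $2\pi$, the local expansion forces the Hopf differential $\Phi$ to extend holomorphically across $F=u^{-1}(p)$. But for cone angle strictly less than $\pi$, the paper shows (Lemma~\ref{hopfpoles}) that the expansion $u(z)=\lambda z + v(z)$ only yields $\phi(z)=\mathcal{O}(|z|^{-2+2\alpha+\epsilon})$, which allows a \emph{simple pole}; and for the relative minimizers constructed in the paper, $\Phi$ genuinely has one. Holomorphic extension of $\Phi$ across cone points of angle below $\pi$ is \emph{equivalent} to being the absolute minimizer (Lemma~\ref{minlemma}, Corollary~\ref{hessharm}), so it cannot serve as an input to the structure theorem --- it is the conclusion. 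The dichotomy between cone angles below and above $\pi$ is in fact the organizing principle of the whole paper (Lemma~\ref{isosjf}). Your uniqueness argument has a separate gap: two minimizers need not have the same preimage of $p$, so lifting both equivariantly to a common cover of $\Sigma - p$ is not straightforward; the paper instead uses the Coron--Helein decomposition $u^*G = H_1 + H_2$ with $H_1$ conformal and $H_2$ of strictly negative curvature, reducing uniqueness to the smooth case without any cover.
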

That $u^{-1}(p)$ is a single point is consistent with the work of Hardt and Lin on maps into round cones in Euclidean space, see \cite{hl}.  Second, we have
\begin{thm}  \label{mthm2}
Let $\Sigma, \mathfrak{c},$ and $G$ be as in the previous theorem.  If
the cone angle at $p$ is less than or equal to $\pi$, then for each $q
\in \Sigma$ and each homeomorphism $\phi$ of $\Sigma$ with $\phi(q) =
p$, there is a unique map $u: \Sigma \lra \Sigma$ with $u(q) = p$
which minimizes energy in the rel$.$ $q$ homotopy class of $\phi$ (see
(\ref{relhtopyclass}) for the definition of relative homotopy class).
This map satisfies
	\begin{align*}
		u^{-1}(p) &= q
	\end{align*}
and $u : \Sigma - q \lra \Sigma - p$ is a diffeomorphism.
\end{thm}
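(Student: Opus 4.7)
The plan is to minimize energy directly over the admissible set $\mathcal{F}$ of finite-energy $W^{1,2}$ maps $u:\Sigma\to\Sigma$ with $u(q)=p$ lying in the rel-$q$ homotopy class of $\phi$. The central obstacle is that this pointwise constraint is not closed under weak $W^{1,2}$ convergence, and overcoming this is precisely where the hypothesis $\alpha\le\pi$ must enter. First I would check that $\mathcal{F}\neq\emptyset$ by producing a finite-energy representative of the rel-$q$ class, using smoothness of $\phi$ and local integrability of the conic area form for $\alpha<2\pi$.

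Take a minimizing sequence $u_n\in\mathcal{F}$. Since $G$ has non-positive Gauss curvature off $p$ and the cone angle at $p$ is $<2\pi$, the target is locally CAT$(0)$, so by the Korevaar--Schoen and Mese theory cited earlier the energy $E$ is weakly lower semicontinuous and some subsequence satisfies $u_n\rightharpoonup u_\infty$ weakly in $W^{1,2}$. The essential step, and the one I expect to be the main technical obstacle, is to show $u_\infty(q)=p$. I would do this by establishing a uniform modulus of continuity of the form $d(u_n(x),p)\le C|x-q|^{\gamma}$ on a conformal disk around $q$, with $C,\gamma>0$ independent of $n$. The condition $\alpha\le\pi$ enters crucially here: a Euclidean cone of total angle $\le\pi$ isometrically unfolds to a convex planar sector, so the distance function $d(\cdot,p)$ is itself convex on small balls about $p$. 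Composing with an (almost) energy-minimizing $u_n$ then makes $d(u_n(\cdot),p)$ satisfy a distributional sub-mean-value inequality; combined with the boundary value $d(u_n(q),p)=0$ and the uniform energy bound, this should yield a Morrey/Campanato-type growth estimate at $q$ and hence the desired H\"older bound. After a further subsequence, $u_n\to u_\infty$ uniformly near $q$, so $u_\infty(q)=p$, and uniform convergence plus the preimage structure place $u_\infty$ in the rel-$q$ homotopy class of $\phi$.

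With existence of a minimizer $u_\infty\in\mathcal{F}$ secured, the remaining structural conclusions—that $u_\infty$ is smooth on $\Sigma\setminus\{q\}$, that $u_\infty^{-1}(p)=\{q\}$, and that $u_\infty:\Sigma\setminus\{q\}\to\Sigma\setminus\{p\}$ is a diffeomorphism—are local assertions and should follow by the same analysis developed for Theorem~\ref{mthm1} (interior regularity, Hopf-differential and Kuwert-type injectivity, local non-vanishing of the Jacobian). Uniqueness follows from the standard NPC argument: two minimizers $u,v\in\mathcal{F}$ are connected by a geodesic homotopy $u_t$ along which $t\mapsto E(u_t)$ is convex, with $u_t(q)=p$ for all $t$; strict convexity—from strict non-positivity of curvature wherever $du_\infty\neq 0$ together with the conic singularity at $p$—then forces $u\equiv v$, so the rel-$q$ minimizer is unique.
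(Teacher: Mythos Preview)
Your approach is genuinely different from the paper's---the paper uses the method of continuity (deform the domain conformal structure from $[G]$ to $\mathfrak{c}$, proving the solution set is open via the Implicit Function Theorem on weighted $b$-H\"older spaces and closed via a rescaling/blow-up argument), not direct minimization. A direct variational approach in the Korevaar--Schoen framework is plausible in principle, but your proposal has two substantive gaps.

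First, you misidentify where the hypothesis $\alpha\le\pi$ enters. A cone of angle $2\pi\alpha$ with $\alpha<1$ is CAT$(0)$ for \emph{every} $\alpha<1$, and the distance $d(\cdot,p)$ to the tip is convex along geodesics for every such angle (unroll and use convexity of the Euclidean norm). So your modulus-of-continuity step, even if it worked, would not distinguish $\alpha\le 1/2$ from $1/2<\alpha<1$. In the paper the dichotomy appears elsewhere: the Hopf differential $\Phi(u)$ of a harmonic diffeomorphism in the asserted form can acquire a simple pole at $q$ precisely when $\alpha\le 1/2$ (Lemma~\ref{hopfpoles}), and it is exactly this pole that encodes the freedom to prescribe $q$; equivalently, the indicial-root/Jacobi-field analysis near $q$ (Lemma~\ref{stfsjfs}) shows that for $\alpha>1/2$ the linearized problem forces a translation of the preimage, while for $\alpha\le 1/2$ it does not. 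None of this is visible from convexity of $d(\cdot,p)$.

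Second, the conclusions you defer to ``the same analysis as Theorem~\ref{mthm1}''---that $u_\infty^{-1}(p)=\{q\}$ and that $u_\infty$ is a diffeomorphism off $q$---are not local corollaries of Theorem~\ref{mthm1}; in the paper Theorems~\ref{mthm1} and~\ref{mthm2} are proved simultaneously by tracking diffeomorphisms through the continuity path and showing the limit remains in Form~\ref{uform}. A direct minimizer produced by Korevaar--Schoen theory is a priori only a Lipschitz map into a singular space; upgrading it to a $C^{2,\gamma}$ diffeomorphism with a single preimage of $p$ is the heart of the problem, and you have not given a mechanism for it. (Relatedly: your subharmonicity argument applies to harmonic maps, not to an arbitrary minimizing sequence, so even the step $u_\infty(q)=p$ needs a replacement argument or equicontinuity via Courant--Lebesgue rather than what you wrote.) Your uniqueness argument also needs care in the non-strictly-negative case; the paper handles this with the Coron--H\'elein decomposition and a branched-cover trick (Proposition~\ref{umin}), not geodesic interpolation.
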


This problem is motivated by the role of harmonic maps in
Teichm\"uller theory and recent work that extends classical
uniformization results to the case of conic metrics on punctured
surfaces.  The uniformization theorems for cone metrics of McOwen and
Troyanov, \cite{mc1}, \cite{mc2}, \cite{tr}, recent work by
Schumacher and Trapani, \cite{st}, \cite{st2}, and unpublished work of
Mazzeo and Weiss has shown that the role the hyperbolic
geometry plays in Teichm\"uller theory is played by conic metrics
in the punctured case.  In the unpunctured case, harmonic maps enter
the picture in the works of \cite{t}, \cite{w1}, \cite{w2}, and
\cite{w}, in the creation of various functionals and in the important
parametrization of Teichm\"uller space by holomorphic quadratic
differentials.  This paper lays the groundwork for the extension
of these results to the punctured case if the uniformizing metrics are
conic, and makes available --- in case the cone angles are less than $\pi$ --- harmonic maps of
punctured Riemann surfaces.

The proofs follow the method of continuity, and accordingly the paper
is divided into a portion with a perturbation result and a portion
with a convergence result.  In both we make frequent use of the fact
that minimizers as in both theorems solve a differential equation (see (\ref{hme})).  To be precise, let $\Mm_{conic}(p, \al)$ denote the space of smooth
metrics on $\Sigma - p$ with a cone point at $p$ of cone angle $2 \pi
\al$ (see \S \ref{metricssect}).  The `tension field' operator $\tau$
is a second order, quasi-linear, elliptic partial differential
operator, arising as the Euler-Lagrange equation for the energy
functional, which takes a triple $(u, g, G)$ of maps and metrics to a
vector field over $u$, denoted by $\tau(u, g, G)$.  For $g \in
\Mm_{conic}(q, \al)$, $ G \in
\Mm_{conic}(p, \al)$, in section \ref{uniquesect} we show that a
diffeomorphism $u : (\Sigma - q, g) \lra (\Sigma - p, G)$ subject to
certain conditions on the behavior near $q$ minimizes energy in its
rel$.$ $q$ homotopy class if $\tau(u, g, G) = 0$.

The perturbation result is an application of the Implicit Function
Theorem to the tension field operator.  Fix $g \in
\Mm_{conic}(q, \al)$, $ G \in
\Mm_{conic}(p, \al)$ and a minimizer $u$ as in Theorem \ref{mthm2}.  The proofs
rest mainly in finding the right space of perturbations of $u$, call
them $\mc{P}(u)$, and the right space of perturbations of $g$, call
them $\mc{M}^{*}(g) \subset \Mm_{conic}(q,\al)$, so that $\tau$ acting on $\mc{P}(u) \times
\mc{M}^{*}(g)$ has non-degenerate differential in the
$\mc{P}(u)$ direction at $(u, g)$.  There are two
not-quite-correct points in this last sentence.  First, to apply the
Implicit Function Theorem, one needs to work with Banach and not
Fr\'echet manifolds like $\Mm_{conic}(p, \al)$; for a precise definition
of the spaces we use see section \ref{setup}.  Second, the map $\tau$
actually takes values in a bundle, specifically the bundle $\EB \lra
\mc{P}(u) \times \mc{M}^{*}(g)$ whose fiber over $(\wt{u},
\wt{g})$ is (some Banach space of) sections of $\wt{u}^*\mbox{T}\Sigma$, so in fact we do not show that $\tau$
has surjective differential but rather that $\tau$ is transversal to
the zero section of $\EB$.


If $z$ denotes conformal coordinates near $q$, the linear operator $L$
can be written $L = \absv{z}^{2\al} \wt{L}$, where $\wt{L}$ falls into
a broad class of linear operators known as elliptic $b$-differential
operators, pioneered and elaborated by R. Melrose, and used
subsequently in countless settings (\S \ref{bcalcsect}).  For detailed
properties of $b$-operators, see \cite{me} and \cite{meme}.  The main
difficulty we encounter is that $L$ is degenerate on a natural space
of perturbations.  Following the example of previous authors,
including \cite{mp}, we supplement the domain of $\tau$ with
`geometric' perturbations; in our case, we add a space $\mc{C}$ of
diffeomorphisms of the domain which look like conformal dilations, rotations, and
translations near the cone point  (\S \ref{setup}).

As we will discuss in section \ref{bcalcsect}, the natural domains for
$L$ are weighted Banach spaces $r^c \Xx^{2, \gamma}_{b}$, which for
the moment should be thought of as vector fields vanishing to order
$r^c$ near $q$.  In particular $L$ acts from $r^{1 - \e} \Xx^{2, \gamma}_{b}$ to $r^{1
  - \e - 2\alpha} \Xx^{2,\gamma}_{b}$, and is Fredholm for sufficiently small $\e > 0$.
Let $\mc{K}$ denote its cokernel (see \eqref{cokerminuse}). The leading order behavior of a
vector field in $\mc{K}$ near the inverse image of the cone point is
characterized by the homogeneous solutions of a related `indicial'
equation, c.f. Lemma \ref{stfsjfs}.  A dichotomy between the behavior
of elements in $\mc{K}$ near cone points of cone angle less than $\pi$
and cone angle greater than $\pi$ arises. 
\begin{lemma} \label{isosjf}
Let $\psi \in\Kk$, and suppose that $G$ has only one cone point, $p$, with $u^{-1}(p) = q$.  If the cone angle $2 \pi \al$ is bigger than $\pi$, then near $q$
	\begin{align} 
		\psi(z) &= w + \frac{ \ov{a} }{1 - \al} \absv{z}^{2(1 - \al)} + \mc{O}(\absv{z}^{2(1 - \al) + \delta})
	\end{align}
for some $w, a \in \C$ and $\delta > 0$.  If the cone angle is less than $\pi$, then near $q$
	\begin{align} 
		\psi(z) &= \mu z + \mc{O}(\absv{z}^{1 + \delta})
	\end{align}
for some $\mu \in \C$ and $\delta > 0$.  
\end{lemma}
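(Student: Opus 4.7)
The plan is to exploit the $b$-calculus structure of $L$ to obtain a polyhomogeneous asymptotic expansion for any $\psi \in \Kk$ near $q$, and then to extract the stated leading behavior from the indicial roots of an appropriate model operator. Concretely, since $L = \absv{z}^{2\al}\widetilde{L}$ with $\widetilde{L}$ an elliptic $b$-operator, the cokernel $\Kk$ is identified, via a natural $L^2_b$ pairing, with the kernel of a formal adjoint $L^*$ acting between suitable dual weighted spaces; this adjoint is again (up to a power of $\absv{z}$) an elliptic $b$-operator. By Melrose's polyhomogeneity theorem (see \cite{me}, \cite{meme}), any such null-space element admits, in polar coordinates $z = r e^{i\theta}$, an asymptotic expansion with terms $a_{s,k}(\theta)\, r^s (\log r)^k$ indexed by those indicial roots $s$ of the model operator that lie in the window dictated by the dual weight.

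The key step is the computation of the indicial roots. Since $u$ is a diffeomorphism near $q$ with $u(q) = p$ and both domain and target carry conic metrics of cone angle $2\pi\al$, the map $u$ is asymptotically linear and conformal there, so the principal part of $L$ at $q$ is governed to leading order by the conic geometry alone. A Fourier decomposition in $\theta$ then reduces the indicial equation to a family of quadratic equations, one for each mode, whose roots one reads off directly. The three indicial roots relevant to the claim are $s = 0$, producing the constant $w$; $s = 2(1 - \al)$, producing the term containing $\ov{a}$; and $s = 1$, producing the linear term in $z$. All remaining indicial roots lie strictly above $\max\{1,\, 2(1 - \al)\}$, separated by a definite gap $\delta > 0$, and a direct check shows that no logarithmic factors arise at these first few exponents.

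With these roots in hand, the dichotomy follows by bookkeeping on which of them lie in the admissible window for $\Kk$. When $2\pi\al > \pi$, i.e.\ $\al > 1/2$, one has $2(1 - \al) < 1$; both $s = 0$ and $s = 2(1 - \al)$ then contribute, and the next root sits strictly beyond $2(1 - \al)$, yielding the error $\mc{O}(\absv{z}^{2(1 - \al) + \delta})$. When $2\pi\al < \pi$, i.e.\ $\al < 1/2$, one has $2(1 - \al) > 1$; the admissible window excludes the constant mode at $s = 0$, and the only indicial root up to $s = 1$ is $s = 1$ itself, so the expansion begins with the linear term $\mu z$ followed by an error $\mc{O}(\absv{z}^{1 + \delta})$.

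The main technical obstacle is the explicit identification of the indicial family of $L^*$ and the precise verification that the admissible window for $\Kk$ sits as claimed on either side of the transition at cone angle $\pi$. Both reduce, after conjugation by $\absv{z}^{2\al}$, to ODE computations tracking how the conformal factor and the conic geometry enter the linearization of the tension field; the resulting indicial polynomial depends on $\al$ in precisely the way that produces the transition at $\al = 1/2$. Granted these computations, the stated expansions follow at once from polyhomogeneity together with the absence of intermediate indicial roots on a small interval past each leading exponent.
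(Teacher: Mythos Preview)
Your overall framework---polyhomogeneity from the $b$-calculus, then reading off leading exponents from the indicial roots---is sound and coincides with what the paper does for the leading-order term in each case. One small correction: in this paper $L$ is symmetric with respect to the geometric $L^2$ pairing, so $\Kk$ is not the kernel of a separate adjoint $L^*$ but the kernel of $L$ itself acting on the dual-weight space $r^{1+\e-2\Aa}\Xx^{2,\g}_b$ (see Lemma~\ref{cokerminuse}). This only simplifies your argument.

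Where the paper genuinely departs from your sketch is in extracting the subleading term $\frac{\ov a}{1-\al}\,|z|^{2(1-\al)}$ when $\al>1/2$. Rather than continuing the indicial bookkeeping (which it explicitly notes is possible), the paper introduces the Hopf differential $\Phi(\psi)$ of the Jacobi field, shows it is holomorphic with at most a simple pole at $q$, and determines by a direct computation that the sole contribution to $\Res_q\Phi(\psi)$ comes from $\rho(u_0)\,\p_z\ov{\psi}\,\p_z u_0$. Back-solving for the leading part of $\psi-w$ then forces the $r^{2(1-\al)}$ coefficient to be $\ov a/(1-\al)$ with $a=\Res_q\Phi(\psi)$. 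This ``kills two birds with one stone'': it yields the expansion \emph{and} pins down $a$ as a residue, which is exactly what is needed later in the surjectivity and non-degeneracy arguments of \S\ref{pertsection1}.

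If you pursue your purely indicial route, be aware of one point you gloss over: the exponents in the polyhomogeneous expansion are not just the indicial roots, but the index set generated by those roots together with the decay rate of the error $E=\wt L-I(\wt L)$. To rule out any term strictly between $r^0$ and $r^{2(1-\al)}$ you must check that $E$ applied to the constant $w$ vanishes to order at least $2(1-\al)$. This follows from the improved decay $v\in r^{3-2\al}C^{2,\g}_b$ of the harmonic map $u_0=z+v$ (Lemma~\ref{thm:loa}) and the assumption $\nu>2\al$ on the conformal factor, but it is a genuine verification, not just a remark. The paper's Hopf-differential detour avoids this check entirely.
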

\noindent This lemma is central to the proof of the main theorems,
since an accurate appraisal of the cokernel is needed to show
that the geometric perturbations described above are sufficient to
give a surjective problem.

To prove energy minimization, both in relative and absolute homotopy
classes, we use an argument from \cite{ch}, in which the authors prove
uniqueness for harmonic maps of surfaces with genus bigger than 1.
They show that the pullback of the target metric via a harmonic map
can be written as a sum of a two metrics, one conformal to the domain
metric and the other with negative curvature, thereby decomposing the
energy functional into a sum of two functionals which the harmonic
map jointly minimizes.  In section \ref{uniquesect}, we adapt this
argument to prove uniqueness in the conic setting.

For fixed domain and target structures $\cc$ and $G$, if the cone
point of $G$ has cone angle less than $\pi$, the harmonic maps in Theorem \ref{mthm2} from $(\Sigma, \cc)$ to $(\Sigma, G)$ are parametrized by points $q$ and rel$.$ $q$ homotopy classes of diffeomorphisms taking $q$ to $p$.  Denote this space by  $\mc{H}arm_{\cc, G}$. Given any diffeomorphism $\phi$ of $\Sigma$, let $[\phi]_{rel.} = [\phi; \phi^{-1}(p)]$, and denote the corresponding element of $\mc{H}arm_{\cc, G}$ 
by $u_{[\phi]_{rel.}}$.  One of our main results is a formula for the gradient of the energy functional on $\mc{H}arm_{\cc, G}$ in terms of the Hopf differential $\Phi(u_{[\phi]_{rel.}})$ of $u_{[\phi]_{rel.}}$ (see section \ref{polessect} for a definition of the Hopf differential).  It turns out that $\Phi(u_{[\phi]_{rel.}})$ is holomorphic on $\Sigma - q$ with at most a simple pole at $q$.  Given a path $u_t$ in $\mc{H}arm_{\cc, G}$ with $u_0 = u_{[\phi]_{rel.}}$, and writing $J := \ddt u_t$, the gradient is given by 
	\begin{align} \label{grad}
          \ddt E(u_t) &= \Re 2\pi i \Res \rvert_{q} \iota_J \Phi(u_{[\phi]_{rel.}}),
	\end{align}
where $\iota$ is contraction.  The one form $\iota_J
\Phi(u_{[\phi]_{rel.}})$ is not holomorphic, but admits a residue
nonetheless.  By Theorem \ref{mthm1} there is a unique choice
$[\ov{\phi}]_{rel.}$ such that the corresponding solution $\ov{u}$ is
an absolute minimum of energy in the free homotopy class
$[\ov{\phi}]$, and we use (\ref{grad}) to prove that $\ov{u}$ is the
unique element in $\mc{H}arm_{\cc, G}$ for which $\Phi(\ov{u})$
extends smoothly to all of $\Sigma$ (\S \ref{pertsection1}).  In other
words, $\ov{u}$ is the unique critical point of $E : \mc{H}arm_{\cc,
  G} \lra \R$.  We refer to $\ov{u}$ as an \textit{absolute
  minimizer} to distinguish it from the other \textit{relative
  minimizers} in $\mc{H}arm_{\cc, G}$.

As a corollary to (\ref{grad}), we prove the following formula for the Hessian of energy at $\ov{u}$.  Given a path $u_t$ through $u_0 = \ov{u}$ with derivative $J = \ddt u_t$, we can define the Hopf differential of $J$ by $\Phi(J) = \ddt \Phi(u_t)$.  We have
	\begin{align} \label{ihess}
		\ddtt E(u_t) &= \Re 2\pi i \Res \rvert_{\ov{q}} \iota_J \Phi(J)
	\end{align}
We use (\ref{ihess}) to prove the non-degeneracy of the linearized
residue map near $\ov{u}$ (\S \ref{pertsection1}), and use this to show that in the cone angle less than $\pi$
case one can perturb in the direction of absolute minimizers as the
geometric data varies.

In the closedness portion (section \ref{closedsec}), we adapt standard
methods for proving regularity of minimizing maps to the conic
setting.  First, we prove that the sup norm of the energy density of a
minimizing map is controlled by the geometric data (see Proposition
\ref{loclip}).  This uses a standard application of the theorems of
DiGiorgi-Nash-Moser and an extension of a Harnack inequality from
\cite{he}.  It is noteworthy that if a conic metric is chosen on the
domain, with cone point at the inverse image of the cone point of a
the target via a
minimizer from either of the main theorems, then the energy density is
bounded from both above and below if and only if the cone angles are
equal, and we work with such conformal metrics on the domain in what
follows. 

Control of the energy density near the (inverse image of)
the cone points is insufficient, and to obtain stronger estimates
we proceed by contradiction, employing a
rescaling argument, and the elliptic regularity of $b$-differential operators, to produce a minimizing map of the standard round
cone.  We also classify such maps, and the map produced by rescaling
is not among them; thus the desired bounds hold near the cone points
(see Proposition \ref{closed}).

This work was completed under the supervision of Rafe Mazzeo for my
thesis at Stanford university, and I would like to thank him for many helpful
discussions, ideas, and suggestions.  I would also like to thank
Andras Vasy, Leon Simon and Dean Baskin for their help and
encouragement, and Chikako Mese for a helpful email discussion.  I am
also deeply indebted to the NSF for the financial support I have
received through Rafe Mazzeo's research grant.

\section{Setup and an example} \label{example}

In this section, we give the precise statements of the theorems and outline the method of proof. 

Let $\Sigma$ be a closed surface, $\pp = \set{ p_1, \dots, p_k }
\subset \Sigma$ a collection of $k$ distinct points, and set
\begin{equation} 
\Sp :=\Sigma - \pp.\label{eq:Sp}
\end{equation}
Given a smooth metric $G$ on $\Sp$, let $u : \Sigma \lra \Sigma$ be a continuous map so that $u^{-1}(p)$ is a single point for all $p \in \pp$.  Let $$\pp' = u^{-1}(\pp)$$ and assume that 
	\begin{align} \label{firstmap}
	u: \Sigma_{\pp'} \lra \Sp
	\end{align}
Is smooth.  Let $g$ be a smooth metric on $\Sigma_{\pp'}$.  The
differential $du$ is a section of $\mbox{T}^*\Sigma_{\pp'} \otimes
u^*\mbox{T}\Sigma_{\pp}$, which we endow with the metric $g \otimes
u^* G$.  We define the Dirichlet energy by
	\begin{equation} 
	\begin{split}
	\label{energy}
		E(u, g, G) &= \frac{1}{2} \int_{\Spr} \norm[{g \otimes u^*G}]{du}^2 dVol_g 
		= \frac{1}{2}  \int_{\Spr} g^{ij} G_{\al \bb} \frac{\partial u^\al}{\partial x^i} \frac{\partial u^\bb}{\partial x^j} dVol_g. 
	\end{split}
	\end{equation}
As we will see in (\ref{energyinv}) below, the value of the energy functional at such a triple depends only on the conformal class of $g$.  

\nomenclature[E]{$E(u, g, G)$}{The energy of a map $u: (\Sigma, g)
  \lra (\Sigma, G)$}
\nomenclature[S1]{$\Sigma$}{a closed, orientable, smooth surface}
\nomenclature[S2]{$\pp$}{a finite subset of $\Sigma$}
\nomenclature[S3]{$\Sigma_{\mathfrak{p}}$}{$\Sigma - \mathfrak{p}$}

Assume for the moment that $G$ is smooth on all of $\Sigma$.  In this
case critical points of this functional are smooth \cite{si} and
satisfy that the \textit{tension field}
 \begin{equation} \label{hme}
   \tau(u, g, G) : =\Tr \nabla_{(u,g,G)} (du) = \Delta_g u^\g + { ^ G {\Gamma}}^\g_{\al \bb}\la d u^\al, d u^\bb \ra_g
 \end{equation}
is identically zero, where $\nabla_{(u,g,G)}$ is the Levi-Cevita connection on
$\mbox{T}^*\Si \otimes u^{*}\mbox{T}\Si$ with metric $g^{-1}
\otimes u^* G$, so $\Tr \nabla du \in \Gamma(u^* \mbox{T}\Si)$.
Here $\Gamma(B)$ denotes the space of smooth sections of a vector
bundle $B \lra \Si$.  In particular, $\tau(u, g, G)$ is a vector field
over $u$, and it is minus the gradient of the energy
functional in the following sense \cite{el}.
	\begin{lemma}[First Variation of Energy] \label{frstvarbdry}
		Let $(M, g)$ and $(N, \wt{g})$ be smooth Riemannian manifolds, possibly with boundary, and let $u : (M, g) \lra (N, \wt{g})$ be a $C^2$ map.  If $u_t$ is a variation of $C^2$ maps through $u_0 = u$ and $ \eval{\frac{d}{dt}}_{t = 0} u_t = \psi$, then
		\begin{equation} \label{frstvar0}
			\begin{split}
		\eval{\frac{d}{dt}}_{t = 0}E(u_t, g, \wt{g}) &= - \int_M \la \tau(u, g, \wt{g}) , \psi \ra_{u^*\wt{g}} dVol_{g}  + \int_{\p M} \la u_* \p_\nu , \psi \ra_{u^*{\wt{g}}} ds,
			\end{split}
		\end{equation}
where $\p_{\nu}$ is the outward pointing normal to $\p M$ and $ds$ is
the area form.
	\end{lemma}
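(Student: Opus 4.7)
The claim is the standard first-variation formula for the harmonic map energy, and the plan is differentiation under the integral sign followed by integration by parts. I will outline a coordinate-free derivation via the pullback connection on $F^*TN$, which avoids the Christoffel-symbol bookkeeping needed in local coordinates.

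First, introduce the map $F : M \times (-\varepsilon, \varepsilon) \lra N$ defined by $F(x,t) = u_t(x)$, so that $\psi = F_*\partial_t|_{t=0}$ is a section of $u^*TN$. The restriction $dF|_{TM}$ is a section of $T^*M \otimes F^*TN$ whose pointwise $(g^{-1}\otimes u_t^*\wt{g})$-norm squared is $2e(u_t)$. Since $M$ is compact and everything is $C^1$ in $t$, differentiation under the integral sign gives
\begin{equation*}
\frac{d}{dt}\bigg|_{t=0} E(u_t,g,\wt{g}) = \int_M g^{ij}\, \la \nabla^{F^*TN}_{\partial_t} dF(\partial_i),\, du(\partial_j)\ra_{u^*\wt{g}}\, dVol_g,
\end{equation*}
using that $\wt{g}$ is covariantly constant, so all $t$-dependence in the integrand sits in the sections.

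Second, because the Levi-Civita connection on $N$ is torsion-free and $[\partial_t,\partial_i]=0$ on the product, the pullback connection satisfies $\nabla^{F^*TN}_{\partial_t} dF(\partial_i) = \nabla^{F^*TN}_{\partial_i}(\partial_t F)$, which at $t=0$ equals $\nabla^{u^*TN}_{\partial_i}\psi$. The integrand becomes the natural contraction $\la \nabla\psi, du\ra$. Define the one-form $\omega$ on $M$ by $\omega(Y) = \la \psi, du(Y)\ra_{u^*\wt{g}}$ and set $X = \omega^\sharp$ (the $g$-dual vector field); metric-compatibility of $\nabla$ yields the Leibniz identity
\begin{equation*}
\mathrm{div}_g(X) = \la \nabla\psi, du\ra + \la\psi,\, \Tr_g \nabla du\ra_{u^*\wt{g}} = \la \nabla\psi, du\ra + \la\psi, \tau(u,g,\wt{g})\ra_{u^*\wt{g}}.
\end{equation*}
Integrating over $M$ and applying the divergence theorem gives $\int_{\partial M}\omega(\partial_\nu)\,ds = \int_{\partial M}\la u_*\partial_\nu,\psi\ra_{u^*\wt{g}}\,ds$ on the left-hand side, and combining with the first-step formula yields exactly \eqref{frstvar0}.

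The only mild obstacle is justifying the two identities above: the commutation $\nabla_{\partial_t} dF(\partial_i) = \nabla_{\partial_i} F_*\partial_t$, which is symmetry of $\nabla$ lifted to $F^*TN$, and the Leibniz rule for $\mathrm{div}_g(X)$, which uses metric-compatibility of $\nabla$ with $\wt{g}$ on $N$. Both are standard facts about pullback connections. Alternatively, a coordinate-based proof expands $e(u_t) = \tfrac12 g^{ij}\wt{g}_{\alpha\beta}(u_t)\partial_i u_t^\alpha \partial_j u_t^\beta$, differentiates in $t$, and integrates by parts on the $\partial_i\psi^\alpha$ cross term; the Christoffel symbols of $\wt{g}$ then reorganize the interior integrand into $-\la \psi,\tau(u,g,\wt{g})\ra$ in the form \eqref{hme}.
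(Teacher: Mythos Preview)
Your argument is correct and is the standard coordinate-free derivation of the first variation formula. The paper does not actually prove this lemma: it is stated as background with a citation to Eells--Lemaire \cite{el}, so there is no ``paper's own proof'' to compare against. The approach you give---differentiate under the integral, use torsion-freeness to rewrite $\nabla_{\partial_t}dF(\partial_i)$ as $\nabla_{\partial_i}\psi$, and integrate by parts via the divergence theorem applied to the vector field dual to $\omega(Y)=\la\psi,du(Y)\ra$---is precisely the argument one finds in the cited reference and in most textbook treatments.

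One small remark: you invoke compactness of $M$ to justify differentiating under the integral, but the lemma as stated does not assume $M$ compact. In the paper's applications $M$ is always a compact surface or a compact surface with small discs removed, so this is harmless, but strictly speaking one needs either compactness or a compactly supported variation (or some integrability hypothesis) for the manipulation to be valid.
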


\nomenclature[T]{$\tau(u, g, G)$}{the tension field of $u:(\Sigma, g)
  \lra (\Sigma, G)$}

\nomenclature[G]{$\Gamma(B)$}{the sections of a bundle $B$.}

Returning to the case of our surface $\Sigma$, if $G$ is instead only smooth on $\Sp$, the map $u$ in (\ref{firstmap}) still has a tension field 
 \begin{equation} \label{pointedtension}
		\tau(u, g, G) \in u^*\mbox{T}\Sp.
 \end{equation}
We will study the energy minimizing problem by finding zeros of $\tau$.

\subsection{Example: Dirichlet problem for standard\\ cones.}\label{examplesection}

\nomenclature[Ca]{$C_{\alpha}$}{the standard flat cone of cone angle
  $2\pi\alpha$}
\nomenclature[Cag]{$g_{\alpha}$}{metric on the standard flat cone}

The standard cone of cone angle $2 \pi \al$ is simply the sector in $\R^2$ of angle $2 \pi \al$ with its boundary rays identified.  Thus in polar coordinates $(\wt{r},  \wt{\theta})$ we can write this as a quotient $ \R^+ \times [0, 2\pi \al] / \lp (\wt{r} , 0) \sim (\wt{r} , 2 \pi \al) \rp$, with the $g_\al := d\wt{r}^2 +  \wt{r}^2  d\wt{\theta}^2 $.
Let $\wt{z} = \wt{r} e^{i  \wt{\theta} }$, and set 
	\begin{align} \label{wedgemap}
		\wt{z} = \frac{1}{\al}z^\al
	\end{align}
Then, in terms of $z$,
	\begin{align} \label{roundmetric}
		g_\al = \absv{z}^{2(\al - 1)} \absv{ dz }^2,
	\end{align}
and  this expression is valid for all $ z \in \C^*$.  We denote this space by
 \begin{equation} \label{stfltcone}\begin{split}
		C_\al &:= (\C, g_\al) \\
		C_\al^* &:= C_\al - \set{0}
\end{split}\end{equation}
Also let $D \subset \C$ be the standard disc of radius one, and set $D^* := D - \set{0} $.
We will discuss the Dirichlet problem for harmonic maps from $D$ to $C_\al$.  In this case, see (\ref{hmec}), the tension field operator for a smooth map $u : D^* \lra C_\al^*$ is
 \begin{equation*}
		\tau(u) = u_{z \ov{z}} + \frac{\al - 1}{u} u_z u_{\ov{z}}.
\end{equation*}
Therefore the identity map 
	\begin{align*}
		id : D^* &\lra C_\al \\
		z &\longmapsto z
	\end{align*}
is harmonic on $D^*$.  Given a map $\phi : \p D \lra C_\al$ near to $id \rvert_{\p D}$, we would like to find a harmonic map
 \begin{equation} \label{Dirprob}\begin{split}
		u : D &\lra C_\al \\
		\tau(u) &= 0  \mbox{ on } D - u^{-1}(0)\\
		u \rvert_{\p D} &=  \phi
\end{split}\end{equation}
Let $z = r e^{i \theta}$.  Initially, we consider $\tau$ acting on maps of the form
 \begin{equation} \label{iiuform}\begin{split}
		u(z) &= 
		z + v(z) \\
		v &\in r^{1 + \e} C^{2, \g}_b(C_\al(1)),
\end{split}\end{equation}		
where, given $v: D \lra \C$,
	\begin{align} \label{ic2gb}
		\begin{array}{c} 
		v \in r^{c} C^{2,\g}_b(D)
		\end{array} 
		&\iff 
		\begin{array}{c} 
		\mbox{ $r^{-c} v$ has uniformly bounded $C^{2,\g}$ norm on balls }\\
		\mbox{ of uniform size with respect to the rescaled} \\
		\mbox{ metric $  g_\al / r^{2\al} = \frac{dr^2}{r^2} +  d\theta^2$. }
		\end{array} 
	\end{align}
We will describe this space more precisely in section \ref{formsect}, but we mention now that $v$ as in (\ref{iiuform}) satisfies $\absv{v(z)} = \mc{O}(r^{1 + \e})$.  We refer to the space of $u$ as in (\ref{iiuform}) by $\Bb^{1 + \e}$.  Let $\mc{B}^{1 + \e}_0 \subset \mc{B}^{1 + \e}$ consist of $u  $ with $u \rvert_{\p D} = id \rvert_{\p D}$, i.e.\ those $u$ whose $v$ in (\ref{iiuform}) satisfy $v \rvert_{\p D}(z) \equiv 0.$  Near $id$ we can write $\Bb^{1 + \e}$ as a product of $\Bb^{1 + \e}_0$ and
 \begin{equation*}
		\mc{E} := \set{ \phi : \p D \lra C_\al : \norm[2,\g]{ \phi - id} < \e }
\end{equation*}
This can be done for example by picking a smooth radial cutoff function $\chi$ which is $1$ near $\p D$ and defining $\mc{E} \lra \mc{B}^{1 + \e}$  be $\phi \longmapsto \phi(\theta) \chi(r)$; the addition map $\mc{E} \times \mc{B}^{1 + \e}_0 \lra \mc{B}^{1 + \e}$ is a local diffeomorphism of Banach manifolds near $(id, id)$.

The strategy now is to study $\tau$ acting on the space $\Bb^{1 + \e} \sim  \Bb^{1 + \e}_0 \times \mc{E}$.  If the derivative of $\tau$ in the $ \Bb^{1 + \e}_0$ direction were non-degenerate at $id$, then the zero set of $\tau$ near $id$ would be a smooth graph over $\mc{E}$, \textit{but this turns out to be false}.  To fix this, we augment the domain of $\tau$ as follows.  Define the spaces of conformal dilations and rotations
	\begin{align*}
		\Vv &= \set{M_\lambda(z) =  \lambda z : \lambda \in \C } \\
		\mc{T} &=  \set{T_w(z) =   z - w : w \in \C } 
	\end{align*}
and define two $2-$dimensional spaces
	\begin{equation} \label{ilocconf}
		\begin{split}
		\Vv_0 &= \set{ \begin{array}{c}
		\mbox{any family $\wt{M}_\lambda$ parametrized by $\lambda$ near $1 \in \C $} \\
		\mbox{ $\wt{M}_\lambda = M_\lambda$ near $0$ and vanishes on $\p D$}
		\end{array}
		} \\
		\mc{T}_0 &= \set{ \begin{array}{c}
		\mbox{any family $\wt{T}_w$ parametrized by $w$ near $0 \in \C $} \\
		\mbox{ $\wt{T}_w = T_w$ near $0$ and vanishes on $\p D$}
		\end{array}
		}
		\end{split}
	\end{equation}
The dichotomy between cone angles less than $\pi$ and between $\pi$ and $2\pi$ now enters.  We will show in \S \ref{pertsection1} that for $\e > 0$ sufficiently small
 \begin{equation} \label{ilins}
		D_{id} \tau: T \lp \Bb^{1 + \e }_0 \circ \Vv_0 \rp \lra r^{- 1 + \e} C^{0, \g}_b(D) \mbox{ is an isomorphism if } 0 < 2 \pi\al < \pi 
 \end{equation}
while
	\begin{align}\label{ilinb}
		D_{id} \tau  : T \lp \Bb^{1 + \e }_0 \circ \Vv_0 \circ \mc{T}_0 \rp \lra r^{-1 + \e } C^{0, \g}_b(D) \mbox{ is an isomorphism if } 2 \pi > 2 \pi \al > \pi
	\end{align}
These two facts have obvious (and distinct) implications about solving
(\ref{Dirprob}).  In the former case, for boundary values $\phi$ near
$id$ in $C^{2, \g}$ we can find solutions of the form $u(z) = \lambda z + v(z)$
while in the latter we must `move the cone point,' so the solutions
are in the form $u(z) = \lambda( z - w) + v(z - w)$.


As we will see in section \ref{confinvsect}, the precomposition of a
harmonic map $u$ with a conformal map is harmonic.  Define $C_\al(1)= C_\al \cap \set{z : \absv{z} \leq 1}$.
Clearly $C_\al(1)$ is conformally equivalent to $D$, so for the moment we think of $id$ not as a map from $D$ to $C_\al$ but from $C_\al(1)$ to $C_\al$, and applying the inverse of (\ref{wedgemap}) to both spaces, think of $id$ as the identity map on the wedge $\set{ r \leq 1, 0 \leq \theta  \leq 2 \pi \al }$. (For simplicity, we have dropped the tildes from the notation.)  We can think of a boundary map, $\phi$ near $id$ as a map from the arc $\set{ r = 1 , 0 \leq \theta  \leq 2 \pi \al }$ into $\C$, satisfying the condition that
	\begin{align} \label{condition}
		\phi(e^{2\pi \al i } ) = e^{2 \pi \al i} \phi(1).
	\end{align}
We decompose $\phi$ in terms of the eigenfunctions of $\p_\theta^2$
which satisfy (\ref{condition}), i.e.\ we write $\phi(e^{i \theta}) = \sum_{j \in \Z} a_j e^{i (1 + \frac{j}{\al})\theta}$.
Assuming convergence, these are the values on the arc of a the sector of the harmonic map
 \begin{equation} \label{powerseries}
		u(z) = \sum_{j \geq 0} a_j z^{1 + \frac{j}{\al}} + \sum_{j < 0} a_j \ov{z}^{- 1 + \frac{j}{\al}}.
 \end{equation}
(In the flat metric $dr^2 + r^2 d\theta^2$ on the sector, the tension field of $u$ is simply $\Delta u$, so by the decomposition $\Delta = 4 \p_z \p_{\ov{z}}$, the sum of a conformal and an anti-conformal function is harmonic.)


Consider the coefficient $a_{-1}$.  If $\al > 1/2$, the term $a_{-1}
\ov{z}^{- 1 + \frac{1}{\al}}$ dominates $a_0 z$ as $z \to 0$, and it
is easy to check that a power series as in (\ref{powerseries})
sufficiently close to $id$ on the boundary passes to a map of the
cone if and only if $a_{-1} = 0$.  It is exactly the deformations
$\Tt$ that eliminate the $a_{-1}$ and give actual mappings of the
cone.  When $\al < 1/2$, any power series as in (\ref{powerseries})
gives a map on the cone, so for any sufficiently regular boundary data
near $id$, there is a harmonic map of $D^*$ with that boundary data.  

To go deeper, as we show in section \ref{uniquesect}, the map $u$ is
minimizing among all maps with its boundary values if and only if the
residue of the Hopf differential of $u$ is zero,  and a simple
computation shows that (in the case under consideration,)
 \begin{equation*}
		\Res \Phi(u) = \ov{a_{-1}} \lp -1  + \frac{1}{\al} \rp.
\end{equation*}
That is, \textit{if $\al < 1/2$, $a_{-1}$ is the obstruction to having a minimizer (w.r.t$.$ the boundary values) on all of $D$, not just $D^*$.}
Thus it is natural, in the case $\al < 1/2$, to ask if we can solve the augmented Dirichlet problem
	\begin{equation} \label{Dirproba}
	\begin{split}
		u : D &\lra C_\al \\
		\tau(u) &= 0  \mbox{ on } D - u^{-1}(0)\\
		a_{-1} &= 0 \\
		u \rvert_{\p D} &=  \phi
	\end{split}.
	\end{equation}
In fact we can.  By (\ref{ilins}) there is a graph of solutions to (\ref{Dirprob}) over $\Tt_0 \times \mc{E}$.  The solutions lying over $\Tt_0 \times \set{id}$ solve (\ref{Dirprob}) with identity boundary value.  Call this space $\mc{S}$.  In section \ref{pertsection2}, we show that the map from $\mc{S}$ to $a_{-1}$ has non-degenerate differential.  Since it is a map of two dimensional vector spaces, it is an isomorphism, and we get a smooth graph of solutions to (\ref{Dirproba}) over $\Tt_0 \times \mc{E}$, see Figure \ref{res2boundary}.

\begin{figure}[htbp]
\begin{center}
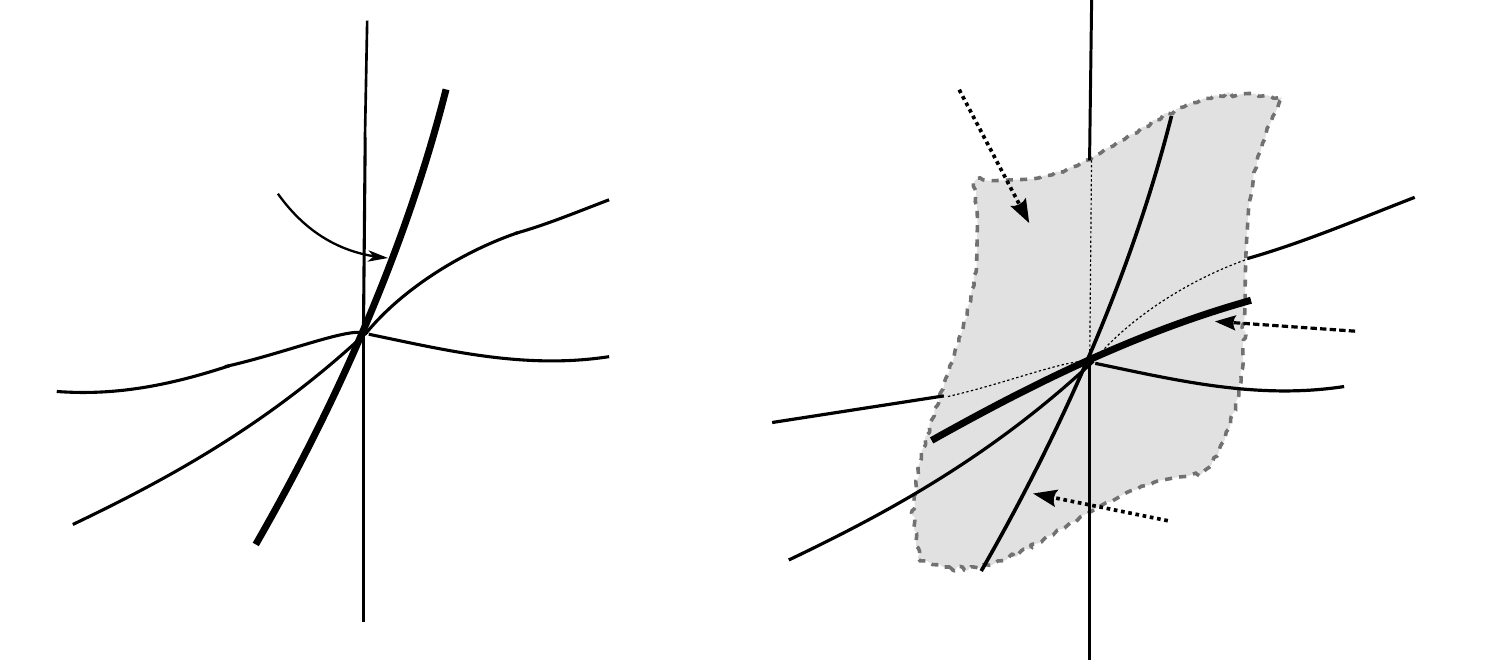
\caption{Solutions to the augmented equation (\ref{Dirproba}) are found by applying the inverse function theorem to the map from $\mc{S}$ to $a_{-1}$.}
\label{res2boundary}
\end{center}
\end{figure}

\subsection{Conic metrics} \label{metricssect}

The simplest example of a conic metric is that of the standard cone $C_{\al}$ in (\ref{roundmetric}).  This motivates the following definition.  Given $ {\al_j} \in \R_+$ and ${\nu_j} > 0$, a Riemannian metric $G$ on $\Sp$ is said to have a conic singularity at $p_{j} \in \pp$ with cone angle $2 \pi {\al_j} $ and type ${\nu_j}$ if there are conformal coordinates $z$ centered at $p_{j}$ such that
 \begin{equation}	 \label{conemet}\begin{split}
		G &= c e^{2 {\lambda_j}} \absv{ z }^{2({\al_j} - 1)} \absv{ dz }^2 \\
		{\lambda_j} = {\lambda_j}(z) &\in r^{\nu_j} C^{k, \g}_b(D(\sigma)) \\
		c &> 0.
\end{split}\end{equation}
Here $D(\sigma) = \set{ z : \absv{z} < \sigma }$.  Obviously, in any
other centered conformal coordinates $w = w(z)$, the metric will take
the same form.  For convenience, we single out those coordinates for which $c = 1$, and we refer to these as \textit{normalized} conformal coordinates.
Given $\Aa = ( \al_1, \dots, \al_k) \subset (0, 1)^k$ and $\nu =
(\nu_{1}, \dots, \nu_{k}) \in \R_{+}^{k}$, we define
	\begin{align} \label{conemetric1}
		\Mm_{k, \g, \nu}(\pp, \Aa)&= 
		\left\{
		\begin{matrix} \mbox{ $C^{k,\g}_{loc}$ metrics on $\Sp$ with cone points $\pp$} \\
		\mbox{ and cone angles $2 \pi \Aa$ such that in (\ref{conemet}) }  \\
		 \mbox{ we have $\lambda_{j} \in r^{\nu_{j}} C^{k, \g}_b(D(\sigma))$ for some $\sigma$.} 
		\end{matrix}
		\right\}
	\end{align}
\nomenclature[D]{$D(R)$}{the disc in $\C$ of radius $R$}
\nomenclature[A]{$\Aa$}{a point in $(0, 1)^{k}$}
\nomenclature[M1]{$\Mm_{k, \g, \nu}(\pp, \Aa)$}{is the space of conic metrics with
  cone points at $\pp$ of cone angles $2\pi\Aa$}

\subsection{The form of the minimizers} \label{formsect}
We will look for harmonic maps which have specified behavior near the
inverse image of the cone points.  By the uniqueness theorem in
section \ref{uniquesect}, all minimizers have this behavior, but we
begin by stating it as an assumption.
\begin{form} \label{uform}
We say that $u: (\Sigma, g) \lra (\Sigma, G)$ is in Form \ref{uform}
(with respect to $g$ and $G$) if 
\begin{enumerate}
	\item $ u $ is a homeomorphism of $\Sigma$ homotopic to the
          identity.  
	
	\item If $\pp' = u^{-1}(\pp)$, $u : \Sigma_{\pp'} \lra \Sp$ is a $C^{2, \gamma}_{loc}$
          diffeomorphism (see \ref{eq:Sp}).
	
	\item For each $p \in \pp$, if $z$ is a conformal coordinate
          around $u^{-1}(p)$ w.r.t$.$ $g$ and by abuse of notation $u$ is a conformal coordinate around $p$ w.r.t$.$ $G$, then
 \begin{equation*}
			u(z)  =  \lambda z  + v(z)
\end{equation*}
	where $\lambda \in \C^*$ and $v \in r^{1 + \e}
        C^{2,\g}_b(D(R))$ for some sufficiently small $\e  > 0$.
\end{enumerate}
\end{form}
\noindent In words, in normal coordinates near $p$ and $u^{-1}(p)$, $u$ is a
dilation composed with a rotation to leading order.

\begin{remark} \label{thm:idremark}
  When working with harmonic diffeomorphisms of surfaces it is often
  convenient to pull back the target metric, i.e.\ given $G \in \Mm_{2,
    \g, \nu}(\pp, \Aa)$ and
  \begin{equation}
    \label{eq:3}
    u : (\Sigma_{\pp'}, g) \lra (\Sp, G)
  \end{equation}
  harmonic, to consider instead
  \begin{equation}
    \label{eq:3}
    id : (\Sigma_{\pp'}, g) \lra (\Sigma_{\pp'}, u^*G).
  \end{equation}
  We will often make this assumption below.  It should be noted that
  it is exactly Form \ref{uform} which guarantees that the pullback
  metric $u^*G$ is a member of $\Mm_{2, \g, \nu}(\pp', \Aa)$.
\end{remark}

The space $C^{k, \g}_b(D(\sigma))$ was defined in the above as the space of functions with $C^{k, \g}$ norm uniformly bounded on balls of uniform size with respect to the rescaled metric $g / (e^{2 \mu} \absv{z}^{2 \al}) = \frac{ dr^2 + r^2 d\theta^2}{r^2}$, but here we give an alternative characterization that is easier to work with.  Given $f: D(R) \lra \C$, let
 \begin{equation} \label{c2gb0}
		\norm[C^{0, \g}_b(D(R))]{f} = \sup_{0 < \absv{z} \leq R} \absv{f} + \sup_{0 < \absv{z}, \absv{z'} \leq R} \frac{\absv{f(z) - f(z')}}{\absv{ \theta - \theta'}^\g +\frac{\absv{ r - r'}^\g}{\absv{ r +r'}^\g} }.
 \end{equation}
(Here, as always, $z = r e^{i \theta}, z' = r' e^{i \theta'}$.)  Then 
 \begin{equation} \label{c2gb}
		f \in C^{k , \g}_b(D(R)) \iff \norm[0 , \g,
                D(R)]{(r\p_r)^i \p^j_\theta f} < \infty \mbox{ for all } i + j \leq k.
 \end{equation}
Finally, define the weighted H\"older spaces by
 \begin{equation} \label{c2gb2}  \begin{split}
	f \in r^c C^{k, \g}_b(D(R)) &\iff r^{-c} f \in C^{k, \g}_b(D(R)) \\
	\norm[r^c C^{k, \g}_b(D(R))]{f} &= \norm[C^{k,
          \g}_b(D(R))]{r^{-c}f} := \sum_{i + j \leq k}\norm[0 , \g,
                D(R)]{r^{-c}(r\p_r)^i \p^j_\theta f}
\end{split}\end{equation}
\nomenclature[C2gb]{$r^{c}C^{2, \gamma}_{b}$}{weighted $b$-H\"older spaces}

\subsection{Scalar curvature} \label{scalarcurvature}

Let $\kappa_G$ denote the gauss curvature of $G \in \Mm_{2, \g, \nu}(\pp, \Aa)$, and near $p \in \pp$, write $G = \rho \absv{dz}^2$ with $\rho = e^{2 \lambda} \absv{ z }^{2(\al - 1)}$.  There is a simple sufficient condition on $\nu$ which guarantees that $\absv{\kappa_G} \leq c < \infty$; one simply computes
	\begin{equation} \label{eq:scalar}
          \begin{split}
		\kappa_G &= - \frac{1}{\rho} \p_z \p_{\ov{z}} \log
                \rho^2 = \frac{- 4}{e^{2 \lambda} \absv{ z }^{2\al}}   \lp z\p_z  \rp \lp \ov{z} \p_{\ov{z}} \rp \mu.
              \end{split}
	\end{equation}
Using $z \p_z = \frac{1}{2} \lp r \p_r - i\p_\theta \rp$
we see that if $\mu \in r^{2\al}C^{2, \g}_b$, i.e.\ if $G$ is type $\nu
= 2\al$, then $\kappa_G$ is a bounded function, and we will always
make this assumption.

\nomenclature[k]{$\kappa_{h}$}{scalar curvature of the metric $h$}

It will be important to have a somewhat stronger assumption about the
metrics.  We need the following 
	\begin{definition}\label{thm:phg}
		A function $f : D(R) \lra \C$ is \textbf{polyhomogeneous} if $f$ admits an asymptotic expansion
	\begin{equation} \label{polylambda}
		f(r, \theta) \sim \sum_{(s, p) \in \wt{\Lambda}} r^s \log^p r a_{s, p}(\theta),
	\end{equation}
where $\wt{\Lambda} \subset \R \times \N$ is a discrete set for which
each subset $\set{ s \leq c} \cap \wt{\Lambda}$ is finite.  In this
setup, $\wt{\Lambda}$ is called the `index set' of $f$, and the symbol
$\sim$ means that
	\begin{equation*}
		f(r, \theta) - \sum_{(s, p) \in \wt{\Lambda}, N > s}
                r^s \log^p r a_{s, p}(\theta) = o(r^{N})
	\end{equation*}
	\end{definition}
Let
\begin{align}\label{eq:phgmetrics}
  \Mm_{k, \g, \nu}^{phg}(\pp, \Aa) = \set{h \in \Mm_{k, \g, \nu}(\pp,
    \Aa)
    \left|
      \begin{array}{c}
      \mbox{ near } p \in \pp, h = e^{2\mu}\absv{z}^{2(\al -
        1)}\absv{dz}^{2} \\
      \mbox{ and } \mu \mbox{ is polyhomogeneous.}
      \end{array}
           \right.
      }
\end{align}
\nomenclature[M2]{$\Mm_{k, \g, \nu}^{phg}(\pp, \Aa)$}{polyhomogeneous
  conic metrics}

\subsection{Restatement of theorems}

Given two maps $u_i : \Sigma \lra \Sigma$, $i = 1, 2$, and a finite subset $\qq \subset \Sigma$, we define the equivalence relation
	\begin{align} \label{relhtopyclass}
		u_1 \sim_\qq u_2 &\iff \begin{array}{c} u_1 \mbox{ is homotopic to } u_2 \mbox{ via } \\
		 F:[0, 1] \times \Sigma \lra \Sigma \\
		 F_t(q) = u_1(q) \mbox{ for all } (t, q) \in [0, 1] \times \qq
		\end{array}.
	\end{align}
If $u_1 \sim_\qq u_2$, we say that the two maps are homotopic relative
to $\qq$ (rel$.$ $\qq$).  For fixed $u_1$, the set of all $u_2$
satisfying (\ref{relhtopyclass}) is referred to as the rel$.$ $\qq$
homotopy class of $u_1$, and is denoted by $[u_1; \qq]$.  By $u_1 \sim
u_2$ we mean that $u_1$ and $u_2$ are homotopic with no restrictions
on the homotopy.  

\nomenclature[p]{$\pl, \pg, \pp_{= \pi}$}{points with various angle specifications}

Define
	\begin{align} \label{inotepi}
		\begin{array}{cc}
		\pp_{< \pi} &= \set{ p_i \in \pp \vert 2 \pi \al_i < \pi} \\
		\pp_{> \pi} &= \set{ p_i \in \pp \vert 2 \pi \al_i > \pi}  \\
		\pp_{= \pi} &= \set{ p_i \in \pp \vert 2 \pi \al_i = \pi} 
		\end{array}
	\end{align}
and assume that
 \begin{equation*}
		\pe = \varnothing.
\end{equation*}
We discuss the case $\pe \neq \varnothing$ in section \ref{coneanglepisect}.

\begin{thm} \label{mainTheorem}
Assume that genus $\Sigma > 0$.  For $\Sigma$ and $\pp$ as above, let $G \in  \Mm_{k, \g,
  \nu}^{phg}(\pp, \Aa)$ have cone angles less than $2 \pi$, and
$\kappa_G \leq 0$.  Let $\cc$ be a conformal class on $\Sigma$.  Fix a
(possibly empty) subset $\qq \subset \pl$, let $w_0 : \Sigma \lra \Sigma$ be
a homeomorphism.

If $\qq \neq \varnothing$, set $\qq' := w_0^{-1}(\qq)$.  Then there exists a
unique energy minimizing map $u$ in $[w_0 ; \qq']$.  Furthermore,
	\begin{align} \label{minthm1}
		u^{-1}(p_i) \mbox{ is a single point for all } i = 1, \dots, k,
	\end{align}
and 
 \begin{equation} \label{minthm2}
		u: \Sigma - u^{-1}(\pp)  \lra \Sigma - \pp
 \end{equation}
is a diffeomorphism.

If $\qq = \varnothing$, there exists an energy minimizing map $u$ in
$[w_0]$, unique up to precomposition with a
conformal automorphism of $(\Sigma, \cc)$.  These also satisfy
(\ref{minthm1}) and (\ref{minthm2}).

If $\Sigma = S^{2}$ then, assumptions as above, there is a unique
energy minimizing map in the rel$.$ $\qq$ homotopy class of $w_{0}$
provided $\absv{\qq} \geq 3$.  Again the map satisfies \eqref{minthm1}---\eqref{minthm2}.

\end{thm}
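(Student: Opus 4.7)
The plan is to prove existence by the method of continuity and uniqueness by the Coron--H\'elein-type decomposition adapted to the conic setting. Fix $\cc$ and $\qq \subset \pl$ with $\qq' = w_0^{-1}(\qq)$. Choose a smooth nonpositively curved metric $G_0$ on $\Sigma$ such that the classical existence theorem (in the unpunctured smooth case) supplies a unique energy-minimizing homeomorphism $u_0$ in $[w_0]$, and join $G_0$ to $G_1 := G$ by a smooth path $G_t \in \Mm^{phg}_{k,\g,\nu}(\pp, \Aa)$ of conic metrics whose cone-angle data and polyhomogeneous structure are fixed (existence of such a path is routine, using for example a suitable conformal factor interpolation). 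Let $S \subset [0,1]$ be the set of $t$ for which there is a map $u_t$ in Form \ref{uform} satisfying $\tau(u_t, g, G_t) = 0$, with $u_t \in [w_0; \qq']$ and $u_t^{-1}(p_i)$ a single point for each $i$. I will show $S$ is nonempty, open, and closed, hence $S = [0,1]$.

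Openness is the perturbation result, obtained by applying the Implicit Function Theorem to the tension field acting on triples $(u, g, G)$. Fixing $g$ and viewing $\tau$ as a section of a bundle $\EB$ over $\mc{P}(u_t) \times \mc{M}^*(G_t)$, the linearization $D_{u_t}\tau$ is, up to the weight $\absv{z}^{2\al_j}$ at each cone point, a second-order elliptic $b$-operator $\wt{L}$. By the indicial dichotomy of Lemma \ref{isosjf}, the relevant cokernel $\Kk$ at a point $p_j \in \pl$ is spanned by elements with leading behavior $\mu z$, which is exactly the infinitesimal model of the conformal dilation/rotation space $\Vv_0$; at $p_j \in \pg$ the cokernel carries in addition the $\absv{z}^{2(1-\al)}$ term generated by the translation space $\Tt_0$. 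Consequently, augmenting the domain by $\Vv_0$ at every $p_j$ and by $\Tt_0$ only at $p_j \in \pg$ makes $\tau$ transversal to the zero section, yielding a local smooth graph of solutions over the parameter $t$. Note that this is perfectly compatible with the rel.~$\qq'$ constraint, because we do \emph{not} add translations at $p_j \in \pl \supset \qq$, so the preimages $u_t^{-1}(p_j)$ for $p_j \in \qq$ remain fixed; at other cone points $u_t^{-1}(p_j)$ may drift, but this respects the rel.~$\qq'$ homotopy class.

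Closedness requires extracting a limit from a sequence $u_{t_n}$ with $t_n \to t_\infty \in \overline{S}$. Uniform energy bounds follow from comparison with a fixed competitor; Proposition \ref{loclip} and the DiGiorgi--Nash--Moser/Harnack machinery give uniform Lipschitz control of the energy density away from $\pp \cup \pp'$. The main obstacle lies near the preimages of the cone points: if the blowup rate of the $b$-H\"older norm failed, rescaling would extract a nontrivial energy-minimizing map of the standard round cone $C_\al$, and Proposition \ref{closed} together with the classification of such cone-to-cone minimizers rules this out. Combined with the elliptic regularity of $b$-differential operators, this produces a limit $u_{t_\infty}$ in Form \ref{uform}, with $u_{t_\infty}^{-1}(p_j)$ a single point for each $j$ (a consequence of Form \ref{uform}, which is preserved by the b-calculus limit) and with $u_{t_\infty}$ a diffeomorphism off $u_{t_\infty}^{-1}(\pp)$ by the maximum principle for the Jacobian.

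Uniqueness is proved by adapting the argument of \cite{ch} to the conic setting in \S \ref{uniquesect}: for any minimizer $u$ in the relative class, the pullback $u^*G$ decomposes as a sum of a metric conformal to $g$ and one of nonpositive curvature, splitting $E$ into two nonnegative functionals that $u$ simultaneously minimizes; a convexity argument along the geodesic homotopy between two such minimizers (using $\kappa_G \leq 0$) forces them to agree. This, together with the first-variation formula \eqref{grad} showing that minimality in the relative class is equivalent to vanishing of the residue of the Hopf differential at the relevant cone preimages, both establishes uniqueness in $[w_0; \qq']$ and confirms that the absolute minimizer (the $\qq = \varnothing$ case) is unique up to conformal automorphism of $(\Sigma, \cc)$. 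For the genus zero case, the Möbius group $\mathrm{Aut}(S^2, \cc)$ is 6-real-dimensional and acts simply transitively on ordered triples of distinct points; fixing $\absv{\qq}\geq 3$ therefore kills this noncompact isometry group, and the same continuity/uniqueness arguments go through unchanged. The hardest step is the interplay in the openness part between the indicial analysis of Lemma \ref{isosjf} and the choice of geometric perturbations, since even a single "wrong" geometric factor would either destroy transversality or violate the rel.~$\qq'$ constraint.
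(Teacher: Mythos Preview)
Your continuity argument has two genuine gaps.

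First, the path of target metrics is set up inconsistently. You ask for a smooth nonpositively curved $G_0$ (so that the classical smooth theory supplies the initial minimizer) and then a path $G_t \in \Mm^{phg}_{k,\g,\nu}(\pp,\Aa)$ with \emph{fixed} cone-angle data $\Aa$. But a smooth metric does not lie in $\Mm^{phg}_{k,\g,\nu}(\pp,\Aa)$ when the $\al_j < 1$, so $G_0$ cannot simultaneously be smooth and the left endpoint of such a path; on $S^2$ there is the further obstruction that no smooth metric has $\kappa \le 0$ at all. The paper avoids this by running the continuity in the other variable: it fixes the target $G$ and deforms the \emph{domain} conformal structure along a path $\cc_t$ from $\cc_0 = [G]$ to $\cc$. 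At $t=0$ the identity map is conformal, hence an absolute (and a fortiori rel.~$\qq$) minimizer, so the continuity set $\mc{H}(\qq)$ is nonempty for free.

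Second, your openness step produces zeros of $\tau$ but not rel.~$\qq$ minimizers when $\qq \subsetneq \pl$. By Lemma \ref{minlemma}, a harmonic diffeomorphism in Form \ref{uform} is a rel.~$\qq$ minimizer if and only if the residues of $\Phi(u)$ vanish on $\pl' - \qq'$; this is the augmented equation \eqref{hme2}, and merely having $\tau = 0$ with $u \in [w_0;\qq']$ does not force it. Your IFT application (augmenting by $\Vv$ everywhere and $\Tt$ only at $\pg$) fixes $u^{-1}(p)$ for every $p \in \pl$, so what you actually obtain are rel.~$\pl$ minimizers, which need not minimize in the larger rel.~$\qq$ class. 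The paper's openness therefore has a second layer you are missing: after the first IFT produces a $(2\absv{\pl - \qq})$-parameter family $\mc{H}arm_{\qq}$ of harmonic maps (parametrized by $\Tt_{\pl - \qq}$), Proposition \ref{h2prop} shows that the differential of $u \mapsto \Res_{\pl - \qq}\Phi(u)$ is non-degenerate on this family, and a second IFT then isolates the solutions of \eqref{hme2}. Without this, your argument proves only the case $\qq = \pl$ and in particular gives neither the absolute minimizer nor the genus-zero statement.
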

\nomenclature[p]{$\qq$}{the cone points in whose relative homotopy
  class we minimize}






A simple argument using the isometry invariance of the energy
functional allows us to reduce Theorem \ref{mainTheorem} to the case
in which $\qq' = \qq$ and $w_0 \sim_{\qq} id$.  In fact, if we prove
Theorem \ref{mainTheorem} in this case, then given an arbitrary $w_0
\in \Diff(\Sigma)$ and a metric $G$ with the properties in Theorem
\ref{mainTheorem}, let $ \wt{u} : (\Si, (w_0^{-1})^*\cc) \lra (\Si, G
) $ be the unique minimizer in $[id; \qq]$.  Then since composition
with an isometry leaves the energy unchanged, $ u :=  \wt{u} \circ
w_0^{-1} :  (\Si, \cc) \lra (\Si, G ) $ is the unique minimizer in
$[w_0; \qq']$.  The same argument with $\qq = \varnothing$ shows that
we can reduce to the case $w_0 \sim_{\qq} id$.  Thus we will always assume that $\qq' = \qq$.

Our approach to proving the theorem is to find homeomorphisms whose
tension fields vanish away
from $u^{-1}(\pp)$.  
Among diffeomorphisms in Form \ref{uform} with
vanishing tension field, minimizing energy in the sense of Theorem \ref{mainTheorem} turns out
to be equivalent to a condition on the Hopf differential of $u$, which
we define now.  
Given $u : \Sigma_{\pp'} \lra \Sp$ with $\tau(u, g, G) = 0$ on $u^{-1}(\pp)$, let
${u^*G}^\circ$ denote the $g$-trace-free part of $u^*G$.  Among
$C^2$ maps, away from the cone points
 \begin{equation} \label{divfree}
		\tau(u, g, G) = 0 \implies \delta_g \lp {u^*G}^\circ \rp= 0,
 \end{equation}
where $\delta_g$ is the divergence operator for the metric $g$ acting
on symmetric $(0, 2)-$ tensors.  Trace-free, divergence-free tensors
are equal to the real parts of holomorphic quadratic differentials
w.r.t$.$ the conformal class $[g]$, so in conformal coordinates, we can write

\nomenclature[p]{$\Phi(u)$}{the Hopf differential}

 \begin{equation} \label{idecomp}
		u^*G = \lambda \absv{ dz }^2  + 2 \Re \lp \phi(z) dz^2 \rp
 \end{equation}
where $\phi$ \textbf{is holomorphic}.  Parting slightly with standard notation, e.g. from \cite{w}, we use
the symbol $\Phi$ to refer to the tensor which in conformal
coordinates is expressed $\phi(z) dz^2$; this is called the
\textbf{Hopf differential} of $u$.  In section \ref{polessect} we will
show that, if $\pl' = u^{-1}(\pl),$ then
	\begin{align*}
		\Phi(u) \mbox{ is holomorphic on $\Sigma - \pl'$ with at most simple poles on $\pl'$.}
	\end{align*}
Our proof of Theorem \ref{mainTheorem} then relies on the following lemma, proven in section \ref{uniquesect}.
	\begin{lemma} \label{minlemma}
		Say genus $\Sigma > 0$.  Given a harmonic diffeomorphism $u: (\Sigma_{\pp'}, g) \lra
                (\Sp, G)$, for any $\qq \subset \pl$, if $\qq' =
                u^{-1}(\qq)$, then $u$ is energy minimizing in its rel$.$ $\qq'$
                homotopy class if and only if the Hopf differential
                $\Phi(u)$ extends smoothly to all of $\Sigma_{\qq'}$.

                If $\Sigma = S^{2}$, the same is true so long as $\absv{\qq} \geq 3$.
	\end{lemma}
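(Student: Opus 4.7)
My plan is to prove the two implications separately. \emph{(Forward, $\Rightarrow$.)} Suppose $u$ minimizes energy in $[u;\qq']$. For each $p' \in \pp' - \qq'$ I construct a variation $u_{t} \in [u;\qq']$ whose variation field $J = \frac{d}{dt}\big|_{t=0}u_{t}$ is compactly supported near $p'$ with $J(p') \in \C$ freely prescribed (this is allowed since $p' \notin \qq'$). Applying Lemma \ref{frstvarbdry} on $\Sigma - B_{\e}(p')$ and using that $\tau(u,g,G) \equiv 0$ off $\pp'$, only a boundary integral over $\p B_{\e}(p')$ survives. Sending $\e \to 0$ and inserting the leading-order form of $u$ from Form \ref{uform}, the boundary integral collects into a residue of exactly the type (\ref{grad}),
\begin{equation*}
\frac{d}{dt}\bigg|_{t=0} E(u_{t},g,G) = \Re\, 2\pi i\, \Res\rvert_{p'} \iota_{J} \Phi(u).
\end{equation*}
Minimality forces this to vanish for every admissible $J$; varying $J(p') \in \C$ kills the simple-pole coefficient of $\Phi(u)$ at $p'$. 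Hence $\Phi(u)$ extends holomorphically across each point of $\pp' - \qq'$, so smoothly to $\Sigma_{\qq'}$.

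\emph{(Backward, $\Leftarrow$.)} Suppose $\Phi(u)$ extends smoothly to $\Sigma_{\qq'}$. By Remark \ref{thm:idremark} I replace $G$ by $u^{*}G$ and reduce to the case $u = id$; this preserves nonpositive curvature and the Hopf differential. The claim becomes: $id$ minimizes $E(\,\cdot\,,g,u^{*}G)$ among $w \in [id;\qq']$. Adapting the strategy of \cite{ch}, I write $u^{*}G = e^{2\psi}\absv{dz}^{2} + 2\Re(\Phi\, dz^{2})$ in local conformal coordinates and derive a decomposition
\begin{equation*}
E(w,g,u^{*}G) = E_{1}(w,g,g_{1}) + E_{2}(w,g,g_{2}) + R(w),
\end{equation*}
where $g_{1}$ is a conformal multiple of $g$, $g_{2}$ has strictly negative curvature on $\Sigma - \pp'$, and $R(w)$ collects the residues produced by integrating by parts the cross terms of $w^{*}(u^{*}G)$. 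The residues at points of $\pp' - \qq'$ vanish by hypothesis, and those at $\qq'$ vanish because $w$ fixes $\qq'$ pointwise; so $R(w) = R(id)$ is a constant on $[id;\qq']$. The functional $E_{1}$ is bounded below by a Jacobian/degree integral achieved by $id$, while $E_{2}$ is minimized \emph{uniquely} by $id$ via the Al'ber--Hartman uniqueness theorem for harmonic maps into negative curvature, applied on $\Sigma - \pp'$ and extended across the cone points using the polyhomogeneous structure of $u^{*}G \in \Mm^{phg}_{k,\g,\nu}(\pp',\Aa)$ (cf.\ \eqref{eq:phgmetrics}) and the asymptotics of $w$ near $\pp'$.

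For $\Sigma = S^{2}$ the same argument applies provided $\absv{\qq} \geq 3$: three pointwise constraints determine a M\"obius transformation, so this hypothesis kills the conformal automorphism group $\mathrm{PSL}(2,\C)$ of the target, without which competing automorphisms would supply other minimizers of the $E_{2}$ step and uniqueness would fail. The main difficulty I anticipate is in the backward direction, namely making the Coron--H\'elein decomposition rigorous in the conic setting: verifying that the integrations by parts producing $R(w)$ collect residue contributions only at the cone points $\pp'$, not distributional contributions from the conic geometry itself, and that both $E_{1}$ and $E_{2}$ are finite on admissible competitors $w$. Both points hinge on the polyhomogeneous structure of $G$ and on the precise leading-order asymptotics of $\Phi(u)$ and of $w$ near each cone point, supplied by Form \ref{uform}.
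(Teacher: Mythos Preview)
Your forward direction ($\Rightarrow$) is essentially the paper's argument: this is Proposition \ref{scndvarformula} together with Corollary \ref{hessharm}, where one computes the first variation along a family moving the preimage of a cone point in $\pl - \qq$ and reads off the residue of $\Phi(u)$.

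The backward direction ($\Leftarrow$), however, has a genuine gap. The Coron--H\'elein decomposition is a \emph{pointwise} splitting of the target metric, $u^{*}G = H_{1} + H_{2}$, which gives $E(w,g,u^{*}G) = E(w,g,H_{1}) + E(w,g,H_{2})$ with no integration by parts and hence no residue term $R(w)$; your proposed $R(w)$ does not arise. The real obstacle is that when $\Phi(u)$ has simple poles on $\qq'$, the metric $H_{2}$ built from $\absv{\phi}$ is genuinely singular there, and the negative-curvature computation and the Al'ber--Hartman uniqueness step are no longer available in that form. The paper circumvents this entirely by a branched-cover trick (Lemma \ref{covering} and the end of the proof of Proposition \ref{umin}): one takes a finite branched cover $\pi:S\to\Sigma$ ramified exactly over $\qq$, lifts $u$ to $\wt{u}:S\to S$, and observes that a simple pole of $\Phi(u)$ at $q\in\qq$ pulls back under $z=\wt{z}^{k}$ to a holomorphic quadratic differential on $S$. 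Since $\text{genus}(S)\geq 2$, the $\qq=\varnothing$ case (where the straight Coron--H\'elein decomposition does work) applies to $\wt{u}$, and the energy inequality descends because energies scale by the number of sheets.

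This also corrects your reading of the $S^{2}$ hypothesis: $\absv{\qq}\geq 3$ is used not to kill $\mathrm{PSL}(2,\C)$ in an Al'ber--Hartman step, but to guarantee the existence of a branched cover of $S^{2}$ by a torus ramified over three points of $\qq$, after which one bootstraps to genus $\geq 2$ as above.
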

        \begin{remark}
          If $\qq = \varnothing$, this lemma means that $u$ is
          minimizing in its free homotopy class if and only if
          $\Phi(u)$ is smooth on all of $\Sigma$.  We say that such maps
          are \textbf{absolute} minimizers.
        \end{remark}
Thus a diffeomorphism with vanishing tension field on $\Sigma_{\pp'}$
is minimizing in the sense of Theorem \ref{mainTheorem} if and only if
the residues of its Hopf differential vanish on $\pl' - \qq$, i.e.\ when it solves
the augmented equation
	\begin{customequation}{HME$(\qq)$} \label{hme2}
		\begin{array}{cccc}
			u: \Si &\lra& \Si &\mbox{ a homeomorphism} \\
			 u &\sim_{\qq}& id\\
			\tau(u, g, G) &=& 0 & \mbox{ on } \Sigma_{\pp'} \\
			\Res \rvert_{p} \Phi(u) &=& 0 & \mbox{ for
                          each } p \in \pl' - \qq.
		\end{array}
	\end{customequation}

	\begin{remark} \label{notinv}
The residues of a holomorphic quadratic differential are only defined up to multiplication by an element of $\C^*$.  Whether they are zero or not \textit{is} well-defined, but since we will work with the residues directly, we will have to fix first conformal coordinates near each cone point.
	\end{remark}

\section{The harmonic map operator} \label{setup}

In this section we discuss the global analysis of the map $\tau$.  We begin by discussing the invariance under conformal change of the domain metric.

\subsection{Conformal invariance}   \label{confinvsect}

Let $g, G \in \Mm_{2, \g, \nu}(\pp, \Aa)$, and suppose that $u$ is a
$C^2$ map of $\Sp$.  Suppose we have conformal expressions $G = \rho
\absv{du}^2$ at some $x \in \Sp$ and $ g = \sigma \absv{dz}^2$ and near $u^{-1}(q)$. 
The energy density (the integrand in (\ref{energy})) in conformal coordinates is
 \begin{equation} \label{confenergy}
		e(u, g, G)(z) := \frac{1}{2} \norm[{g \otimes u^*G}]{du(z)}^2 = \frac{\rho(u(z))}{\sigma(z)} \lp \absv{ \p_z u}^2 + \absv{ \p_{\overline{z}} u}^2\rp
 \end{equation}
From this expression it is easy to verify that if
	\begin{equation} \label{uC}
		\xymatrix@!C=3pc@R=0pc{
		(\Sigma_1, g) \ar[r]^C & (\Sigma_1, g) \ar[r]^u & (\Sigma_2 , G)\\
		z \ar@{|->}[r] & w = C(z)  \ar@{|->}[r] & u(w)
		}
	\end{equation}
for arbitrary surfaces $\Sigma_i$, $i = 1, 2$ and $C$ is conformal ($C^*g = e^{2\mu}g$), then
 \begin{equation} \label{energyinv}
		e(u \circ C, g, G)(z) = \absv{\p_z C(z)}^2 e(u,  C^*g, G)(z),
 \end{equation}
and so
	\begin{align} \label{energyinv2}
		E(u \circ C , g, G) = E(u, C^* g, G) = E(u, g, G).
	\end{align}
A simple computation preformed e.g. in \cite{sy} shows that 
	\begin{align} \label{hmec}
		\fbox{$
		\displaystyle{
		\tau(u, g, G) = \frac{4}{ \sigma} \lp u_{z \overline{z}} + \frac{\p \log \rho}{\p u} u_z u_{\overline{z}}  \rp
		}
		$}
	\end{align}
The tension field enjoys a point-wise conformal invariance; in the
situation of (\ref{uC}), if one knows only at some particular
point $z_{0}$ that $C_{\ov{z}}(z_0) = 0 $, one can use this formula to chech that
 \begin{equation} \label{tauconfinv}
		\tau( u \circ C, g, G) = \tau(u, C^*g, G)\circ C
 \end{equation}

\subsection{Global analysis of $\tau$} \label{defHsect}

Given a diffeomorphism $\wt{u}_0: (\Sp, g_{0}) \lra (\Sigma_{\pp'},
\wt{\Gfixed})$ solving, pull back $\wt{\Gfixed}$ as in Remark
\ref{thm:idremark} to obtain that $id = u_0: (\Sp, g) \lra (\Sp,
\Gfixed)$ is a diffeomorphism solving
(\ref{hme2}) in Form \ref{uform}.
To avoid tedious repetition below, we give a name to our main
assumption on the metrics and maps.

\begin{assumption} \label{assump}

\begin{enumerate}
\item  $\Gfixed \in \mc{M}_{2, \g, \nu}(\pp, \Aa)$ with $\al_j < 1$ for
  each $ 2 \pi \al_j \in \Aa$, and $\nu_{j} > 2\al_{j} $ (see section \ref{scalarcurvature})  
\item $g$ is also in $\Mm_{2, \g,
    \nu}(\pp, \Aa)$.
\end{enumerate}

\end{assumption}

By (\ref{def2}), if we let $\tau$ act on a space of varying maps and
metrics, then the range of this operator depends on the domain, and
therefore $\tau$ is most naturally viewed as a section of the bundle.  We
will now define precisely the domain of $\tau$ and the vector bundle
$\EB$ in which it takes values.

Fix coordinates $z_j$ near each $p_j \in \pp$, conformal with respect
to $g$, and let $z_j = r_j e^{i \theta_j}$.  Continuing to abuse
notation, let $u_j$ be conformal coordinates for $G$ near $p_j$.  (We
will often omit $j$ from the notation when it is understood that we
work with a fixed cone point.)  First, given any $u$ in Form
\ref{uform} we define $r^c \Xx_b^{k,
  \g}(u)$ for any $c \in \R^k$ by
		\begin{align} \label{vflds}
		\psi \in r^{ c } \Xx_b^{k, \g}(u) &\iff \left\{ 
		\begin{array}{c}
\psi \in \Gamma(u^{*}T\Sp) \\
 \psi \in C^{k, \g}_{loc} \mbox{ away from } u^{-1}(\pp)  \\
\psi \in r^{c_j }C^{k, \g}_b(D(\sigma)) \mbox{ near } p_{j} \in \pp
	\end{array} \right. ,
	\end{align}
for some $\sigma > 0$.  

\begin{remark}
We will often write $r^{1 + \e} \Xx^{k, \g}_b(u)$ for  a positive
number $\e$, by which we mean $r^{c} \Xx^{k, \g}_b(u)$ where $c_j = 1
+ \e$ for all $j$.  Given $\delta \in \R$, by $c > \delta$ we mean
that $ c_j > \delta$ for all $j$.
\end{remark}
\nomenclature[X]{$r^{1 + \e} \Xx^{k, \g}_b(u)$}{$r^{1 + \epsilon}C^{2,
    \gamma}_{b}$ vector fields over $u$}

\begin{itemize}

	\item Let $ \Bb^{1 + \e}(u_0)$ be the space of perturbations of $u_0$ defined by
 \begin{equation} \label{maps}
		\Bb^{1 + \e}(u_0) = \{ \exp_{u_0} \psi \ | \ \psi \in r^{ 1 + \e } \Xx_b^{2, \g}(u_0), \| \psi \|_{r^{ 1 + \e } \Xx_b^{2, \g}} < \delta \}.
 \end{equation}
	The $\delta > 0$ is picked small enough so that all the maps in this space are diffeomorphisms of $\Sp$ and will be left out of the notation.
	Note that
 \begin{equation} \label{tangentspace}
			\mbox{T}_{u_0}\Bb^{1 + \e}(u_0) \simeq  r^{ {1 + \e} } \Xx_b^{2, \g}(u_0)
 \end{equation}
	
\nomenclature[B]{$\Bb^{1 + \e}(u_0)$}{$r^{1 + \epsilon}C^{2,
    \gamma}_{b}$ perturbations of $u_{0}$}

	\item We also need a space of automorphisms of $\Sigma$, analogous to
          those in \ref{ilocconf}, that are locally
          conformal near $\pp$ with respect to $g$.  Let $\Vv$ be a
          $2\absv{\pp}$-dimensional space of diffeomorphisms of
          $\Sigma$ parametrized by an open set $0 \in U \in
          \C^{\absv{\pp}}$, so that $\lambda \in U$ corresponds to a
          map $M_{\lambda}$ with
          \begin{equation}
            \label{eq:cplxmult}
            M_{\lambda}(z_{j}) = \lambda_{j}z_{j} \mbox{ near } p_{j}.
          \end{equation}
          Similarly, let $\mc{T}$ be a $2\absv{\pp'}$-dimensional
          space of diffeomorphisms of
          $\Sigma$ \\
parametrized by an open set $0 \in V \in
          \C^{\absv{\pp'}}$, so that $w \in V$ corresponds to a map
          $T_{w}$ with
          \begin{equation}
            \label{eq:cplxtrans}
            		T_w(z_j) = z_j - w_j \mbox{ near } p_{j}.
          \end{equation}
To be precise, we take these locally defined maps and extend them to
diffeomorphisms of $\Sigma$ in such a way that they are the identity
outside a compact set, and in particular such that $T_{0} = id$.  For any subset $\wt{\pp} \subset \pp$, define $\Tt_{\wt{\pp}} \subset$ by the
condition that
that $w_{i} = 0$ for $p_{i} \not \in \wt{\pp}$.  This can be done in
such a way that
\begin{equation}
\label{thets}
  \mc{T} = \mc{T}_{>\pi} \circ \Tt_{< \pi} \circ \Tt_{= \pi}.
\end{equation}
Finally, set
\begin{equation}
  \label{eq:totalconf}
  \mc{C} := \Vv \circ \mc{T}
\end{equation}

\nomenclature[T]{$\mc{T}$}{local conformal translations}
\nomenclature[T]{$\mc{T}_{\wt{\pp}}$}{local conformal translations
  near $\wt{\pp}$}

	\item  Finally, given $h_0 \in \Mm_{k, \g, \nu}(\pp, \Aa)$,
          define a subspace $$\Mm^*_{k, \g, \nu}(h_0, \pp, \Aa) \subset
          \Mm_{k, \g, \nu}(\pp, \Aa)$$ as follows.  Let $D_{j} =
          \set{z_{j} \leq 1}$ where again $z_{j}$ are conformal
          coordinates for $h_{0}$ near $p_{j}$.
	\begin{align} \label{metstar}
		\Mm^*_{k, \g, \nu}(h_0, \pp, \Aa) &= \set{
		h \in \Mm_{k, \g, \nu}(\pp, \Aa) \left|
                 \begin{array}{c}
                  id \rvert_{D_{j}} : (D_{j},
                h_{0}) \lra (\Sigma, h)  \\
		\mbox{is conformal for all $j$ }
                \end{array}
		\right.
              }
	\end{align}
In words, this means the conformal coordinates for $h_{0}$ near $\pp$ \textbf{are}
conformal coordinates for $h_{0}$. This definition may seem arbitrary, but as we will see in section
\ref{seccont}, it is motivated by the requirement that $\tau$ be a
continuous map.  

\nomenclature[M2]{$\Mm^*_{k, \g, \nu}(h_0, \pp, \Aa) $}{metrics
  locally uniform to $h_{0}$}

\end{itemize}

To clarify the relationship between $\Mm^*_{k,
  \g, \nu}(h_0, \pp, \Aa)$ and $\Mm_{k, \g, \nu}(\pp, \Aa)$, we can
construct, locally near $h_{0}$, a smooth injection from an open ball
$\mc{U}$ in the latter space into $\Diff_{0}(\Sigma;
\pp)$ times the former by uniformizing locally around $\pp$.  To be
precise, given any $h \in \Mm_{k,
    \g, \nu}(\pp, \Aa)$, let $v_{h}: D_{j} \lra (D_{j}, h)$ be the
  solution to the Riemann mapping problem normalized by the condition
  that $v_{h}(0) = 0$ and $v_{h}(1) = 1$.  Let $\chi : D_{j} \lra \R$ be a smooth cutoff function with $\chi
  \equiv 1$ near $0$ and $\chi \equiv 0$ near $\p D_{j}$.  Consider the map
  \begin{align*}
    \wt{v}_{h}(z) &= \left\{
      \begin{array}{ccc}
        \chi(z_{j}) v_h(z_{j}) + \lp 1 - \chi(z_{j}) \rp z_{j} &\mbox{
          on } D_{j} \\
        id & \mbox{elsewhere}
      \end{array}
      \right.
  \end{align*}
Then $\wt{v_h}$ is well-defined, and is a diffeomorphism if, for
$\norm[C^{\infty}(D)]{v_h - id} < \epsilon.$ We have
\begin{lemma}\label{thm:localuni}
  The map --- defined locally near $h_{0}$ --- given by
  \begin{align*}
    \Mm_{k, \g, \nu}(\pp, \Aa) &\lra \Diff_{0}(\Sigma; \pp) \times \Mm^*_{k, \g, \nu}(h_0, \pp, \Aa)\\
    h &\longmapsto (\wt{v}_{h}, \wt{v}_{h}^{*}h)
  \end{align*}
is an isomorphism onto its image on a ball near $h_{0}$.
\end{lemma}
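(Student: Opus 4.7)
The plan is to exhibit $F : h \mapsto (\wt{v}_h, \wt{v}_h^* h)$ as a smooth embedding onto its image by producing an explicit smooth inverse. The argument splits into three steps.

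\emph{Step 1: $F$ is well-defined near $h_0$.} For $h$ sufficiently close to $h_0$ in $\Mm_{k, \g, \nu}(\pp, \Aa)$, $v_h$ is close to the identity on each $D_j$, so $\wt{v}_h$ is a genuine diffeomorphism of $\Sigma$ fixing $\pp$ and homotopic to $id$ relative to $\pp$. Since $\wt{v}_h = v_h$ near $p_j$, and $v_h$ is by construction a conformal equivalence from the standard $D_j$ to $(D_j, h)$, the pullback $\wt{v}_h^* h$ has the $z_j$ as conformal coordinates near $p_j$. Because the $z_j$ are also conformal for $h_0$, this places $\wt{v}_h^* h$ in $\Mm^*_{k, \g, \nu}(h_0, \pp, \Aa)$; the cone angle is preserved since $v_h'(0) \neq 0$.

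\emph{Step 2: Smoothness of $F$.} Near $p_j$, the Beltrami coefficient $\mu_h$ of $h$ in the $z_j$-coordinates is a ratio of components of $h$ in which the common conical factor $\absv{z_j}^{2(\al_j - 1)}$ cancels from numerator and denominator. Hence $\mu_h$ is a genuine (unweighted) $C^{k, \g}$ function on $D_j$, and $h \mapsto \mu_h$ is smooth from $\Mm_{k, \g, \nu}(\pp, \Aa)$ into $C^{k, \g}(D_j)$. The classical smooth dependence of the normalized Riemann map on its Beltrami coefficient (Ahlfors--Bers) then gives smoothness of $h \mapsto v_h$, and hence of $h \mapsto \wt{v}_h$ (via the fixed cutoff $\chi$) and of $h \mapsto \wt{v}_h^* h$.

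\emph{Step 3: Smooth left inverse.} The pushforward map $G : (\phi, g) \longmapsto (\phi^{-1})^* g$ is smooth on a neighborhood of $(id, h_0)$ in $\Diff_0(\Sigma; \pp) \times \Mm^*_{k, \g, \nu}(h_0, \pp, \Aa)$ with values in $\Mm_{k, \g, \nu}(\pp, \Aa)$. Since $(\wt{v}_h^{-1})^*(\wt{v}_h^* h) = h$, one has $G \circ F = id$ on a ball around $h_0$; in particular $F$ is injective, and $F^{-1}$ on the image coincides with the restriction of $G$ there, hence is smooth. This is exactly the local isomorphism statement.

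The principal obstacle is Step 2: one must verify that the weighted H\"older regularity of $h$ near the cone points, encoded by membership in $\Mm_{k, \g, \nu}(\pp, \Aa)$, descends after cancellation of the conical factor to ordinary $C^{k, \g}$ regularity of $\mu_h$, with smooth dependence on $h$. Once this reduction is in place, the desired smoothness follows from the classical parametric theory of the Beltrami equation, and Steps 1 and 3 are essentially formal.
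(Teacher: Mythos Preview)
Your proposal is correct and follows essentially the same approach as the paper: both rest on smooth dependence of the (normalized) Riemann map on the metric, which the paper states in one sentence and you unpack via the Beltrami coefficient and Ahlfors--Bers. Your explicit construction of the smooth left inverse $G:(\phi,g)\mapsto(\phi^{-1})^*g$ is a nice addition that the paper omits, and your identification of the cancellation of the conical factor in $\mu_h$ as the key regularity check is exactly the point the paper is taking for granted.
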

\begin{proof}
  This follows immediately from the fact that the solution to the
  Riemann mapping problem has $C^{\infty}$ norm controlled by the
  distance from $h$ to $h_{0}$
\end{proof}
Note that, by construction $id: (\Sigma, h_{0}) \lra (\Sigma,
\wt{v}_{h}^{*}h)$ is conformal near $\pp$.

With these definitions, we consider
 \begin{equation} \label{mapquest}
   \begin{split}
		\tau : \lp \Bb_{2, \g}^{1 + \e}(u_0) \circ \mc{C} \rp
                \times \Mm^*_{2, \g, \nu}(g_{0}, u^{-1}(\pp), \Aa)
                \times \Mm^*_{2, \g, \nu}(\Gfixed, \pp, \Aa)
                & \lra \EB \\
                (u, g, G) &\longmapsto \tau(u,g,G),
              \end{split}
 \end{equation} 
where $\pi: \EB \lra \lp \Bb_{2, \g}^{1 + \e}(u_0) \circ \mc{C} \rp
                \times \Mm^*_{2, \g, \nu}(g_{0}, u^{-1}(\pp), \Aa)
                \times \Mm^*_{2, \g, \nu}(\Gfixed, \pp, \Aa)$ is the bundle
whose fibers satisfy
\begin{equation*}
  \pi^{-1}(u \circ C, g, G) = r^{1 + \e - 2\Aa} \Xx^{0, \g}_b(u)
\end{equation*}
In section \ref{seccont}, we will prove
	\begin{proposition}	\label{Hcontinuity}
		Let $(u_0, g_{0}, \Gfixed)$ solve (\ref{hme2}) and satisfy  Assumption \ref{assump}.  Then the map
                (\ref{mapquest}) is $C^1$.
	\end{proposition}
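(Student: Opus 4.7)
My plan is to split $\tau$ into contributions near and away from $\pp \cup u_0^{-1}(\pp)$, handle the $\mc{C}$-direction using the pointwise conformal invariance (\ref{tauconfinv}), and then verify the mapping and regularity properties near each cone point by direct weight counting in the formula (\ref{hmec}). On compact subsets of $\Sp - u_0^{-1}(\pp)$ the tension field is a classical quasilinear second-order elliptic expression in local $C^{2,\gamma}$-norms, and the $\Mm^{*}$-condition guarantees that $g$ and $G$ vary within fixed coordinate charts, so $C^{1}$ (in fact $C^{\infty}$) dependence is immediate from the standard calculus on H\"older spaces. The substantive content of the proposition is therefore the local analysis near each $p_{j}$.

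Near a fixed cone point $p_{j}$, I work in conformal coordinates $z$ for $g$ about $u_{0}^{-1}(p_{j})$ and in conformal coordinates $u$ for $G$ about $p_{j}$; by the definition of $\Mm^{*}_{2,\gamma,\nu}$ these coordinates remain conformal for every metric in the nearby family. Since elements of $\mc{C}$ are conformal in a neighborhood of $\pp$, (\ref{tauconfinv}) gives
\begin{equation*}
\tau(u \circ C, g, G) = \tau(u, C^{*}g, G) \circ C
\end{equation*}
in those neighborhoods, and the operations $C \mapsto C^{*}g$ and precomposition with $C$ are smooth between the relevant Banach spaces. This reduces the claim to showing $C^{1}$ dependence of $\tau$ on the slice where $u$ is directly in Form \ref{uform}, i.e.\ of the form $u(z) = \lambda z + v(z)$ with $v \in r^{1+\e}C^{2,\gamma}_{b}$.

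On that slice, writing $g = e^{2\mu_{g}}\absv{z}^{2(\al-1)}\absv{dz}^{2}$ and $G = e^{2\mu_{G}}\absv{u}^{2(\al-1)}\absv{du}^{2}$ with $\mu_{g}, \mu_{G} \in r^{\nu}C^{2,\gamma}_{b}$, formula (\ref{hmec}) becomes
\begin{equation*}
\tau(u, g, G) = 4 e^{-2\mu_{g}} \absv{z}^{2(1-\al)} \lp v_{z\ov{z}} + \tfrac{\al - 1}{u}\, u_{z} u_{\ov{z}} + 2 (\p_{u}\mu_{G})(u)\, u_{z} u_{\ov{z}} \rp.
\end{equation*}
Since $\absv{v/(\lambda z)} = \mc{O}(r^{\e})$, the factor $1/u = (\lambda z)^{-1}(1 + v/(\lambda z))^{-1}$ lies in $r^{-1}C^{2,\gamma}_{b}$ and depends smoothly on $v$. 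Using that $\p_{z}$ and $\p_{\ov{z}}$ shift the $b$-weight by $-1$, that $r^{a}C^{k,\gamma}_{b}\cdot r^{b}C^{k,\gamma}_{b} \subset r^{a+b}C^{k,\gamma}_{b}$, and the assumption $\nu_{j} > 2\al_{j}$, a short weight count confirms that each summand lies in $r^{1+\e-2\al}C^{0,\gamma}_{b}$, so the mapping property into $\EB$ holds.

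The $C^{1}$-smoothness then follows because the right-hand side is a finite sum of products of (i) smooth functions of $\mu_{g}$ and $\mu_{G}$ in their weighted spaces, (ii) first and second $z,\ov{z}$-derivatives of $v$, and (iii) compositions of smooth functions of a variable bounded away from zero (for $\delta$ small in (\ref{maps}), $u$ stays bounded below by $c\absv{z}$). Each is $C^{\infty}$ between the relevant weighted H\"older spaces via the algebra property and the standard calculus of composition. The main obstacle is purely bookkeeping: verifying that the partial Frech\'et derivatives in the $u$-, $g$-, and $G$-directions each land in $r^{1+\e-2\al}\Xx^{0,\gamma}_{b}$ with continuous dependence on the base triple. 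This is done term by term and reduces in every instance to the same weight-counting argument sketched above.
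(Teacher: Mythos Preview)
Your approach is essentially the paper's: reduce to local conformal coordinates near each $p_{j}$, absorb the $\mc{C}$-direction via the pointwise conformal invariance \eqref{tauconfinv}, and then verify the mapping property and $C^{1}$-dependence by weight counting in \eqref{hmec}. The one thing the paper makes explicit that you leave implicit is the bundle trivialization: since $\tau(u,g,G)$ lives in a fiber of $\EB$ that varies with $(u\circ C)$, the paper first defines a global trivializing map $\Xi:\EB\to\Xx^{1+\e-2\Aa}_{0,\gamma}(u_{0})$ by precomposing with $C^{-1}$ and then parallel translating along the geodesic $t\mapsto\exp_{u_{0}}(t\wt{\psi})$, and checks (via an elementary ODE estimate) that in local coordinates the parallel-transport matrix satisfies ${(\Xi_{u})_{i}}^{j}-\delta_{i}^{j}\in r^{1+\e}C^{0,\gamma}_{b}$, so that the trivialized section inherits exactly the weight you compute. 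Your coordinate computation is correct, but to turn it into a proof of $C^{1}$-regularity of a \emph{section of a bundle} you should say one sentence about how you are identifying nearby fibers; once that is done, your argument and the paper's coincide.
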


\nomenclature[EB]{$\EB$}{bundle in which $\tau$ takes values.}

\subsection{Proof of Theorem \ref{mainTheorem}} \label{sec:actualproof}

Let $\cc$ be any conformal structure on $\Sigma$ and $G \in
\Mm_{2, \g, \nu}(\pp, \Aa)$ any metric satisfying the hypotheses of
the theorems.  Given $\qq \subset \pp$ with $\qq \neq \varnothing$
(resp$.$ $\qq= \varnothing$), we would like to find a map $u : (\Sigma,
\cc) \lra (\Sigma, G)$ that minimizes energy in the rel$.$ $\qq$ homotopy class of
the identity (resp$.$ the free homotopy class of the identity.)  Let
$\cc_{0} := [G]$ be the conformal class of $G$, and
let $\cc_{t}$, $t \in [0, 1]$ be a smooth path of conformal structures from
$\cc_{0}$ to $\cc_{1} = \cc$.  (That the space of conformal structures is
connected follows immediately from the convexity of the space of
metrics.)  Finally, define
	\begin{align*}
		\mc{H}(\qq) = \set{ t \in [0, 1] \left|
                   \begin{array}{c}
                     \mbox{there is a map } u_t : (\Sigma,
                  \cc_{t}) \lra
                   (\Sigma, G) \\
                   \mbox{ so that } (u_{t},
                     \cc_{t}, G) \mbox{ satisfies (\ref{hme2}).}
                 \end{array}
                \right. }
	\end{align*}
In the remainder of this paper, we will prove
\begin{proposition}
  \label{thm:clopenprop}
If genus $\Sigma > 0$, $\mc{H}(\qq)$ is closed, open, and non-empty.
If $\Sigma = S^{2}$, then the same is true provided $\absv{\qq} \geq 3$.
\end{proposition}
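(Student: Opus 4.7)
The plan is to establish non-emptiness, openness, and closedness separately, following the method of continuity framework set up in the preceding sections.

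For \textbf{non-emptiness}, at $t = 0$ we have $\cc_{0} = [G]$, and we may take $g_{0} = G$ as representative. Then $id : (\Sigma, G) \lra (\Sigma, G)$ is an isometry, hence harmonic, and its Hopf differential vanishes identically. Since $id$ is clearly in Form \ref{uform} with $id^{-1}(\pp) = \pp$, all conditions of (\ref{hme2}) hold and $0 \in \mc{H}(\qq)$.

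For \textbf{openness}, fix $t_{0} \in \mc{H}(\qq)$ with solution $u_{t_{0}}$ and, via Remark \ref{thm:idremark}, reduce to $u_{t_{0}} = id$ by pulling back $G$. Choose a smooth family $g_{t} \in \cc_{t} \cap \Mm^{*}_{2, \g, \nu}(g_{t_{0}}, \pp, \Aa)$ and consider
\begin{equation*}
\mc{F}(u, s) := \bigl( \tau(u, g_{t_{0}+s}, G), \, ( \Res \rvert_{p} \Phi(u) )_{p \in \pl - \qq} \bigr),
\end{equation*}
defined on $(\Bb^{1+\e}(id) \circ \mc{C}) \times (-\delta, \delta)$. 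I would apply the Implicit Function Theorem by showing $D_{u}\mc{F}|_{(id, 0)}$ is surjective, so that $\mc{F}^{-1}(0)$ is a smooth graph over $s$. The linearization $L$ of $\tau$ is a weighted elliptic $b$-operator whose finite-dimensional cokernel $\mc{K}$ is characterized by Lemma \ref{isosjf}: near cone points of angle less than $\pi$, elements have leading term $\mu z$, while at cone angles bigger than $\pi$ they look like $w + \ov{a}\, \absv{z}^{2(1-\al)}/(1-\al) + O(\absv{z}^{2(1-\al)+\delta})$. The space $\mc{C} = \Vv \circ \mc{T}$ is engineered to match $\mc{K}$: dilations in $\Vv$ hit the $\mu z$ modes, translations in $\mc{T}_{>\pi}$ hit the constants $w$, so $D\tau$ augmented by $T\mc{C}$ surjects onto the fiber of $\EB$. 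For the residue conditions on $\pl - \qq$, I would invoke the Hessian formula (\ref{ihess}) at the absolute minimizer $\ov{u}$ to establish non-degeneracy of the residue map on $\mc{T}_{\pl - \qq}$, which extends to a neighborhood of $u_{t_{0}}$ by continuity. Together these give surjectivity of $D_{u}\mc{F}$ and the IFT yields solutions for $|s|$ small.

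For \textbf{closedness}, let $t_{n} \to t_{\infty}$ with solutions $u_{n}$. A uniform upper bound on $E(u_{n})$ follows from minimization by comparison with a fixed reference diffeomorphism in the $\qq$-relative homotopy class. By Proposition \ref{loclip}, the energy density of $u_{n}$ is uniformly bounded in sup norm, and standard elliptic regularity for the quasilinear tension field equation gives $C^{2,\g}_{loc}$ bounds on compact subsets of $\Sigma - \pp - \pp'$, providing subsequential convergence $u_{n} \to u_{\infty}$ locally away from the cone points. The main obstacle is uniform control of $u_{n}$ near cone points in the weighted space, so that $u_{\infty}$ remains in Form \ref{uform}. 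Following Proposition \ref{closed} I would argue by contradiction: if the $r^{1+\e}\Xx^{2,\g}_{b}$-norm blows up along a subsequence, one rescales about the offending cone point using the $b$-geometric conformal factor, and elliptic regularity for $b$-differential operators extracts a non-trivial limit which is a harmonic self-map of the standard cone $C_{\al}$. A classification of such maps (along the lines of the power-series representation in \S \ref{examplesection}) shows the rescaled limit cannot occur, yielding the required uniform bound. Finally, the residue conditions pass to the limit by continuity of $\Res \rvert_{p} \Phi$ under $C^{2}_{loc}$ convergence of $u_{n}$, so $u_{\infty}$ satisfies (\ref{hme2}) at $t_{\infty}$.

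For $\Sigma = S^{2}$, the non-compact automorphism group $PSL(2, \C)$ normally obstructs both compactness and uniqueness; requiring $\absv{\qq} \geq 3$ rigidifies the problem since a M\"obius transformation is determined by three points, restoring both the openness surjectivity and the closedness compactness arguments. The hardest step I expect is this closedness argument near cone points: the rescaling must be compatible with the $b$-calculus structure, and the classification of harmonic self-maps of $C_{\al}$ must genuinely rule out the rescaled limit --- without the preserved Form \ref{uform} structure, the residue at $\pl - \qq$ becomes ill-defined and the limit need not solve (\ref{hme2}).
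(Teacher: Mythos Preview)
Your overall strategy matches the paper's method of continuity, and your treatments of non-emptiness and closedness are essentially correct and follow the paper closely. The gap is in the openness argument, specifically in your claim that ``$D\tau$ augmented by $T\mc{C}$ surjects onto the fiber of $\EB$.''

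This is not what the paper proves, and it is not true in general. What Proposition \ref{mainlemma} actually shows is
\[
\Ll\bigl(r^{1+\e}\Xx^{2,\g}_b + T_{id}\Vv + T_{id}\Tt_{>\pi}\bigr) \oplus \bigl(\CK \cap r^{1+\e-2\Aa}\Xx^{0,\g}_b\bigr) = r^{1+\e-2\Aa}\Xx^{0,\g}_b,
\]
so the cokernel consists of those conformal Killing fields for $g$ that happen to lie in the range space. Your heuristic ``dilations hit the $\mu z$ modes, translations hit the constants $w$'' shows only that $L(r^{1+\e}\Xx + T\Vv)$ together with $\Tt_{>\pi}$ fills out $\mc{K}$; it does not address the remaining obstruction $\mc{K}\supset\{C_j\}$ of Jacobi fields with vanishing Hopf-residue, which Corollary \ref{positiveindex} identifies with conformal Killing fields. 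In genus $>1$ this obstruction is zero and your argument goes through, and on $S^2$ with $\absv{\qq}\geq 3$ you correctly note it vanishes as well. But in genus $1$ with $\pl = \varnothing$ the conformal Killing fields are nonzero and lie in the range, and you must argue (as the paper does via \eqref{intoetilde0}--\eqref{intoetilde}) that $\tau(u,g,G)$ landing in this space forces $\tau(u,g,G)=0$. In genus $1$ with $\pl\neq\varnothing$ the residue map is not an isomorphism onto $\C^{\absv{\pl-\qq}}$ but only onto the hyperplane $\sum\Res=0$ (see \eqref{sumzero}), and you need the decomposition $\mc{H}arm_\qq = \mc{H}arm_\qq'\circ\mc{C}onf_0$ of Proposition \ref{h2prop} to conclude.

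A smaller point: your sentence ``Choose a smooth family $g_t\in\cc_t\cap\Mm^*_{2,\g,\nu}(g_{t_0},\pp,\Aa)$'' hides a nontrivial step. Membership in $\Mm^*$ requires that the fixed conformal coordinates for $g_{t_0}$ near $\pp$ remain conformal for $g_t$, which is not automatic as $\cc_t$ varies. The paper handles this via Lemma \ref{thm:localuni}, pulling back by a family of local uniformizers $\wt v_t$ so that $\wt v_t^*\cc_t$ agree with $\cc_{t_0}$ near $\pp$; only then can $\tau$ be treated as a $C^1$ section of $\EB$ and the IFT applied.
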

\begin{remark}
  The set is non-empty; it contains $0$, since $\cc_{0} =
  [G]$ and thus the identity is conformal.  Degree one conformal maps are global energy minimizers in
  their homotopy classes, \cite{el}.
\end{remark}

We can now prove that Theorem \ref{mainTheorem} is true, at least up
to the proofs of the preceding proposition and Proposition
\ref{umin}, where uniqueness is shown.
\begin{proof}[Proof of Theorem \ref{mainTheorem}]
The content of Proposition \ref{umin} is that solutions to
\eqref{hme2} are minimizing in their rel$.$ $\qq$
(or free if $\qq = \varnothing$) homotopy classes, and that such minimizers are unique in
the appropriate sense.  Thus it suffices to solve the
equation, but Proposition \ref{thm:clopenprop} implies that a solution
always exists.
\end{proof}


\section{Openness via non-degeneracy}

Our proof that $\mc{H}(\qq)$ is open relies on two non-degeneracy
results.  We describe these now, and then use them to prove openness.

\subsection{Non-degeneracy of $\tau$}

Let $(u_0,
g, \Gfixed)$ solve (\ref{hme2}) with $u_0$ in Form \ref{uform}.  Let $u_t$
be a $C^{1}$ path in $\Bb_{2, \g}^{1 + \e}(u_0) \circ  \Vv \circ \Tt_{> \pi}$ and
write $\psi := \dot{u}_0$. Define
 \begin{equation} \label{L}
		\Ll \psi := \ddt \tau(u_t, g, \Gfixed).
 \end{equation}
By (\ref{tangentspace}) and the fact that $u_0 = id$, the domain of $L$ is
 \begin{equation*}
		 \mbox{T} \lp  \Bb^{1 + \e} \circ \Vv \circ \Tt_{>
                   \pi}  \rp = r^{1 + \e} \Xx^{2, \g}_b +
                 \mbox{T}_{id} \Vv + \mbox{T}_{id}\Tt_{>\pi}.
\end{equation*}
We will show that the linearization  of $\tau$ is transverse to those
conformal Killing fields of $g$ which lie in its natural range.
A conformal Killing field for $g$ is
a vector field $C$ satisfying $\mc{L}_C g = \mu g$ for some function
$\mu$, where $\mc{L}$ denotes the Lie derivative.  This is the
derivative of the conformal map equation $F_t^* g = e^{\mu_t} g$ for
some family $F_t$ with $F_0 = id$.  It is well known that for surfaces
the conformal killing fields are exactly the tangent space to the
identity component of the conformal group,
 \begin{equation} \label{confgrp}
		\mc{C}onf_0 = \set{ C : (\Sigma, g) \lra (\Sigma, g) : C^*g = e^{2\mu} g }
 \end{equation}
This space contains only the identity map if genus $\Sigma > 1$ and is
two or three dimensional if the genus is $1$ or $0$,
respectively.  We have
	\begin{proposition} \label{mainlemma}
	Notation as above,
	\begin{equation} \label{icok}
		\Ll \lp\mbox{T} \lp r^{1 + \e} \Xx^{2, \g}_b
                +\mbox{T}_{id} \Vv +\mbox{T}_{id}\Tt_{>\pi} \rp \rp
                \oplus \lp\CKK\rp= r^{1 + \e - 2\Aa}\Xx^{0, \g}_b.
	\end{equation}
	\end{proposition}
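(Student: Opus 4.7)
The proof splits naturally into two parts: showing that the sum in \eqref{icok} is direct, and showing that it exhausts $r^{1+\e-2\Aa}\Xx^{0,\g}_{b}$. The plan is to combine the b-calculus Fredholm theory alluded to in \S\ref{bcalcsect} with the pointwise conformal invariance \eqref{tauconfinv} of the tension field in the domain metric.

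For the direct-sum claim, the key observation is that $\CKK$ is annihilated by $\Ll$. If $C \in \CKK$ with flow $C_{t}$, then \eqref{tauconfinv} gives $\tau(id \circ C_{t}, g, \Gfixed) = \tau(id, C_{t}^{*}g, \Gfixed)\circ C_{t}$; since $C_{t}^{*}g = e^{2\mu_{t}}g$ differs from $g$ only by a conformal factor (which pointwise merely rescales $\tau$), we obtain $\tau(C_{t}, g, \Gfixed) \equiv 0$, and differentiating at $t = 0$ gives $\Ll(C) = 0$. Because $\Ll$ is the Hessian of the Dirichlet energy at the critical point $u_{0} = id$, it is formally self-adjoint for the natural $L^{2}$ pairing on $u_{0}^{*}\mbox{T}\Sigma$; hence $\langle \Ll\psi, C\rangle = \langle \psi, \Ll C\rangle = 0$, once one checks that the boundary contributions at $r = 0$ vanish by combining the decay of $\psi$ with the polyhomogeneous growth of $C$ near cone points. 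Thus any $C \in \CKK \cap \Ll(\mbox{domain})$ must satisfy $\|C\|^{2} = 0$, so $C = 0$.

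For surjectivity, write $\Ll = \absv{z}^{2\Aa}\widetilde{\Ll}$ with $\widetilde{\Ll}$ an elliptic $b$-differential operator. Then $\Ll : r^{1+\e}\Xx^{2,\g}_{b} \to r^{1+\e-2\Aa}\Xx^{0,\g}_{b}$ is Fredholm, and the relative index theorem (cf.\ \cite{me}, \cite{meme}) identifies its cokernel as $\Kk$ plus a contribution, one complex dimension per cone point, coming from the single indicial root of $\widetilde{\Ll}$ in the interval $(1-\e, 1+\e)$, namely the root at $s = 1$ with mode $\mu z$. An explicit residue computation in conformal coordinates, integrating by parts against the indicial solutions of the formal adjoint, shows that $\Ll(M_{\lambda})$ pairs non-degenerately with this $s=1$ cokernel component, so enlarging the domain by $\Vv$ returns the cokernel to $\Kk$. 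By Lemma \ref{isosjf}, the elements of $\Kk$ localize near each cone into leading data: $\mu z$ at $<\pi$ cones, and $(w, a)$ (i.e.\ a constant part $w$ plus an $\absv{z}^{2(1-\al)}$ part with coefficient $\bar{a}/(1-\al)$) at $>\pi$ cones. The translations $\Tt_{>\pi}$ pair non-degenerately with the $w$-components at each $>\pi$ cone, eliminating them. A global Riemann--Roch/Hopf-differential count on the remaining data (the $\mu z$ at $<\pi$ cones, the $a \absv{z}^{2(1-\al)}$ at $>\pi$ cones) shows that what is left has real dimension equal to $\dim_{\R}\CKK$, and the formal-adjoint pairing from the first part identifies this residual cokernel with $\CKK$ itself.

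The principal technical obstacle is the non-degeneracy of the two local indicial pairings: between $\Ll(\Vv)$ and the $s=1$ indicial cokernel component, and between $\Ll(\Tt_{>\pi})$ and the constant ($s=0$) cokernel component. Each reduces to a residue calculation in a conformal chart near a single cone point, carried out by integrating $\Ll\psi$ against a chosen indicial solution of the formal adjoint of $\widetilde{\Ll}$ and reading off the $r \to 0$ boundary term. Once these pairings are in hand, the assembly into the global statement is bookkeeping, combining the dimension counts with Step 1's self-adjointness to match the residual cokernel to $\CKK$.
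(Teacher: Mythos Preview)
Your outline captures the Fredholm-theoretic skeleton correctly, and the direct-sum argument via self-adjointness is sound. But there is a genuine gap in the surjectivity half, and one concrete error along the way.

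First the error: the integration-by-parts computation for $\psi\in\mbox{T}_{id}\Tt_{>\pi}$ (translation by $w'$) against $\tilde\psi\in\Kk$ gives
\[
\langle \Ll\psi, \tilde\psi\rangle_{L^2} \;=\; -4\pi\,\Re\sum_{p_i\in\pg} w'_i\,\Res_{p_i}\Phi(\tilde\psi) \;=\; -4\pi\,\Re\sum w'_i\, a_i,
\]
so translations detect the \emph{residue} coefficient $a_i$ of $\tilde\psi$ (the $\absv{z}^{2(1-\al)}$ part in Lemma~\ref{stfsjfs}), not the constant part $w_i$. What survives after adding $\Tt_{>\pi}$ is therefore the subspace of $\Kk$ with all $a_i = 0$; these elements can and do have nonzero constant parts near the $>\pi$ cones (e.g.\ a constant vector field on a torus). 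Your description of the ``remaining data'' is reversed.

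The real gap, however, is the final identification. You assert via ``a global Riemann--Roch/Hopf-differential count'' that the surviving subspace $\{C_j : \Res\Phi(C_j)=0\}\subset\Kk$ has dimension exactly $\dim_\R\CKK$, and then combine this with the orthogonality from your first step. But no such count is carried out, and this is the entire content of the proposition. The paper does \emph{not} proceed by comparing dimensions: it proves directly (Corollary~\ref{positiveindex}) that every such $C_j$ is a conformal Killing field. The argument is global and variational: first, a careful boundary-term cancellation (Lemma~\ref{cokhess}) shows that the second variation of energy along $C_j$ vanishes; then the Coron--H\'elein decomposition $u_0^*G = H_1 + H_2$ from \S\ref{realuniquesect} is invoked to see that the Hessian of $E(\cdot, g, H_2)$ is strictly positive-definite (genus $\geq 2$) or has kernel exactly $\CK$ (genus $1$), forcing $C_j\in\CK$. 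A Jacobi field with vanishing Hopf residues at every cone point is not \emph{a priori} conformal Killing; establishing this requires the minimizing property of $u_0$, which does not enter your sketch at all.
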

In words, $\tau$ in \eqref{mapquest} is transver to whatever conformal
Killing fields lie in the range.
\ref{pertsection1}.	


\subsection{Non-degeneracy of the residue map}

To describe the second non-degeneracy result, we begin by describing the space of harmonic maps near a given
solution to (\ref{hme2}) with fixed geometric data.  Let $(u_{0}, g,
\Gfixed)$ solve \eqref{hme2} and satisfy Assumption \ref{assump}.  By the previous
section, there is an open set $\mc{U} \subset \Tt_{< \pi} \times
\Mm^*_{2, \g}(\Gfixed, \pp, \Aa)$ and a map graphing zeros of the tension
field operator,
 \begin{equation} \label{harmfmap}\begin{split}
	\mc{S}: \mc{U}&\lra \Bb^{1 + \e}_{2, \g}(u_0) \circ \Vv \circ \Tt_{>\pi}  \\
	(T, G) &\longmapsto u \mbox{ with } \tau(u, g, G) = 0,\\
\end{split}\end{equation}
so that $u = \wt{u} \circ D \circ T' \circ T$, where $T' \in \Tt_{>
  \pi}, D \in \Vv, \wt{u} \in \Bb^{1 + \e}_{2, \g}(u_0) $.

Given $\wt{\qq} \subset \pl$, set
 \begin{equation*}
			\Tt_{\wt{\qq}} := \set{ T_w : w_i = 0 \mbox{ for } p_i \not \in \wt{\qq}},	
\end{equation*}
c.f. (\ref{thets}).  We take $\wt{\qq} = \pl - \qq$.  Given the
identification of $\Tt_{ \pl - \qq}$ with a ball $U \subset \C^{\absv{
    \pl - \qq}}$ around the origin, if $U$ is a sufficiently small we
can define (locally near $u_{0}$ the $2\absv{\pl - \qq})$-dimensional
manifold of harmonic maps fixing $\qq$,
 \begin{equation} \
		\mc{H}arm_{\qq} = \set{\mc{S}(T_w, \Gfixed) : w \in U},
 \end{equation}
Also, let 
	\begin{align*}
		u_w := \mc{S}(T_w, \Gfixed).
	\end{align*}
We denote the tangent space to $\mc{H}arm_{\qq} $ by
	\begin{align} \label{jacspace}
		\fbox{$
		\mc{J}_{\qq} := T_{u_0} \mc{H}arm_{\qq}.
		$}
	\end{align}
We identify this space with $\C^{\absv{ \pl - \qq}}$ by setting
 \begin{equation}\label{eq:jw}
		J_w := \ddt u_{tw}.
\end{equation}
Clearly $\Ll J_w = 0$ on $\Sp$.
Consider the residue map,
	\begin{align*}
		\Res_{u_w^{-1}(\pl - \qq)} : \mc{H}arm_{\qq} &\lra \C^{\absv{ \pl - \qq}} \\
		u_w &\longmapsto \eval{\Res}_{u_w^{-1}(\pl - \qq)} \Phi(u_w),
	\end{align*}
where $\Phi(u_w)$ is the Hopf differential defined in (\ref{divfree})-(\ref{idecomp}).  (The $J_w$ also have Hopf differentials, defined by $\Phi(J_w) = \ddt \Phi(u_{tw})$.)  Differentiating $\Res$ at $u_0$ gives
 \begin{equation} \label{h2map}\begin{split}
		D\Res_{\pl -\qq} :  \mc{J}_{\qq} &\lra \C^{\absv{ \pl - \qq}} \\
		J_w &\longmapsto \Res(\Phi(J_w)).
\end{split}\end{equation}
We will prove the following 
\begin{proposition} \label{h2prop}
	If genus $\Sigma > 1$, the map (\ref{h2map}) is an isomorphism.  If genus $\Sigma = 1$, then the space $\mc{H}arm_{\qq}$ decomposes near $id$ as
 \begin{equation} \label{confdecomp}
			\mc{H}arm_{\qq} = \mc{H}arm_{\qq}' \circ \mc{C}onf_0,
 \end{equation}
and 
	\begin{align} \label{smallinj}
		D \Res_{\pl - \qq} : T\mc{H}arm_{\qq}' \lra \C^{\absv{ \pl - \qq}} \mbox{ is injective.}
	\end{align}
\end{proposition}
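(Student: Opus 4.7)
The plan is to derive injectivity of $D\Res_{\pl - \qq}$ from the second-variation formula \eqref{ihess}, combined with the non-negativity of the Bochner-type integrand at a harmonic map into a non-positively curved target. Since $\mc{J}_\qq$ and $\C^{\absv{\pl - \qq}}$ have the same real dimension $2\absv{\pl - \qq}$, injectivity will promote to an isomorphism for genus $>1$; the genus $1$ case will follow by the same argument after quotienting by the infinitesimal conformal action.

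First I would observe that, by Lemma \ref{minlemma} together with the uniqueness proved in Proposition \ref{umin}, $u_0$ is the unique energy minimizer in its rel-$\qq'$ class, and in particular a critical point of $E$ restricted to $\mc{H}arm_\qq$. Thus the first-variation formula \eqref{grad} vanishes at $u_0$---elements of $\mc{J}_\qq$ vanish on $\qq'$, and $\Phi(u_0)$ is regular on $\pp' - \qq'$ by \eqref{hme2}---and the second-variation formula \eqref{ihess} applies to give, for any $J_w \in \mc{J}_\qq$,
\begin{align*}
\ddtt E(u_{tw})\big|_{t=0} = \Re\, 2\pi i \sum_{p \in u_0^{-1}(\pl - \qq)} \Res_p\, \iota_{J_w} \Phi(J_w).
\end{align*}
Near each $p_j \in \pl - \qq$, Lemma \ref{isosjf} and the translation structure of $\Tt_{<\pi}$ give the leading behavior $J_w \sim -w_j\, \p_z$, while $\Phi(J_w)$ has a simple pole with residue $D\Res(J_w)_j$. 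A direct local computation then shows that the one-form $\iota_{J_w}\Phi(J_w)$ has a simple pole at $p_j$ with residue proportional to $w_j \cdot D\Res(J_w)_j$, and hence
\begin{align*}
\ddtt E(u_{tw})\big|_{t=0} = 4\pi\, \Im \sum_{j} w_j\, D\Res(J_w)_j.
\end{align*}

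To promote this to injectivity, I would invoke the Bochner-type identity for the second variation at a harmonic map into a target of curvature $\kappa_G \leq 0$, which in the conic setting takes the form
\begin{align*}
\ddtt E(u_{tw})\big|_{t=0} = \int_{\Sigma_\pp} \left( \absv{\nabla_g J_w}^2 - \la R^G(J_w, du_0)du_0, J_w \ra_G \right) dV_g.
\end{align*}
The integrand is pointwise non-negative, so vanishing of the left-hand side forces $\nabla_g J_w \equiv 0$ together with $\la R^G(J_w, du_0)du_0, J_w\ra_G \equiv 0$. If $D\Res(J_w) = 0$ then the displayed Hessian is zero, so these pointwise vanishing conditions hold. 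For genus $>1$, Gauss--Bonnet with cone points of angle less than $2\pi$ forces $\kappa_G < 0$ on an open set, and parallel transport then rules out nontrivial $J_w$, giving $w = 0$ and hence injectivity of $D\Res_{\pl - \qq}$. For genus $1$, the same pointwise conditions instead identify the kernel of the Hessian with the infinitesimal action of $\mc{C}onf_0$ on $\mc{H}arm_\qq$ by precomposition; this furnishes the splitting $\mc{H}arm_\qq = \mc{H}arm_\qq' \circ \mc{C}onf_0$, and $D\Res_{\pl - \qq}$ is injective on the transverse factor $T\mc{H}arm_\qq'$, giving \eqref{smallinj}.

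The main technical obstacle will be the rigorous passage between the two expressions for $\ddtt E(u_{tw})\big|_{t=0}$: the Bochner identity must be obtained by integration by parts on $\Sigma_\pp \setminus \bigcup_j B_\e(p_j')$, and one must show that as $\e \to 0$ the boundary terms on the small circles about $\pp'$ converge precisely to the residue expression in \eqref{ihess}. The weighted asymptotics from Lemma \ref{isosjf} and the $r^{1+\e}\Xx^{2,\g}_b$ regularity of the Jacobi fields $J_w$ should make these boundary computations tractable, the key point being that it is precisely the translation-type leading term of $J_w$ at each $p_j \in \pl - \qq$ that produces the nonvanishing residue, rather than being absorbed into the interior integral.
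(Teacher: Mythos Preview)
Your overall strategy---show that $D\Res(J_w) = 0$ forces $J_w$ to be conformal Killing via a second-variation/Bochner argument---matches the paper's. But the direct Bochner identity you invoke for the conic target $G$ is not available: the integral $\int_{\Sigma_\pp} |\nabla^G J_w|^2 \, dV_g$ diverges. Near $p_j \in \pl - \qq$ the Jacobi field $J_w$ has \emph{constant} leading term $-w_j$ (a translation; note that Lemma~\ref{isosjf} describes elements of $\mc{K}$, which vanish like $\mu z$ at $\pl$, but $J_w \notin \mc{K}$). With $G \sim |z|^{2(\al-1)}|dz|^2$ and $\al_j < 1/2$, the Christoffel symbols of $G$ are $\sim r^{-1}$, so $\nabla^G J_w \sim r^{-1}$, and one computes $|\nabla^G J_w|^2_{g^{-1}\otimes u_0^*G}\, dV_g \sim r^{2\al_j-3}\, dr\, d\theta$, which is non-integrable at $r=0$. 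Hence the identity $\ddtt E = \int(|\nabla J_w|^2 - R\text{-term})$ cannot hold with both sides finite, and the step ``integrand non-negative and integral zero, hence integrand zero'' has no content. The boundary terms you flag as a technicality are in fact divergent and cancel the divergent bulk integral; no finite Bochner expression for $G$ survives.

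The paper avoids this by routing through the Coron--Helein decomposition of \S\ref{realuniquesect}: $u_0^*G = H_1 + H_2$ with $H_1$ conformal to $g$ and $H_2$ a \emph{smooth} metric of strictly negative curvature (genus $>1$) or flat (genus $=1$). Then $E(\cdot,g,G) = E(\cdot,g,H_1) + E(\cdot,g,H_2)$, both Hessians are $\geq 0$, and if $D\Res(J_w)=0$ then Corollary~\ref{secondvar} gives $\ddtt E(u_{tw},g,G)=0$, forcing both to vanish. Because $H_2$ is smooth on $\Sigma - \qq$, its Bochner integral for $J_w$ is finite (the translation behavior near $\pl - \qq$ is now harmless, and near $\qq$ one has $J_w \sim \mu z$), so the standard argument gives $\nabla^{H_2} J_w = 0$ and, via $\kappa_{H_2}<0$, $J_w \equiv 0$ (or conformal Killing when genus $=1$). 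This is exactly the mechanism behind Corollary~\ref{positiveindex}. Your proof is repaired by applying Bochner to $H_2$ rather than to $G$.
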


\nomenclature[H]{$\mc{H}arm_{\qq}$}{harmonic maps with fixed geometric
  data fixing $\qq$}


\subsection{$\mc{H}(\qq)$ is open}

We will now use the two non-degeneracy results just discussed to prove openness.

\begin{proposition}
Propositions \ref{mainlemma} and \ref{h2prop} imply that $\mc{H}(\qq)$ is open.
\end{proposition}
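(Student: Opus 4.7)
The plan is a nested pair of implicit function theorem applications. Fix $t_{0} \in \mc{H}(\qq)$ with corresponding solution $u_{0} := u_{t_{0}}$ to \eqref{hme2}. I would first pick a conic metric $g_{0} \in \cc_{t_{0}}$ with cone points at $\pp' := u_{0}^{-1}(\pp)$ and cone angles matching $u_{0}^{*}G$, and for $t$ near $t_{0}$ select a smooth family $g_{t} \in \cc_{t}$ lying in $\Mm^*_{2, \g, \nu}(g_{0}, \pp', \Aa)$, which exists by applying Lemma \ref{thm:localuni} parametrically to any smooth path of conic representatives of $\cc_{t}$. By Proposition \ref{Hcontinuity}, $\tau$ is then a $C^{1}$ section of $\EB$ over the resulting parameter space, bringing both implicit function theorems into reach.

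Stage $1$ solves $\tau = 0$. Assume first that genus $\Sigma \geq 2$, so $\CKK = 0$ and Proposition \ref{mainlemma} reduces to surjectivity of the linearization $L$ of $\tau$ at $(u_{0}, g_{0}, G)$ in the direction $\mbox{T}(\Bb^{1 + \e}(u_{0}) \circ \Vv \circ \Tt_{> \pi})$. The implicit function theorem then produces a smooth graph
\begin{equation*}
\mc{S} : \mc{U} \subset \Tt_{\pl - \qq} \times \Mm^*_{2, \g, \nu}(g_{0}, \pp', \Aa) \lra \Bb^{1 + \e}(u_{0}) \circ \Vv \circ \Tt_{> \pi}
\end{equation*}
satisfying $\tau(\mc{S}(T, g), g, G) \equiv 0$ and $\mc{S}(0, g_{0}) = u_{0}$, extending \eqref{harmfmap} to allow the domain metric to vary. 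Stage $2$ imposes the residue condition on this graph. Setting
\begin{equation*}
\mc{R}(T, t) := \Res\rvert_{\mc{S}(T, g_{t})^{-1}(\pl - \qq)}\Phi\bigl(\mc{S}(T, g_{t})\bigr) \in \C^{\absv{\pl - \qq}},
\end{equation*}
we have $\mc{R}(0, t_{0}) = 0$ since $u_{0}$ solves \eqref{hme2}, and $\partial_{T} \mc{R}(0, t_{0})$ is precisely the linearized residue map $D\Res_{\pl - \qq} : \mc{J}_{\qq} \lra \C^{\absv{\pl - \qq}}$ of Proposition \ref{h2prop}, which is an isomorphism. A second application of the IFT yields $T(t)$ with $T(t_{0}) = 0$ and $\mc{R}(T(t), t) \equiv 0$, so that $u_{t} := \mc{S}(T(t), g_{t})$ satisfies \eqref{hme2} with respect to $(\cc_{t}, G)$. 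Continuity, together with the fact that $T(t) \in \Tt_{\pl - \qq}$ and maps in $\Bb \circ \Vv \circ \Tt_{> \pi}$ remain in Form \ref{uform} fixing $\qq$, keeps each $u_{t}$ in the rel.\ $\qq$ homotopy class of $u_{0}$.

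The main obstacle is the treatment of conformal Killing fields in the low-genus cases, where the cokernel of $L$ in Proposition \ref{mainlemma} and the dimensional defect of $D\Res$ on $T\mc{H}arm'_{\qq}$ in Proposition \ref{h2prop} both reflect the presence of nontrivial $\mc{C}onf_{0}$. My approach would be to pass to the quotient by precomposition with $\mc{C}onf_{0}$ before running Stage $1$: since conformal precomposition preserves harmonicity, the quotient inherits a Banach manifold structure and the descended tension field operator has surjective linearization, with Proposition \ref{mainlemma} computing the cokernel on the nose. A matching dimension count, using the decomposition $\mc{H}arm_{\qq} = \mc{H}arm'_{\qq} \circ \mc{C}onf_{0}$, places the image of the quotient residue map onto the conformally admissible residues, and Stage $2$ proceeds on this quotient, producing a minimizer unique up to conformal automorphism as in Theorem \ref{mainTheorem}. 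For $\Sigma = S^{2}$ with $\absv{\qq} \geq 3$, the condition $u \sim_{\qq} id$ together with Form \ref{uform} already rigidifies $PSL_{2}(\C)$ to the identity, so no residual conformal freedom remains and the high-genus argument applies verbatim.
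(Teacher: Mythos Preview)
Your nested-IFT strategy for genus $\geq 2$ matches the paper exactly and is correct. The low-genus handling, however, differs from the paper's and is missing a key step.

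The paper does not quotient by $\mc{C}onf_0$-precomposition. For $S^2$ with $\absv{\qq}\geq 3$ and for genus $1$ with $\pl\neq\varnothing$, it first observes that conformal Killing fields cannot lie in $r^{1+\e-2\Aa}\Xx^{0,\g}_b$ (they would have to vanish at points of $\pl$, which forces them to be identically zero), so $\CKK=\{0\}$ and Stage $1$ goes through with no modification. The only subtlety is Stage $2$ in genus $1$: the residues satisfy the linear constraint $\sum_{p\in\pl}\Res\rvert_p\phi=0$ (integrate over a fundamental domain), and the paper matches dimensions by showing $D\Res:T\mc{H}arm'_\qq\to V^\perp$ is an isomorphism, where $V=\mbox{span}\la(1,\dots,1),(i,\dots,i)\ra\subset\C^{\absv{\pl-\qq}}$. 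For genus $1$ with $\pl=\varnothing$ there is no Stage $2$; the paper instead quotients the \emph{target} bundle fiberwise by $V_u:=u_*T\mc{C}onf_0$, applies the IFT to $\pi\circ\tau$, and then argues separately --- via the first-variation identity \eqref{intoetilde} --- that $\tau(u)\in V_u$ forces $\tau(u)=0$.

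Your domain-side quotient does not by itself eliminate the cokernel. Conformal precomposition acts by $\tau(u\circ C)=\tau(u)\circ C$, so quotienting the space of maps only identifies maps whose tension fields are reparametrisations of one another; the range of the linearization is unchanged and $\CKK$ persists. Even if you interpret the quotient equivariantly (so that $\EB$ is quotiented fiberwise by $V_u$, which is essentially the paper's construction), you still owe the argument that a zero of the quotient section lifts to a genuine zero of $\tau$, i.e.\ that $\tau\in V_u\Rightarrow\tau=0$. That energy computation is the crux of the genus-$1$, $\pl=\varnothing$ case and cannot be elided.
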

\begin{proof}

Given $t_{0} \in \mc{H}(\qq)$, let $g_{0} \in \cc_{t_{0}}$ be a
metric satisfying Assumption \ref{assump}.  (See section
\ref{sec:actualproof} for definitions.)  We now use Lemma
\ref{thm:localuni}; there is a small $\delta > 0$
such that for $t \in (t_{0} - \delta, t_{0} + \delta)$ there is a path
of diffeomorphisms $\wt{v}_{t}$, all isotopic to the identity, such that the pullback conformal
structures $\wt{v}_{t}^{*}\cc_{t}$ have the property that
$id: (\Sigma, \cc_{t_{0}}) \lra (\Sigma, \wt{v}_{t}^{*}\cc_{t})$
is conformal near $\pp$.  Let $\wt{g}_{t} \in \wt{v}_{t}^{*}\cc_{t}$ be any family of metrics
that equal $g_{\alpha_{j}}$ on the conformal ball $D_{j}$ near
$p_{j}$.  The point is that this can be done uniformly for $t$ near
$t_{0}$ since all the conformal structures $\wt{v}_{t}^{*}\cc_{t}$ are
equal there.  Thus $\wt{g}_{t} \in \Mm^*_{2, \g, \nu}(g_0, \pp,
\Aa)$.  We claim that there is a unique solution $\wt{u}_{t} :
(\Sigma, \wt{g}_{t}) \lra (\Sigma, G)$ to \eqref{hme2}.  Assuming this
for the moment, the proof is finished, since $u_{t}:= \wt{u}_{t} \circ \wt{v}_{t}^{-1} : (\Sigma, \lp
\wt{v}_{t}^{-1} \rp^{*}\wt{g}_{t}) \lra (\Sigma, G)$
is a relative minimizer and $\lp\wt{v}_{t}^{-1} \rp^{*}\wt{g}_{t}
\in \cc_{t}$. 

The proposition is proven modulo the existence of $\wt{u}_{t}$, which
we now prove using Propositions \ref{mainlemma} and \ref{h2prop}.

If genus $\Sigma > 1$, then Proposition \ref{mainlemma} states that
the differential of the map \eqref{mapquest}
at a solution $(u_0, g_{0}, \Gfixed)$ to (\ref{hme2}) in the direction of $
\Bb^{1 + \e} \circ \Vv \circ \Tt_{> \pi} $ is an isomorphism.  This
together with the fact that $\tau$ is $C^1$ (Proposition
\ref{Hcontinuity}) and the Implicit Function Theorem shows that the
zero set of $\tau$ near $(u_0, g_{0}, \Gfixed)$ is a smooth graph over $\mc{T}_{<
  \pi} \times \Mm^*_{2, \g, \nu}(g_0, \pp, \Aa) $.  The map $\mc{S}$
in \eqref{harmfmap} realizes the
manifold of relative minimizers as a graph over an open set $\mc{U}
\subset  \Tt_{< \pi} \times
\Mm^*_{2, \g, \nu}(\Gfixed, \pp, \Aa)$.  We now use Proposition
\ref{h2prop}. The map $D (\Res_{\pl - \qq}
\circ \mc{S})$ is an isomorphism from $\mbox{T}_{id}
\mc{T}_{\pl - \qq}$ to $\C^{\absv{\pl - \qq}}$.  Thus the set $\mc{U} \cap
\set{\Res_{\pl - \qq}^{-1}(0)}$, which by Lemma \ref{minlemma}
consists of rel$.$ $\qq$ minimizers, is a graph over an open set $\mc{V} \subset
\mc{T}_{\pl - \qq} \times
  \Mm^*_{2, \g, \nu}(\Gfixed, \pp, \Aa)$ sufficiently close to
  $\set{id} \times \Gfixed$.  This proves the existence of the $\wt{u}_{t}$.

Now suppose genus $\Sigma = 1$.  Let $\set{C_i}$ be a basis for
$\CKK$.  Note that 
\begin{equation}
  \label{eq:ckk}
  \CKK = \left\{ \begin{array}{ccc}
      \CK &\mbox{ if }& \pl = \varnothing \\
      \set{0} &\mbox{ if }& \pl \neq \varnothing
      \end{array} \right.
\end{equation}
This follows immediately from the fact that conformal Killing fields
are nowhere vanishing.  

Thus if $\pl \neq \varnothing$ we again have
surjective differential, and as in the genus
$> 1$ case, rel$.$ $\qq$ minimizers forms a graph over $\mc{T}_{\pl -
  \qq} \times \Mm^*_{2, \g, \nu}(g_{0}, \pp, \Aa)$.  We lift to
the universal cover of $\Sigma$, which we can take to be $\C$, and let
$z$ denote the coordinate there.  By integrating
around a fundamental domain, we claim that if $\phi_w dz^2 = \Phi(J_w)$ then
 \begin{equation} \label{sumzero}
		\sum_{p_i \in \pl } \Res \rvert_{p_i} \phi_w= 0.
 \end{equation}
This follows immediately from the fact that the deck transformations
are $z \mapsto z + z_{0}$ for some $z_{0}$, so $\phi_{w}$ is actually
a periodic function with respect to the deck group.  
Define a subset $V \subset \C^{\absv{\pl - \qq}}$ by $V =
\mbox{span}\la (1, \dots, 1), (i, \dots, i) \ra$.
Thus by
\eqref{sumzero}, $V^\perp =  D\Res(T \mc{H}arm_{\qq})$,
where the orthocomplement is taken with respect to the standard
hermitian inner product on $\C^{\absv{\pl - \qq}}$.  Now by Proposition
\ref{h2prop} and the fact that $\CK$ has one complex dimension, $$ D
\Res: T \mc{H}arm_{\qq}' \lra V^\perp$$ is an isomorphism, and again
the existence of the $\wt{u}_{t}$ follows from the Implicit Function Theorem.

We now tackle the case: $\pl = \varnothing$ and genus $= 1$.
In this case $\Tt_{> \pi} = \Tt$.  Let $\CKg$ denote the identity
component of the conformal group of $g$.  Consider the quotient bundle $\wt{\EB}$ whose fiber over $(u, g, G)$ is given by $E_u / V_u$, where
	\begin{align*}
		V_u := u_* \TCKg \subset r^{1 - \e - 2\Aa}\Xx^{2, \g}_b(u)
	\end{align*}
Let $\pi: \EB \lra \wt{\EB}$ be the projection.  Proposition \ref{mainlemma}
immediately implies that the differential of the composition $\pi
\circ \tau$ in the direction of $ \Bb^{1 + \e} \circ \Vv \circ \Tt$ is an isomorphism.  By the Implicit Function Theorem, the
zero set of $\pi \circ \tau$ near $(u_0, g, \Gfixed)$ is a smooth graph
over $\Mm^*_{2, \g, \nu}(g_0, \pp, \Aa) $, so for each $g \in \Mm^*_{2, \g,
  \nu}(g_0, \pp, \Aa)$ there is a map $u = \wt{u} \circ D \circ T$
with $\Tt $ such that
 \begin{equation} \label{Vu}
	\tau(u, g, \Gfixed) \in V_u.
 \end{equation}
We will show that (\ref{Vu}) implies that $\tau(u, g, \Gfixed) = 0$.
Suppose that $\tau(u, g, \Gfixed) = u_*C$ for some $C \in \TCKg$.  Let
$f_t \subset \mc{C}onf_0$ be a family with $\eval{ \frac{d}{dt}}_{t =
  0} f_t = C$.  By the conformal invariance of energy,
(\ref{energyinv2}), we have.
 \begin{equation} \label{intoetilde0}
		\eval{ \frac{d}{dt}}_{t = 0} E(u \circ f_t, g, \Gfixed) = 0 
 \end{equation}
On the other hand we will show using (\ref{energyinv2}) that 
 \begin{equation} \label{intoetilde}
   \begin{split}
		\eval{ \frac{d}{dt}}_{t = 0} E(u \circ f_t, g, \Gfixed) &=
                \int_{\Sigma} \la \tau(u, g, \Gfixed) , u_* C\ra \sqrt{g} dx \\
                &= \norm[L^{2}]{u_{*}C}^{2},
              \end{split}
 \end{equation}
with $L^{2}$ norm as in \eqref{pairingdef}.  Some care is needed in
the proof since in general the boundary term in (\ref{frstvarbdry}) can
be singular.  We postpone the rigorous computation to section
\ref{confkillsect}, where several similar computations are done at
once.  Using the fact that the solutions are a graph over $\Mm^*_{2, \g,
  \nu}(g_0, \pp, \Aa)$ to conclude that the $\wt{u}_{t}$ exist as in
the previous cases.

Finally, suppose $\Sigma = S^{2}$.  Then again $\CKK = \varnothing$
since the only conformal Killing field vanishing at three points is
identically zero.  The proof then proceeds as in the previous cases.

\end{proof}

\section{Uniqueness and convexity} \label{uniquesect}

%

%


%

%

The main result of this section is Proposition \ref{umin}, which states that if $(u, g, G)$ is a solution to
(\ref{hme2}) in Form \ref{uform} satisfying Assumption \ref{assump}, then, up to conformal
automorphism, u is uniquely energy minimizing in its rel$.$ $\qq$
homotopy class.

\subsection{The Hopf differential} \label{polessect} Let
$(u, g, G)$ solve (\ref{hme2}) and satisfy Assumption \ref{assump}.  In conformal coordinates, a trivial computation using (\ref{confenergy}) yields
 \begin{equation} \label{pbm}\begin{split}
		u^* G 
		&= e(u) \sigma \absv{dz}^2 + 2 \Re \rho(u) u_z \ov{u}_z dz^2.
\end{split}\end{equation}
where $g = \sigma  \absv{dz}^2$. If we let $u^*G^\circ$ denote the
trace free part of $u^*G$ w.r.t$.$ $g$, i.e.\ $u^*G^\circ = u^*G -
\frac{1}{2} \lp \tr_g u^*G \rp g$, then $u^*G^\circ  = 2 \Re \Phi(u)$
where $\Phi(u)$ is the Hopf differential
	\begin{align} \label{hopfcoeff}
		\Phi(u) := \phi(z) dz^2 = \rho(u) u_z \ov{u}_z dz^2.
	\end{align}
It follows directly (see section 9 of \cite{s}) that for $z_0 \in \Sp$
	\begin{equation} \label{iff}
          \begin{split}
            \tau(u, g, G)(z_0) = 0 &\implies \p_{\ov{z}}\phi(z_0) = 0  \\
\p_{\ov{z}}\phi(z_0) = 0 \quad \& \quad J(u)(z_{0}) \neq 0 &\implies \tau(u, g, G)(z_0) = 0,
          \end{split}
	\end{equation}
where $J(u)(z_{0})$ is the Jacobian determinant of $u$.  This means that, among local diffeomorphisms, the vanishing of the
tension field is equivalent to the holomorphicity of the (locally
defined) function $\phi$.
By Form \ref{uform}, near $p \in \pp$ we have $u(z) = \lambda z + v(z)$
with $\lambda \in \C^*$ and $v(z) \in r^{1 + \e}C^{2, \g}_b$ for some $\e > 0$.  
By the definition of $C^{2, \g}_b$ from the previous section, and the fact that $\p_z = \frac{1}{2 z} \lp r \p_{r} - i \p_{\theta} \rp$, we see that
 \begin{equation} \label{hopfbound}
		\phi(z)= \lp \absv{\lambda z}^{2 (\al - 1)} + o( \absv{ z}^{2 (\al - 1)})\rp \lp \lambda + o(1) \rp \mc{O}(\absv{z}^{\e}) = \mc{O}(\absv{z}^{-2 + 2\al + \e}).
\end{equation}	
Since $ - 2 + 2 \al + \e > -2$, the function $\phi$ has at worst a
simple pole at $z = 0$.  If $\al \geq 1/2$ then $- 2 + 2 \al + \e >
-1$, so $\phi$ extends to a holomorphic function over $p$.  Thus we have proven
	\begin{lemma} \label{hopfpoles}
		Let $\Phi(u)$ be the Hopf differential of a solution $(u, g, G)$ to (\ref{hme2}) in Form \ref{uform}, with $G \in \Mm_{2, \g, \nu}(\pp, \Aa)$.
		\begin{align*}
			\Phi(u) \mbox{ is holomorphic on $\Sigma - \pp_{< \pi}$ with at most simple poles on $\pp_{< \pi}$.}
		\end{align*}
	\end{lemma}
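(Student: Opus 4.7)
The plan is to split the claim into two parts---holomorphicity of $\Phi(u)$ on the complement of the cone points, and a sharp growth estimate near each cone point---and then finish with a Laurent expansion argument. The first part is essentially immediate: by Form \ref{uform}(2), $u : \Sigma_{\pp'} \lra \Sp$ is a $C^{2,\g}$ diffeomorphism, so its Jacobian is everywhere nonzero on $\Sigma_{\pp'}$, and the second implication in \eqref{iff} combined with the vanishing of $\tau(u,g,G)$ on $\Sigma_{\pp'}$ (from \eqref{hme2}) gives $\p_{\ov{z}}\phi \equiv 0$ there. Hence $\Phi(u)$ is holomorphic on $\Sigma - \pp'$.

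For the growth estimate, I would work in normalized conformal coordinates $z$ for $g$ near a cone point $p' \in \pp'$ and $u$ for $G$ near the corresponding $p \in \pp$ with cone angle $2\pi\al$, so that $G = e^{2\mu(u)}\absv{u}^{2(\al-1)}\absv{du}^2$ with $\mu$ bounded, and $u(z) = \lambda z + v(z)$ with $\lambda \in \C^*$ and $v \in r^{1+\e}C^{2,\g}_b$ (Form \ref{uform}(3)). Plugging into $\phi(z) = \rho(u)\, u_z\, \ov{u}_z$ from \eqref{hopfcoeff}, I would estimate each factor separately: $\rho(u) \sim \absv{\lambda z}^{2(\al - 1)}$ (using $\absv{u(z)} = \absv{\lambda z}(1+o(1))$), $u_z = \lambda + \mc{O}(\absv{z}^{\e})$, and $\ov{u}_z = \mc{O}(\absv{z}^\e)$. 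The step I expect to require the most care, though it is ultimately routine, is converting the $r^{1+\e}C^{2,\g}_b$ control on $v$ into pointwise bounds on $v_z$ and $v_{\ov{z}}$: since $\p_z = \tfrac{1}{2z}(r\p_r - i\p_\theta)$, the $b$-derivatives $r\p_r v$ and $\p_\theta v$, both of size $\mc{O}(\absv{z}^{1+\e})$, are absorbed by the $1/z$ factor to yield $\mc{O}(\absv{z}^\e)$ bounds. Multiplying the three factors gives $\absv{\phi(z)} = \mc{O}(\absv{z}^{2\al - 2 + \e})$.

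To finish, since $\phi$ is holomorphic on a punctured disc and bounded by $C\absv{z}^{2\al - 2 + \e}$, its Laurent expansion $\phi(z) = \sum_n a_n z^n$ has $a_n = 0$ for every integer $n < 2\al - 2 + \e$. If $2\pi\al \geq \pi$, i.e.\ $\al \geq 1/2$, the exponent exceeds $-1$, forcing $a_n = 0$ for all $n \leq -1$, so $\phi$ extends holomorphically across $p'$. If $2\pi\al < \pi$, the exponent only exceeds $-2$, so $\phi$ may carry a simple pole at $p'$ but nothing worse. Identifying $\pp_{<\pi}$ with $\pp'_{<\pi} = u^{-1}(\pp_{<\pi})$ via the homeomorphism $u$, this yields the claim. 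The entire argument is driven by the sharp asymptotic in Form \ref{uform}(3); without the strict improvement $v \in r^{1+\e}C^{2,\g}_b$ over $v = o(r)$, one would lose the decisive $\e$ that distinguishes a simple pole from a double pole in the $\al < 1/2$ case and removability from a simple pole in the $\al \geq 1/2$ case.
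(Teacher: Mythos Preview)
Your proof is correct and follows essentially the same route as the paper: holomorphicity on $\Sigma_{\pp'}$ via \eqref{iff}, then the growth bound $\phi(z)=\mc{O}(\absv{z}^{2\al-2+\e})$ obtained by estimating each factor of \eqref{hopfcoeff} using Form~\ref{uform} and the identity $\p_z=\tfrac{1}{2z}(r\p_r-i\p_\theta)$, followed by the Laurent expansion dichotomy at the threshold $\al=1/2$. The paper's argument is the same, written slightly more tersely as equation \eqref{hopfbound}.
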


\subsection{Uniqueness} \label{realuniquesect}


The main result of this section is the following.  
\begin{proposition}\label{umin}
Let $(u, g, G)$ solve (\ref{hme2}) with $u$ in Form \ref{uform}, and
assume that $\Phi(u)$ has non-trivial poles exactly at $\qq \subset \pl.$ 

Assume $\qq \neq \varnothing$.  Then for any $w : \Sigma \to \Sigma$ with $w \sim_{\qq} u$ (see (\ref{relhtopyclass}); in particular $w \rvert_\qq = u  \rvert_\qq $) we have
 \begin{equation*}
		E(u, g, G) \leq  E(w, g, G)
\end{equation*}
with equality if and only if $u = w$.

If $\qq = \varnothing$ (i.e.\ $(u, g, G)$ solves (\ref{hme2})) and $w : \Sigma \lra \Sigma$ satisfies the weaker condition $w \sim u$, then 
 \begin{equation*}
		E(u, g, G) \leq  E(w, g, G)
\end{equation*}
with equality if and only if $u = w \circ C$
for $C \in \mc{C}onf_0$ (see (\ref{confgrp})).  In particular, if genus $\Sigma > 1$, equality holds if and only if $u = w$.
\end{proposition}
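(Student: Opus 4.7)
The plan is to follow the decomposition strategy of \cite{ch}. By Remark \ref{thm:idremark}, I replace $G$ by the pullback $h := u^{*}G \in \mc{M}_{2,\gamma,\nu}(\pp',\Aa)$, so that $u$ becomes the identity. For any competitor $w \sim_{\qq} u$, set $\tilde w := u^{-1}\circ w$ on $\Sigma_{\pp'}$; the chain rule gives $(u\circ\tilde w)^{*}G = \tilde w^{*} h$, hence $E(w,g,G) = E(\tilde w, g, h)$, with $\tilde w \sim_{\qq'} id$ where $\qq' := u^{-1}(\qq)$. The proposition therefore reduces to showing that $id:(\Sigma,g)\to(\Sigma,h)$ is uniquely minimizing in the corresponding homotopy class.

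On $\Sigma_{\pp'}$, (\ref{pbm}) gives the pointwise decomposition $h = e(id,g,h)\,g + 2\Re\Phi$, where $\Phi$ (the Hopf differential of $id:(\Sigma,g)\to(\Sigma,h)$, equal to $\Phi(u)$) is holomorphic on $\Sigma\setminus\qq'$ by Lemma \ref{hopfpoles}. Following \cite{ch}, I further split
\begin{equation*}
h = h_1 + h_2,
\end{equation*}
with $h_1 = f\,g$ pointwise conformal to $g$ and $h_2$ a Riemannian metric of non-positive Gauss curvature. The curvature property of $h_2$ comes from the hypothesis $\kappa_G \le 0$ combined with a Bochner-type identity for the pullback curvature, and the decomposition can be arranged so that $id$ is simultaneously a critical point of $E(\cdot,g,h_1)$ and $E(\cdot,g,h_2)$. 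The energy then splits additively, $E(\tilde w,g,h) = E(\tilde w,g,h_1) + E(\tilde w,g,h_2)$.

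The two pieces require different arguments. For $h_1 = f g$, the pointwise inequality $e(\tilde w,g,h_1)\geq J(\tilde w,g,h_1)$, with equality iff $\tilde w$ is conformal $(\Sigma,g)\to(\Sigma,h_1)$, together with the homotopy invariance $\int_{\Sigma} J(\tilde w)\,dVol_g = \mathrm{Vol}(\Sigma,h_1) = E(id,g,h_1)$, yields $E(\tilde w,g,h_1)\geq E(id,g,h_1)$, with equality iff $\tilde w \in \mc{C}onf_0(\Sigma,g)$. For the non-positively curved $h_2$, I apply the classical Hartman convexity argument: along the $h_2$-geodesic homotopy $\tilde w_t$ from $id$ to $\tilde w$ the second variation satisfies $\ddot{E}(\tilde w_t, g, h_2)\geq 0$, with strict inequality wherever $\kappa_{h_2}<0$, giving $E(\tilde w,g,h_2)\geq E(id,g,h_2)$ and forcing $\tilde w_t$ totally geodesic at equality. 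Summing and examining equality recovers the proposition: when $\qq\neq\varnothing$, the constraint $\tilde w|_{\qq'}=id$ excludes nontrivial conformal automorphisms, so $\tilde w = id$; when $\qq=\varnothing$ only the $\mc{C}onf_0(\Sigma,g)$ ambiguity survives, which collapses to $\{id\}$ for genus $>1$.

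The main technical obstacle is regularity at $\pp'$. The decomposition $h = h_1 + h_2$ degenerates near cone points and $\Phi$ has simple poles on $\qq'$, so both the additive splitting and the convexity computation must be carried out on the exhaustion $\Sigma\setminus B_\epsilon(\pp')$ with boundary terms shown to vanish as $\epsilon\to 0$. Form \ref{uform} (giving $u(z)=\lambda z + \mc{O}(|z|^{1+\epsilon})$ near each cone point) together with Lemma \ref{hopfpoles} controls both $h_2$ and the homotopy $\tilde w_t$ near $\pp'$ sufficiently to justify the limits, but making this rigorous, and producing the explicit decomposition $h=h_1+h_2$ with both pieces having the required curvature and critical-point properties (which requires a careful choice of the conformal factor $f$), constitutes the principal technical burden.
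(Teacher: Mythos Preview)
Your approach for $\qq = \varnothing$ is essentially the paper's: the Coron--H\'elein decomposition $u^*G = H_1 + H_2$ with $H_1$ conformal to $g$ and $H_2$ smooth and negatively curved, followed by the standard minimization results for each piece. (One quibble: the negative curvature of $H_2$ in the CH decomposition is a direct computation using holomorphicity of $\phi$ and the hyperbolic background $\omega|dz|^2$; it does not come from $\kappa_G\le 0$ via a Bochner identity. You also need a separate argument in genus $1$, where there is no hyperbolic metric and $\phi$ is constant --- the paper handles this with a simpler flat decomposition.)

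The genuine gap is in the case $\qq\neq\varnothing$. When $\Phi$ has actual simple poles on $\qq'$, the CH metric $H_2 = (\epsilon\omega^2 + |\phi|^2)^{1/2}|dz|^2 + 2\Re\phi\,dz^2$ is singular at $\qq'$ (its conformal factor blows up like $|z|^{-1}$), so Hartman's convexity theorem does not apply. Your proposed fix --- exhaust by $\Sigma\setminus B_\epsilon(\pp')$ and show the boundary terms vanish --- cannot be carried out as stated: the competitor $\tilde w$ is an \emph{arbitrary} map in the rel.\ $\qq'$ class, with no regularity whatsoever near the cone points, so there is nothing that controls the boundary contribution of the second variation near $\qq'$. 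Form~\ref{uform} constrains $u$, not $\tilde w$. The paper avoids this entirely by a different idea: pass to a finite branched cover $\pi:S\to\Sigma$ ramified exactly over $\qq$ (Lemma~\ref{covering}). Under $\pi$, a simple pole of $\phi$ at $q$ pulls back to a holomorphic zero (since $dz^2$ picks up $k^2\tilde z^{2k-2}$ under $z=\tilde z^k$), so $\Phi(\tilde u)$ is holomorphic on all of $S$, genus $S\ge 2$, and the $\qq=\varnothing$ argument applies upstairs. Energies scale by the number of sheets, and rel.\ $\qq$ homotopies on $\Sigma$ lift to free homotopies on $S$, so the inequality and its equality case descend. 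This branched-cover reduction is the missing ingredient.
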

\begin{proof}

Assume $\qq = \varnothing$.  We use a trick from \cite{ch} to reduce to the smooth case.

First assume that the genus of $\Sigma > 1$.  Let $ \omega(z) \absv{
  dz }^2$ be the unique constant curvature $-1$ metric in $[g]$.  By
section \ref{polessect}, for any $\e > 0$ in local coordinates we can write
 \begin{equation} \label{chdecomp}\begin{split}
		u^*G &=  e(u) \sigma dzd\ov{z} + 2 \Re \phi(z) dz^2 \\
		&= \underbrace{ \lp  e(u) \sigma  - \lp \e \omega^2 + \absv{\phi}^2 \rp^{1/2} \rp \absv{ dz }^2}_{ :=  H_1} + \underbrace{\lp \e \omega^2 + \absv{\phi}^2 \rp^{1/2}  \absv{ dz }^2  +  2  \Re \phi(z) dz^2 }_{ :=  H_2}
\end{split}\end{equation}
For $\e$ sufficiently small $H_1$ is a metric on $\Sp$; this follows
from the fact that for $u^*G$ to be positive definite we must have
that $ e(u) \sigma(z) > \absv{\phi(z)}$.  As for $H_2$, if $\qq =
\varnothing$, $\phi$ extends smoothly $p \in \pp$, so
$H_{2}$ is a smooth metric.  It is slightly more involved but also trivial to verify that the Gauss curvature of $H_2$ satisfies
 \begin{equation}
		\kappa_{H_2} < 0.
 \end{equation}
See Appendix B of \cite{ch} for the computation.  Since the above
characterization of $\tau(\wt{u}, g, \wt{G}) = 0$ in (\ref{iff}) is
necessary and sufficient, we see that 
	\begin{align*}
		id : (\Si, g) \lra (\Si, H_1) & \mbox{ is conformal} \\
		id : (\Si, g) \lra (\Si, H_2) & \mbox{ is harmonic}
	\end{align*}
From equation (\ref{energy}), for any $w: \Sigma \to \Sigma$ we have
 \begin{equation} \label{split}
		E(w, g, G) = E(w, g, H_1)  + E(w, g, H_2),
 \end{equation}
Unique minimization now follows from (\ref{split}), the fact that
degree one conformal maps are energy minimizing, and fact that a harmonic diffeomorphism into a negatively curved surface is uniquely energy minimizing in its homotopy class (see e.g. \cite{t}, \cite{h}).

If the genus of $\Sigma$ is $1$, then lifting to the universal cover
$\C$, we obtain a harmonic map $\wt{u} : (\C, \pi^* g) \lra (\C, \wt{\pi}^*
G)$,
where $\pi$ and $\wt{\pi}$ are conformal covering maps with respect to the standard
conformal structure on $\C$.  The metric $\absv{dz}^2$ descends to
$\Sp$ and is in the unique ray of flat metrics in the conformal class
of $g$.  Here $\Phi(\wt{u}) = \wt{\phi}(\wt{z}) d\wt{z}^2$ is defined
globally on $\C$ and $\wt{\phi}$ is entire and periodic with respect
to the deck transformations, hence bounded, hence constant.  Write
$\Phi(\wt{u}) = a d\wt{\wt{z}}^2$. For sufficiently small $\e > 0$, we
decompose
	\begin{align} \label{flatdecomp}
		\wt{u}^*(\pi^*G) = \underbrace{ \lp  e(\wt{u}) \sigma  - \e \rp \absv{ d\wt{z} }^2}_{ :=  \wt{K}_1} + \underbrace{ \e \absv{ d\wt{z} }^2  +  2  \Re \lp a d\wt{z}^2 \rp }_{ :=  \wt{K}_2}
	\end{align}
Since $e(\wt{u}) \sigma  > 2 \absv{a}$ and $\lp e(\wt{u}) \sigma
\rp(\wt{z})$ is periodic, there is an $\e$ so that the $\wt{K}_i$ are
positive definite.  If $K_i = \pi_* \wt{K}_i$, then $id: (\Sigma, g)
\lra (\Sigma, K_{i})$ is harmonic for both $i = 1, 2$.  We now argue as above, invoking both the minimality of conformal maps, and the minimality up to conformal automorphisms for maps of flat surfaces.  This completes the $\qq = \varnothing$ case.

If genus $= 0$, it is standard that $\Phi(u) \equiv 0$, and thus $u$
is conformal away from $\qq$, hence globally conformal.  Since
conformal maps are energy minimizing this case is complete.

To relate the $\qq = \varnothing$ case to the $\qq \neq \varnothing$ case, we will use the following
	\begin{lemma} \label{covering}
		Given a closed Riemann surface $R = (\Sigma, \cc)$
                (here $\cc$ is the conformal structure) with genus $>
                0$ and any finite subset $\mathfrak{q} \subset
                \Sigma$, there is a finite sheeted conformal covering
                space $ \pi: S \lra R $ which has non-trivial branch
                points exactly on $f^{-1}(\mathfrak{q})$.
                Furthermore, genus $S > $ genus $\Sigma$. 

                If $R = S^{2}$, the above is true provided $\absv{\qq}
                \geq 3$.
	\end{lemma}
This is a well-known fact from the theory of Riemann surfaces.  We
include a sketch of a proof here for the convenience of the reader.
First assume genus $\Sigma > 0$.
For each $q \in \qq$, there is a branched cover $Y_q \lra R$ ramified
above $q$, constructed as follows.  The fundamental group of $R -
\set{q}$ is a free group with $2g$ generators, where $g$ is the genus.
Let $Z_q$ be any normal covering space of $R - q$ that is not a
covering of $R$, i.e.\ let $Z_q$ correspond to a normal subgroup of
$\pi_1(R - \set{q})$ that is not the pullback of a normal subgroup in
$\pi_1(R)$ via the induced map.  Let $Y_q$ be the unique closure of
$Z_q$.  Finally, let $\wt{S}$ be the composite of the $Z_q$, i.e.\ the
covering of $R - \qq$ corresponding to the intersection of the groups
corresponding to the $Z_q$.  Then $\wt{S}$ is normal and has a unique
closure $S$ with a branched covering of $R$ that factors through each
of the $Y_q \lra R$.  The deck transformations of $\wt{S}$ extend to
$S$ and by normality act transitively on the fibers, and therefore $S$
is ramified exactly above $\qq$. \cite{c}.  If $\Sigma = S^{2}$ and
$\absv{\qq} \geq 3$, let $\qq_{0} \subset \qq$ have exactly three
points. There is a branched cover of $\Sigma$ by a torus branched over
$\qq_{0}$.  Applying the previous argument to the torus and branching
over all the lifts of cone points to the torus gives the result. \cite{ya}

Assuming $\qq \neq \varnothing$, let $S$ a covering map branched
exactly over $\qq$. (By our assumption that $\absv{\qq} \geq 3$ in
case $\Sigma = S^{2}$, such a cover exists.) and lift $u$ to a map $\wt{u}$ so that
	\begin{align}\label{eq:lift}
		\xymatrix{
			(S, \pi^*g) \ar[r]^{\wt{u}} \ar[d]^\pi & (S, \pi^* G) \ar[d]^\pi \\
			(\Sp, g) \ar[r]^u & (\Sp, G)
		}.
	\end{align}
In particular, $\wt{u}^* \pi^* G  = \pi^* u^* G$,
so the Hopf differential of $\wt{u}$ is the pullback via the conformal map $\pi$ of the Hopf differential of $u$.  Pick any $p \in \qq$, and let $ \phi(z) dz^2 $ be a local expression of the Hopf differential of $u$ in a conformal neighborhood centered at $p$.  By assumption, $\phi$ has at most a simple pole at $0$.  Given $ q \in \pi^{-1} (p) $, since $q$ is a non-trivial branch point we can choose conformal coordinates $\wt{z}$ near $q$ so that the map $\pi$ is given by $\wt{z}^k = z$ for some $ k \in \N$, $ k > 1$. If
 \begin{equation*}
		\phi(z) dz^2 = \lp \frac{a}{z} + h(z)  \rp dz^2
\end{equation*}
where $h$ is holomorphic, we have
	\begin{align*}
		 \lp \frac{a}{z} + h(z)  \rp dz^2 
		 &= k^2 \lp a \wt{z}^{k - 2} + \wt{z}^{ 2k - 2} h(\wt{z}^k)  \rp d\wt{z}^2,
	\end{align*}
so $\Phi(\wt{u})$ is holomorphic on all of $S$, and therefore $\wt{u}$ solves (\ref{hme2}) with respect to the pullback metrics.

Note that by Lemma \ref{covering}, the genus of $S$ is at least 2, so we are in the right situation to apply the preceding argument.  Specifically, suppose that $ w \sim_{\qq} u$.  Then the lifts $\wt{u}$ and $\wt{w}$ are homotopic and we have
	\begin{align*}
		\lp \mbox{ \# of sheets } \rp E(u, g, G) &= E(\wt{u}, \pi^*g, \pi^*G) \\
		&\leq E(\wt{w},\pi^*g, \pi^*G) \\
		&= \lp \mbox{ \# of sheets } \rp E(w, g, G) 
	\end{align*}
with equality if and only if $\wt{u} = \wt{w}$, i.e.\ $ u = w$.

\end{proof}


\section{Linear Analysis}\label{pertsection1}

\subsection{The linearization}\label{sec:linearization}

We now explicitly compute the derivative of $\tau$ at $u_0$ in the
$\Bb^{1 + \e}(u_0)$ direction.  That is, given a solution $(u_0, g_0,
\Gfixed)$ to (\ref{hme2}) with $u_0$ in Form \ref{uform}, and a path
$u_t \in \Bb^{1 + \e}(u_0)$ through $u_0$ with $\ddt u_{t} = \psi$, we compute
	\begin{align*}
		\Ll_{u_0, g_0, \Gfixed}\psi &:= \left. \frac{d}{dt} \right|_{t=0} \tau( u_{t} , g_0, \Gfixed ).
	\end{align*}
For any $u \in \Bb^{1 + \e}(u_0)$, near $p \in \pp$, write
	\begin{align*}
		u_0 &= \lambda z + v \\
		u &= u_0 + \wt{v}, \\
		v, \wt{v} &\in r^{1 + \e} C^{2, \g}_b.
	\end{align*}
From \eqref{hmec}, we have
	\begin{align*} 
\lp  \frac{\sigma}{4}\rp\tau(u_0 + \wt{v}, g_0, \Gfixed) &= \p_z \p_{\ov{z}} \lp u_0 + \wt{v} \rp+ \frac{\p \log \rho_0(u)}{\p u} \p_z \lp u_0 + \wt{v} \rp \p_{\ov{z}} \lp u_0 + \wt{v} \rp \\
		&=  \tau(u_0, g_0, \Gfixed)  \\
		&\qquad + \p_z \p_{\ov{z}}   \wt{v} + \lp  \frac{\p \log \rho_0(u)}{\p u}   -  \frac{\p \log \rho_0(u_0)}{\p u}  \rp  \p_z  u_0 \p_{\ov{z}} u_0 \\
		&\qquad +  \frac{\p \log \rho_0(u)}{\p u} \lp \p_z  u_0 \p_{\ov{z}} \wt{v} + \p_z  \wt{v} \p_{\ov{z}} u_0 + \p_z  \wt{v} \p_{\ov{z}} \wt{v} \rp.
	\end{align*}
Since $\tau(u_0, g_0, \Gfixed) = 0$, 
	\begin{equation}
	\begin{split}\label{label}
	\lp  \frac{\sigma}{4}\rp\tau(u_0 + \wt{v}, g, \Gfixed)&=
        \p_z \p_{\ov{z}}   \wt{v} + \lp  \frac{\p \log \rho_0(u)}{\p
          u}   -  \frac{\p \log \rho_0(u_0)}{\p u}  \rp  \p_z  u_0
        \p_{\ov{z}} u_0 \\
		&\qquad +  \frac{\p \log \rho_0(u)}{\p u} \lp \p_z  u_0
                \p_{\ov{z}} \wt{v} + \p_z  \wt{v} \p_{\ov{z}} u_0 +
                \p_z  \wt{v} \p_{\ov{z}} \wt{v} \rp.
	\end{split}	\end{equation}
If we have $u_t$ with $\ddt u_t = \psi$, then
	\begin{equation} \label{Lll}  \begin{split}
\frac{\sigma}{4} \Ll \psi &=\p_z \p_{\ov{z}}  \psi + \frac{\p \log
  \rho_0(u)}{\p u} \lp \p_z  u_0 \p_{\ov{z}} \psi + \p_{\ov{z}} u_0
\p_z  \psi  \rp  + A \psi 
	\end{split} \end{equation}
where
	\begin{align} \nonumber
	\frac{\sigma}{4} A \psi &:=  \p_z  u_0 \p_{\ov{z}} u_0 \cdot
        \ddt  \frac{\p \log \rho_0(u_t)}{\p u} \\ \label{AAA}
	&= 2  \p_z  u_0 \p_{\ov{z}} u_0 \lp \frac{\p^2 \mu}{\p u \p
          \ov{u}} \ov{\psi} + \frac{\p^2 \mu}{\p u^2 } \psi\rp +  \p_z
        u_0 \p_{\ov{z}} u_0 \frac{\al - 1}{u_0^2} \psi.
	\end{align}
As a preliminary to the analysis below, we can now see that
$\Ll$ acts on weighted H\"older spaces  (see \eqref{c2gb2}):
\begin{equation}
  \label{eq:holderbounded}
  \Ll :  r^{c} \Xx^{2, \g}_b \lra
r^{c - 2\al} \Xx^{0, \g}_b
\end{equation}
In fact, if we define the operator $\wt{L} :=  \frac{\sigma \absv{z}^2}{ 4} \Ll$
then
\begin{equation}
  \label{il0}
  	\wt{L}\psi = 	I(\wt{\Ll})\psi  + E(\psi)
\end{equation}
where
	\begin{align}\label{il}
		I(\wt{\Ll}) \psi &= \lp z\p_z \rp \lp
                \ov{z}\p_{\ov{z}} \rp  \psi + \lp \al - 1 \rp \ov{z}
                \p_{\ov{z}} \psi 
	\end{align}
and $E$ is defined (locally near $\pp$) by this equation.  It follows immediately that
\begin{equation}
  \label{eq:holderbounded}
  I(\wt{\Ll}) :  r^{c} C^{2, \g}_b \lra
r^{c} C^{0, \g}_b
\end{equation}
Furthermore, using $\absv{z}\absv{\frac{ \p \mu}{ \p u}} +  \absv{\p_z
  v} + \absv{\p_{\ov{z}} v}  = \mc{O}\lp \absv{z}^{\delta} \rp$
 we see that if
$\psi \in r^{c}C^{2, \gamma}_{b}
$ near $p \in \pp$ then
\begin{equation}
  \label{eq:6}
  E(\psi) \in r^{c + \epsilon}C^{2, \gamma}_{b}.
\end{equation}
From these two equations, \eqref{eq:holderbounded} follows immediately.

For the arguments in section \ref{closedsec} we must also compute the mapping properties of the locally defined operator
	\begin{align*}
		Q \wt{v} &= \tau(u_0 + \wt{v}, g_0, \Gfixed) - L(\wt{v}) 
	\end{align*}
From (\ref{label})-(\ref{AAA}), we have
	\begin{align*}
		\frac{\sigma}{4} \lp \tau(u_0 + \wt{v}, g_0, \Gfixed) - L(\wt{v}) \rp &= \lp \frac{\p \log \rho_0(u)}{\p u}  - \frac{\p \log \rho_0(u_0)}{\p u}  \rp \p_z  u_0  \p_{\ov{z}}  u_0   \\
		&\qquad +  \frac{\p \log \rho_0(u)}{\p u} \p_z \wt{v} \p_{\ov{z}} \wt{v} - A \wt{v}\\
		 &= \lp \frac{\p \log \rho_0(u)}{\p u}  - \frac{\p
                   \log \rho_0(u_0)}{\p u}\rp \p_z  u_0  \p_{\ov{z}}
                 u_0 \\
                 &\qquad - \ddt  \frac{\p \log \rho_0(u_t)}{\p u} \p_z  u_0  \p_{\ov{z}}  u_0   + \lp \frac{\p \mu(u)}{\p u}  + \frac{\al - 1}{ u} \rp \p_z \wt{v} \p_{\ov{z}} \wt{v} 
	\end{align*}
From this formula and the fact that $\wt{v} \in r^{1 + \e}C^{2,
  \g}_b$,  $\mu \in r^{\e}C^{1, \g}_b$, and $\p_{\ov{z}} u \in r^{
  \e}C^{2, \g}_b$ we see that
	\begin{equation} \label{qbound}
		\norm[r^{1 + 2\e - 2\al} C^{2, \g}_b ]{ Q(\wt{v})} < C \norm[r^{1 + \e} C^{1, \g}_b ]{\wt{v}}
	\end{equation}
holds for $\e > 0$ small, $k \in \N$ and $\g \in (0, 1)$ arbitrary.

\subsection{The $b$-calculus package for $\Ll$} \label{bcalcsect}

This section links the study of $L$ to a large body of work on
$b$-differential operators.  For more detailed definitions and proofs
of what follows we refer the reader to \cite{me}.  Fixing conformal
coordinates $z_i$ near each $p_i \in \pp$, we make a smooth function
$r : \Sigma \lra \C$ that is equal to $\absv{z_i}$ in a neighborhood
of each $p_i$.  There is a smooth manifold with boundary $[\Sigma;
\pp]$ and a smooth map $\beta: [\Sigma; \pp] \lra \Sigma$ which is a
diffeomorphism from the interior of $[\Sigma; \pp]$ onto $\Sp$, with
$\bb^{-1}(\pp) = \p [\Sigma; \pp] \simeq \cup_{i = 1}^k S^1$, on which
the map $r \circ \beta$ is smooth up to the boundary.  The space
$[\Sigma; \pp]$ is constructed by radial blow up.  Finally, let  
 \begin{equation} \label{nub}
	\mc{V}_b = \mbox{smooth vector fields on $[\Sigma; \pp]$ which are tangent to the boundary}
 \end{equation}
The Lie algebra $\mc{V}_{b}$ generates a filtered algebra of differential operators called $b$-differential operators.

For a more concrete definition, let $B_{i}$, $i = 1, 2$ be any two vector bundles over $[\Sigma; \pp]$.  Then $P$ is a differential $b$-operator of order $N$ on sections of $B$ if it admits a local expression
	\begin{align} \label{localexpresh}
		P = \sum_{i + j \leq N} a_{i , j} \lp r\p_r \rp^i \p_\theta^j  &\mbox{ where }  
		a_I \in C^\infty \lp [\Sigma ; \pp] ; End(B_{1};B_{2}) \rp
	\end{align}
near the boundary of $[\Sigma; \pp]$.  We will call $P$ \textbf{$b$-elliptic} if for all $(r, \theta)$ and $(\xi, \eta) \in \R^2 - \set{ (0, 0) }$,
 \begin{equation*}
		\sum_{i + j \leq N} a_{i , j}(r, \theta) \xi^i \eta^j
                \mbox{ is invertible.}
\end{equation*}
	
Let $(u, g, G)$ solve (\ref{hme2}) and satisfy Assumption
\ref{assump}.  In \eqref{eq:5}, the operator $\wt{L}$ can be defined
globally using our extension of $r$ and the local conformal
factor $\sigma$ to positive global functions. We immediately see that $\wt{L}$ is
an elliptic $b$-operator, and that
$E$ is a $b$-operator which, in local coordinates as in
(\ref{localexpresh}), has coefficients $a_I$ tending to zero at a polynomial rate.

We will now describe the relevant properties of an
elliptic $b$-differential operator in our context. To begin we define
the set $\Lambda \subset \C$ of indicial roots of $\wt{L}$.
Given $p \in \pp$, we define a set, $\Lambda_{p}$ consisting of all $\zeta \in \C$ such that for some function $a = a(\theta) : S^1 \lra \C$ and some $p \in \N$ 
 \begin{equation} \label{indicialdef}
		\wt{L} r^\zeta a(\theta) = o(r^\zeta).
 \end{equation}
The total set of indicial roots is
\begin{equation}
  \label{eq:indicialroots}
  \Lambda = \set{z \in \C^{n} : z_{i} \in \Lambda_{p_{i}} \mbox{ for
      some } i}.
\end{equation}
We will show in section \ref{indroots} that $\Lambda$ is a discrete
subset of $\R$.  For all $c \not \in \Re \Lambda$ there is a parametrix
$\wt{\mc{P}}_c$ for $\wt{L}$, i.e.\ 
 \begin{equation} \label{parametrix}\begin{split}
					\wt{\mc{P}}_c \circ \wt{\Ll} &= I - R_1 \\
					 \wt{\Ll} \circ \wt{\mc{P}}_c &= I - R_2
\end{split}\end{equation}
for compact operators $R_1$ and $R_2$ on $r^{c} \Xx^{k, \g}_b$ and
$r^{c} \Xx^{k - 2, \g}_b$, respectively.  This holds for all $k \geq
2$.  

The spaces $r^c \Xx^{2, \g}_b$ can be replaced, in the sense that
everything above is still true, by weighted $b$-Sobolev spaces,
defined as follows.  Let $d \mu$ be a smooth, nowhere-vanishing density on $\Sigma_\pp$ so that near $\pp$
 \begin{equation*}
		d\mu = \frac{drd\theta}{r} 
\end{equation*}
Let $\Xx_{L^2_b(d\mu)}$ be the completion of $r^\infty \Xx^{\infty}_b$ (smooth vector fields vanishing to infinite order at $\pp$) with respect to the norm 
 \begin{equation*}
		\norm[L^2_b(d\mu)]{\psi}^{2} = \int_{\Sigma} \norm[u^*G]{\psi}^2 d\mu
\end{equation*} 
As in the H\"older case, $\psi \in r^c \Xx_{L^2_b(d\mu)}$ if and only
if $r^{-c} \psi \in \Xx_{L^2_b(d\mu)}$, and $r^c \Xx_{L^2_b(d\mu)} $
is a Hilbert space with inner product
\begin{equation*}
  \la \psi, \psi' \ra_{r^c \Xx_{L^2_b(d\mu)}} = \la r^{-c}\psi, r^{-c}\psi' \ra_{ \Xx_{L^2_b(d\mu)}}
\end{equation*}
We define the weighted $b$-Sobolev spaces by
\begin{equation}\label{eq:weightedbholder}
	\begin{split}
		\psi \in r^c \Xx_{H^k_b(d\mu)}	&\iff \begin{array}{c}
		\mbox{for all $k$-tuples } V_1, \dots, V_k \in \mc{V}_b,  \\
		\nabla_{V_1} \dots \nabla_{V_k} \psi \in r^c\Xx_{L^2_b(d\mu)}
		 \end{array}
	\end{split}
      \end{equation}

The operators in (\ref{parametrix}) also acting between
weighted $b$-Sobolev spaces, and it follows by standard functional
analysis that $\wt{L}$ is Fredholm, i.e.\ that it has closed range
and finite dimensional kernel and cokernel.  This immediately implies
that 
\begin{equation}\label{eq:sobbounded}
  L : r^{c}\Xx_{H^k_b(d\mu)} \lra  r^{c - 2\Aa}\Xx_{H^{k-2}_b(d\mu)} 
\end{equation}
is Fredholm.  Thinking of $L$ as a
map from the $r^{c}L^2_b(d\mu)$ orthocomplement of its kernel onto its
range, we can define the generalized inverse

 \begin{equation} \label{parametrixr}\begin{split}
					\Gg_c \circ \Ll &= I - \pi_{ker} \\
					 \Ll \circ \Gg_c &= I - \pi_{coker},
\end{split}\end{equation}
where $\pi_{ker}$ (resp$.$ $\pi_{coker}$) is the $r^{c}L^2_b(d\mu)$
(resp$.$ $r^{c - 2\Aa}L^2_b(d\mu)$) orthogonal projections onto the
kernel (resp$.$ cokernel) of $L$.  In particular
\begin{equation} \label{eq:otherbounded}
  \begin{split}
    \Gg_c: r^{c - 2\Aa}\Xx_{H^{k-2}_b(d\mu)} &\lra
    r^{c}\Xx_{H^k_b(d\mu)} \\
    \pi_{ker}: r^{c }\Xx_{H^{k}_b(d\mu)} &\lra
    r^{c}\Xx_{H^k_b(d\mu)}\\
    \pi_{coker}: r^{c - 2\Aa}\Xx_{H^{k-2}_b(d\mu)} &\lra r^{c -
      2\Aa}\Xx_{H^{k - 2}_b(d\mu)}
  \end{split}
\end{equation}
In fact this holds on the weighted $b$-H\"older spaces, so everywhere
$r^{c'} \Xx^{k', \g}_b$ can replace $r^{c'}\Xx_{H^{k'}_b(d\mu)}$ (see
\cite{m}, section 3 for more on the relationship between $b$-H\"older
and $b$-Sobolev spaces).  The fact that \eqref{eq:otherbounded} holds
for both $b$-Sobolev and $b$-H\"older spaces immediately implies the
following lemma, which will be very useful below.
\begin{lemma}\label{thm:cokerker}
For any $c \not \in \Lambda$,
  \begin{equation}
    \label{eq:9}
    \mbox{Ker}\lp L :  r^{c}\Xx_{H^2_b(d\mu)} \lra
    r^{c - 2\Aa}\Xx_{H^{0}_b(d\mu)} \rp = \mbox{Ker}\lp L :
    r^{c}\Xx^{2, \gamma}_{b} \lra
    r^{c - 2\Aa}Xx^{0, \gamma}_{b}  \rp,
  \end{equation}
and $L(r^{c}\Xx^{2, \g}_{b}) \oplus W = r^{c -2\Aa}\Xx^{0, \g}_{b}$, where
  \begin{equation}
    \label{eq:8}
    W = \mbox{Coker}\lp L :  r^{c}\Xx_{H^2_b(d\mu)} \lra
    r^{c - 2\Aa}\Xx_{H^{0}_b(d\mu)} \rp := \lp  L \lp r^{c}\Xx_{H^2_b(d\mu)} \rp\rp^{\perp}
  \end{equation}
where the orthocomplement is computed with respect to the $\la,\ra_{
  r^{c - 2\Aa}\Xx_{H^2_b(d\mu)} }$ inner product.  This last equation
says that both the image of \eqref{mapquest2} and the image of
\eqref{eq:sobbounded} are complimented by $\mbox{Im}\lp\pi_{coker}\rp$
in \eqref{eq:otherbounded}.
\end{lemma}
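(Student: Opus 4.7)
The plan is to exploit the fact that the parametrix construction of \cite{me} is available simultaneously on the weighted $b$-Sobolev and weighted $b$-H\"older scales, together with the polyhomogeneous regularity of solutions to both $L\psi = 0$ and the formal adjoint equation $L^{*}w = 0$. The indicial roots $\Lambda$ are discrete, and the assumption $c \notin \Re \Lambda$ will render the small weight shifts between the two scales inconsequential.

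For the kernel equality \eqref{eq:9}, suppose $\psi \in r^{c}\Xx_{H^{2}_b}$ satisfies $L\psi = 0$. Interior ellipticity gives $\psi \in C^{\infty}(\Spr)$, and the parametrix identity $\mc{P}_c L = I - R_1$, together with the smoothing nature of $R_1$, yields by the standard iterative improvement argument that $\psi$ admits an asymptotic expansion $\sum r^{\zeta}\log^{p}r\, a_{\zeta,p}(\theta)$ near $\pp$ with $\zeta \in \Lambda$. Membership in $r^{c}\Xx_{H^{2}_b}$, where the measure is $drd\theta/r$, forces $\Re \zeta > c$ for every term in the expansion; and since exactly the same condition places $\psi$ in $r^{c}\Xx^{2,\gamma}_b$, one inclusion follows. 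The reverse inclusion is immediate via the embedding $r^c\Xx^{2,\gamma}_b \hookrightarrow r^{c - \delta}\Xx_{H^{2}_b}$ for arbitrarily small $\delta > 0$, followed by the same indicial root analysis on the Sobolev side (equality $\Re \zeta = c$ is excluded by $c \notin \Re \Lambda$).

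For the decomposition, take $f \in r^{c - 2\Aa}\Xx^{0,\gamma}_b$; after an arbitrarily small weight loss, $f$ lies in $r^{c - 2\Aa}\Xx_{H^{0}_b}$ as well. Writing $f = L\mc{G}_c f + w$ with $w = \pi_{coker}(f) \in W$ from the Sobolev version of \eqref{parametrixr}, elements of $W$ satisfy $L^{*}w = 0$ weakly and are therefore polyhomogeneous by the same bootstrap as above; hence $w \in r^{c - 2\Aa}\Xx^{0,\gamma}_b$. Consequently $L\mc{G}_c f = f - w \in r^{c-2\Aa}\Xx^{0,\gamma}_b$, and applying the H\"older version of the parametrix to both sides upgrades $\mc{G}_c f$ to an element of $r^{c}\Xx^{2,\gamma}_b$, modulo a kernel element that by the first half of the lemma already lies in $r^{c}\Xx^{2,\gamma}_b$ and can be absorbed into the first summand. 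This yields $f \in L(r^{c}\Xx^{2,\gamma}_b) + W$. Directness of the sum is automatic: the $L^{2}_b$-orthogonality of $W$ to $L(r^{c}\Xx_{H^{2}_b})$ (which contains $L(r^{c}\Xx^{2,\gamma}_b)$ up to the same infinitesimal weight shift) forces any element of the intersection to have vanishing norm.

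The main delicacy is the polyhomogeneous regularity of elements of $W$, for which the adjoint parametrix construction in \cite{me} must be invoked in full; granted this, the argument is essentially bookkeeping with weights, rendered routine by the discreteness of $\Lambda$ and the standing assumption $c \notin \Re\Lambda$.
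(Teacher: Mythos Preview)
Your approach is correct in spirit and arrives at the same conclusion, but it is considerably more elaborate than what the paper does. The paper simply observes that the generalized inverse $\mc{G}_c$ and the projections $\pi_{ker}$, $\pi_{coker}$ from \eqref{parametrixr}--\eqref{eq:otherbounded} are bounded \emph{simultaneously} on the $b$-Sobolev and $b$-H\"older scales (this is the content of the reference to \cite{m}, Section~3), and the lemma is then literally a one-line consequence: applying $I = L\mc{G}_c + \pi_{coker}$ on the H\"older side gives the decomposition directly, with $\mc{G}_c f \in r^{c}\Xx^{2,\gamma}_b$ and $\pi_{coker}f \in W$ without any weight loss or bootstrapping. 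Your route instead re-derives the regularity of kernel and cokernel elements via polyhomogeneity and then pieces the decomposition together by hand; this is a legitimate alternative that makes the mechanism more transparent, at the cost of extra bookkeeping.

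One point to tighten: in your directness argument you write that $L(r^{c}\Xx_{H^{2}_b})$ ``contains $L(r^{c}\Xx^{2,\gamma}_b)$ up to the same infinitesimal weight shift.'' This is not quite right as stated, since $r^{c}\Xx^{2,\gamma}_b \not\subset r^{c}\Xx_{H^{2}_b}$ (only $r^{c}\Xx^{2,\gamma}_b \subset r^{c-\delta}\Xx_{H^{2}_b}$), and $W$ is orthogonal to the image at weight $c$, not $c-\delta$. The clean fix is to note that elements of $W$, being polyhomogeneous solutions of the adjoint equation, actually lie in $r^{c-2\Aa+\epsilon}\Xx^{0,\gamma}_b$ for some $\epsilon>0$ (no indicial root at $c$), so the pairing $\langle L\psi, w\rangle$ with $\psi \in r^{c}\Xx^{2,\gamma}_b$ is finite and vanishes by integration by parts against $L^{*}w = 0$; alternatively, observe that since no indicial root lies in $(c-\delta,c]$, the Sobolev cokernel is unchanged under the weight shift, so $W$ is already orthogonal to $L(r^{c-\delta}\Xx_{H^{2}_b}) \supset L(r^{c}\Xx^{2,\gamma}_b)$.
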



Finally, we will need to use the fact that approximate solutions admit partial expansions.  To be precise, from Corollary 4.19 in \cite{m}, if $\psi \in r^{c } \Xx^{k, \g}_b $ and $ \Ll \psi \in r^{c + \delta - 2\Aa} \Xx^{k - 2, \g}_b $ for $\delta > 0$, i.e.\ if $\Ll \psi$ vanishes faster than the generically expected rate of $r^{c - 2\Aa}$, then $\psi$ decomposes as
			\begin{align} \label{expdecomp1}
				\psi = \psi_1 + \psi_2
			\end{align}
Where $\psi_1 \in r^{c + \delta } \Xx_b$ and $\psi_2$ admits an
asymptotic expansion, meaning for some discreet $\mc{E} \subset \C
\times \N$ which intersects $\set{ \R z < C}$ at a finite number of
points,
			\begin{align} \label{expdecomp2}
				\psi_2(z) &=  \sum_{(s, p) \in
                                  \mc{E}, c + \delta > s > c}  a_{s, p}(\theta) r^{s} \log^p r,
			\end{align}
where $a_{s, p}: S^1 \lra \C.$
In particular, we have
\begin{lemma}
 Solutions to $\Ll \psi = 0$ have complete asymptotic expansions,
 i.e.\ they are polyhomogeneous (see \ref{thm:phg}).
\end{lemma}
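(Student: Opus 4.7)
The plan is an iterative bootstrap that uses the partial expansion result \eqref{expdecomp1}--\eqref{expdecomp2} repeatedly, at each stage peeling off a finite collection of indicial terms and improving the weight of the remainder by a fixed amount. The key observation is that if $\Ll\psi=0$, then for every $\delta>0$ we trivially have $\Ll\psi\in r^{c+\delta-2\Aa}\Xx^{k-2,\g}_b$ whenever $\psi\in r^{c}\Xx^{k,\g}_b$, so the hypothesis needed to invoke \eqref{expdecomp1}--\eqref{expdecomp2} is always available.

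The first step is to fix an initial weight $c_{0}$ with $\psi\in r^{c_{0}}\Xx^{k,\g}_b$; any weight for which the Fredholm theory of $\wt L$ applies will do. Choose $\delta_{0}>0$ small enough that $(c_{0},c_{0}+\delta_{0})$ contains only finitely many indicial roots of $\wt L$, which is possible by the discreteness of $\Lambda$ proved in section \ref{indroots}. Apply \eqref{expdecomp1}--\eqref{expdecomp2} to obtain a decomposition
\begin{equation*}
\psi \;=\; \psi_{2}^{(0)} + \psi_{1}^{(0)}, \qquad \psi_{1}^{(0)}\in r^{c_{0}+\delta_{0}}\Xx^{k,\g}_b,
\end{equation*}
with $\psi_{2}^{(0)}$ a finite sum of terms of the form $r^{s}\log^{p}r\,a_{s,p}(\theta)$ at indicial roots $s\in(c_{0},c_{0}+\delta_{0})$.

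The iteration step is then to rerun this procedure with $\psi$ replaced by $\psi_{1}^{(0)}$. To do this I must check that $\Ll\psi_{1}^{(0)} = -\Ll\psi_{2}^{(0)}$ still lies in a weighted space that gains over the generic rate; this is the substance of the argument. By the definition \eqref{indicialdef} of indicial roots, each summand of $\psi_{2}^{(0)}$ is annihilated by $\wt L$ to leading order, so $\wt L\psi_{2}^{(0)}$ vanishes strictly faster than the expected rate dictated by its weight. The perturbation $E$ in \eqref{il0}, whose coefficients decay polynomially by the analysis of section \ref{sec:linearization}, also sends $r^{s}C^{2,\g}_b$ into $r^{s+\varepsilon}C^{0,\g}_b$ for some $\varepsilon>0$. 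Combining these two facts, $\Ll\psi_{2}^{(0)}$ lies in $r^{c_{0}+\delta_{0}+\varepsilon'-2\Aa}\Xx^{k-2,\g}_b$ for some $\varepsilon'>0$, so \eqref{expdecomp1}--\eqref{expdecomp2} applied to $\psi_{1}^{(0)}$ advances the weight further. Iterating produces, for every $N$, a finite polyhomogeneous sum $\psi_{\mathrm{poly}}^{(N)}$ together with a remainder $\psi_{\mathrm{rem}}^{(N)}\in r^{c_{0}+N\delta}\Xx^{k,\g}_b$ such that $\psi=\psi_{\mathrm{poly}}^{(N)}+\psi_{\mathrm{rem}}^{(N)}$; taking $N\to\infty$ yields the complete asymptotic expansion in the sense of Definition \ref{thm:phg}.

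The main obstacle I expect is the bookkeeping at the iteration step: one has to track carefully that applying the full operator $\Ll=\wt L+E$ (after normalization) to a finite sum of formal indicial solutions really does produce an error in a strictly better weighted $b$-H\"older space, not merely in the generic range. Once that quantitative improvement is in hand the convergence of the expansion is automatic, and the index set of $\psi$ is the union of the indicial sets extracted at each stage, which is discrete by \eqref{eq:indicialroots}.
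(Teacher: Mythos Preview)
Your proposal is correct and is essentially the same argument as the paper's, just unpacked: the paper invokes the refined mapping property \eqref{eq:Gcpolyhomo} of the generalized inverse $\Gg_c$ from \cite{m}, which is itself proved by precisely the iterative bootstrap you describe---peeling off indicial terms via \eqref{expdecomp1}--\eqref{expdecomp2} and using that the error term $E$ in \eqref{il0} gains a fixed weight $\varepsilon$ at each step. Your identification of the key point (that $\Ll\psi_2^{(0)} = E\psi_2^{(0)}$ since $I(\wt\Ll)$ annihilates indicial solutions, so the remainder improves by a fixed amount) is exactly right.
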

This follows from the improvement of \eqref{eq:otherbounded} in
\cite{m}; if $\mc{A}_{phg}^{\mc{E}}$ denotes the polyhomogeneous
functions with index set $\mc{E}$, then, given $c' > c$, 
\begin{align}
  \label{eq:Gcpolyhomo}
  \Gg_c : r^{c' - 2\Aa}\Xx^{0, \g}_b &\lra   r^{c}\Xx^{2, \g}_b +
 \lp\mc{A}_{phg}^{\mc{E}} \cap  r^{c'}\Xx^{2, \g}_b\rp
\end{align}

We can also use \eqref{eq:Gcpolyhomo} to prove polyhomogeneity for the
solution $u$ 
by applying the parametrix for the linearization of
$\tau$ to the harmonic map equation.   Recall from \eqref{qbound} that
$\tau(u, g, G) = \Ll(u) + Q(u)$.  Since $u$ is harmonic, we have $\Ll(v) = - Q(v)$.
We now want to let our parametrix $\mathcal{G}_{c}$, with $c_{i} = 1 +
\epsilon$ for all $i$, act on both sides of this equation and to use the formula $\mathcal{G}_{c} \Ll = I $. 
This yields 
\begin{equation}\label{expansionrelation}
v =  - \mathcal{G} Q(v)
\end{equation}
We know that, locally $v \in r^{1 + \e} \Xx^{k, \g}_b$. From
(\ref{qbound}) we have
\ben \label{quad}
Q: r^{1 + \e} \Xx^{k, \g}_b &\lra& r^{1 + 2\e - 2\al} \Xx^{k-1, \g}_b
\een
so by \eqref{eq:Gcpolyhomo}, $- \Gg Q(v)  = v_1+ v_2$
where $v_{1} \in \Xx^{1 + 2 \e
}_{k +1, \g} $ and $v_{2} \in \mc{A}_{phg}^{\mc{E}}$.  The full
expansion follows from induction; assuming that $v = v_{1, k} + v_{2,
  k}$, where $v_{1, k} \in  \Xx^{1 + k \e
}_{k +1, \g} $ and $v_{2,  k} \in \mc{A}_{phg}^{\mc{E}} $ we apply the same
reasoning to this decomposition yields to increase the vanishing order
of the term $v_{1, k}$.

\subsection{Cokernel of $L :  r^{1 - \e} \Xx^{2, \g}_b \lra r^{ 1 - \e - 2\Aa } \Xx_b^{0, \g} $}

We continue to let $(u_{0}, g_{0}, G)$ denote a solution to
\eqref{hme2} in Form \ref{uform} satisfying Assumption \ref{assump},
and to let $L$ denote the linearization of $\tau$ at $u_{0}$.

An important step in the proof of Proposition \ref{mainlemma} is an
accurate identification of `the' cokernel of the map 
 \begin{equation} \label{mapquest2}
		\Ll : r^{1 - \e} \Xx^{2, \g}_b \lra r^{ 1 - \e - 2\Aa } \Xx_b^{0, \g}
 \end{equation}
(note the `$-\e$.')  In this section we will prove the following lemma
	\begin{lemma} \label{cokerminuse}
	\begin{align*}
		r^{ 1 - \e - 2\Aa } \Xx^{0, \g}_b = \Ll \lp r^{1 - \e }\Xx^{2, \g}_b \rp \oplus \Kk
	\end{align*}
where
	\begin{align} \label{Kk}
		\fbox{$
		\displaystyle{
		\Kk := \krn{1 + \e - 2\Aa}
		}
		$,}
	\end{align}
and this decomposition is $L^2$ orthogonal with respect to the inner
product in \eqref{pairingdef}.
	\end{lemma}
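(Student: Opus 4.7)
The plan is to identify $\Kk$ as the cokernel of the map \eqref{mapquest2} by combining the formal self-adjointness of $L$ (in the natural $L^{2}$-pairing coming from $L$ being the Jacobi operator of the energy functional) with the Fredholm duality provided by the $b$-calculus (section \ref{bcalcsect}).

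First I would verify that $L$ is formally self-adjoint with respect to
\[
\la \psi, \psi'\ra := \int_{\Sigma} \la \psi, \psi'\ra_{u^{*}G} \, dVol_g.
\]
This is because $-\tau$ is the $L^{2}$-gradient of $E$ (Lemma \ref{frstvarbdry}), so its linearization $L$ is the Hessian of $E$. Rigorously I would integrate by parts on $\Sigma \setminus \bigcup_{j} \{ r_{j} \leq \delta \}$ using the explicit formula \eqref{Lll}--\eqref{AAA} and show that the boundary terms vanish as $\delta \to 0$. This uses that $\psi, \psi' \in r^{c} \Xx^{2,\g}_b$ for $c > 0$, together with the fact that the area form of $\{ r_{j} = \delta \}$ induced by $u^{*}G \otimes dVol_g$ scales like $\delta^{4\al_{j} - 1}$.

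Next I would compute the dual weight. Near a cone point of angle $2\pi\al_{j}$, $u^{*}G \sim r^{2(\al_{j} - 1)}|dz|^{2}$ and $dVol_g \sim r^{2\al_{j} - 1}\, dr\, d\theta$, so the $L^{2}$-pairing locally behaves like
\[
\int \psi\bar{\psi'}\, r^{4\al_{j} - 2}\, d\mu, \qquad d\mu = \frac{dr\, d\theta}{r}.
\]
Therefore this pairing identifies the continuous dual of $r^{c}\Xx^{0,\g}_b$ with $r^{2 - 4\Aa - c}\Xx^{0,\g}_b$, componentwise. The target of \eqref{mapquest2} has weight $1 - \e - 2\Aa$, so its dual weight is
\[
2 - 4\Aa - (1 - \e - 2\Aa) \;=\; 1 + \e - 2\Aa,
\]
which is precisely the weight defining $\Kk$ in \eqref{Kk}.

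Third, by section \ref{bcalcsect} the operator $\wt{L} = \tfrac{\sigma|z|^{2}}{4} L$ is $b$-elliptic, hence Fredholm between weighted $b$-Sobolev and $b$-H\"older spaces for any weight not in $\Re \Lambda$. A direct computation on $r^{\zeta}e^{ik\theta}$ using \eqref{il} shows that $\Lambda \subset \{ k, 2 - 2\al_{j} - k : k \in \Z \}$, so for $\e > 0$ small and $\al_{j}$ generic, both $1 - \e$ and $1 + \e - 2\Aa$ avoid $\Re \Lambda$. Standard Fredholm duality applied to $L^{*} = L$ then gives
\[
\bigl( L\bigl( r^{1-\e}\Xx^{2,\g}_b\bigr)\bigr)^{\perp} \;=\; \mbox{Ker}\bigl(L\rvert_{r^{1+\e-2\Aa}\Xx^{2,\g}_b}\bigr) \;=\; \Kk,
\]
and the range of $L$ is closed. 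This yields the $L^{2}$-orthogonal direct sum decomposition of the lemma (orthogonality is immediate from self-adjointness: $\la L g, \psi \ra = \la g, L\psi \ra = 0$ for $\psi \in \Kk$), and Lemma \ref{thm:cokerker} handles the passage between $b$-Sobolev and $b$-H\"older categories.

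The main obstacle is Step 1: although $L$ is symmetric on smooth compactly supported sections, our spaces only guarantee polynomial vanishing at $\pp'$, so some care is needed to ensure the integration-by-parts boundary terms actually die. A secondary issue is verifying that the perturbation $E$ in \eqref{il0}, which decays at the cone points but is not exactly a polynomial weight shift, does not alter the leading-order duality analysis; this is absorbed into the parametrix construction of section \ref{bcalcsect}.
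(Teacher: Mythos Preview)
Your approach is essentially the paper's: formal self-adjointness of $L$ in the geometric $L^{2}$-pairing, the dual-weight computation $2-4\Aa-(1-\e-2\Aa)=1+\e-2\Aa$, Fredholm duality from the $b$-calculus, and Lemma~\ref{thm:cokerker} to pass between Sobolev and H\"older categories. The one point you flag as an obstacle---that the integration by parts $\la Lg,\psi\ra=\la g,L\psi\ra$ must be justified for $g\in r^{1-\e}\Xx^{2,\g}_b$ and $\psi\in\Kk\subset r^{1+\e-2\Aa}\Xx^{2,\g}_b$, where the weights are exactly borderline---is resolved in the paper by observing that any $\psi\in\Kk$ is polyhomogeneous with expansion governed by the indicial roots \eqref{indrootsp}, hence actually lies in a strictly better weighted space; this gives the $\delta$ of extra decay needed to make the pairing finite and the boundary terms vanish.
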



Given two vector fields $\psi, \psi' \in \Gamma(u_0^* T\Sp)$ which are
smooth and vanish to infinite order near $\pp$, we define the
geometric $L^{2}$ pairing by
	\begin{equation} \label{pairingdef}
          \begin{split}
            \la \psi, \psi' \ra_{L^2} &:= \int_{\Sigma } \left<
              \psi, \psi' \right>_{u_0^*(G)} d\mu_g       = \Re \int_{M} \psi \overline{\phi} \rho(u_{0}) \sigma(z)
            \absv{dz}^2
          \end{split}
	\end{equation}
It is straightforward to check using  $g, G \in
\Mm_{k, \g, \nu}(\pp, \Aa)$ and $u_{0} \sim \lambda z$ that if $\psi \in r^{c}
                \Xx^{0, \g}_b(u_{0})$ and $\psi' \in r^{c'} \Xx^{0,
                  \g}_b(u_{0})$, then
\begin{equation}
  \label{ipfinite}
  c_i + c_i' > 2 - 4 \alpha_{i} \forall i \implies \la \psi, \psi'
  \ra_{L^{2}} < \infty
\end{equation}

Since they are necessary for subsequent functional analytic arguments,
we define weighted $L^2$ spaces of vector fields:
	\begin{definition}
		Let $\Xx_{L^2}$ denote the space with
			\be
	\psi \in \Xx_{L^2} &\iff& \la \psi, \psi \ra_{L^2} = \norm[L^2]{\psi}^2 < \infty
			\ee
		As in section \ref{bcalcsect}, let $r$ denote a
                positive function on $\Sp$ with the property that,
                near $p_{j} \in \pp$, $r =r_{j}$ where $z_{j} =
                r_{j}e^{i\theta_{j}}$ for a centered conformal
                  coordinate $z_{j}$.  Given $a = (a_1, \dots, a_k)$, let $r^a$
                denote a smooth positive function equal to $r_i^{a_i}$ near $p_i$, and
                let $ r^{a} \Xx_{L^2}$ be the space with
			\be
	\psi \in r^{a} \Xx_{L^2}&\iff& \la r^{-a} \psi, r^{-a} \psi \ra_{L^2} := \norm[r^a L^2]{\psi}^2 < \infty
			\ee
	Let $ r^{a} \Xx_{H^1_b} $ denote the space of sections $\psi$ of $\Gamma(u_0^{*} T\Si)$ such that $\| \psi \|_{u_0^*(T\Si)}$ and also $\|\nabla \psi \|_{u_0^*(T\Si)}$ are in $L^2_{loc}$, and near any cone point $r \nabla_{\p_r} \psi, \nabla_{\p_\theta} \psi \in r^{a} \Xx_{L^2}$.
	\end{definition}

It is standard (see e.g. \cite{el}), that the linearization of $\tau$
in (\ref{L}) is symmetric with respect to this inner product and
appears in the formula of the Hessian of the energy functional near a
harmonic map.  For reference, we state this here as 
	\begin{lemma}[Second Variation of Energy] \label{scndvarbdry}
		Let $(M, h)$ and $(N, \wt{h})$ be smooth Riemannian
                manifolds, possibly with boundary, and let $u_{0} :
                (M, h) \lra (N, \wt{h})$ be a $C^2$ map satisfying
                $\tau(u_{0}, h, \wt{h}) = 0$.  If $u_t$ is a variation of
                $C^2$ maps through $u_0 = 0$ with $
                \eval{\frac{d}{dt}}_{t = 0} u_t = \psi$, and $L$ is
                the linearization of $\tau$ at $u_{0}$ (see (\ref{L})), then
		\begin{align*}
			\fbox{ $ \Ll\psi =  \nabla^* \nabla \psi  + \tr_h  R^{\wt{h}}(du, \psi) du $ },
		\end{align*}
where ${\nabla }$ is the natural connection on $u_{0}^{*}(\mbox{T}N)$ induced by $\wt{h}$, and $R^{\wt{h}}$ is its curvature tensor.  If $\p_\nu$ denotes the outward pointing normal to $\p M$, then
 \begin{align}  \label{boundaryterm2}
   \left. \frac{d^2}{dt^2}E(u_t) \right|_{t = 0} 	&= \int_M  \lp
   \left< \nabla \psi ,  \nabla \psi \right>_{u^* \wt{h}}  +  \tr_h
   R^{\wt{h}}(du, \psi, \psi, du) \rp   dVol_h \\ \nonumber
				&\quad + \int_{\p M }  \la \nabla_{u_* \p_\nu} \psi, \psi \ra_{u^* \wt{h}}  ds \\ \label{hess}
                                &= -  \left< \Ll \psi ,  \psi \right>_{L^2} + \int_{\p M }  \lp \la \nabla_{\psi} \psi, u_* \p_\nu \ra_{u^* \wt{h}} + \la \nabla_{u_* \p_\nu} \psi, \psi \ra_{u^* \wt{h}} \rp ds
\end{align}
	\end{lemma}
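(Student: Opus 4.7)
\smallskip

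\noindent\textbf{Proof plan.} This is the classical second variation formula for harmonic maps (cf.\ Eells--Lemaire), adapted to the case with boundary. The computation relies on three standard ingredients: torsion-freeness of the pulled-back connection, the curvature identity for commuting covariant derivatives on the pulled-back bundle, and integration by parts with the appropriate boundary terms.

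To identify $\Ll\psi$, I would lift the variation to a map $F : (-\epsilon, \epsilon) \times M \lra N$ by $F(t, x) = u_t(x)$, and regard $\psi_t := F_*\p_t$ as a section of $F^*\mbox{T}N$, with $\psi = \psi_0$. Since $[\p_t, X] = 0$ for $X$ lifted from $M$ and the pulled-back Levi-Civita connection is torsion-free, one has the basic commutation
\begin{equation*}
  \nabla_{\p_t}(du_t(X)) \;=\; \nabla_X \psi_t.
\end{equation*}
Differentiating $\tau(u_t, h, \wt{h}) = \tr_h \nabla du_t$ in $t$ at $t=0$ and commuting $\nabla_{\p_t}$ past $\nabla_X$ picks up the curvature term $R^{F^*TN}(\p_t, X)\psi_t = R^{\wt h}(\psi_t, F_*X)\psi_t$. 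After recognizing $\tr_h\nabla^2 = \nabla^*\nabla$ (with the sign convention compatible with the definition of $\tau$ used here) and applying the symmetries of $R^{\wt h}$, this yields the stated formula $\Ll\psi = \nabla^*\nabla\psi + \tr_h R^{\wt h}(du, \psi)du$.

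For the first form \eqref{boundaryterm2} of the second variation, I would argue directly: starting from $\tfrac{d}{dt}E(u_t) = \int_M \la \nabla \psi_t, du_t\ra\, dVol_h$ (obtained by expanding the energy density and using the commutation above), differentiate once more in $t$ to obtain
\begin{equation*}
 \eval{\ddtt}_{0} E(u_t) = \int_M \la \nabla \psi, \nabla \psi\ra\, dVol_h + \int_M \la du, \nabla_{\p_t} \nabla \psi_t\ra\big|_{0}\, dVol_h.
\end{equation*}
In the second integrand, again commute $\nabla_{\p_t}$ past $\nabla_X$ to separate a curvature piece $\tr_h R^{\wt h}(du,\psi,\psi,du)$ from a term of the form $\la du, \nabla(\nabla_\psi \psi)\ra$; integrating this last term by parts, the interior contribution vanishes because $\tau(u_0)=0$, leaving only a boundary contribution which combines with the $\la \nabla \psi,\nabla\psi\ra$ term to produce \eqref{boundaryterm2}. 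For the second form \eqref{hess}, the cleaner route is to differentiate the first variation formula from Lemma \ref{frstvarbdry}. Since $\tau(u_0) = 0$, the interior contribution collapses to $-\la \Ll\psi, \psi\ra_{L^2}$. For the boundary term, differentiate $\int_{\p M}\la u_{t*}\p_\nu, \psi_t\ra\, ds$ in $t$ and use that $\nabla_{\p_t}(u_{t*}\p_\nu)|_{0} = \nabla_{u_*\p_\nu}\psi$ (from $[\p_t, \p_\nu] = 0$) together with $\nabla_{\p_t}\psi_t|_{0} = \nabla_\psi\psi$ (the geodesic acceleration of $t \mapsto u_t(x)$), producing the two boundary terms in \eqref{hess}.

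The routine part is the Bochner-style commutation, which is purely algebraic. The main obstacle, which is only bookkeeping rather than a genuine analytic difficulty, is tracking signs and the $R^{\wt h}$-symmetries consistently between the two forms and verifying that they agree via the integration-by-parts identity $\int_M \la \nabla^*\nabla \psi, \psi\ra = \int_M |\nabla\psi|^2 - \int_{\p M}\la \nabla_{u_*\p_\nu}\psi, \psi\ra$ combined with the formula for $\Ll\psi$ established in the first step.
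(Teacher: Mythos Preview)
Your sketch is correct and is exactly the standard Eells--Lemaire computation. Note, however, that the paper does not actually prove this lemma: it is simply stated with a reference to \cite{el}, so there is no ``paper's own proof'' to compare against. Your argument---lifting to $F:(-\epsilon,\epsilon)\times M\to N$, using torsion-freeness of the pulled-back connection to get $\nabla_{\p_t}(du_t(X))=\nabla_X\psi_t$, commuting covariant derivatives to produce the curvature term, and then integrating by parts---is precisely what one finds in the cited reference, so nothing is missing. The one small slip is in your line ``picks up the curvature term $R^{F^*TN}(\p_t,X)\psi_t$'': when differentiating $\tau(u_t)=\tr_h\nabla du_t$, the curvature acts on $du_t(X)$, not on $\psi_t$, i.e.\ the term is $R^{\wt h}(\psi,du(e_i))du(e_i)$; you recover the stated form $\tr_h R^{\wt h}(du,\psi)du$ via the antisymmetry of $R$ in its first two slots, which is what you allude to under ``tracking signs and symmetries.''
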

\noindent Note that the boundary term in the last line can be expressed in terms of the Lie derivative
 \begin{equation} \label{hessbterm}
		\int_{\p M }  \lp \la \nabla_{\psi} \psi, u_* \p_\nu \ra_{u^* \wt{h}} + \la \nabla_{u_* \p_\nu} \psi, \psi \ra_{u^* \wt{h}} \rp ds = \int_{\p M }  \mc{L}_{u_*\psi}\wt{h}(u_*\psi, u_*\p_\nu) ds.
 \end{equation}
We can also write down a necessary condition on the weights of
variations in the $b$-H\"older spaces so that the Hessian of energy is
given by the quadratic form corresponding to $L$.  	If $ \psi \in r^{1 - \Aa + \e}\Xx^{2, \g}_b$ and $u_t \in \Bb^{1 - \Aa + \e}$ has $\eval{\frac{d}{dt}}_{t = 0} u_t = \psi$, then
 \begin{equation*}
				\ddtt E(u_t, g, G) = - \la \Ll \psi , \psi \ra_{L^{2}}
\end{equation*}
As for the symmetry of $L$ with respect to the $L^{2}$ inner product, if $\psi \in  r^{c}\Xx^{2, \g}_b$ and  $\psi' \in  r^{c'}\Xx^{2, \g}_b$
and $c_{i} + c_{i}' > 2\al_{i} - 2$, then
\begin{equation}
  \label{thm:intbyparts}
  \la \Ll \psi , \psi' \ra_{L^{2}} = \la  \psi , \Ll\psi' \ra_{L^{2}}
\end{equation}
The proofs are a simple excercise in counting orders of decay, using
our assumptions on the $u_{0}, g_{0}$ and $G$ (and $\Gamma \sim r^{-1}$.)

We will use the fact that $L$ is symmetric with respect to the $L^{2}$
pairing to prove a relationship between the dimension of the cokernel
and the dimension of the kernel of $\Ll$ on these weighted spaces.
Given a constant $\delta \in \R$ and $c = (c_1, \dots, c_k) \in \R^k$,
let $c + \delta = (c_1 + \delta, \dots, c_k + \delta)$.  
	\begin{lemma}  \label{pairLemma}
		Again, let $(u_0, g_0, \Gfixed)$ solve (\ref{hme2}) and satisfy Assumption \ref{assump}.  If $c \in \R^k$ and if $\psi \in \Xx^{c + \Aa}_{H^1_b},  \psi' \in \Xx^{-c + \Aa}_{H^1_b}$, we have
			\begin{equation} \label{p2}
				\left< \Ll \psi, \psi' \right> = \left<  \psi, \Ll \psi' \right>
			\end{equation}
and the bilinear form 
			\begin{align*}
				r^{c + \Aa} \Xx_{H^1_b} \times r^{- c + \Aa} \Xx_{H^1_b} &\lra \R \\
				\psi, \psi' &\mapsto \left< \Ll \psi, \psi' \right> 
			\end{align*}
is continuous and non-degenerate.


			\end{lemma}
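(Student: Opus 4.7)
The plan is to reduce both the self-adjointness and the duality claim to the second-variation identity of Lemma \ref{scndvarbdry}, $\Ll \psi = \nabla^{*}\nabla \psi + R \psi$ with $R\psi := \tr_{g} R^{G}(du_{0}, \psi)\,du_{0}$, in which $R$ is a bounded, self-adjoint zeroth order operator (bounded because $\kappa_{G}$ is bounded under Assumption \ref{assump}; self-adjoint by the pair-exchange symmetry of the Riemann tensor). The central idea is to \emph{define} a bilinear form by integration by parts,
\[
B(\psi, \psi') := \int_{\Sigma} \la \nabla\psi, \nabla\psi' \ra_{u_{0}^{*}G}\, d\mu_{g} + \int_{\Sigma} \la R\psi, \psi' \ra_{u_{0}^{*}G}\, d\mu_{g},
\]
then identify $B$ with $\la \Ll \psi, \psi' \ra_{L^{2}}$ on a dense subspace, and finally establish non-degeneracy by asymptotic analysis at the cone points.

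First I would verify continuity of $B$ on $r^{c+\Aa}\Xx_{H^{1}_{b}} \times r^{-c+\Aa}\Xx_{H^{1}_{b}}$. The mirrored weights are dual: $r^{c+\Aa}\cdot r^{-c+\Aa} = r^{2\Aa}$, which meets the threshold \eqref{ipfinite} for finiteness against the conic measure $d\mu_{g}$. Cauchy-Schwarz on each summand, together with the fact that the Christoffel symbols of $u_{0}^{*}G$ contribute only $b$-type first-order terms so that $\nabla$ preserves $b$-Sobolev weights, yields the required bound. Second, for the symmetry statement \eqref{p2}, I would prove $\la \Ll\psi, \psi'\ra_{L^{2}} = B(\psi, \psi')$ first for $\psi, \psi' \in C_{c}^{\infty}(\Sigma_{\pp'})$, where integration by parts produces no boundary term; symmetry of $B$ is then immediate from symmetry of the metric pairing and self-adjointness of $R$. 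Density of $C_{c}^{\infty}(\Sigma_{\pp'})$ in $r^{a}\Xx_{H^{1}_{b}}$ (by a standard cutoff $\chi(r/\epsilon)$ near each cone point, noting that $r\partial_{r}\chi(r/\epsilon)$ is uniformly bounded) then passes the identity and symmetry to the full weighted spaces via the continuity just established.

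For non-degeneracy, suppose $B(\psi, \cdot) \equiv 0$ on $r^{-c+\Aa}\Xx_{H^{1}_{b}}$. Testing against $C_{c}^{\infty}(\Sigma_{\pp'})$ gives $\Ll\psi = 0$ distributionally on $\Sigma_{\pp'}$, hence classically by interior elliptic regularity, and by the polyhomogeneity result at the end of \S \ref{bcalcsect}, $\psi$ admits a full polyhomogeneous expansion at each $p \in \pp$. The mirrored weight choice places the admissible indicial exponents for $\psi$ and for $\psi'$ in exact duality under the indicial pairing of $\wt L$; peeling off the leading-order terms of $\psi$ by testing against $\psi'$ with matched indicial profile --- a standard $b$-calculus device, which reduces the pairing to a residue-type integral over the shrinking circles $\{r = \epsilon\}$ --- forces each coefficient of $\psi$ to vanish. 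Thus $\psi$ is $O(r^{\infty})$ at $\pp'$, and unique continuation for the elliptic operator $\Ll$ on $\Sigma_{\pp'}$ concludes $\psi \equiv 0$; the other direction is symmetric. The main obstacle is precisely this last step: ``$B(\psi, \cdot) \equiv 0$'' is strictly weaker than ``$\Ll\psi = 0$ as a distribution on all of $\Sigma$,'' and one must verify that the test space $r^{-c+\Aa}\Xx_{H^{1}_{b}}$ is broad enough to detect every asymptotic term of $\psi$ by matching to the indicial root analysis of $\wt L$ (to be carried out in \S \ref{indroots}), so as to avoid spurious degeneracies from indicial roots crossing the critical weight.
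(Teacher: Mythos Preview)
Your treatment of symmetry and continuity is essentially the paper's: both arguments rest on the second-variation identity $\Ll = \nabla^{*}\nabla + R$, establish the symmetric bilinear form $B$ on smooth compactly supported sections by integration by parts, verify continuity on the mirrored weighted spaces by counting decay rates, and extend by density. The paper compresses this into two sentences, but the content is the same.

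Your non-degeneracy argument, however, has a genuine gap. As written it would prove that $\Ll$ is injective on $r^{c+\Aa}\Xx_{H^{1}_{b}}$, and this is false: for suitable weights $c$ the space $\Kk$ of \eqref{Kk} is nontrivial (indeed the $C_{j}$ identified in Corollary \ref{positiveindex} as conformal Killing fields belong to it), and any such Jacobi field $\psi$ satisfies $B(\psi,\cdot)\equiv 0$ with $\psi\neq 0$. The ``peeling off by matched indicial profile'' step cannot work, because once testing against $C_{c}^{\infty}(\Sigma_{\pp'})$ has given you $\Ll\psi=0$ on the punctured surface, the identity $B(\psi,\psi')=\la \Ll\psi,\psi'\ra=0$ is automatic for every admissible $\psi'$ and yields no further constraint on the expansion coefficients of $\psi$. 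Unique continuation is likewise unavailable here: it would force $\psi\equiv 0$ from $\Ll\psi=0$ plus infinite-order vanishing, but you have not (and cannot) establish the latter.

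Note that the paper's own proof stops after ``part one is proven'' and never addresses non-degeneracy; only the symmetry \eqref{p2} is used downstream, to derive \eqref{eq:cokerker1}. So the non-degeneracy clause in the lemma statement should be read as the duality identification $\Ker \Ll\rvert_{r^{c+\Aa}} = \bigl(\Ll(r^{-c+\Aa})\bigr)^{\perp}$, which follows from symmetry alone, rather than as classical non-degeneracy of the form.
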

	
\begin{proof}

The fact that the equation $\left< \Ll \psi,
  \psi' \right> =  \left<  \psi,  \Ll \psi' \right> $ holds for $\psi,
\psi' \in r^{\infty} \Xx^{2, \g}_b $ follows from Lemma
\ref{scndvarbdry}.  All of our work up to this point then implies that
both sides of the equation are continuous with respect to the stated
norms, so part one is proven.

\end{proof}

We can now prove Lemma \ref{cokerminuse}.

\begin{proof}[Proof of Lemma \ref{cokerminuse}]

Lemma \ref{pairLemma} immediately implies that
	\begin{equation} \label{eq:cokerker1}
		\Ker  \Ll \rvert_{r^{c + \Aa}\Xx_{H^1_b}} =\lp \Ll \lp r^{- c + 
      \Aa}\Xx_{H^1_b}  \rp\rp^{\perp}
	\end{equation}
where the right hand side is simply defined to be those $\psi \in r^{c
  + \Aa}\Xx_{H^1_b}$ for which $\la L\psi', \psi\ra = 0$ for all
$\psi' \in r^{-c + \Aa}\Xx_{H^1_b}$.  We need to relate this to the
weighted $b$-Sobolev spaces in \eqref{eq:weightedbholder}.  Note that
	\begin{equation*}
	r^c 	\Xx_{H^1_b\lp d\mu \rp} = r^{c + 2\Aa
          - 1} \Xx_{H^1_b}.
	\end{equation*}
It follows from this that
\begin{equation}\label{eq:cokerker2}
  \begin{split}
      \Ker  \Ll \rvert_{r^{c + \Aa }\Xx_{H^1_b}} &=\mbox{Ker}\lp L :
      r^{c -\Aa + 1}\Xx_{H^2_b(d\mu)} \lra
    r^{c - 2\Aa}\Xx_{H^{0}_b(d\mu)} \rp \\
     \lp \Ll \lp r^{- c + \Aa}\Xx_{H^1_b}  \rp\rp^{\perp} &=
\mbox{Coker}\lp L :  r^{- c - \Aa + 1}\Xx_{H^2_b(d\mu)} \lra
    r^{c - 2\Aa}\Xx_{H^{0}_b(d\mu)} \rp 
  \end{split}
  \end{equation}
By \ref{eq:cokerker1} the two left hand sides are equal, thus are
right hand sides, so by Lemma \ref{thm:cokerker} we have that
$L(r^{1 - \epsilon}\Xx^{2, \g}_{b}) + \Kk = r^{1 -
  \epsilon-2\Aa}\Xx^{0, \g}_{b}$, where
  \begin{equation}
    \label{eq:8}
    \Kk= \mbox{Ker}\lp L :  r^{1 + \epsilon - 2\Aa}\Xx_{H^2_b(d\mu)}
    \lra r^{1 + \epsilon - 4\Aa}\Xx_{H^{0}_b(d\mu)} \rp.
\end{equation}
The proof will be finished if we can show that the sum is $L^{2}$
orthogonal.   To do this, we use the fact from the last paragraph of section \ref{bcalcsect}
that each $\psi \in \Ker \Ll \rvert_{r^{1 + \epsilon - 2\Aa} \Xx^{2, \g}_b}$
in fact satisfies $\psi \in r^{c + 1 - \Aa + \delta} \Xx^{2, \g}_b$
for some small $\delta > 0$ (proof:  such $\psi$ admits a
polyhomogeneous expansion in the indicial roots.)  By
\eqref{ipfinite}, we have that $\la L \psi, \psi'\ra$ is finite and
Lemma \ref{thm:intbyparts}, $\la L \psi, \psi'\ra = \la  \psi, L
\psi'\ra = 0$.  This completes the proof.

\end{proof}

\subsection{The indicial roots of $L$} \label{indroots}

We now compute the indicial roots of $L$, defined in (\ref{indicialdef}).  It is sufficient to find all $r$-homogeneous solutions to
 \begin{equation*}
		I(\wt{\Ll}) \wt{\psi} = 0,
\end{equation*}
where $I(\wt{\Ll})$ defined as in (\ref{il}).  Fixing $p \in \pp$, we
separate variables; let $\wt{\psi} = r^s a(\theta)$ be a solution, and
write $a(\theta) = \sum a_j e^{ij\theta}.$  We have
	\begin{align*}
		I(\wt{\Ll}) r^s e^{ij\theta} &= 
		e^{i j \theta} r^{s} \lp s^2 + 2 \lp \al - 1 \rp  s - j^2 - 2 \lp \al - 1\rp j \rp,
	\end{align*}
which equals to zero if and only if $s \in \set{ j, 2 - 2\al - j}$.
Setting
 \begin{equation} \label{indrootsp}
		\Lambda_{p} = \bigcup_{j \in \Z}  \set{ j, 2 - 2\al - j}
 \end{equation}
the indicial roots of $\wt{L}$ is the union of these sets, \eqref{eq:indicialroots}.

We can now say something quite precise about the leading asymptotics
of solutions
\begin{lemma}[Leading order asymptotics]\label{thm:loa}
  Notation as above, near a cone point $p$ of cone angle $2 \pi \al >
  \pi$, we have $ u = \lambda z + v$ where $v \in r^{3  -
    2\alpha}C^{2, \gamma}_{b} \cap \mc{A}_{phg}^{\mc{E}}$.
\end{lemma}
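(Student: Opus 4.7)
The plan is to combine the polyhomogeneity of $u$ established in the paragraph immediately preceding the lemma with the explicit indicial root computation of Section~\ref{indroots}.

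Setting $v := u - \lambda z$, the harmonic map equation $\Ll(v) = -Q(v)$ together with iterative use of the parametrix identity $v = -\Gg Q(v)$ and the mapping property \eqref{eq:Gcpolyhomo} shows that $v = v_{1} + v_2$ with $v_1 \in r^\infty C^{2,\g}_b$ and $v_2 \in \mc{A}_{phg}^{\mc{E}}$, where $\mc{E}$ is built from the indicial roots $\Lambda_p$ of $\Ll$. This already yields the polyhomogeneity claim $v \in \mc{A}_{phg}^{\mc{E}}$, and reduces the lemma to identifying the smallest exponent appearing in the expansion strictly above $1$.

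By \eqref{indrootsp}, $\Lambda_p = \bigcup_{j \in \Z}\{j,\, 2 - 2\alpha - j\}$. Since $v \in r^{1+\e}C^{2,\g}_b$ by Form~\ref{uform}, no term in the expansion of $v$ can sit at an exponent $\leq 1$. Enumerating the indicial roots in $(1, \infty)$: the integer roots contribute $2, 3, 4, \ldots$; the roots $2 - 2\alpha - j$ exceed $1$ only when $j < 1 - 2\alpha$, and since $\alpha > 1/2$ this forces $j \leq -1$, yielding $3 - 2\alpha,\, 4 - 2\alpha,\, 5 - 2\alpha, \ldots$. Because $\alpha \in (1/2, 1)$ we have $3 - 2\alpha \in (1, 2)$, so $3 - 2\alpha$ is in fact the smallest indicial root strictly greater than $1$. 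Consequently the expansion of $v$ begins at $r^{3 - 2\alpha}$, giving $v \in r^{3 - 2\alpha}C^{2,\g}_b \cap \mc{A}_{phg}^{\mc{E}}$.

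The main point requiring care is confirming that $\mc{E}$ does not introduce spurious exponents in the open interval $(1,\, 3 - 2\alpha)$ through the nonlinear interaction encoded in $Q$. This is handled by the bootstrap already executed before the lemma: each iteration of $v = -\Gg Q(v)$ produces a non-polyhomogeneous remainder whose vanishing rate $r^{1 + k\e}$ improves with $k$, together with polyhomogeneous contributions whose exponents lie in $\Lambda_p$. Since $\Lambda_p \cap (1,\, 3-2\alpha) = \varnothing$, no such exponents can arise in the expansion, and the leading behavior of $v$ is exactly $r^{3-2\alpha}$ as asserted.
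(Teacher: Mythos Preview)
Your proof is correct and follows essentially the same approach as the paper: both use the polyhomogeneity established at the end of Section~\ref{bcalcsect} and then identify the smallest indicial root above $1$ via \eqref{indrootsp}. The paper phrases the key step slightly differently---it simply observes that the leading term $v_0$ of $v$ must satisfy $I(\wt{L})v_0 = 0$ (by \eqref{label}), hence its exponent is an indicial root---whereas you route the argument through the parametrix bootstrap and explicitly check that the nonlinearity $Q$ cannot produce exponents in $(1,\,3-2\alpha)$; these are two views of the same mechanism.
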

\begin{proof}
  We know that $u$ is polyhomogeneous by the end of the
  section \ref{bcalcsect}.  By \eqref{label}, the leading order term of $v$, call it
  $v_{0}$, satisfies $I(\wt{L})v_{0} = 0$.
  Since $v_{0} \in r^{1 + \e}C^{2, \gamma}_{b}$, this implies that it
  in fact vanishes to the first order indicial root bigger than $1 + \epsilon$.
  By \eqref{indrootsp}, this is at least as big as $3 - 2\alpha$.
\end{proof}

\subsection{Properties of Jacobi fields} 

Solutions to $L\psi = 0$ are called \emph{Jacobi fields}.  Using the
previous section's analysis, we can now describe the behavior of
Jacobi fields in $\mc{K}$ (see \eqref{Kk}) near the conic set.

\begin{lemma}[Structure of Singular Jacobi Fields] \label{stfsjfs}
Let $\psi \in\Kk$ (see (\ref{Kk})), let $p_i \in \pp$, and let $z$ be
conformal coordinates near $p_{i}$.  If $2\pi \al_i < \pi$, then
 \begin{equation} \label{expsmall}
		\psi(z) = \mu_i z + \mc{O}(\absv{z}^{1 + \delta})
 \end{equation}
for some $\mu_{i} \in C$, $\delta > 0$.

If $2 \pi \al_i > \pi$, and the solution $u_{0}(z) = z + v$, then
 \begin{equation} \label{expbig}
		\psi(z) = w_i + \frac{ \ov{a_i} }{1 - \al}
                \absv{z}^{2(1 - \al)} + \mc{O}\lp \absv{z}^{2(1 - \al)
                  + \delta} \rp
 \end{equation}
for some $w_{i}, a_{i} \in \C$, $\delta > 0$.  

\end{lemma}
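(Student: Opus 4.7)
The plan is to deduce the expansions directly from the polyhomogeneity of $\psi$, established at the end of Section~\ref{bcalcsect}, together with the list of indicial roots
$\Lambda_{p_i} = \bigcup_{j \in \mathbb{Z}} \{\,j,\ 2 - 2\alpha_i - j\,\}$
computed in \eqref{indrootsp}. Since $\psi \in \mathcal{K}$ lies in $r^{1+\epsilon - 2\alpha_i}C^{2,\gamma}_b$ near $p_i$, only indicial roots $s > 1 + \epsilon - 2\alpha_i$ can appear in its asymptotic expansion. Moreover, the two families $\{j\}$ and $\{2 - 2\alpha_i - j\}$ intersect for integer $j$ only if $2\alpha_i \in \mathbb{Z}$, which is ruled out by the hypothesis $2\pi\alpha_i \neq \pi$; hence there are no resonances and no logarithmic terms in the expansion.

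First I would list the allowed indicial roots and sort them in each case. When $\alpha_i < 1/2$ we have $2 - 2\alpha_i > 1$, so the ordered allowed roots begin $1 < 2 - 2\alpha_i < 2 < \ldots$. The only indicial mode at order $r^1$ corresponds to $j = 1$ in the first family and is $re^{i\theta} = z$, so the leading term of $\psi$ is $\mu z$ for some $\mu \in \mathbb{C}$ and the tail is $\mathcal{O}(|z|^{2 - 2\alpha_i})$; choosing $\delta = 1 - 2\alpha_i > 0$ gives \eqref{expsmall}. When $\alpha_i > 1/2$ we have $0 < 2 - 2\alpha_i < 1$, so the ordered allowed roots begin $0 < 2(1-\alpha_i) < 1 < \ldots$. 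The root $s = 0$ contributes a free constant $w \in \mathbb{C}$, and the root $s = 2(1-\alpha_i)$ (angular mode $1$, from $j = 0$ in the second family) contributes a term $c\,|z|^{2(1-\alpha_i)}$ for a free $c \in \mathbb{C}$; writing $c = \bar a/(1-\alpha_i)$ is simply a relabeling that matches the statement. The next allowed root is $s = 1$, so $\delta = 2\alpha_i - 1 > 0$ works and we obtain \eqref{expbig}.

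I do not expect a genuine obstacle. The polyhomogeneity of elements of $\mathcal{K}$ yields the existence of the expansion, the resonance check rules out log terms, and the coefficients $\mu$, $w$, $a$ are simply the free parameters attached to the first one or two allowed indicial roots. The relabeling $c = \bar a/(1-\alpha_i)$ is the only nontrivial convention; it will become natural later when comparing the leading asymptotics of Jacobi fields with the residues of their associated Hopf differentials via the map \eqref{h2map}.
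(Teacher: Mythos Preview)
Your argument is correct and proves the lemma as stated. It is also the route the paper explicitly declines to take: after establishing the leading term via indicial root analysis (exactly as you do), the paper writes ``We could continue the proof by analyzing \eqref{Lll}, but we prefer to kill two birds with one stone by analyzing the Hopf differential of $\psi$.'' The paper then defines $\Phi(\psi)$ as in \eqref{jachopfdiff}, shows it is holomorphic on $\Sigma_{\pp}$ with at most a simple pole at $p \in \pg$, and computes that only the term $\rho(u_0)\,\partial_z\bar\psi\,\partial_z u_0$ contributes to the residue; matching the pole order forces the leading term of $\psi - w$ to be $\frac{\bar a}{1-\al}|z|^{2(1-\al)}$ with $a = \Res_p \Phi(\psi)$. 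Thus the paper's detour buys the identification \eqref{resis}, which is used immediately afterward to define the basis $J_{a^j}, C_k$ of $\mc{K}$ and drives the transversality computation in the proof of Proposition~\ref{mainlemma}. Your approach is cleaner for the lemma in isolation but postpones that identification; you correctly flag this. One small caution: your assertion that the expansion contains \emph{only} indicial-root exponents is stronger than what polyhomogeneity gives you --- the index set generally contains exponents generated by the error term $E$ as well. What you actually need (and have) is that no such generated exponent lies strictly between $0$ and $2(1-\al)$ when $\al > 1/2$; this follows because $E(w) = \mc{O}(r^{2-2\al})$ for a constant $w$, using $\partial_{\bar z}u_0 \in r^{2-2\al}C^{1,\gamma}_b$ from Lemma~\ref{thm:loa}.
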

\noindent The choice of coefficient of $\absv{z}^{2(1 - \al)}$ simplifies subsequent formulae.

\begin{proof}
This follows essentially from the analysis in sections \ref{bcalcsect}
and \ref{indroots}.  An arbitrary solution $\Ll \psi = 0$ has an
expansion near each cone point.  Fix $p \in \pp$.  By (\ref{il}), the
lowest order homogeneous term, which we can write as $f(r, \theta) = r^\delta
a(\theta)$ for $a : S^1 \lra \C$, must solve $I(\wt{\Ll}) f = 0$,
and so $\delta$ must be an indicial root, i.e.\ $\delta \in
\Lambda(p)$.  If $p \in \pl$, then the smallest such indicial root is
$1$.  The eigenvector is $\lambda z$, so the lemma is proven in this case.
If $p \in \pg$, the smallest such indicial root is $0$, and the eigenvector
for this indicial root is a constant complex number $w_i$.  

We could continue the proof by analyzing \eqref{Lll}, but we prefer to
kill two birds with one stone by analyzing the Hopf differential of $\psi$.
  Solutions to $\Ll \psi = 0$ also have holomorphic quadratic differentials: their coefficients are the linearization of the coefficients for harmonic maps in (\ref{hopfcoeff}).  Precisely, suppose $\eval{\frac{d}{dt}}_{t = 0}u_t = \psi,$ and define
 \begin{equation} \label{jachopfdiff}\begin{split}
		\Phi(\psi) &:= \ddt \rho(u_t) \p_z u_t \p_z \ov{u}_t dz^2 \\
		&= \lp \lp \frac{ \p \rho}{ \p u} \psi + \frac{ \p \rho}{ \p \ov{u}} \ov{\psi} \rp  \p_{z}u_0 \p_z \ov{u}_0 + \rho(u_0) \lp \p_z \psi \p_z \ov{u}_0+  \p_z \ov{\psi} \p_z u_0  \rp  \rp dz^2
\end{split}\end{equation}
Using (\ref{Lll}) one can easily check that $\Phi(\psi)$ is
holomorphic on $\Sigma_{\pp}$.  Near $p \in \pl$, it is easy to show
that $\Phi(\psi)$ extends holomorphically over $p$.  Near $p \in
\pg$, since $\psi = w + \psi_{0}$ for
$\psi_{0} \in r^{c}C^{2,
  \g}_{b}$ for some $c > 0$ and $v \in r^{3 - 2\al}C^{2,
  \g}_{b}$ (Lemma \ref{thm:loa}), $\Phi(\psi)$ has at most a simple pole. Suppose that
	\begin{align}  \label{resis}
		\Res \rvert_{p} \Phi(\psi) = a
	\end{align}
We claim that the only term in \eqref{jachopfdiff} that contributes to
this residue is $\rho(u_0) \p_z \ov{\psi} \p_z u_0$.  To see this,
note that
 \begin{equation*}
		\frac{\p \rho}{\p u} = \mc{O}\lp \absv{z}^{2\al - 3} \rp,
\end{equation*}
so by Form \ref{uform} and the assumption $\al > 1/2$
	\begin{align*}
		\frac{\p \rho}{\p u} \psi \p_{z} u_0 \p_z u_0 &= \mc{O}\lp \absv{z}^{2\al - 3 + 1 + \e} \rp = \mc{O}\lp \absv{z}^{-1 + \delta} \rp
	\end{align*}
for some $\delta > 0$, so this term does not contribute to the residue.
Similarly, $\frac{ \p \rho}{ \p \ov{u}} \ov{\psi}  \p_{z} u_0 \p_z \ov{u}_0$
does not contribute.  Using \ref{thm:loa}, the other term is
term is
\begin{align*}
  \rho(u_0) \p_z \psi \p_z \ov{u}_0 &= \mc{O}\lp r^{2\al - 2 }\rp
  \mc{O}\lp r^{\delta - 1 }\rp  \mc{O}\lp r^{2 - 2\al }\rp =  \mc{O}\lp r^{\delta - 1}\rp
\end{align*}

Finally, let $\absv{z}^{\delta}b(\theta)$ be the lowest
order homogeneous term in $\psi_{0} = \psi - w$.  Then the leading
order part of $\rho(u_0) \p_z \ov{\psi} \p_z u_0$ is $ \absv{z}^{2(1 -
  \alpha)} \p_z \ov{\absv{z}^{\delta}b(\theta)} $, which equals to
$\ov{\absv{z}^{\delta}b(\theta)}$.  This implies that 
\begin{align*}
  \absv{z}^{\delta}b(\theta) =\frac{\ov{a}}{1 - \alpha}
  \absv{z}^{2(\alpha - 1)},
\end{align*}
and the proof is complete.

\end{proof}

\begin{remark}
  Note that, as a consequence of the proof, if we (easily) choose conformal
  coordinates so that $u_0$ in Form \ref{uform} has $\lambda = 1$, then
  \begin{align} \label{resis} \fbox{$ \displaystyle{ \Res \rvert_{p}
        \Phi(\psi) = a } $}
  \end{align}
\end{remark}

Writing $\pg = \set{ q_1, \dots, q_n}$, the map
	\begin{align*}
		\Res : \Kk &\lra \C^{\absv{\pg}} \\
		\psi &\longmapsto \Res \Phi(\psi) = (\Res \rvert_{q_1}\Phi(\psi), \dots, \Res \rvert_{q_n}\Phi(\psi) )
	\end{align*}
is obviously linear.  We define a basis of $\Kk$, $J_{a^1}, \dots, J_{a^{m_{1}}}, C_1, \dots, C_{{m_{2}}}$ with $a^j \in \C^n$, so that $C_1, \dots, C_{{m_{2}}}$ is a basis of $\ker (\Res: \Kk \lra  \C^{\absv{\pg}})$, i.e.\
 \begin{equation} \label{Ci}
				\Res C_j = 0 \in \C^n
 \end{equation}
and
 \begin{equation} \label{Jai}
				\Res J_{a^j} = a^j \in \C^n.
 \end{equation}
It follows that the $a^j$ are linearly independent.

%

%


%

\subsection{The $C_j$ are conformal Killing fields} \label{confkillsect}

\subsubsection{The Hessian of energy on $\mc{K}$} \label{scndvarintro}

We will prove that the $C_j$ are conformal Killing fields.  The most
important step in the proof is that they are zeros of the Hessian of
the energy functional, and we begin by proving this.
	\begin{lemma} \label{cokhess}
		Let $(u_0, g_0, \Gfixed)$ solve (\ref{hme2}) and satisfy
                Assumption \ref{assump}.  Let
                $C \in \Kk$ have $\Res C = 0 \in \C^n$.  By Lemma \ref{stfsjfs}, we can find $u_t \in \Bb^{c} \circ \Vv \circ \Tt_{> \pi} $, $\e > 0$
		 so that $\eval{\frac{d}{dt}}_{t = 0} u_t = C$, where
			\begin{equation} \label{ci1} 
                          \begin{split}
                            c_i > 1 &\mbox{ for } p_i \in \pl \\
                            c_i > 2 - 2 \al_i &\mbox{ for } p_i \in
                            \pg.
                          \end{split}
			\end{equation}
Then
			\begin{align} \label{scndvarjac}
				\eval{ \frac{d^2}{dt^2}}_{ t = 0} E(u_t, g_0, \Gfixed) = 0
			\end{align}
	\end{lemma}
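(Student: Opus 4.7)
The plan is to apply the integration by parts formula \eqref{hess} of Lemma \ref{scndvarbdry} on the truncated surface $\Sigma_{\epsilon} := \Sigma \setminus \bigcup_{p \in \pp \cup \pp'} B_\epsilon(p)$, where $B_\epsilon(p)$ is a small conformal ball around $p$. Since $LC = 0$ on $\Sigma_{\pp'}$, the bulk term $-\la LC, C\ra_{L^2(\Sigma_\epsilon)}$ vanishes identically, leaving
\begin{equation*}
\left. \ddtt E(u_t\rvert_{\Sigma_\epsilon}, g_0, \Gfixed) \right|_{t=0} = \int_{\partial \Sigma_\epsilon} \mc{L}_{u_{0*}C}(u_0^*\Gfixed)(u_{0*}C, u_{0*}\partial_\nu)\, ds
\end{equation*}
after using the Lie derivative rewriting \eqref{hessbterm}. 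The proof then consists of two steps: (a) verify that the left-hand side converges to $\left.\ddtt E(u_t, g_0, \Gfixed)\right|_{t=0}$ as $\epsilon \to 0$; (b) show that each boundary integral goes to zero in this limit.

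Step (a) is the easier of the two. The path $u_t$ lies in $\Bb^{c} \circ \Vv \circ \Tt_{>\pi}$, so in normal conformal coordinates around each cone point it has the shape $u_t(z) = \lambda(t)(z - w(t)) + v_t(z)$ with $v_t \in r^{c}C^{2,\gamma}_b$ and the weights $c$ satisfying \eqref{ci1}. Because $u_t$ together with its first two $t$-derivatives enjoy uniform H\"older-$b$ control near each $p \in \pp \cup \pp'$, the energy density $e(u_t)$ and its second variation are dominated uniformly in $t$ by an integrable function, and dominated convergence identifies the limit of the LHS with the global Hessian.

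The heart of the argument is the boundary estimate (b), which is a scaling calculation based on Lemma \ref{stfsjfs}. Near $p \in \pl$ we have $C = \mu z + \mc{O}(r^{1+\delta})$; with $g_0 \sim r^{2(\alpha-1)}\absv{dz}^2$, the $g_0$-length element on $\partial B_\epsilon$ is $\epsilon^{\alpha}\,d\theta$, the $g_0$-unit outward normal is $\epsilon^{1-\alpha}\partial_r$, and the Christoffel symbols of $u_0^*\Gfixed$ are $\mc{O}(1/r)$. A direct calculation gives $\absv{u_{0*}C}_{u_0^*\Gfixed} = \mc{O}(\epsilon^\alpha)$ and $\absv{\nabla_{u_{0*}\partial_\nu}u_{0*}C}_{u_0^*\Gfixed} = \mc{O}(1)$, making the boundary integrand of size $\mc{O}(\epsilon^{2\alpha})\,d\theta$, which vanishes in the limit.

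The main obstacle is the case $p \in \pg$, where $C$ tends to a nonzero constant $w$ and neither $\absv{C}$ nor $\absv{\nabla C}$ is small on its own. Here the hypothesis $\Res C = 0$ is crucial: by the remark following Lemma \ref{stfsjfs}, it forces the $\absv{z}^{2(1-\alpha)}$-coefficient in the expansion of $C$ to vanish, so that $C = w + \mc{O}(r^{2(1-\alpha)+\delta})$. The Lie derivative $\mc{L}_{u_{0*}C}(u_0^*\Gfixed)$ then splits into (i) the contribution of the constant field $w$, which is a Killing field of the model conic metric $\absv{z}^{2(\alpha-1)}\absv{dz}^2$ so that $\mc{L}_w(u_0^*\Gfixed)$ is of order $r^{\nu}$ coming only from the subleading correction $\mu$ in \eqref{conemet}, and (ii) the contribution of the $r^{2(1-\alpha)+\delta}$ remainder, whose derivatives produce a factor of $r^{\delta - 1}$. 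Both contributions, multiplied by $\absv{u_{0*}C}_{u_0^*\Gfixed} \cdot ds \sim \epsilon^\alpha \cdot \epsilon^\alpha\,d\theta$, yield a boundary integrand of positive power of $\epsilon$ and so vanish as $\epsilon \to 0$. This cancellation is exactly the one that would fail if the coefficient of $\absv{z}^{2(1-\alpha)}$ in $C$ were nonzero, explaining the role of the residue hypothesis in the statement.
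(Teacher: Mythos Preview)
Your argument breaks down at the cone points $p\in\pg$, and the error is the claim that the constant vector field $w$ is a Killing field of the model conic metric $\absv{z}^{2(\alpha-1)}\absv{dz}^2$. It is not: translations are conformal for $\absv{dz}^2$ but the conic metric is not translation-invariant. A direct computation gives
\[
\mc{L}_w\bigl(\absv{z}^{2(\alpha-1)}\absv{dz}^2\bigr)=2(\alpha-1)\,\absv{z}^{2(\alpha-2)}\,\Re(w\bar z)\,\absv{dz}^2,
\]
which is of order $r^{2\alpha-3}$, not $r^{\nu}$. Feeding this into the boundary integral (with $ds\sim \epsilon^{\alpha}\,d\theta$ and $\partial_\nu\sim \epsilon^{1-\alpha}\partial_r$) yields a contribution of size $\epsilon^{2\alpha-2}$, and since $\int_0^{2\pi}[\Re(we^{-i\theta})]^2\,d\theta=\pi\absv{w}^2\neq 0$ there is no angular cancellation. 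For $\alpha<1$ this blows up, so step~(b) fails. (You also misestimated $\absv{u_{0*}C}_{u_0^*\Gfixed}$ at such points: since $C\approx w$ and $\rho\sim r^{2(\alpha-1)}$, one has $\absv{C}_{\Gfixed}\sim r^{\alpha-1}$, not $r^\alpha$.)

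Step~(a) fails for the same reason: one checks that $\ddtt e(u_t)\,dVol_{g_0}$ contains, from the pure translation part, a term proportional to $(\alpha-1)^2\absv{\dot w}^2 r^{2\alpha-3}\,dr\,d\theta$ after angular integration, which is not integrable at $r=0$ for $\alpha<1$. So there is no integrable dominator and $\ddtt E(u_t\rvert_{\Sigma_\epsilon})$ does \emph{not} converge to $\ddtt E(u_t,\Sigma)$; instead it diverges like $\epsilon^{2\alpha-2}$, consistently with the boundary term. The paper handles this by excising \emph{moving} discs $D_{tw}(\delta)$ that follow the translation $T_t$: conformal invariance then makes $E(u_t,D_{tw}(\delta))$ independent of $t$, so the full second variation equals that over $\Sigma\setminus D_{tw}(\delta)$, and the resulting boundary terms match (up to $o(1)$) the three terms in the identity \eqref{cancelling2}, which sum to zero by conformal invariance of energy under translations. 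The residue hypothesis enters only to kill the Hopf-differential piece of the boundary term, not to control $\mc{L}_wG$.
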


The proof hinges on a nice cancellation of boundary terms related to
the conformal invariance of energy.  We begin by proving equation (\ref{intoetilde}) which illustrates this phenomenon in a simpler setting.  

\begin{proof}[Proof of (\ref{intoetilde}):]
We are given a $u \in \Bb^{1 + \e}(u_0) \circ \Vv \circ \Tt_{\pg}$,
and we assume that $\pl = \varnothing$ and genus $\Sigma = 1$.  In particular, near each $q \in u^{-1}(\pp)$ we can choose conformal coordinates so that $u \sim \lambda z$.  Let $C \in \CK$.  Choose $f_t \in \mc{C}onf_0$ with $\eval{\frac{d}{dt}}_{t = 0} f_t = C$ and consider
	\begin{align*}
		\eval{\frac{d}{dt}}_{t = 0} E(u \circ f_t, g, G),
	\end{align*}
which is zero by (\ref{intoetilde0}). By lifting to the universal cover we can assume that 
 \begin{equation*}
		f_t(z) = z - tw
\end{equation*}
for some fixed $w$.  Let
 \begin{equation} \label{confdiscw}
	D_w(r) = \set{ z : \absv{ z - w } < r },
 \end{equation}
be the conformal disc (not necessarily a geodesic ball), so $f_t \lp D_{tw}(r) \rp = D_0 (r)$.  In particular, for
$\delta > 0$ sufficiently small, $f_t \lp  \Sigma- D_{tw}(\delta) \rp
= \Sigma - D_{0}(\delta) $.  For the moment we drop the geometric data from the notation and let
 \begin{equation*}
		E(w, A) = \int_A e(w, g, G) \sqrt{g} dx
\end{equation*}
for any $A \subset \Sigma$.
By the conformal invariance of the energy functional, we have
	\begin{align*}
		E(u \circ f_t, \Sigma - D_{tw}(\delta)) &= E(u,  \Sigma - D_{0}(\delta) ) \\
		E(u \circ f_t, D_{tw}(\delta)) &= E(u, D_{0}(\delta) )
	\end{align*}
for all $t$.  Thus the functions $ E(u \circ f_t, \Sigma )$,  $E(u \circ f_t, \Sigma - D_{tw}(\delta))$, and $ E(u \circ f_t, D_{tw}(\delta))$ are all constant in $t$.  In particular
	\begin{align*}
		0 &= \eval{\frac{d}{dt}}_{t = 0} E(u \circ f_t, \Sigma) \\
		&= \eval{\frac{d}{dt}}_{t = 0} E(u \circ f_t, \Sigma - D_{tw}(\delta)) + \eval{\frac{d}{dt}}_{t = 0} E(u \circ f_t, D_{tw}(\delta)) \\
		&= \eval{\frac{d}{dt}}_{t = 0} E(u \circ f_t, \Sigma - D_{tw}(\delta)) 
	\end{align*}
We can also evaluate this last expression using the first variation
formula \eqref{frstvarbdry} and the chain rule to get the expression
	\begin{equation} \label{crappy0}
          \begin{split}
            \eval{\frac{d}{dt}}_{t = 0} E(u \circ f_t, \Sigma -
            D_{tw}(\delta)) &= - \int_{\Sigma - D_0(\delta)} \la \tau(u, g, G) ,
            u_*C \ra_{u^*G} dVol_g \\ 
            &\quad+ \int_{\p
              D_0(\delta)} \la u_* \p_\nu , u_*C \ra_{u^*G} ds
            \\ 
            &\quad+ \eval{\frac{d}{dt}}_{t = 0} \lp
            \int_{\Sigma - D_{tw}(\delta)} e(u, g, G) \sqrt{g}dx \rp
          \end{split}
	\end{equation}
Here $\p_\nu$ is the outward pointing normal from $\Sigma - D_0(\delta)$.
The last integral satisfies
 \begin{equation} \label{jesus}
		\eval{\frac{d}{dt}}_{t = 0} \lp \int_{\Sigma - D_{tw}(\delta)}  e(u, g, G) \sqrt{g}dx \rp =  - \int_{\p D_{tw}(\delta)}  e(u, g, G)  \la \p_\nu, u_*C \ra_g ds
 \end{equation}
To compare this integral to (\ref{crappy0}), we use the decomposition (\ref{pbm}), $u^*G = e(u)g + u^*G^\circ$.  
 \begin{equation*}
		\int_{\p D_0(\delta)} \la u_* \p_\nu , u_*C \ra_{u^*G} ds =
                \int_{\p D_0(\delta)}   e(u, g, G) \la  \p_\nu , u_*C \ra_{g} ds
                + \int_{\p D_0(\delta)} \la \p_\nu , u_*C \ra_{u^*G^\circ} ds
\end{equation*}
By (\ref{jesus}), the first term on the right exactly cancels the last
term in (\ref{crappy0}).  For the second term, note that although
$u^*G^\circ$ is not holomorphic, the bound from (\ref{hopfbound})
still holds, so
 \begin{equation*}
		 \int_{\p D_0(\delta)} \la \p_\nu , u_*C \ra_{u^*G^\circ}  ds = \int_{0}^{2\pi} \mc{O}\lp r^{2\al - 2 + \e} \rp r d\theta,
\end{equation*}
which goes to zero since $\al > 1/2$.  Looking back at (\ref{crappy0}), we have proven that
 \begin{gather*}
		 \lp \eval{\frac{d}{dt}}_{t = 0} E(u \circ f_t, \Sigma -
                 D_{tw}(\delta)) \rp + \int_{\Sigma - D_0(\delta)} \la \tau(u, g, G)
                 , u_*C \ra_{u^*G} dVol_g \to 0 \\
                 \mbox{as } \delta \to 0
\end{gather*}
This proves \eqref{intoetilde}

\end{proof}
\begin{remark}
  \label{rm:coneanglepi}
  Note that this last proof works if $\al > 1/2$ is replaced by $\al
  \geq 1/2$.  This will be important when we deal with that case in
  section \ref{coneanglepisect}.
\end{remark}

A similar cancellation of boundary terms will lead to the proof of
Lemma (\ref{cokhess}).  Here we take two derivatives, so the relevant
boundary terms look slightly different.  To illustrate this, let $g = \sigma \absv{dz}^2$
be a conformal metric on $\C$ with finite area, and let $T_t = z - tw$, and $\eval{\frac{d}{dt}}_{t = 0} T_t = C (= w)$.  Notation as above
 \begin{equation*}
		\eval{ \frac{d^2}{dt^2} }_{ t = 0 } E(T_t, \C - D_{tw}(\delta)) = 0
\end{equation*}
As above, a direct computation of the second derivative using \eqref{scndvarbdry} and the chain rule will produce boundary terms which must cancel one another.  If we let $e_t = e(T_t, \absv{dz}^2, g)$, and let $\p_{\nu_t}$ be the outward pointing normal to $\C - D_{tw}(\delta)$, then a simple computation using (\ref{hess})--(\ref{hessbterm}) and the product rule shows that 
	\begin{equation} \label{cancelling2}
          \begin{split}
            \eval{ \frac{d^2}{dt^2} }_{ t = 0 } E(T_t, \C - D_{tw}(\delta)) &= 		\eval{ \frac{d^2}{dt^2} }_{ t = 0 } E(T_t, \C - D_{tw}(\delta))  \\
            &= \int_{\p D_0(\delta)} \mathcal{L}_{\dot{T}_0} g \lp \dot{T}_0, \p_\nu \rp ds - 2 \int_{\p D_0(\delta)}  \dot{e}_0 \la \dot{T}_0, \p_\nu \ra_g ds \\
            &\quad - \int_{\p D_0(\delta)} e_0 \eval{ \frac{d}{dt} }_{t = 0}
            \la\dot{T}_{-t}, \p_{\nu_{-t}} \ra_{g(T_{-t})} ds.
          \end{split}
	\end{equation}
Thus the expression on the right \emph{must be equal to zero}. 
%
%

%

%


%

%
\begin{proof}[Proof of Lemma \ref{cokhess}]
Assume that $u_0 = id$.  By (\ref{confenergy}) and conformal invariance, we may replace $g$ by $g / e(u_0)$ and assume that
 \begin{equation} \label{energyis1}
		e(u_0) \equiv 1.
 \end{equation}

We arrange it so that
	\begin{align*}
		u_t &= \wt{u}_t \circ T_t \\
		\wt{u}_t &\in r^{2(1 - \al)}C^{2, \g}_{b} \mbox{ near
                } p \in \pp \\
		T_t &\in \mc{T}_{> \pi}, 
	\end{align*}
with $c$ as in (\ref{ci1}), so $T_t (z_i) = z_i - t w_i$ near $p_i \in \pp_{ > \pi}$.  Define
 \begin{equation*}
		D_{i, t}(\delta) = D_{t w_i}(\delta) \mbox{ in conformal coordinates $z_i$ near $p_i$ },
\end{equation*}
where $D_{t w_i}(\delta)$ is the conformal disc defined in \eqref{confdiscw}.  We can then write
 \begin{align*}
			\eval{ \frac{d^2}{dt^2}}_{ t = 0} E\lp u_{t},
                        \Sigma \rp &= 	\underbrace{\eval{
                            \frac{d^2}{dt^2}}_{ t = 0} E\lp u_{t},
                          \Sigma  - \bigcup_{p_i \in \pp_{ > \pi}}
                          D_{i, t}(\delta)  \rp}_{ : = A(\delta)} +
                        \underbrace{\eval{ \frac{d^2}{dt^2}}_{ t = 0}
                          E\lp u_{t}, \bigcup_{p_i \in \pp_{ > \pi}}
                          D_{i, t}(\delta) \rp}_{ : = B(\delta)}.
\end{align*}

For the term $B(\delta)$ we can use conformal invariance
	\begin{align*}
		E\lp u_{t}, \bigcup_{p_i \in \pp_> \pi } D_{t}(\delta) \rp &= E\lp \wt{u}_{t}, \bigcup_{p_i \in \pp_> \pi } T_{t} \lp D_{t}(\delta) \rp \rp = E\lp \wt{u}_{t}, \bigcup_{p_i \in \pp_> \pi } D_{0}(\delta) \rp.
	\end{align*}
The fact that $ \wt{u}_{t} \in r^{2 - 2 \alpha_{i} + \epsilon}C^{2, \gamma}_{b}$
near $p_{i} \in \pp_{> \pi}$ and the boundary term computation from
section \ref{scndvarintro} show that $B(\delta) \to 0 \mbox{ as } \delta \to 0$.
Note that for fixed $t$ and $\delta$ the integrals $E\lp u_{t}, \Sigma  - \bigcup_{p_i \in \pp_{ > \pi}} D_{i, t}(\delta)  \rp$ are improper since we did not delete balls around the points $ p \in \pl$, but again the boundary contributions from deleted discs here do not contribute to $A(\delta)$.  Let
	\begin{align} \label{sigmaeps}
		\Sigma(\delta) := \Sigma - \cup D_{i, t} (\delta).
	\end{align}
We use the same reasoning as in \eqref{cancelling2} and the fact that
$T_{-1}$ parametrizes the boundary of $\Sigma$ to deduce that
	\begin{align*}
		A(\delta) &= \int_{\Sigma(\delta)} \mc{L}_{C}G( C, \p_\nu )ds  - 2 \int_{\Sigma(\delta)}   \dot{e}_0 \la \dot{T}_0, \p_\nu \ra_g ds \\
		&\quad- \int_{\Sigma(\delta)}  e_0 \eval{ \frac{d}{dt} }_{t = 0} \la \dot{u}_{-t}, \p_{\nu_{-t}} \ra_{g(T_{-t})} ds,
	\end{align*}
where we have once again set $e_t = e(u_t)$.  At this point we use that $C$ is a Jacobi field, so in conformal coordinates $g = \sigma \absv{dz^2}$, 
 \begin{equation*}
		 \mc{L}_{C}G = \dot{e}_0 g + 2 \underbrace{\Re \phi(z) dz^2}_{ \Phi(C)}.
\end{equation*}
We also have that 
	\begin{align*}
		\Res C = 0 \implies \phi(z) dz^2 \mbox{ is smooth on all of } \Sigma.
	\end{align*}
Plugging in and separating everything from the Hopf differential bit, we have (using (\ref{energyis1}))
\begin{equation}  \label{cancelling}
	\begin{split}
		A(\delta) &=\sum_{p \in \pg} \int_0^{2\pi} 2 \lp \Re
                \phi(z) dz^2 \rp \lp C, \p_r \rp \delta d\theta \\
		& \quad + \sum_{p \in \pp_{> \pi}} \int_0^{2\pi}  \lp \dot{e}_0 \la C, \p_r \ra_g - 2  \dot{e}_0 \la \dot{T}_0 , \p_r \ra_g - \eval{ \frac{d}{dt} }_{t = 0} \la \dot{u}_{-t}, \p_{r_{-t}} \ra_{g(T_{-t})} \rp \delta d\theta
	\end{split}
      \end{equation}
Since $\phi$ is bounded, the first term on the right hand side goes to
zero as $\delta \to 0$. 

The point now is that up to terms vanishing
with $\delta$ the bottom line consists exactly of the cancelling terms from
above (\ref{cancelling2}), proving Lemma \ref{cokhess}.  To show this,
we start with the rightmost term. Since $\ddot{T} \equiv 0$ near $ p \in \pp_{ > \pi}$ and $\ddot{\wt{u}}_0 \in r^{2 - 2\Aa + \e} \Xx^{2, \g}_b$, we see that
	\begin{align*}
		\int_{\p \Sigma(\delta)}  e_0 \eval{ \frac{d}{dt} }_{t = 0} \la \dot{u}_{-t}, \p_{\nu_{-t}} \ra_{g(T_{-t})} ds &= 
		\sum_{p \in \pg} \int_0^{2\pi} \eval{ \frac{d}{dt}
                }_{t = 0} \lp \la \dot{T}_t + \dot{\wt{u}}_t ,
                \p_{r_{-t}} \ra_{g(T_{-t})} \rp \delta d\theta \\
                &=\sum_{p \in \pg} \int_0^{2\pi} \eval{ \frac{d}{dt}
                }_{t = 0}  \la \dot{T}_t ,
                \p_{r_{-t}} \ra_{g(T_{-t})}  \delta d\theta \\
		&\quad+\sum_{p \in \pg}\int_0^{2\pi} \eval{ \frac{d}{dt}
                }_{t = 0} \lp \la \dot{\wt{u}}_t ,
                \p_{r_{-t}} \ra_{euc} \sigma(T_{-t})  \rp \delta
                d\theta \\
                &=\sum_{p \in \pg} \int_0^{2\pi} \eval{ \frac{d}{dt}
                }_{t = 0}  \la \dot{T}_t ,
                \p_{r_{-t}} \ra_{g(T_{-t})}  \delta d\theta \\
		&\quad+\sum_{p \in \pg} \int_0^{2\pi}  \mc{O}(\delta^{ 2 - 2 \al + \e })  \sigma(z) \delta d\theta
	\end{align*}
but in the last term, since $\sigma = \mc{O}(\delta^{2(\al - 1) })$,
the integrand is $\mc{O}(\delta^{1 + \epsilon})  d\theta$ so 
\begin{equation} \label{almostdone}
	\begin{split} 
	\int_{\p \Sigma(\delta)}  e_0 \eval{ \frac{d}{dt} }_{t = 0} \la \dot{u}_{-t}, \p_{\nu_{-t}} \ra_{g(T_{-t})} ds
                &=\sum_{p \in \pg} \int_0^{2\pi} \eval{ \frac{d}{dt}
                }_{t = 0}  \la \dot{T}_t ,
                \p_{r_{-t}} \ra_{g(T_{-t})}  \delta d\theta + \mc{O}(\delta^{1 + \epsilon})
	\end{split}
      \end{equation}
Thus as $\delta \to 0$ this approaches the final term in
\eqref{cancelling2}.  Note that in the end $\ddot{\wt{u}}_{0}$
disappears completely.  Next comes the middle term in \eqref{cancelling}.
Using that $T_{0} = id$, $\wt{u}_{0} = id$, we have
 \begin{align*}
		\dot{e}_0 &= \ddt e(T_t) + \ddt e(\wt{u}_t),
\end{align*}
and by \eqref{confenergy}, $\ddt e(\wt{u}_t) = \mc{O}(\delta^{1 +
  \epsilon - 2\alpha})$.  Using this and $\ddot{T}_{0} \equiv 0$ near
$\pp$, we see that
	\begin{align*}
 \sum_{p \in \pp_{> \pi}}\int_0^{2\pi}  - 2  \dot{e}_0 \la \dot{T}_0 ,
 \p_r \ra_g \delta d\theta	&=	  \sum_{p \in \pp_{> \pi}}
 \int_0^{2\pi} \ddt e(T_t)   \la \dot{T}_0, \p_\nu \ra_g  +
 \mc{O}(\delta^{\e}),
	\end{align*}
And this indeed approaches the middle term of \eqref{cancelling2}.
That the first term in \eqref{cancelling} limits to the first term in
\eqref{cancelling2} follows in the same way. This completes the proof
\end{proof}

\subsubsection{Zeros of the Hessian}

We can now prove

%


%

\begin{corollary} \label{positiveindex}
The $C_j$ are conformal Killing fields.
\end{corollary}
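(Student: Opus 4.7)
The plan is to combine Lemma \ref{cokhess} with the second variation formula of Lemma \ref{scndvarbdry}, exploiting $\kappa_G \leq 0$ to force $C_j$ into the conformal Killing subspace. By Remark \ref{thm:idremark} I would assume $u_0 = id$, and let $u_t$ be the variation with $\dot u_0 = C_j$ from Lemma \ref{cokhess}. Applying \eqref{boundaryterm2} on the truncated domain $\Sigma(\delta)$ from \eqref{sigmaeps} yields
\[
\eval{\ddtt}_{t=0} E(u_t,g,G) = \int_{\Sigma(\delta)} \lp \absv{\nabla C_j}^2_G + \tr_g R^G(du_0, C_j, C_j, du_0) \rp dVol_g + B(\delta),
\]
where $B(\delta)$ collects the boundary integrals from \eqref{hessbterm} over $\p \Sigma(\delta)$.

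The first and most delicate step is to show $B(\delta) \to 0$ as $\delta \to 0$. Near $p_i \in \pl$, Lemma \ref{stfsjfs} gives $C_j = \mu_i z + \mc{O}(\absv{z}^{1+\delta})$ and boundary contributions vanish trivially with $\delta$. Near $p_i \in \pg$, however, $C_j = w_i + \mc{O}(\absv{z}^{2(1-\al_i)+\delta})$ is bounded away from $0$, and the individual terms in \eqref{hessbterm} need not shrink with $\delta$. The boundary integrand \eqref{hessbterm} involves $\mc{L}_{u_*C_j} G$; its trace-free part is $2\,\Re\,\Phi(C_j)$, which since $\Res C_j = 0$ extends smoothly across $\pg$. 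Invoking the cancellation argument from \eqref{cancelling}--\eqref{almostdone} in the proof of Lemma \ref{cokhess} then gives $B(\delta) = \mc{O}(\delta^\e)$ for some $\e > 0$.

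Combined with Lemma \ref{cokhess}, the bulk integral on $\Sigma_{\pp'}$ must vanish. For the surface target, the curvature integrand equals $-\kappa_G(u_0) \lp \absv{du_0}^2 \absv{C_j}^2 - \la du_0, C_j\ra^2 \rp \geq 0$ since $\kappa_G \leq 0$ and $u_0$ is a local diffeomorphism. Both non-negative integrand terms summing to zero forces each to vanish pointwise on $\Sigma_{\pp'}$; in particular $\nabla C_j \equiv 0$ with respect to $G$.

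Finally, for a harmonic diffeomorphism between surfaces whose target has non-positive Gauss curvature, it is classical (cf.\ \cite{t}, \cite{el}) that such a Jacobi field, simultaneously annihilated by the gradient and the curvature part of the Hessian, necessarily arises as the push-forward of a conformal Killing field of the domain metric. Since $u_0 = id$ the push-forward is trivial, and $C_j$ is itself a conformal Killing field of $g$. The main obstacle is establishing the cancellation $B(\delta) \to 0$ when $C_j$ does not vanish at cone points in $\pg$, which hinges critically on $\Res C_j = 0$ via the smooth extension of $\Phi(C_j)$ across those points.
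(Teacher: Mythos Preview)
Your argument has a genuine gap at the step where you claim $B(\delta)\to 0$ and that the bulk integral is finite. Near $p_i\in\pg$ you have $C_j = w_i + \mc{O}(r^{2(1-\al_i)+\delta})$ with $w_i\neq 0$, while the Christoffel symbols of the \emph{conic} metric $G$ satisfy $\Gamma\sim r^{-1}$. Hence $\nabla C_j\sim w_i\,\Gamma\sim r^{-1}$, and a direct count of powers gives
\[
\absv{\nabla C_j}^2_{g\otimes u_0^*G}\, dVol_g \;\sim\; r^{2-2\al_i}\cdot r^{2\al_i-2}\cdot r^{-2}\cdot r^{2\al_i-1}\,dr\,d\theta \;=\; r^{2\al_i-3}\,dr\,d\theta,
\]
which is \emph{not} integrable at $r=0$ since $\al_i<1$. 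The curvature term, by contrast, \emph{is} integrable (it carries a bounded factor $\kappa_G$). Thus the bulk integral over $\Sigma(\delta)$ diverges to $+\infty$ as $\delta\to 0$, and the boundary term $B(\delta)$ must diverge to $-\infty$ to keep the total equal to zero. Your appeal to the cancellations in \eqref{cancelling}--\eqref{almostdone} does not apply here: those identities concern a different decomposition (they balance the pieces arising from a \emph{moving} domain $\Sigma(\delta)$ against each other), not the bulk/boundary split of \eqref{boundaryterm2} for the fixed metric $G$. Consequently you cannot conclude that the nonnegative integrand vanishes pointwise, and the final step---$\nabla^G C_j\equiv 0$ hence $C_j$ conformal Killing for $g$---is unsupported (and would in any case need an argument to pass from $G$-parallel to $g$-conformal Killing).

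The paper circumvents exactly this divergence by invoking the Coron--H\'elein decomposition $u_0^*G = H_1 + H_2$ from \eqref{chdecomp} (respectively $K_1+K_2$ in genus one). The point is that $H_2$ is a \emph{smooth} metric on all of $\Sigma$ with $\kappa_{H_2}<0$; with $H_2$ in place of $G$ the Christoffel symbols stay bounded, $\absv{\nabla^{H_2}C_j}^2$ is integrable, and the boundary term in \eqref{boundaryterm2} genuinely vanishes as $\delta\to 0$. Lemma \ref{cokhess} and the splitting $E(\cdot,g,G)=E(\cdot,g,H_1)+E(\cdot,g,H_2)$ then force the $H_2$-Hessian to vanish, and strict negativity of $\kappa_{H_2}$ (flatness of $K_2$ in genus one) yields $C_j\equiv 0$ (respectively $C_j$ constant), which is the desired conclusion. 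In short, the decomposition is not a convenience but the mechanism that makes the second-variation integral finite.
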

\begin{proof}
Assume that $u_0 = id$.  For
genus $\Sigma > 1$, $\mc{C}onf_0 = \set{id}$.  From the decomposition
(\ref{split}) and our work above, it follows that
\begin{equation*}
  	\eval{ \frac{d^2}{dt^2} }_{t = 0} E(u_t, g, \Gfixed) =
        \eval{ \frac{d^2}{dt^2} }_{t = 0} E(u_t, g, H_{1}) + 	\eval{ \frac{d^2}{dt^2} }_{t = 0} E(u_t, g, H_{2}).
\end{equation*}
Note that this is a non-trivial statement, since it is by no means
clear that, for example, the function $t \longmapsto E(u_t, g, H_{1})$
is twice differentiable; but it indeed is, by exactly the same
computations we just preformed.  The fact that conformal maps are global minimizers of
energy now implies that
\begin{equation*}
  \eval{ \frac{d^2}{dt^2} }_{t = 0} E(u_t, g, H_{1}) \geq 0,
\end{equation*}
so since the left hand side of our first equation is zero by Lemma
\ref{cokhess}, we arrive at
\begin{equation*}
  \eval{ \frac{d^2}{dt^2} }_{t = 0} E(u_t, g, H_{2}) = 0.
\end{equation*}
But in fact $E( \cdot, g, H_2)$ if positive definite, as we now shoe.
Cutting out conformal discs $D_{j}(\epsilon)$ as above, near $p \in
\pp_{> \pi}$ the boundary
term in \eqref{boundaryterm2} is 
\begin{align*}
   \lim_{\epsilon \to 0}\int_{\p D_{j} (\epsilon)}  \la \nabla_{u_* \p_\nu} C, C \ra_{u^* H_2}
   ds  &=  \lim_{\epsilon \to 0} \int_{0}^{2\pi}  \p_{\epsilon} \la C, C \ra_{u^* H_2}
   \epsilon d\theta \\
   &= \lim_{\epsilon \to 0} \int_{0}^{2\pi}  \p_{\epsilon} \absv{ w +
     \mc{O}^{2 - 2\al_{j}} } \epsilon d\theta \\
   &= 0.
\end{align*}
Near $p \in \pp_{< \pi}$ the same computation shows that the
contribution is also zero.  We now see from \eqref{boundaryterm2} that
\begin{equation*}
 \int_M  \lp \left< \nabla C ,  \nabla C \right>_{u^* {H_2}}  +  \tr_g
 R^{H_2}(du, C, C, du) \rp   dVol_g = 0,
\end{equation*}
which immediately implies that $C = 0$ by negative curvature. (In
particular $C$ is conformal Killing.)

Now assume the genus of $\Sigma$ is
$1$.  As above we lift to the universal
cover and use (\ref{flatdecomp}).  As in the previous paragraph we
conclude that
\begin{equation*}
   \int_M  \lp \left< \nabla C ,  \nabla C \right>_{u^* {K_2}}  +  \tr_g
 R^{K_2}(du, C, C, du) \rp   dVol_g = 0,
\end{equation*}
but now we can only have flatness and thus can only conclude that
	\begin{align} \label{flatnabla}
		^{ K_2 } \nabla C = 0.
	\end{align}
The lift of $K_2$ to the universal cover, $\wt{K}_2$, written with
respect to the global coordinates $\wt{z}$ on $\C$, is a constant
coefficient metric.  Hence (\ref{flatnabla}) means $C \equiv v$
for some constant vector $v \in \C$, which, as desired, is conformal Killing.  The first statement follows exactly as in the genus $\Sigma > 1$ case.


\end{proof}

\subsection{Proof of Proposition \ref{mainlemma}}

Recall from Lemma \ref{cokerminuse} that the Fredholm map
 \begin{equation*}
		\Ll : r^{1 - \e}\Xx^{2, \g}_b \lra r^{1 - \e - 2\Aa}\Xx^{0, \g}_b
\end{equation*}
satisfies
	\begin{align} \label{pr1}
		\Ll(r^{1 - \e}\Xx^{2, \g}_b) \oplus \Kk = r^{1 - \e - 2\Aa}\Xx^{0, \g}_b,
	\end{align}
and the sum is $L^2$ orthogonal.  This obviously implies
	\begin{align}
		\lp \Ll(r^{1 - \e}\Xx^{2, \g}_b) \oplus \Kk  \rp \cap r^{1 + \e - 2\Aa}\Xx^{0, \g}_b = r^{1 + \e - 2\Aa}\Xx^{0, \g}_b,
	\end{align}	
Since $\Ll : r^{1 + \e}\Xx^{2, \g}_b \lra r^{1 + \e - 2\Aa}\Xx^{0,
  \g}_b$ is also Fredholm for $\e$ small, $\Ll\lp r^{1 + \e}\Xx^{2,
  \g}_b \rp \subset  \Ll(r^{1 - \e}\Xx^{2, \g}_b) \cap r^{1 + \e -
  2\Aa}\Xx^{0, \g}_b $ is a finite index inclusion, and we can find 
	\begin{align} \label{W}
		W \subset  \Ll(r^{1 - \e}\Xx^{2, \g}_b) \cap r^{1 + \e - 2\Aa}\Xx^{0, \g}_b
	\end{align}		
so that
	\begin{align} \label{Wdecomp}
		\lp \Ll\lp r^{1 + \e}\Xx^{2, \g}_b \rp+ W \rp \oplus \mc{K} =  r^{1 + \e - 2\Aa} \Xx^{2, \g}_b.
	\end{align}
We will now use the final paragraph in section \ref{bcalcsect}, which says that vector fields like those in $W$ which are sent via $\Ll$ to vector fields with higher vanishing rates than generically expected admit partial expansions.  Precisely, any vector field $\psi \in W$ satisfies
 \begin{equation} \label{W2}
\psi \in W \implies 
\left\{\begin{array}{cc}
		\psi \in r^{1 + \e - 2\Aa}\Xx^{0, \g}_b \\
		\psi= \Ll\psi' \mbox{ for } \psi' \in r^{1 - \e}\Xx^{0, \g}_b,
              \end{array} \right.
\end{equation}
so from (\ref{expdecomp1}) and (\ref{expdecomp2}) we have $\psi' =
{\psi}_1 + {\psi}_2$, where
	\begin{align*}
	{\psi}_1 &= \sum_{(s, p) \in \Lambda \cap [1 - \e, 1 + \e]} r^s \log^p (r)  a_{s, p}(\theta),
	\end{align*}
and ${\psi}_2 \in r^{1 + \e}\Xx^{2, \g}_b.$
Looking at the eigenvectors in section \ref{indroots} shows that
${\psi}_1(z) = \lambda z$ for some $ \lambda \in \C$.  Thus	$\psi'
\in \mbox{T}_{id} \Vv + r^{1 + \e} \Xx^{2, \g}_b$, which of course
implies
 \begin{equation}
		\Ll(  r^{1 + \e} \Xx^{2, \g}_b ) + W = \Ll(  r^{1 + \e} \Xx^{2, \g}_b + \mbox{T}_{id}\Vv ).
 \end{equation}
Now by (\ref{Wdecomp})
 \begin{equation} \label{cokD}
		 \Ll(  r^{1 + \e} \Xx^{2, \g}_b + \mbox{T}_{id}\Vv ) \oplus \Kk = r^{1 + \e - 2\Aa}\Xx^{0, \g}_b,
 \end{equation}
where $\Kk \perp  \Ll(  r^{1 + \e} \Xx^{2, \g}_b + \mbox{T}_{id}\Vv )$.  Let
 \begin{equation*}
		\pi_{\Kk} = \mbox{projection onto $\Kk$ in (\ref{cokD})}.
\end{equation*}
	
	Now we add $\mbox{T}_{id}\mc{T}_{> \pi}$ to the domain of $\Ll$.  Let $\psi \in \mbox{T}_{id}\mc{T}_{> \pi}$ corresponding to $w \in \C^n$, so near $p_i \in \pg$ we have $\psi \equiv w_i$
Since near $p_i$ we have $\Ll w_i = 0$, we know that
	\begin{align} \label{Lpsi80}
		\Ll \psi \in  r^{1 + \e - 2\Aa}\Xx^{0, \g}_b.
	\end{align}
Using the basis for $\mc{K}$ in (\ref{Ci})--(\ref{Jai}),
\eqref{Lpsi80} implies that we can write
 \begin{equation*}
			\pi_{\Kk} \Ll \psi = \sum_{j = 1}^N \la \Ll \psi, J_{a^i}\ra_{L^{2}} J_{a^i} + \sum_{k = 1}^M \la \Ll \psi, C_k\ra_{L^{2}} C_k,
\end{equation*}
for the $L^{2}$ inner product in \eqref{pairingdefb}. A simple integration by parts using shows that for $\wt{\psi} \in \Kk$ we have
	\begin{align*}
		\la \Ll \psi, \wt{\psi} \ra_{L^2} &= -4\pi \Re
                \sum_{p_i \in \pg} w_i \Res \rvert_{p_i}
                \Phi(\wt{\psi}).
               	\end{align*}
This immediately implies that
	\begin{align*}
		\pi_{\mc{K}} \Ll ( r^{1 + \e} \Xx^{2, \g}_b \oplus \mbox{T}_{id}\Vv \oplus \mbox{T}_{id}\mc{T}_{> \pi}) = \mbox{span}\la J_{a^i} \ra,
	\end{align*}
and thus we have shown that
	\begin{align*}
		\Ll \lp r^{1 + \e} \Xx^{2, \g}_b \oplus \mbox{T}_{id}\Vv \oplus \mbox{T}_{id}\mc{T}_{> \pi} \rp \oplus \CKK =  r^{1 + \e - 2\Aa}\Xx^{0, \g}_b,
	\end{align*}
which is what we wanted.




\subsection{Proof of Proposition \ref{h2prop}}
For the proof of Proposition \ref{h2prop} we will need a formula for the
second variation of energy in the direction of an arbitrary $J_w \in
\mc{J}_{\qq}$.  First, we compute the first variation near a solution \eqref{hme2}.

\begin{proposition} \label{scndvarformula}
Let $(u_0, g_0, \Gfixed)$ solve (\ref{hme2}) and satisfy Assumption
\ref{assump}.  With notation as in the previous section, for any $w
\in \C^{\absv{\qq}}$ we have
 \begin{equation} \label{frstvar}
				\eval{ \frac{d}{dt}}_{ t = 0} E(u_{tw}, g_0, \Gfixed) = \Re \lp 2 \pi i \sum_{p_i \in \pl} \Res \rvert_{p_i} \lp \iota_{J_w} \Phi(u_0) \rp \rp,
 \end{equation}			
and if $\Phi(u_0) = \phi_{u_0} dz^2$
 \begin{equation} \label{frstvar2}
				\Res \rvert_{p_i} \lp \iota_{J_w} \Phi \rp  = w_i \Res \rvert_{p_i} \phi_{u_0}
 \end{equation}
\end{proposition}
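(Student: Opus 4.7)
The plan is to apply the first variation of energy formula in Lemma \ref{frstvarbdry} on the truncated surface $\Sigma(\delta) := \Sigma \setminus \bigcup_{p_j \in \pp} D_j(\delta)$ obtained by removing a small conformal disc of radius $\delta$ around each cone point, and then send $\delta \to 0$. Since $u_{tw} \in \mc{H}arm_\qq$ satisfies $\tau(u_{tw}, g_0, \Gfixed) \equiv 0$ on $\Sigma_{\pp'}$ for each $t$, the interior integral in \eqref{frstvar0} vanishes identically, and differentiating at $t=0$ leaves only boundary contributions:
\begin{equation*}
\ddt E(u_{tw}, g_0, \Gfixed) = \lim_{\delta \to 0}\sum_{p_j \in \pp}\int_{\p D_j(\delta)} \la u_{0*}\p_\nu, J_w\ra_G\, ds,
\end{equation*}
with $\p_\nu$ the outward conormal to $\Sigma(\delta)$ (pointing radially toward each $p_j$).

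The next step is to show that only the cone points in $\pl - \qq$ contribute in the limit. For $p_j \in \qq$, the translation component of $T_{tw}$ is zero by definition of $\Tt_{\pl - \qq}$, so $u_{tw}$ fixes $p_j$ and therefore $J_w = \mc{O}(|z|)$ near $p_j$; combining this with $\rho(u_0) \sim r^{2(\al_j - 1)}$, $u_{0*}\p_r = \mc{O}(1)$, and $ds \sim r^{\al_j} d\theta$ makes the integrand $\mc{O}(\delta^{\e})$. For $p_j \in \pg$, the Jacobi field $J_w$ is bounded but does not decay, so I would re-use the conformal-invariance cancellation from the proof of \eqref{intoetilde}: writing $u_{tw} = \wt{u}_{tw} \circ T_{tw}$ with $T_{tw}$ a local translation (hence conformal) near cone points, the identity $E(u_{tw}, \Sigma - T_{tw}D_j(\delta)) = E(\wt{u}_{tw}, \Sigma - D_j(\delta))$ matches the $e(u_0)g_0$ piece of the boundary integrand against the derivative of the region of integration, while the trace-free remainder involves $\Phi(u_0)$, which by Lemma \ref{hopfpoles} is holomorphic over $p_j$ and therefore contributes $\mc{O}(\delta)$.

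For $p_j \in \pl - \qq$, normalize the conformal coordinates $z$ at $p_j$ so that Form \ref{uform} reads $u_0(z) = z + v$ with $v \in r^{1+\e}C^{2,\g}_b$. Then $u_{tw}(z) = (z - tw_j) + v_{tw}(z)$ gives $J_w(z) = -w_j + \mc{O}(r^{\e})$ as a complex-valued function under the identification of domain and target tangent spaces by $z$. Using $u_0^*G = e(u_0) g_0 + 2\Re \Phi(u_0)$, the same conformal/translation argument cancels the $e(u_0) g_0$ part of the boundary integrand. The trace-free part $2\Re\Phi(u_0)$ with $\Phi(u_0) = \phi_0\, dz^2$ contracts along the boundary to the $1$-form $\phi_0 J_w^z\, dz$ plus its conjugate; since $\phi_0$ has at most a simple pole at $p_j$ by Lemma \ref{hopfpoles}, Cauchy's residue theorem yields
\begin{equation*}
\lim_{\delta \to 0}\int_{|z|=\delta} \phi_0(z) J_w^z(z)\, dz = 2\pi i\, J_w^z(0)\, \Res\rvert_{p_j}\phi_0,
\end{equation*}
with the $\mc{O}(r^{\e})$ part of $J_w$ contributing $\mc{O}(\delta^{\e})$. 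Taking real parts, summing over $p_j \in \pl$ (with the convention $w_j = 0$ for $p_j \in \qq$), and adding the conjugate gives \eqref{frstvar}, while the local identity $\iota_V(\phi\, dz^2) = \phi V^z\, dz$ evaluated at $p_j$ gives \eqref{frstvar2}.

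The main obstacle will be the rigorous cancellation of the trace-part contribution near $p_j \in \pl - \qq$: on $\p D_j(\delta)$ the quantity $e(u_0)\rho(u_0)\cdot |J_w|\cdot ds$ is a priori of size $\mc{O}(\delta^{3\al_j - 2})$, which blows up when $\al_j < 1/2$. This divergence is absorbed by the $t$-derivative of the domain boundary in the conformal-invariance identity $E(u_{tw}, \Sigma - T_{tw}D_j(\delta)) = E(\wt{u}_{tw}, \Sigma - D_j(\delta))$, exactly as in the computation leading to \eqref{intoetilde} and Remark \ref{rm:coneanglepi}; once this cancellation is made precise, the remainder of the computation is a direct residue calculation.
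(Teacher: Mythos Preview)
Your proposal is correct and follows essentially the same approach as the paper: apply the first variation on the truncated surface, split the boundary integrand via $u_0^*\Gfixed = e(u_0)\,g_0 + 2\Re\Phi(u_0)$, cancel the trace part against the moving-domain contribution coming from the factorization $u_{tw} = \wt{u}_{tw}\circ T_{tw}$, and read off the residue from the $\Phi$ piece. The only organizational point is that your first displayed limit does not exist as written (as you yourself flag in the last paragraph), so the paper introduces the moving discs $D_{tw_i}(\delta)$ from the outset and writes $\frac{d}{dt}E = A(\delta)+B(\delta)$ for every $\delta$, then uses the chain rule to split $A(\delta)=A_1(\delta)+A_2(\delta)$ with $A_1^1+A_2\to 0$ giving exactly your cancellation.
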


	\begin{corollary} \label{hessharm}
		Let $(u_0, g_0, \Gfixed)$ be a solution to
                (\ref{hme2}).  Then $u_0$ minimizes $E(\cdot, g_0,
                \Gfixed)$ in its free homotopy class if and only if it
                solves (\ref{hme2}) with $\qq = \varnothing$, i.e.\ if
                and only if $Phi(u_0)$ is holomorphic on $(\Sigma, g_{0})$.
	\end{corollary}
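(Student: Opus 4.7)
The plan is to combine Lemma \ref{minlemma} for one direction with the first variation formula of Proposition \ref{scndvarformula} for the other, yielding essentially a two-line argument that relies on no new analysis.

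For the ``if'' direction, I would invoke Lemma \ref{minlemma} with $\qq = \varnothing$: when $\Phi(u_0)$ extends holomorphically to all of $\Sigma$, which is exactly the condition that $(u_0, g_0, \Gfixed)$ solves (\ref{hme2}) with $\qq = \varnothing$, that lemma directly gives that $u_0$ is energy minimizing in its free homotopy class.

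For the ``only if'' direction, suppose $u_0$ minimizes energy in its free homotopy class. I would first apply the harmonic parametrization $\mc{S}$ of (\ref{harmfmap}) with $\qq = \varnothing$ to produce, for each sufficiently small $w \in \C^{\absv{\pl}}$, a smooth one-parameter family $u_{tw} = \mc{S}(T_{tw}, \Gfixed)$ of harmonic maps through $u_0$. Since this family varies continuously in $t$ and starts at $u_0$, every $u_{tw}$ lies in the free homotopy class of $u_0$, and so the minimizing hypothesis gives $E(u_{tw}, g_0, \Gfixed) \geq E(u_0, g_0, \Gfixed)$ with equality at $t = 0$. Therefore $\eval{\frac{d}{dt}}_{t = 0} E(u_{tw}, g_0, \Gfixed) = 0$ for every admissible $w$. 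Applying Proposition \ref{scndvarformula} together with (\ref{frstvar2}) then yields
\begin{equation*}
0 \;=\; \Re \lp 2 \pi i \sum_{p_i \in \pl} w_i \Res \rvert_{p_i} \phi_{u_0} \rp \quad \text{for all } w \in \C^{\absv{\pl}}.
\end{equation*}
Letting $w$ range over a real basis of $\C^{\absv{\pl}}$ (e.g.\ taking $w_i \in \{1, i\}$ and $w_j = 0$ for $j \neq i$) forces $\Res \rvert_{p_i} \phi_{u_0} = 0$ at every $p_i \in \pl$. Combined with Lemma \ref{hopfpoles}, which already guarantees that $\Phi(u_0)$ is holomorphic on $\Sigma - \pl$ with at worst simple poles on $\pl$, the vanishing of these residues gives that $\Phi(u_0)$ extends holomorphically to all of $\Sigma$, as required.

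The only delicate point is verifying that the family $u_{tw}$ truly exists near $u_0$ and lies in the correct free homotopy class, but this is not a new obstacle: it is a direct consequence of the openness machinery built via the Implicit Function Theorem in the preceding sections, and the free homotopy class is preserved because $t \mapsto u_{tw}$ is continuous. Everything else is a symbolic manipulation of the two previously established formulas, so I expect no genuine analytic difficulty in fleshing this out.
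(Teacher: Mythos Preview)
Your proposal is correct and follows essentially the same route as the paper: the ``if'' direction is Proposition \ref{umin} (equivalently Lemma \ref{minlemma} with $\qq=\varnothing$), and the ``only if'' direction uses the first variation formula (\ref{frstvar})--(\ref{frstvar2}) from Proposition \ref{scndvarformula} to see that a nonzero residue produces a direction $w$ with nonvanishing first variation, contradicting minimality. The paper phrases the second part contrapositively while you phrase it directly, but the argument is identical.
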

	\begin{proof}
		That $\Res \Phi(u_0) = 0$ implies the minimizing property is the content of Proposition \ref{umin}.
	
For the other direction, if $(u_0, g_0, \Gfixed)$ solves (\ref{hme2}) and satisfies Assumption \ref{assump}, then
			\begin{align*}
				\Res \Phi(u) \neq 0 \implies u_0 \mbox{ is not energy minimizing}
			\end{align*}
since by (\ref{frstvar}), if $w \in \C^{\absv{\qq}}$ has $\Re 2\pi i \sum_{p_i \in \pl} w_i \Res \rvert_{p_i} \phi_{u_0} \neq 0$ (which is easy to arrange), then 
 \begin{equation*}
				\eval{ \frac{d}{dt}}_{ t = 0} E(u_{tw}, g_0, \Gfixed) \neq 0,
\end{equation*}
contradicting minimality.
	\end{proof}

	\begin{remark} The one form $\iota_{J_w} \Phi(u_0)$ is not
          holomorphic (since $J_w$ is not), but we will show that it still has a residue, meaning that the limit
 \begin{equation*}
		\Res \iota_{J_w} \Phi := \lim_{\e \to 0}  \frac{1}{2 \pi i} \int_{\absv{z} = e}  \iota_{J_w} \Phi (z)
\end{equation*}
exists.  
	\end{remark}

By our assumption that $u_0 = id$ and decomposition (\ref{idecomp}), in conformal coordinates the metric $\Gfixed$ is expressed by
 \begin{equation} \label{decomp2}
		\Gfixed = \sigma \absv{dz}^2 + 2 \Re \phi(z) dz^2.
 \end{equation}
Below, these coordinates are used on both the domain and the target.

	\begin{proof}[Proof of Proposition \ref{scndvarformula}]
		
The proof is similar to the proof of Lemma \ref{cokhess}.  As always assume $u_0 =
id$.  We can write  $u_{tw} = \wt{u}_{tw} \circ T_{tw}$
where $\wt{u}_{tw} \in \Bb^{ 1 + \e}(u_0) \circ \Vv \circ \Tt_{>\pi}$
and $T_{tw} \in \mc{T}_{\qq}$, and $\ddt u_{tw} = J_{w}$ (see\eqref{eq:jw}).
Then	
 \begin{equation*}
				\eval{ \frac{d}{dt}}_{ t = 0} E\lp u_{tw}, \Sigma \rp = 	\underbrace{\eval{ \frac{d}{dt}}_{ t = 0} E\lp u_{tw}, \Sigma  - \bigcup_{i = 1}^{\absv{p}} D_{tw}(\delta)  \rp}_{ : = A(\delta)} + \underbrace{\eval{ \frac{d}{dt}}_{ t = 0} E\lp u_{tw}, \bigcup_{i = 1}^{\absv{p}} D_{tw}(\delta) \rp}_{ : = B(\delta)}.
\end{equation*}
As in Lemma \ref{cokhess} $B(\delta) \to 0$ as $\delta \to 0$.
Thus we arrive at
 \begin{equation*}
		\eval{ \frac{d}{dt}}_{ t = 0} E\lp u_{tw}, \Sigma \rp  = \lim_{\delta \to 0} A(\delta).
\end{equation*}
By the chain rule,
	\begin{align} \label{A}
		A(\delta) = \underbrace{\eval{ \frac{d}{dt}}_{ t = 0} E\lp u_{tw}, \Sigma  - \bigcup_{\pp} D_{0}(\delta)  \rp}_{:= A_1(\delta)} + \underbrace{\eval{ \frac{d}{dt}}_{ t = 0} E\lp u_{0}, \Sigma  - \bigcup_{\pp} D_{tw}(\delta)  \rp}_{:= A_2(\delta)}
	\end{align}
As we will see shortly, the term $A_2(\delta)$ is not in general
bounded as $\delta \to 0$, but we will show that $A_1(\delta)$
decomposes into a sum of two terms, $A_1(\delta) = A_1^1(\delta) +
A_1^2(\delta)$ where $A_1^1(\delta) \sim - A_2(\delta)$ (i.e.\ it
cancels the singularity), and $A_1^2(\delta)$ converges to the
expression in (\ref{frstvar}). $A_1$ is an integral over a smooth
manifold with boundary so by the first variation formula
\eqref{frstvar0}, and in the last line using decomposition (\ref{decomp2}), if $\Sigma(\delta)$ is as in (\ref{sigmaeps}), we have
			\begin{align*}
			A_{1} &=  \int_{\p \Sigma (\delta)} \la J_w , \p_\nu \ra_{\Gfixed} ds  \\
				 &= - \underbrace{\sum_{p_i \in \pl}  \int_0^{2\pi} g\lp J_w , \p_r \rp \delta d\theta}_{ : = A_1^1}  - \underbrace{\sum_{p_i \in \pl}  \int_0^{2\pi} \Re \Phi\lp J_w , \p_r \rp \delta d\theta}_{A_1^2}.
			\end{align*}
It thus remains only to show: 1)  $\lim_{\delta \to 0} A_1^2= \Re \lp
2 \pi i \Res \lp \iota_{J_w} \Phi \rp \rp$, 2) (\ref{frstvar2}) holds,
and 3)  $\lim_{\delta \to 0} \absv{ A_1^1 + A_2 } = 0$.

We prove numbers 1 and 2 together:
 \begin{equation*}
		 \int_{\p D_0(\delta)} \Re \Phi\lp J_w , \p_r \rp \delta d\theta =  \int_{\p D_0(\delta)} \Re \phi(z) dz^2\lp J_w , \p_r \rp \delta d\theta.
\end{equation*}
Since
	\begin{align*}
		dz^2\lp J_w , \p_r \rp =   J_w dz\lp  \p_r \rp =  J_w \frac{z}{\absv{z}},
	\end{align*}
we have
 \begin{equation} \label{eeee}
		\int_0^{2\pi} \Re \Phi\lp J_w , \p_r \rp \delta d\theta = \Re \int_0^{2\pi} J_w \phi(z) dz,
 \end{equation}
and by definition
 \begin{equation} \label{eee}
		\int_0^{2\pi} \Re \Phi\lp J_w , \p_r \rp \delta d\theta = \Re \int_0^{2\pi} \iota_{ J_w}\Phi(z).
 \end{equation}
So 
	\begin{align} \nonumber
		\sum_{p_i \in \pl}\lim_{\delta \to 0}  \int_0^{2\pi} J_w \phi(z) dz &=  \sum_{p_i \in \pl}\lim_{\delta \to 0}  \int_0^{2\pi} \lp-w_i + \mc{O}(\absv{z}) \rp \phi(z) dz \\ \label{eeeee}
		&= - 2 \pi i \sum_{p_i \in \pl} w_i  \Res \rvert_{p_i} \phi_{u_0}.
	\end{align}
Putting (\ref{eeeee}) together with (\ref{eee}) gives us what we wanted.
	
For number 3, note that from the expression $g = e^{2 \mu} \absv{z}^{2(\al - 1)}$ we have
	\begin{align*}
		A_1^1(\delta) &= \int_0^{2\pi} \la -w_i + \mc{O}(\absv{z}), \p_r \ra_{g} r d\theta 
		 = \int_0^{2\pi} \la -w_i , \p_r \ra_{g} r d\theta  + \mc{O}(\delta^{2\al})
	\end{align*}
Using \eqref{jesus} and that fact that near $p \in \pp$, $T_{tw}^{-1} (z) = z + tw$ parametrizes the boundary of $\p D_{tw}(\delta)$ we have 
	\begin{align*}
          A_{2}  &= \sum_{i = 1}^{\absv{ p }}   \int_0^{2\pi} \la
          \dot{T}_0^{-1} , \p_r \ra_g r d\theta
          = \sum_{i = 1}^{\absv{ p }}   \int_0^{2\pi} \la w_i, \p_r \ra_g ds + \mc{O}(\delta^{2\al}).
	\end{align*}
Thus $\absv{ A_1^1 + A_2 } = \mc{O}(\delta^{2 \al})$, and the proof is
finished.
	\end{proof}

Now suppose that $u_{0}$ solves \eqref{hme2}.  Thus $u_{0}$ is an absolute minimum of
$E(\cdot, g, \Gfixed)$.  By differentiating again, we get the following
as a corollary to Proposition \ref{scndvarformula}.
	\begin{corollary} \label{secondvar}
Suppose that $(u_{0}, g, \Gfixed)$ solves \eqref{hme2} and satisfies
Assumption \ref{assump}.  Then
			\begin{equation} \begin{split} \label{scndvar}
				\eval{ \frac{d^2}{dt^2}}_{ t = 0}
                                \wt{E}(u_{tw}) =  \Re \lp 2 \pi i
                                \sum_{p_i \in \pl} \Res \rvert_{p_i}
                                \lp \iota_{J_w} \Phi(J_w) \rp \rp =
                                \Re \lp 2 \pi i \sum_{p_i \in \pl} w_i
                                \Res \rvert_{p_i}   \phi(J_w) \rp
			\end{split} \end{equation}
where $\Phi(J_w) dz^2 = \eval{\frac{d}{dt}}_{t = 0} \phi_{u_{tw}} dz^2$ 
	\end{corollary}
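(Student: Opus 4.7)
The plan is to differentiate the first-variation formula of Proposition \ref{scndvarformula} once more in $t$, viewing it as holding along the entire path $u_{tw}$ and not merely at $u_0$.

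First, I would argue that the derivation giving \eqref{frstvar}--\eqref{frstvar2} applies verbatim at every harmonic map $u_{tw}$ in $\mc{H}arm_\qq$, not only those satisfying the residue condition of \eqref{hme2}. The key cancellation of boundary terms in that proof relies only on the decomposition $u_{tw}^{*}\Gfixed = e(u_{tw})g + 2\Re \Phi(u_{tw})$ and on $\tau(u_{tw}, g, \Gfixed) = 0$ away from the cone points, both of which hold along the entire family. After the coordinate normalization of that proof applied to $u_{tw}$ in place of $u_{0}$, we obtain
\begin{equation*}
R(t) := \eval{\frac{d}{ds}}_{s=t} E(u_{sw}, g, \Gfixed) = \Re \Big( 2\pi i \sum_{p_{i} \in \pl} w_{i} \Res_{p_{i}(t)} \phi_{u_{tw}} \Big),
\end{equation*}
where $p_{i}(t) := u_{tw}^{-1}(p_{i})$ and $\phi_{u_{tw}}$ is the Hopf-differential coefficient of $u_{tw}$ in the ambient conformal coordinate.

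Next, since $u_{tw}$, and hence $\phi_{u_{tw}}$, depend smoothly on $t$, and $p_{i}(t)$ varies smoothly with $p_{i}(0) = p_{i}$, one can choose a contour $\gamma_{i}$ that encircles $p_{i}(t)$ for all sufficiently small $t$. Writing the residue as the Cauchy integral $(2\pi i)^{-1}\oint_{\gamma_{i}} \phi_{u_{tw}}\, dz$ and differentiating under the integral sign gives
\begin{equation*}
\eval{\frac{d}{dt}}_{t=0}\Res_{p_{i}(t)} \phi_{u_{tw}} = \frac{1}{2\pi i} \oint_{\gamma_{i}} \phi(J_{w})\, dz = \Res_{p_{i}} \phi(J_{w}),
\end{equation*}
since $\phi(J_{w})$ is meromorphic inside $\gamma_{i}$ with its only possible pole at $p_{i}$. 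A double-pole contribution generated by the motion of $p_{i}(t)$ is proportional to $\Res_{p_{i}}\phi_{u_{0}}$, which vanishes at each $p_{i} \in \pl - \qq$ by \eqref{hme2} and is multiplied by $w_{i} = 0$ at each $p_{i} \in \qq$. Combining these, $\eval{\ddtt}_{t=0} E(u_{tw}, g, \Gfixed) = R'(0) = \Re\big( 2\pi i \sum_{p_{i} \in \pl} w_{i} \Res_{p_{i}} \phi(J_{w}) \big)$, which is the corollary.

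The main obstacle is the first step: the formula of Proposition \ref{scndvarformula} is established for solutions of \eqref{hme2}, whereas the generic $u_{tw}$ along the path is merely harmonic, with possibly non-vanishing residues. To handle this, I would redo the boundary-cancellation computation in that proof after translating conformal coordinates near each cone point so that $u_{tw}$ assumes Form \ref{uform} with $\lambda = 1$; the chain-rule split, the conformal invariance on the excised cone neighborhoods, and the asymptotic cancellation of the three boundary integrals analogous to those in \eqref{cancelling} all go through identically, producing the formula for $R(t)$ without appeal to any residue hypothesis on $u_{tw}$.
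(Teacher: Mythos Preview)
Your approach is correct and is exactly what the paper means by its one-line justification ``by differentiating again.'' You have simply made explicit the step the paper leaves implicit: that the first-variation identity \eqref{frstvar}--\eqref{frstvar2} holds along the whole family $u_{tw}$, and that differentiating the resulting residue in $t$ via a fixed Cauchy contour yields $\Res_{p_i}\phi(J_w)$.

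Two minor remarks. First, your concern about the hypothesis of Proposition \ref{scndvarformula} is overly cautious: inspecting that proof, the residue condition from \eqref{hme2} is never invoked --- only $\tau(u_0)=0$, Form \ref{uform}, and Lemma \ref{hopfpoles} are used --- so the identity applies to every $u_{tw}$ without redoing any boundary analysis. Second, your comment about a ``double-pole contribution'' from the motion of $p_i(t)$ is unnecessary: once you write $\Res_{p_i(t)}\phi_{u_{tw}}$ as a contour integral over a fixed $\gamma_i$ and differentiate under the integral sign, the answer is automatically $\Res_{p_i}\phi(J_w)$ regardless of whether $\phi(J_w)$ has a simple or double pole there, since the contour integral extracts only the residue. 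The vanishing you observe (via $\Res_{p_i}\phi_{u_0}=0$ on $\pl-\qq$ and $w_i=0$ on $\qq$) is what ensures $\phi(J_w)$ has in fact only a simple pole, but this is not needed for the formula itself.
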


We now conclude the proof Proposition \ref{h2prop} using Proposition
\ref{scndvarformula}

\begin{proof}[Proof of Proposition \ref{h2prop}]
We proceed by interpreting the formula for the Hessian in Corollary
\ref{secondvar} in light of the characterization of the Hessian of the
energy functional at a solution to (\ref{hme2}) in Corollary
\ref{positiveindex}.  By Corollary \ref{positiveindex}, the Hessian of
the energy functional is positive definite on any compliment of the
conformal Killing fields, while by Corollary \ref{hessharm}, if $\Res
J_w = 0$, $J_w$ is a zero of the Hessian.  If genus $\Sigma > 1$,
there are no conformal Killing fields, so the map (\ref{h2map}) is an
injective linear map between vector spaces of the same dimension, thus
an isomorphism.  If genus $\Sigma \leq 1$, any space $\mc{H}arm_{\qq}'
\subset \mc{H}arm_{\qq}$  with tangent space complimentary to
$\mc{C}onf_0$ will satisfy (\ref{smallinj}).

\end{proof}


\section[Closedness]{Closedness:  limits of harmonic diffeomorphisms}
\label{closedsec}

In this section we prove that $\mc{H}(\qq)$ (see
Proposition \ref{thm:clopenprop}) is closed.  In fact we prove the
stronger result

\begin{theorem} \label{closedthm}
	Let $(u_k, g_{k}, G_k)$ be a sequence of solutions to
        (\ref{hme2}) where $g_{k} \to g_{0}$ with $g_k \in \Mm^*_{2, \g,
          \nu}(g_{0}, \pp, \Aa)$, $G_k \to \horrible$ with $G_k \in \Mm^*_{2, \g,
          \nu}(\horrible, \pp, \Aa)$, and all the $u_{k}$ are in Form \ref{uform},.  Assume that $\kappa_{G_k} \leq 0$ and that each $u_k$ has non-vanishing Jacobian away from $u_k^{-1}(\pp)$.  Then the $u_k$ converge to a map $u_0$ so that $(u_0, g_0, \horrible)$ solves (\ref{hme2}) in Form \ref{uform}.  The convergence is $C^{2, \g}_{loc}$ away from $u_0^{-1}(\pp)$. For a precise description of the convergence near the cone points, see Corollary \ref{wkconverge} below.
\end{theorem}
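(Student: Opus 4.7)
The strategy proceeds in two stages: first establish $C^{2,\gamma}_{loc}$ convergence on $\Sigma - \pp$ via standard elliptic theory, and second handle the behavior near $\pp$ by a rescaling argument in the $b$-calculus framework of Section \ref{bcalcsect}.

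For the first stage, the harmonicity of $u_k$ together with $\kappa_{G_k} \leq 0$ yields a Bochner-type subharmonic inequality for the energy density $e(u_k)$ on $\Sigma - u_k^{-1}(\pp)$. Since $g_k \to g_0$ and $G_k \to \horrible$ as conic metrics, the geometric data is uniformly controlled on compact subsets of $\Sigma - \pp$, so an application of DiGiorgi--Nash--Moser together with the extension of Heinz's Harnack inequality packaged in Proposition \ref{loclip} gives a uniform $L^\infty$ bound on $e(u_k)$ on such compacta. Quasilinear elliptic bootstrap applied to $\tau(u_k, g_k, G_k) = 0$ then yields uniform $C^{2,\gamma}$ estimates, and Arzel\`a--Ascoli extracts a subsequence with $u_{k_j} \to u_0$ in $C^{2,\gamma}_{loc}(\Sigma - \pp)$. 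Smoothness of $\tau$ in its arguments gives $\tau(u_0, g_0, \horrible) = 0$ on $\Sigma - \pp$, and the Hopf differentials $\Phi(u_{k_j})$ converge to $\Phi(u_0)$ locally, so the vanishing-residue condition of \eqref{hme2} passes to the limit on $\pl - \qq$.

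The central obstacle is to show that $u_0$ has the correct asymptotic form at each cone point. I would argue by contradiction in conformal coordinates $z$ centered at a cone point $p_j$. Writing $u_k(z) = \lambda_k z + v_k(z)$ in Form \ref{uform}, suppose the weighted norms $\|v_k\|_{r^{1+\e}C^{2,\gamma}_b}$ are not uniformly bounded. Choose $\rho_k \to 0$ so that the rescaled $\wt{v}_k(z) := \rho_k^{-1-\e} v_k(\rho_k z)$ has unit norm on a fixed $b$-ball. Because the metrics in Assumption \ref{assump} differ from the round cone metric $g_{\alpha_j}$ by $\mc{O}(r^{2\alpha_j})$ perturbations, the rescaled metrics converge to $g_{\alpha_j}$, and the rescaled harmonic map equation reduces in the limit to the tension field equation on $C_{\alpha_j}$. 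Using the $b$-elliptic regularity of $\wt{L}$ from Section \ref{bcalcsect} --- which is scale invariant in $b$-coordinates --- together with the quadratic remainder bound \eqref{qbound}, one obtains uniform $r^c C^{2,\gamma}_b$ estimates on $\wt{v}_k$ on any compact subset of $C_{\alpha_j}^*$. Extracting a subsequential limit produces a nontrivial harmonic map $\wt{u}_\infty$ of $C_{\alpha_j}$ whose Hopf differential has vanishing residue at $0$.

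The final step is to classify $\wt{u}_\infty$ and derive a contradiction. By the indicial-root analysis of Section \ref{indroots}, solutions of the model equation on $C_{\alpha_j}$ with the allowed growth are generated by the indicial roots, and the vanishing residue condition together with Lemma \ref{isosjf} rules out the zeroth-order constant piece in the $2\pi\alpha_j > \pi$ case. The remaining admissible asymptotic behavior $\lambda z$ is precisely what we factored out via $\lambda_k$, contradicting the unit-norm normalization. This yields the sought uniform bound, which passes to $u_0$; iterating \eqref{expansionrelation} via the $b$-parametrix as at the end of Section \ref{bcalcsect} then upgrades $u_0$ to the full polyhomogeneous expansion required by Form \ref{uform}. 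The hardest part will be making the rescaling quantitatively correct --- choosing the weight $c$ and normalization $\rho_k$ so that the limit is simultaneously nontrivial, regular enough for the classification, and compatible with the residue condition; in particular care is needed to ensure the limit really is a mapping into a single cone $C_{\alpha_j}$ rather than developing bubbles, and that the limit homeomorphism property (with non-vanishing Jacobian away from $\pp$) follows from degree-theoretic stability under $C^0$ convergence.
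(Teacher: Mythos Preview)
Your overall architecture---uniform interior estimates, then a rescaling/blow-up argument near the cone points---matches the paper's. The first stage is essentially right, though note that Proposition~\ref{loclip} is doing more than you indicate: it gives the energy-density bound \emph{through} the cone point (not just on compacta of $\Sigma - \pp$), and, crucially, the lower bound $\lambda_k \geq c > 0$ via the Harnack inequality applied to $\log h_k$. Without that lower bound the limit could degenerate and fail to be in Form~\ref{uform}.

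The genuine gap is in your classification of the blow-up limit $\wt{u}_\infty$. You invoke Lemma~\ref{isosjf} and the indicial-root analysis, but those describe the asymptotics of \emph{Jacobi fields}, i.e.\ solutions of the linearized equation $L\psi = 0$. The limit $\wt{u}_\infty$ is a solution of the full nonlinear equation $\tau(\wt{u}_\infty, g_\alpha, g_\alpha) = 0$ on $C_\alpha$, and there is no reason its remainder $v_\infty$ should satisfy $L v_\infty = 0$ or be constrained by the indicial roots alone. Moreover, the vanishing-residue condition plays no role here: for $\alpha > 1/2$ the Hopf differential is automatically regular (Lemma~\ref{hopfpoles}), and in any case the residue condition on $u_k$ at the original cone points does not obviously survive the rescaling as a condition on $\wt{u}_\infty$ at $0$.

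The paper closes this gap with a separate Liouville-type result, Lemma~\ref{globalclass}: a harmonic map $C_\alpha \to C_\alpha$ with bounded energy density arising as a limit of orientation-preserving homeomorphisms is a rotation-dilation $z \mapsto az$. The proof is nonlinear: on the flat cone $\ell = \frac{\rho}{\sigma}|\partial_{\bar z} u|^2$ is a bounded entire subsolution of $\Delta$, hence constant, and since $\ell(0) = 0$ one gets $\ell \equiv 0$, so $u$ is holomorphic and thus linear. This is the missing ingredient; the rest of your rescaling argument (choice of $\rho_k$, uniform $b$-estimates via the parametrix and \eqref{qbound}, extraction of a nontrivial limit) is in the right spirit and close to what the paper does in Proposition~\ref{closed}.
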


To be precise about the convergence of the $G_k$, in conformal coordinates near $p \in \pp$ we have
 \begin{equation*}
		G_k = c_k e^{2 \mu_k} \absv{w}^{2 (\al - 1)} \absv{dw}^2,
\end{equation*}
and
 \begin{equation*}
		\horrible = c_0 e^{2 \mu_0} \absv{w}^{2 (\al - 1)} \absv{dw}^2.
\end{equation*}
By $G_k \to \horrible$, we mean that for some $\sigma > 0$,
	\begin{equation} \begin{split} \label{eq:metnearp}
		\mu_k &\to \mu \mbox{ in } r^{\nu} C^{2, \g}_b(D(\sigma))   \\
		c_k &\to c_0
	\end{split}\end{equation}
(The $b$-H\"older spaces are defined in
(\ref{c2gb0})--(\ref{c2gb2})).  Away from the cone points $G_k \to G$
in $C^{2, \g}_{loc}$.  We can (and do) reduce to the case $c_k = c_0 =
1$ by replacing $G_k$ by $G_k / c_k$ and $\horrible$ by $\horrible / c_0$.  Note
that \eqref{eq:metnearp} easily implies that near each cone point the scalar curvature
functions $\kappa_{G_{k}}$ converge to $\kappa_{\horrible}$ in
$C^{0, \g}_{b}$ (see \eqref{eq:scalar}); in particular are they uniformly
bounded in absolute value.

For the $g_{k}$, we make the stronger assumption that near
$u^{-1}(\pp)$ the metrics look like the standard round conic metric
$g_{\alpha}$ (see \eqref{roundmetric}).  To do this uniformly, we need the uniform bound on the
modulus of continuity obtained in the next section; the precise
statement of this assumption is in section \ref{edbouc}.  In the end the theorem is
true as stated (i.e.\ without this stronger assumption), since we will
change the domain metric in a bounded way and in its conformal class.

We refer the reader to the introduction for an outline of the
subsequent arguments.  Before we prove Theorem \ref{closedthm}, we use
it to prove
\begin{proposition}
  $\mc{H}(\qq)$ is closed.
\end{proposition}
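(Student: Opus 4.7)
The plan is to reduce closedness of $\mc{H}(\qq)$ to a direct application of Theorem \ref{closedthm}, using Lemma \ref{thm:localuni} to normalize the domain conformal structures so that they agree near $\pp$, which is what Theorem \ref{closedthm} requires of the domain metrics.

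Let $t_k \in \mc{H}(\qq)$ with $t_k \to t_0 \in [0,1]$, so we have solutions $u_k : (\Sigma, \cc_{t_k}) \lra (\Sigma, G)$ to \eqref{hme2}. First, I would use Lemma \ref{thm:localuni} applied to each $\cc_{t_k}$: since $\cc_{t_k} \to \cc_{t_0}$ smoothly and Riemann mapping depends continuously on the conformal structure, we obtain a sequence of diffeomorphisms $\wt v_k : \Sigma \lra \Sigma$, isotopic to the identity and fixing $\pp$ pointwise, such that $id : (\Sigma, \cc_{t_0}) \lra (\Sigma, \wt v_k^{*}\cc_{t_k})$ is conformal in a fixed neighborhood of each $p_j \in \pp$, and moreover $\wt v_k \to id$ smoothly. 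Fix a reference metric $\wt g_0 \in \cc_{t_0}$ which equals the standard round cone metric $g_{\alpha_j}$ on a small conformal disc $D_j$ around each $p_j$, and, using the agreement of conformal structures near $\pp$, choose $\wt g_k \in \wt v_k^{*}\cc_{t_k}$ with $\wt g_k = g_{\alpha_j}$ on $D_j$ for every $k$. By construction $\wt g_k \in \Mm^{*}_{2,\g,\nu}(\wt g_0, \pp, \Aa)$ and $\wt g_k \to \wt g_0$ in the sense needed by Theorem \ref{closedthm}.

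Next, I would set $\wt u_k := u_k \circ \wt v_k$. By conformal invariance of energy and of the tension field (section \ref{confinvsect}), $\wt u_k : (\Sigma, \wt g_k) \lra (\Sigma, G)$ is a harmonic diffeomorphism away from $\wt u_k^{-1}(\pp)$; since $\wt v_k$ is isotopic to the identity through maps fixing $\qq$, we have $\wt u_k \sim_{\qq} u_k \sim_{\qq} id$, and the residue conditions in \eqref{hme2} are preserved because the Hopf differential pulls back to the Hopf differential under the (locally) conformal map $\wt v_k$, and $\wt v_k$ is a local biholomorphism near $\pp$. Thus $(\wt u_k, \wt g_k, G)$ satisfies \eqref{hme2}. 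Being solutions to \eqref{hme2}, the $\wt u_k$ lie in Form \ref{uform} and are diffeomorphisms off the preimage of $\pp$ with non-vanishing Jacobian (by (\ref{minthm2}), already established for solutions).

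I would then apply Theorem \ref{closedthm} to $(\wt u_k, \wt g_k, G_k := G)$, noting that the hypotheses $\kappa_{G_k} \leq 0$ and $G_k \in \Mm^{*}_{2,\g,\nu}(G, \pp, \Aa)$ hold trivially because the target is constant. The theorem produces a limit $\wt u_0$ with $(\wt u_0, \wt g_0, G)$ solving \eqref{hme2} in Form \ref{uform}. Since $\wt g_0 \in \cc_{t_0}$, this shows $t_0 \in \mc{H}(\qq)$, so $\mc{H}(\qq)$ is closed.

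The main obstacle is a bookkeeping one rather than an analytic one: one must be sure that the local conformal normalization provided by Lemma \ref{thm:localuni} can be done uniformly in $k$ (so that the $\wt g_k$ converge to $\wt g_0$ in the topology Theorem \ref{closedthm} requires), and that the diffeomorphism $\wt v_k$ can be arranged to fix $\qq$ so that the rel$.$ $\qq$ homotopy class is preserved when passing from $u_k$ to $\wt u_k$. Both follow from the explicit construction of $\wt v_k$ in Lemma \ref{thm:localuni} via normalized Riemann mappings near each cone point glued to the identity outside, together with the smoothness of the path $\cc_t$. Once these are in place the argument is a direct invocation of Theorem \ref{closedthm}.
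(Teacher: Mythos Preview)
Your proposal is correct and follows essentially the same route as the paper: normalize the conformal structures near $\pp$ via the local uniformization of Lemma \ref{thm:localuni}, precompose the given minimizers with the resulting diffeomorphisms so that the domain metrics lie in $\Mm^{*}_{2,\g,\nu}(g_0,\pp,\Aa)$, and then invoke Theorem \ref{closedthm} with constant target $G$. The only differences are cosmetic (you are slightly more explicit about preserving the rel.\ $\qq$ class and the residue conditions under precomposition), and your argument is at least as complete as the paper's on those points.
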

\begin{proof}
  Let $t_{k} \in \mc{H}(\qq)$ be a sequence such that $t_{k} \to
  t_{0}$, and let $\cc_{k}$ be the corresponding conformal structures,
  so $\cc_{k} \to \cc_{0}$.  We again uniformize locally, i.e.\ we
  choose diffeomorphisms $v_{k}$ so that $id: (\Sigma, \cc_{0}) \lra
  (\Sigma, v_{k}^{*}\cc_{k})$ is conformal near $\pp$ in such a way
  that $v_{k} \to id$ in $C^{\infty}$.  By assumption,
  there is a rel$.$ $\qq$ minimizer $u_{k} : (\Sigma, \cc_{k}) \lra
  (\Sigma, G)$. Let $g_{k}$ be metrics in $\cc_{k}$ and $g_{0}$ be a
  metric in $\cc_{0}$ that are conic
  near near $u_{k}^{-1}(\pp)$ with cone angles $\Aa$.  Then
  $u_{k}\circ v_{k} : (\Sigma, v_{k}^{*}g_{k}) \lra (\Sigma, G)$ are
  also rel$.$ $\qq$ minimizers, but now $v_{k}^{*}g_{k} \in \Mm^*_{2, \g,
          \nu}(g_{0}, \pp, \Aa)$, and thus Theorem \ref{closedthm}
        applies.  The limiting map $u_{0}$ is the minimizer we desire,
        to $\mc{H}(\qq)$ so closed.
\end{proof}

\subsection{Energy bounds and uniform continuity} \label{ebauc}

The $u_k$ in Theorem \ref{closedthm} are in fact a rel$.$ $\qq$ energy minimizing sequence for the metric $\horrible$, meaning that 
	\begin{align} \label{minseq}
		\limsup_{k \to \infty} E(u_k, g_{k}, \horrible) = \inf_{u \sim_{rel. \qq} id} E(u, g_{k}, \horrible).
	\end{align}
This follows from Proposition \ref{umin}, since for any $u \sim_{rel. \qq} id$
	\begin{align*}
		\limsup_{k \to \infty} E(u_k, g, \horrible) &= \limsup_{k \to \infty} E(u_k, g_{k}, G_k) \\
		&\leq \lim_k E(u, g_{k}, G_k) \\
		&= E(u, g_{0}, \horrible).
	\end{align*}
Assume that genus $\Sigma > 0$.  It is standard that the $u_{k}$ are an equicontinuous
sequence.  Thus they subconverge.   Let
 \begin{equation*}
		R := u_0^{-1}(\Sigma_\pp).
\end{equation*}
On this regular set, it again standard that the $u_{k}$ converge
uniformly in $C^{2, \gamma}_{loc}$ on compact sets of $R$. \cite{t}, \cite{j}.

For $\Sigma = S^{2}$, lift as in \eqref{eq:lift} to a branched cover
and applying the above arguments gives the same results. To summarize, we have
	\begin{lemma} \label{easyreg}
          The $u_k$ converge in $C^0(\Sigma) \cap C^{2,
                  \g}_{loc}(R)$ to a map $u_{0}$.  On each compact set of $R$ the
                Jacobian of $u_0$ is bounded below by a positive constant.
	\end{lemma}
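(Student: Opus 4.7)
The plan is to combine three standard ingredients, each of which requires some adaptation to the conic setting, proceeding in the order: equicontinuity and a $C^0$ limit, interior elliptic regularity on the regular set, and a positivity argument for the Jacobian.

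First, I would establish equicontinuity of the sequence $\{u_k\}$ on all of $\Sigma$. The uniform energy bound $\sup_k E(u_k, g_k, G_k) < \infty$ furnished by \eqref{minseq}, together with the conformal invariance of energy, lets me apply a Courant--Lebesgue annulus argument in convenient local coordinates to obtain a uniform modulus of continuity. Away from $\pp$ and $\pp'_k := u_k^{-1}(\pp)$ this is entirely classical; near a cone point one works in the conformal coordinates in which $G_k \to \horrible$ in $r^{\nu}C^{2,\g}_b$, and uses that each $u_k$ continuously sends $u_k^{-1}(p_j)$ to $p_j$ to extend the estimate across the cone set. Arzel\`a--Ascoli then yields a subsequential limit $u_0 \in C^0(\Sigma)$.

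Second, on the regular set $R = u_0^{-1}(\Sp)$, both the $u_k$ (for large $k$) and $u_0$ stay in $\Sp$, where $G_k \to \horrible$ in $C^{2,\g}_{loc}$. The equation $\tau(u_k, g_k, G_k) = 0$ is a uniformly elliptic quasilinear system in local coordinates, so the uniform $C^0$ bound of step one combines with interior Schauder theory (as in \cite{t} and \cite{j}) to produce uniform $C^{2,\g}$ bounds on compact subsets of $R$. Passing to a further subsequence, $u_k \to u_0$ in $C^{2,\g}_{loc}(R)$ and $(u_0, g_0, \horrible)$ satisfies $\tau = 0$ on $R$. In particular $J(u_0) \geq 0$ on $R$ as a uniform limit of the strictly positive Jacobians $J(u_k)$.

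The third step, the strict positivity of $J(u_0)$ on compact subsets of $R$, is the main obstacle. Because $u_0$ is a $C^0$ limit of degree-one homeomorphisms, $u_0$ has degree one and is nonconstant. On $R$ the map $u_0$ is harmonic into a smooth surface with $\kappa_{\horrible} \leq 0$, and wherever $J(u_0) > 0$ the classical Bochner identity for harmonic maps between surfaces gives
\begin{equation*}
  \Delta_{g_0} \log J(u_0) \;=\; 2\kappa_{g_0} \;-\; 2\kappa_{\horrible}(u_0)\, J(u_0).
\end{equation*}
Combined with $J(u_0) \geq 0$, a Sampson-type analysis (unique continuation for this elliptic equation, after regularizing $\log(J+\epsilon)$ and letting $\epsilon \to 0$) implies that either $J(u_0) \equiv 0$ on the connected component, or the zero set of $J(u_0)$ consists of isolated branch points of finite order. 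The former is ruled out because $u_0$ is of degree one and therefore nonconstant on any component meeting a neighborhood of a regular point of the cover. An isolated branch point at $x_0 \in R$ would force a small loop around $x_0$ to wind $k \geq 2$ times around $u_0(x_0)$ under $u_0$; but by $C^{2,\g}_{loc}$ convergence the same loop has winding number one under the diffeomorphism $u_k$ for $k$ large, contradicting the homotopy invariance of winding number. Hence $J(u_0) > 0$ everywhere on $R$, and compactness gives the required positive lower bound on each compact $K \subset R$. The main technical delicacies are (i) the uniform Courant--Lebesgue estimate across the cone points in step one, which relies on the precise form of the metrics in $\mc{M}_{2,\g,\nu}^{phg}(\pp, \Aa)$, and (ii) ruling out interior branch points in step three via the winding-number argument.
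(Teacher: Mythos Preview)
Your three-step plan---Courant--Lebesgue equicontinuity, interior Schauder estimates on $R$, and a local-degree argument for $J(u_0)>0$---is exactly the standard route to which the paper defers via the citations \cite{t} and \cite{j}; the paper offers essentially no independent argument here, so in outline you have reproduced what is intended.

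There is, however, a genuine error inside step three. The displayed identity
\begin{equation*}
\Delta_{g_0}\log J(u_0)\;=\;2\kappa_{g_0}-2\kappa_{\horrible}(u_0)\,J(u_0)
\end{equation*}
is \emph{not} valid for harmonic maps between surfaces; it holds for holomorphic maps (where $J=h$), but in general, with $J=h-\ell$ as in \eqref{hell}--\eqref{energyhl}, there is no clean Bochner formula for $\log(h-\ell)$. The identities that do hold are those in \eqref{eq:eandhweitz} for $\log h$ and $\log\ell$ separately. Consequently your dichotomy ``$J\equiv 0$ or isolated branch points'' is also too coarse: $J$ can vanish where $h=\ell>0$, which is a fold rather than a branch point. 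The standard repair proceeds in two strokes. First, the equation for $\log h$ (equivalently, the $\bar\partial$-type system satisfied by $(u_0)_z$ coming from holomorphicity of $\Phi(u_0)$) shows that zeros of $(u_0)_z$ are isolated of finite order unless $(u_0)_z\equiv 0$; your winding-number argument then rules out both alternatives, giving $h>0$ on $R$. Second, on $\{h>0,\ \phi\neq 0\}$ one has $h\ell=|\phi|^2/\sigma^2$, and $w:=\log(\sigma h/|\phi|)\ge 0$ satisfies $\Delta w = -4\kappa_{\horrible}(|\phi|/\sigma)\sinh w$; writing the right side as $\tilde c(z)w$ with $\tilde c\ge 0$ and applying the strong maximum principle forces either $w>0$ (hence $J>0$) or $w\equiv 0$ on a component, and the latter gives $J\equiv 0$ there, which your degree-one argument excludes. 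With this correction the proof goes through.
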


\subsection{Upper bound for the energy density and lower bound on the Jacobian.} \label{edbouc}

Fix $p \in \pp$, and let
 \begin{equation*}
		q_k = u_k^{-1}(p),
\end{equation*}
we can pass to a subsequence so that
 \begin{equation*}
		q_k \to q_0,
\end{equation*}
for some $q_0$.  Let $S \subset \Sigma$ be any set containing $u_0^{-1}(p)$ so that $S \cap u_0^{-1}(\pp - \set{p}) = 0$ and $S$ is diffeomorphic to a disc, and choose conformal maps
	\begin{align*}
		F_k : D &\lra S\\
		 0 &\longmapsto q_k
	\end{align*}
so that $F_k \to F_0$ in $C^\infty$.  Finally, define
 \begin{equation*}
		w_{k} := u_k \circ F_k.
\end{equation*}
Thus we have a sequence of harmonic maps $w_k :  D \lra (\Sigma, G_k)$
with $w_k(0) = p$.  By uniform continuity, we may choose a single
conformal coordinate chart containing $w_k(D) = u_k \circ F_{k}(D)$.
By abuse of notation, we denote these coordinates by $w$.  Our goal is
to prove uniform estimates for the $w_k$.  Specifically, we wish to
control their energy densities and Jacobians.  The remarkable
fact here is that, although being harmonic is conformally invariant, if the domain is given a conic metric with cone
points at the inverse images of the cone points in the target, then uniform control, from above and below, in terms of
the energy is available exactly when the metric on the domain has cone
angles equal to that of the corresponding cone points in the target
metric.  Consider the maps
 \begin{equation} \label{wk}
		w_k :  (D, g_\al) \lra (\Sigma, G_k),
 \end{equation}
where $2 \pi \al$ is the cone angle at $p$ and again $g_\al =
\absv{z}^{2 ( \al - 1)} \absv{dz}^2$.  

\begin{proposition} \label{loclip}
	The maps (\ref{wk}) have uniformly bounded energy density (see (\ref{confenergy})), i.e.\ for some $C > 0$
 \begin{equation} \label{edb}
		e_k(z) = e(w_k, g_\al, G_k)(z) < C \mbox{ for } \absv{z} \leq 1/2.
 \end{equation}
At $z = 0$ we have the lower bound
	\begin{align} \label{harnbound}
		0 < c \leq \lim_{z \to 0} e_k(z, g_\al, G_k)
	\end{align}
for some uniform $c$.
\end{proposition}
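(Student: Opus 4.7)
The plan is to exploit conformal invariance of the energy together with a Bochner-type inequality that is non-singular precisely because the domain and target cone angles agree, and then to apply the Moser iteration and Harnack techniques cited in the introduction.

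First I would establish a uniform bound on $\int_D e_k \, dVol_{g_\alpha} = E(w_k, g_\alpha, G_k)$. Since $F_k : D \to S$ is conformal onto its image, the conformal invariance \eqref{energyinv2} gives $E(w_k, g_\alpha, G_k) = E(u_k \circ F_k, g_\alpha, G_k) = E(u_k|_S, (F_k)_* g_\alpha, G_k)$, which is bounded by $E(u_k, g_k, G_k)$, and this in turn is uniformly bounded by \eqref{minseq}. Next, using that $g_\alpha$ is flat on $D^*$ (so $\kappa_{g_\alpha} \equiv 0$) and that the curvatures $\kappa_{G_k}$ are uniformly bounded in absolute value (a consequence of \eqref{eq:metnearp} and \eqref{eq:scalar}), the standard Bochner formula for harmonic maps of surfaces yields
\[
\Delta_{g_\alpha} e_k \ge -C\, e_k^2 \quad\text{on } D - \{0\},
\]
with $C$ independent of $k$. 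Passing to the wedge coordinate $\tilde z = z^\alpha/\alpha$ of \eqref{wedgemap}, in which $g_\alpha$ becomes the standard flat metric on the sector of angle $2\pi\alpha$ with boundary rays identified, this inequality becomes an honest uniformly elliptic differential inequality with bounded coefficients on a (quotient of a) Euclidean disc. The uniform $L^1$ bound on $e_k$ together with a DiGiorgi--Nash--Moser iteration then yields the pointwise upper bound \eqref{edb}.

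For the lower bound \eqref{harnbound}, I would apply a version of Heinz's Harnack inequality for the Jacobian of univalent harmonic maps \cite{he}, extended to the conic setting (again via the wedge coordinate to reduce to flat geometry). Since the maps $u_k$, and hence $w_k$, are homeomorphisms by Form \ref{uform}, $J(w_k) > 0$ on $D^*$, so that $e_k$ is strictly positive there. Combined with the upper bound from the preceding step, the Harnack inequality gives $\sup e_k \le C \inf e_k$ on a small disc around $0$, from which \eqref{harnbound} follows upon passing to the limit $z \to 0$.

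The principal obstacle is the degenerate nature of $g_\alpha$ at $0$: a priori both the Bochner inequality and the candidate Harnack inequality live on a non-smooth Riemannian manifold, so neither Moser iteration nor the standard Heinz argument applies directly at the cone point. What makes the argument go through is precisely the matching of domain and target cone angles, which via the wedge coordinate reduces all analysis to standard elliptic theory on a Euclidean ball. If the cone angles did not match, the pulled-back metric would differ in scaling from $g_\alpha$ near $0$ and the corresponding elliptic operator would acquire an unbounded first-order coefficient; as the introduction emphasizes, one would then lose either the upper or the lower bound.
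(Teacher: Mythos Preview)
Your overall strategy is in the right spirit, but both halves have real gaps that the paper's argument is designed to avoid.

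For the upper bound, the inequality $\Delta_{g_\alpha} e_k \ge -C e_k^2$ is too weak: Moser iteration for a nonnegative subsolution of $\Delta u + Cu^{2} \ge 0$ does \emph{not} give an $L^\infty$ bound from an $L^1$ bound without a smallness hypothesis on the energy, which you do not have. The paper instead uses the hypothesis $\kappa_{G_k}\le 0$ (part of Theorem~\ref{closedthm}) together with $\kappa_{g_\alpha}=0$ and $J_k\ge 0$ in the surface Bochner formula \eqref{eq:eandhweitz} to obtain the much stronger fact $\Delta e_k \ge 0$ on $D^*$; since $\Delta = |z|^{2(1-\alpha)}\Delta_0$, this is $\Delta_0 e_k \ge 0$ for the \emph{Euclidean} Laplacian. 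One then needs to check, using the explicit asymptotics $e_k = \lambda_k^{2\alpha}+f_k$ with $f_k\in r^\e C^{1,\gamma}_b$ coming from Form~\ref{uform}, that $e_k$ is a weak subsolution across $z=0$ (see \eqref{eq:esubsoln}). After that a single mean-value inequality gives $\sup_{D(1/2)} e_k \le C\int_D e_k\,dx\,dy$, and the right-hand side is dominated by the energy exactly because $|z|^{2(\alpha-1)}\ge 1$ on $D$. Your wedge-coordinate reduction does not really sidestep the singularity: the sector with identified edges is still a cone at the tip, so ``standard elliptic theory on a Euclidean ball'' is not literally available there.

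For the lower bound, the assertion ``$\sup e_k \le C\inf e_k$ on a small disc'' is not what Heinz's argument gives, and is not what is needed. The paper works not with $e_k$ but with the holomorphic energy $h_k=\tfrac{\rho}{\sigma}|\partial_z w_k|^2$, which satisfies the \emph{identity} $\Delta\log h_k = -2\kappa_{G_k}(h_k-\ell_k)$ and hence the supersolution inequality $\Delta\log h_k \le C h_k$. One then proves a Harnack-type lemma on the cone (Lemma~\ref{genharn}) comparing $\liminf_{z\to 0} f$ with the boundary average $\tfrac{1}{2\pi}\int_0^{2\pi} f(e^{i\theta})\,d\theta$ for positive supersolutions of $(\Delta-\sigma)f\le 0$, and applies it to $f=\log(h_k/\delta_k)$. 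The uniform lower bound on $h_k|_{\partial D}$ comes from Lemma~\ref{easyreg} (Jacobian bounded below on compact subsets of $R$), and this feeds through the Harnack comparison to give $\lim_{z\to 0} h_k(z)=|\lambda_k|^{2\alpha}\ge c>0$, which is \eqref{harnbound}.
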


Before we begin the proof, we remark that by Form \ref{uform},
 \begin{equation} \label{wuform}\begin{split}
		w_k(z) &= \lambda_k z + v_k(z) \\
		v_k &\in r^{1 + \e}C^{2, \g}_b.
\end{split}\end{equation}
As we will show below
 \begin{equation}\label{eq:limitlambda}
		\lambda_k^{2\al} = \lim_{z \to 0} e_k(z, g_\al, G_k),
\end{equation}
so showing (\ref{harnbound}) reduces to showing that $0 < c \leq \lambda_k$
for some uniform $c$.

\begin{proof}[Proof of Proposition \ref{loclip}]
For any harmonic map $w : (D, \sigma \absv{dz}^2) \lra (D, \rho \absv{dw}^2)$, as in \cite{w}, let
 \begin{equation} \label{hell}\begin{split}
		h(z) = \frac{\rho(w(z))}{\sigma(z)} \absv{\p_z w}^2,
                \qquad  \qquad
		\ell(z) = \frac{\rho(w(z))}{\sigma(z)} \absv{\p_{\ov{z}} w}^2,
\end{split}\end{equation}
so the energy density and the Jacobian satisfy, respectively
 \begin{equation} \label{energyhl}\begin{split}
		e = h + \ell \qquad \& \qquad
		J = h - \ell.
 \end{split}\end{equation}
The lemma follows from analysis of the following inequality and identity, which are standard and can be found for example in \cite{sy}, where they appear as equations (1.19) and (1.17), respectively.
\begin{equation} \label{eq:eandhweitz}
	\begin{split}
		\Delta e(u) &\geq - 2 \kappa_\rho J  + 2 \kappa_\sigma e(u) \\ 
		\Delta \log h &= - 2 \kappa_\rho \lp h - \ell \rp + \kappa_\sigma.
	\end{split}
      \end{equation}

Here $\kappa_\rho$ and $\kappa_\sigma$ are the scalar curvature
functions for the range and domain, respectively, and $\Delta$ is the
Laplacian for $\sigma \absv{dz}^2$.  The second equation holds only
when $h(z) \neq 0$, and of course both equations make sense only when
$\sigma$ and $\rho$ are sufficiently regular.  For the $w_k$ in
(\ref{wk}), the equations simplify as follows: 1) $\sigma(z)
\absv{dz}^2 = g_\al$ has $\kappa_\sigma \equiv 0$  away from $ z = 0
$, 2) $\kappa_{\rho_k} \leq 0, J \geq 0$ by the assumptions of Theorem
(\ref{closedthm}), and 3) $\kappa_{\rho_k} > - C$ by the remark about the
convergence of scalar curvature at the beginning of the section.  Therefore, we restrict our attention to the inequalities
	\begin{align} \label{subsoln}
		\Delta e_k &\geq 0 \\ \label{supsoln}
		\Delta \log h_k &\leq  C h_k.
	\end{align}

To prove (\ref{edb}), we use (\ref{subsoln}) as follows.  Since
$\Delta = \absv{z}^{2(1 - \al)} \Delta_0$ where $\Delta_0 = 4 \p_z \p_{\ov{z}}$ is the euclidean Laplacian, we also have 
 \begin{equation}\label{eq:delta0}
		\Delta_0 e_k \geq 0,
\end{equation}
away from $z = 0$.  We claim that in fact each $e_k$ is a subsolution
to \eqref{eq:delta0} on all of $D$.
To see that this is true, write $e_k = h_k + \ell_k$ as in (\ref{energyhl}).  Using (\ref{wuform}), we have
	\begin{align*}
		h_k(z) &= \frac{{\rho_k}(w_k(z))}{\absv{z}^{2(\al - 1)}} \absv{ \p_z w_k(z) }^2 \\
		&= \lambda_k^{2(\al - 1)} \absv{1 + \absv{z}^{2(1 - \al)}v_k(z)}^{2(\al - 1)} e^{2{\mu_k}(w_k(z))} \absv{  \lambda_k +  \p_z v_k(z) }^2 \\
		&= \lambda_k^{2\al} \absv{1 + \absv{z}^{2(1 - \al)}v_k(z)}^{2(\al - 1)} e^{2{\mu_k}(w_k(z))} \absv{ 1 +  \p_z v_k(z) /  \lambda_k  }^2,
	\end{align*}
so since $v_k \in r^{1 + \e} C^{2, \g}_b$ and $\mu_k \in r^\nu C^{2, \g}_b$, we have
 \begin{equation} \label{hreg}
		h_k - \lambda_k^{2\al}  \in r^\e C^{1,\g}_b 
 \end{equation}
for some $\e > 0$.  Similarly, using $\p_{\ov{z}} z = 0$,
 \begin{equation} \label{lreg}
		\ell_k \in r^\e C^{1,\g}_b.
 \end{equation}
Therefore
 \begin{equation} \label{wkedensity}
		e_k = \lambda_k^{2\al} + f_k(z),
 \end{equation}
where $f_k \in r^{\e} C^{1, \g}_b(D)$.  In particular, $r \p_r e_k \to 0$ as $r \to 0$.  Therefore, for any non-negative function $\zeta \in C_c^\infty(D)$.
	\begin{equation}\label{eq:esubsoln}
          \begin{split}
            \int_D \nabla e_k \cdot \nabla \zeta dxdy
            &= -  \lim_{\e \to 0} \int_{D - D(\e)} ( \Delta e_k ) \zeta dxdy  - \int_{r = \e} (\p_r e_k )\zeta r d\theta \\
            &\leq 0,
          \end{split}
	\end{equation}
so the $e_{k}$ are indeed subsolutions.  By (\ref{wkedensity}), each $e_k$ is a bounded function, so the standard theory of subsolutions to elliptic linear equations (see section 5 of \cite{mo}) now implies that for some uniform constant $C > 0$,
 \begin{equation*}
		\sup_{z \in D(1/2)} e_k(z) \leq C \int_{D} e_k dxdy.
\end{equation*}
The right hand side is controlled by the energy,
	\begin{align*}
		C \int_{D} e_k dxdy &= C \int_{D} e_k \frac{\absv{z}^{2(\al - 1)}}{\absv{z}^{2(\al - 1)}}dxdy \\
		&\leq C \int_{D} e_k \absv{z}^{2(\al - 1)}dxdy \\ 
		&= C E(w_k, D, G_{k}) \\
		&\leq C',
	\end{align*}
and this establishes (\ref{edb}).  

It remains to prove \eqref{harnbound}.  From (\ref{hreg}) and (\ref{lreg}), we now see that
 \begin{equation} \label{dnmbound}
		\lim_{z \to 0} e_k(z) = \lim_{z \to 0} h_k(z) = \lambda_k^{2\al},
 \end{equation}
Thus \eqref{eq:limitlambda} is correct, and to prove (\ref{harnbound}), it is equivalent to prove that $c < \lambda_k$ for some uniform $c$.

To do so, we use (\ref{supsoln}).  Dropping the $k$'s for the moment, by (\ref{hreg}) and the fact that the logarithm is smooth and vanishes simply at $1$, we have
 \begin{equation} \label{loghreg}
		\log h - \log \absv{\lambda}^{2\al} \in r^{\e} C^{2, \g}_b.
 \end{equation}
We will now apply the assumption from Theorem \ref{closedthm} regarding the Jacobian, specifically that $J = h - \ell > 0$ on compact sets away from $0$, and thus by continuity and (\ref{dnmbound}) we may choose a $\delta$ satisfying 
	\begin{align*}
		0 < \delta \leq \frac{1}{2} \inf_{z \in D} h(z).
	\end{align*}
Thus $h / \delta$ is bounded from below by $2$.  We also need control
from above.  We already know by \eqref{edb} that $ h + \ell = e < c$
for some $c > 0$, so $\sup_{z \in D} h(z) < c$ for some constant
depending only on the energy.  Using this and the obvious bound $h
\leq  \frac{c}{\log (c' / \delta) } \log \lp \frac{h}{\delta} \rp$ we
conclude from (\ref{supsoln}) that
 \begin{equation} \label{supsoln2}\begin{split}
		\Delta \log \lp h / \delta \rp
		&\leq \frac{c}{  \log (c' / \delta)}  \log \lp \frac{h}{\delta} \rp.
\end{split}\end{equation}
The upshot is that $\log h / \delta \geq \ln 2 > 0$, so this
inequality looks promising for an application of the Harnack
inequality.  We use the following explicit inequality, inspired by
Lemma 6 of \cite{he}.  
\begin{remark}
  The lemma we are about to prove has slightly more general
  assumptions than seem necessary, but they are necessary for the
  analogous statement in section \ref{coneanglepisect} where we deal
  with the case $\pe \neq \varnothing$.
\end{remark}
	\begin{lemma}[Harnack Inequality] \label{genharn}
		Let $\Delta$ denote the Laplacian on the standard cone $(D, g_\al)$.  Let $f : D \lra \R$, $f \in C^2(\ov{D} - \set{0})$, $f > 0$, and assume that for some $\sigma > 0$, 
			\begin{align*}
			\lp \Delta - \sigma \rp f &\leq 0 \mbox{ on } D - \set{0}.
			\end{align*}
Assume furthermore that
 \begin{equation} \label{hypos}\begin{split}
				f  &= a + b(\theta) + v(r, \theta) \\
				a &\in \C \\
				b &\in C^{\infty}(S^1) \\
				v &\in r^{\e}C^{2, \g}_b(D)
\end{split}\end{equation}
Then if $\sigma < \e^2$,
 \begin{equation} \label{basicharn}
				\liminf_{z \to 0} f = a + \inf b \geq e^{- c\sigma}\frac{1}{2\pi}\int_0^{2\pi} f(e^{i\theta}) d\theta
 \end{equation}
for some $c > 0$.
	\end{lemma}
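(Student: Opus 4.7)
The plan is to reduce the problem to a one-dimensional ODE comparison, after two preliminary steps: (i) a change of variables making the operator flat with an exponentially small perturbation, and (ii) an argument showing that the angular profile $b$ is forced to be constant. What remains is a Sturm--Liouville comparison against a modified Bessel function.

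First, I would pass to the cylindrical coordinate $t = \log r$. Since $g_\al = e^{2\al t}(dt^2 + d\theta^2)$, one has $\Delta = e^{-2\al t} L_0$ with $L_0 = \p_t^2 + \p_\theta^2$, and the hypothesis becomes $L_0 f \leq \sigma e^{2\al t} f$ on $(-\infty, 0] \times S^1$. The regularity $v \in r^\e C^{2,\g}_b$ translates into the statement that $v$ together with all its first- and second-order $(t,\theta)$-derivatives are $\mc{O}(e^{\e t})$ as $t \to -\infty$; in particular $L_0 v \to 0$ uniformly in $\theta$. Substituting $f = a + b(\theta) + v$ and sending $t \to -\infty$ in the inequality yields $b''(\theta) \leq 0$ pointwise. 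Since $b$ is $2\pi$-periodic with $\int_{0}^{2\pi} b''\,d\theta = 0$, this forces $b'' \equiv 0$; hence $b$ is constant, and in particular $\inf b = \bar b$ and $\liminf_{z \to 0} f = a + \bar b$.

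Second, I would average the inequality in $\theta$. For $m(t) := \frac{1}{2\pi}\int_0^{2\pi} f(t, \theta)\, d\theta$, the $\p_\theta^2$ term drops out, yielding the scalar inequality $m'' \leq \sigma e^{2\al t} m$, with $m > 0$, $m(0) = M$, $m(-\infty) = a + \bar b$, and $m'(-\infty) = 0$ (from the decay of $v$). Let $n(t) := I_0\lp \tfrac{\sqrt{\sigma}}{\al} e^{\al t} \rp$ be the unique positive bounded solution on $(-\infty, 0]$ of the associated equality $n'' = \sigma e^{2\al t} n$, satisfying $n(-\infty) = 1$, $n'(-\infty) = 0$, and $n(0) = I_0(\sqrt{\sigma}/\al)$. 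A short computation then shows that $P := m/n$ satisfies $(n^2 P')' = m''n - mn'' \leq 0$ with $n^2 P'(-\infty) = 0$. Hence $P$ is non-increasing on $(-\infty, 0]$, giving
\begin{equation*}
a + \bar b \,=\, P(-\infty) \,\geq\, P(0) \,=\, \frac{M}{I_0(\sqrt{\sigma}/\al)} \,\geq\, e^{-\sigma/(4\al^2)} M,
\end{equation*}
where the last inequality uses the elementary bound $I_0(x) \leq e^{x^2/4}$, visible from the series expansion $I_0(x) = \sum_k (x/2)^{2k}/(k!)^2 \leq \sum_k (x^2/4)^k/k!$.

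The main obstacle is the rigidity step showing $b$ is constant, which relies essentially on the $r^\e C^{2,\g}_b$ control of $v$ to pass to pointwise limits in the inequality; without enough regularity of $v$ one cannot discard the $L_0 v$ term. Once this is in place, the remainder is a standard Sturm--Liouville comparison. The hypothesis $\sigma < \e^2$ plays a background role in ensuring that the various decay rates combine correctly, but in the argument as sketched it does not enter the Bessel comparison explicitly.
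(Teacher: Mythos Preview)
Your argument is correct and takes a genuinely different route from the paper's. Your key new observation is the rigidity step: passing to $t=\log r$, the inequality $L_0 f \le \sigma e^{2\alpha t}f$ together with $L_0 v = O(e^{\epsilon t})$ and boundedness of $f$ forces $b''(\theta)\le 0$ pointwise, hence $b''\equiv 0$ by periodicity. Once $b$ is constant, you average $f$ itself over $\theta$ and run a clean Sturm--Liouville comparison of $m(t)$ against the modified Bessel solution $n(t)=I_0\bigl(\tfrac{\sqrt\sigma}{\alpha}e^{\alpha t}\bigr)$; the monotonicity of $P=m/n$ follows from $(n^2P')'\le 0$ and $n^2 P'(-\infty)=0$ (the latter using $m'=O(e^{\epsilon t})$ and $n'=O(e^{2\alpha t})$), and the series bound $I_0(x)\le e^{x^2/4}$ finishes it with $c=1/(4\alpha^2)$.

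The paper does not notice that $b$ must be constant. Instead it compares $f$ with the Dirichlet solution $\tilde f$ of $(\Delta-\sigma)\tilde f=0$, $\tilde f|_{\partial D}=f|_{\partial D}$, and proves $F:=f-\tilde f\ge 0$ by a maximum principle argument: interior and boundary minima are easy, and a minimum at $r=0$ is ruled out by perturbing with a barrier $-\mu|z|^\nu$ with $\sqrt\sigma\le\nu<\epsilon$. From $f(0,\theta)\ge\tilde f(0)$ one then averages $\tilde f$ (not $f$) in $\theta$; the angular mean $\gamma(\rho)$ solves the Bessel ODE exactly and gives the same constant $c=1/(4\alpha^2)$. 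So the paper's approach is a comparison with an auxiliary PDE solution plus a barrier; yours is a direct ODE comparison after the rigidity reduction. Your route is shorter and, as you note, does not actually use the hypothesis $\sigma<\epsilon^2$, whereas in the paper this condition is precisely what allows the barrier exponent $\nu$ to be chosen. One trade-off: your rigidity step leans on the full $r^\epsilon C^{2,\gamma}_b$ control of $v$ to pass to the limit in the second-order inequality, while the paper's barrier argument only needs $F(0,\theta)-F(r,\theta)=O(r^\epsilon)$, so the paper's method is marginally more robust to weakening the regularity of $v$.
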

Before proving the lemma, we conclude the proof of (\ref{harnbound})
(and thus of Proposition \ref{loclip}) by applying the lemma to
(\ref{supsoln2}) as follows.  By (\ref{loghreg}), the lemma applies to
\eqref{supsoln2} with $f = \log h/\delta$ and $b \equiv 0$.  We will choose $\eta > 0$ small so that if 
	\begin{align} \label{deltak}
		\delta_k = \min \set{ \eta, \frac{1}{2} \inf_{z \in D - 0} h_k(z) }
	\end{align}
then the hypotheses of the lemma are satisfied.  

Thus by the lemma
	\begin{align*}
		\lim_{z \to 0}\log (h_k(z) / \delta_k) &\geq e^{ \frac{c}{\log(c' / \delta_k)}} \frac{1}{2\pi} \int_0^{2\pi} \log(h_k / \delta_k) d\theta \\
		\lim_{z \to 0} \log h_k(z) &\geq e^{ \frac{c}{\log(c' / \delta_k)}} \lp \int_0^{2\pi} \log{h_k} \  d\theta \rp - \lp  e^{ \frac{c}{\log(c' / \delta_k)}}  - 1 \rp \log \delta_k.
	\end{align*}
It is trivial to check that $\lp  e^{ \frac{c}{\log(c' / \delta_k)}}
- 1 \rp \log \delta_k > c''$ for some $c''$ independent of $\delta_k$.  Thus, since $e^{ \frac{c}{\log(c' / \delta_k)}} $ is bounded above, for some $c$, we have
 \begin{equation} \label{hbound}
		\lim_{z \to 0} \log h_k(z) = \absv{\lambda_{k}}^{2 \alpha} \geq -c  \absv{ \int_0^{2\pi} \log{h_k}(e^{i \theta}) \  d\theta } + c
 \end{equation}
By Lemma \ref{easyreg}, the Jacobians, $J_k$, and thus the $h_k$, are uniformly bounded below on $\p D$.  Thus (\ref{hbound}) gives a uniform lower bound for $h_k(0)$ from below, which finishes the proof of Proposition \ref{loclip} modulo the proof of Lemma \ref{genharn}.

\end{proof}

	\begin{proof}[Proof of Lemma \ref{genharn}]
		To prove the lemma we will work in normal polar coordinates.  Let $\phi = \theta, \rho = r^{\al}/\al.$  In these coordinates $g_\al = d \rho^2 + \al^2 \rho^2 d\phi^2$ and
$\Delta = \p_\rho^2 + \frac{1}{\rho}\p_\rho + \frac{1}{\al^2 \rho^2} \p_\phi^2$.
The fact that $f - \lp a + b(\theta) \rp \in r^{\e}C^{2, \g}_b$ implies that $f - \lp a + b(\theta) \rp \in \rho^{\e/\al}C^{2, \g}_b$.  The exact vanishing rate is irrelevant, and we refer to it henceforth as $\e$.
		
		The proof proceeds by comparing $f$ to the solution $\wt{f}$ to the equation
 \begin{equation} \label{ftilde}\begin{split}
				(\Delta - \sigma) \wt{f} &= 0 \\
				\wt{f} \rvert_{\p D} &= f \rvert_{\p D} 
\end{split}\end{equation}
That such a solution exists follows, for example, from separation of
variables from \cite{ta}, in particular there is a solution that is
continuous on $\ov{D}$ with the asymptotic behavior 
 \begin{equation} \label{solnasym}\begin{split}
		\wt{f}(z) - \wt{f}(0) &\in r^{a}C^{2, \g}_b \\
		a &= \min \set{ 2 \al, 1} .
\end{split}\end{equation}
It follows that $F = f - \wt{f}$ satisfies 
 \begin{equation} \label{subsoln2}
   \begin{split}
		\lp \Delta - \sigma \rp F &\leq 0 \mbox{ away from } z
                = 0 \\
	F \rvert_{\p D} &= 0, \mbox{ where } \p D = \set{ \rho = 1/\al }.
      \end{split}
	\end{equation}

The main technical challenge is to show that $F(\rho, \phi) \geq 0$. Assuming this for the moment, we have in particular that
 \begin{equation} \label{zerobound}
		 a + b(\theta) \geq \wt{f}(0).
 \end{equation}
Let
	\begin{align*}
		\gamma(\rho) := \frac{1}{2\pi} \int_0^{2\pi} \wt{f}(\rho, \phi) d\phi.
	\end{align*}
Then $\lp \p_\rho^2 + \frac{1}{\rho} \p_\rho - \sigma \rp \gamma = 0$,
so since $\gamma(0) = \wt{f}(0)$, 
 \begin{equation*}
		\gamma(\rho) = \wt{f}(0)\sum \frac{\sigma^m \rho^{2m}}{2^{2m}(m!)^2 }.
\end{equation*}
But then by (\ref{zerobound}) and $F\rvert_{\p D} = 0$, we have
	\begin{align*}
		\frac{1}{2\pi} \int_0^{2\pi} f(1/\al, \phi) d\phi &= 		\int_0^{2\pi} \wt{f}(1/\al, \phi) d\phi \\
		&=\wt{f}(0)\sum \frac{\sigma^m (1/\al)^{2m} }{2^{2m}(m!)^2 } \\
		&\leq e^{\sigma/ 4 \al^2} \lp a + b(\theta) \rp,
	\end{align*}
which is equivalent to (\ref{basicharn}).

It remains to show that $F \geq 0$.  Switch back to conformal
coordinates, $z = r e^{i \theta}$.  By assumption (\ref{hypos}), $F(z)
= F(r, \theta)$ is continuous on $[0, 1]_r \times S^1_\theta$, thus
attains a minimum.  If that minimum is on $r = 1$, we are done by
(\ref{subsoln2}).  If it is in the interior and away from $z = 0$, say
at $z_0$, then (\ref{subsoln2}) implies that $0 \leq \Delta F \leq \sigma F$,
so since $\sigma \geq 0$, $F(z_0) \geq 0$.  

Finally, assume that $F$ attains its minimum on $r = 0$, and suppose that
\begin{equation*}
I := \inf_{r = 0} F(r, \theta) < 0
\end{equation*}
We will perturb $F$ to a function $F_\mu$ solving $\lp \Delta - \sigma
\rp F_\mu \leq 0$ with a negative interior minimum. This is a
contradiction by the previous paragraph, so $I \geq 0$ and thus $F
\geq 0$.

Consider the function
 \begin{equation} \label{bbig}\begin{split}
		F_\mu(z) = F(z) - \mu \absv{z}^\nu,
\end{split}\end{equation}
where $\mu > 0,  \nu \geq \sqrt{\sigma}$.  Since $(\Delta - \sigma) \absv{z}^\nu = \absv{z}^\nu \lp \absv{z}^{-
  2\al} \nu^2 - \sigma \rp \geq 0$, $F_\mu$ also satisfies $\lp \Delta - \sigma \rp F_\mu(z) \leq 0$.
Since $F_\mu \rvert_{\p D} \equiv- \mu$ and $\liminf_{z \to 0}
F_\mu(z) = I$, if $I \leq -\mu$
then $F_\mu = F_\mu(r, \theta)$ has a (negative) minimum away from $\p D$.  On the other hand, given any $(r, \theta)$ with $r > 0$,
	\begin{equation*}
		F_{\mu}(0, \theta) \geq F_\mu(r, \theta) \iff
		\frac{F(0, \theta) - F(r, \theta)}{{r^\nu}} \geq - \mu.
\end{equation*}
Thus \emph{if} we can find $\mu > 0$ so that for some $r > 0, \theta, \nu$
	\begin{align} \label{donecondition}
		I \leq - \mu \leq \frac{F(0, \theta) - F(r, \theta)}{{r^\nu}},
	\end{align}
then $F_\mu$ will have a negative interior minimum.  By the
assumptions of the lemma and (\ref{solnasym}) we have that $F(0,
\theta) - F(r, \theta) \in r^{\e}C^{2, \g}_b$, and in particular $\frac{\absv{F(0, \theta) - F(r, \theta)}}{{r^\e}} < C$.
Thus, as long as $\nu  < \epsilon$,
we have that
	\begin{align*}
		\frac{\absv{F(0, \theta) - F(r, \theta)}}{r^\nu} \to 0 \mbox{ as } z \to 0.
	\end{align*}
Thus, taking the second line in (\ref{bbig}) into account,
\eqref{donecondition} can be attained if
	\begin{align} \label{sigmacondition}
		\sqrt{\sigma} < \e.
	\end{align}
This finishes the proof.
	\end{proof}

\subsection{Harmonic maps of $C_\al$}

Now we prove a classification lemma for harmonic maps of the standard cone $C_\al$ defined in section \ref{examplesection}.
\begin{lemma} \label{globalclass}
Let $u: C_\al \lra C_\al$ be a smooth harmonic map (i.e.\ a solution to (\ref{hme2})) in Form \ref{uform},  with uniformly bounded energy density, that is the uniform limit of a sequence of orientation-preserving homeomorphisms fixing the cone point.  Then $u$ is a dilation composed with a rotation.
\end{lemma}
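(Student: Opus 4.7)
The plan is to analyze the Hopf differential of $u$ and to exploit the fact that bounded energy density on $C_{\alpha}$ severely constrains its growth. Writing $z$ for the coordinate on $C_{\alpha}=(\C,g_{\alpha})$ and $\rho(u)=\absv{u}^{2(\alpha-1)}$ for the conformal factor on the target, the Hopf differential $\Phi(u)=\phi(z)dz^{2}$ with $\phi=\rho(u)u_{z}\bar{u}_{z}$ is holomorphic on $\C^{*}$. The inequality $\absv{u_{z}\bar{u}_{z}}\leq\tfrac{1}{2}(\absv{u_{z}}^{2}+\absv{u_{\bar z}}^{2})$ and (\ref{confenergy}) convert the hypothesis $e(u)\leq C$ into a global estimate
\begin{equation*}
\absv{\phi(z)}\ \leq\ C'\,\absv{z}^{2(\alpha-1)}\qquad\text{for all }z\in\C^{*}.
\end{equation*}

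Next I would expand $\phi=\sum_{n}a_{n}z^{n}$ as a Laurent series on $\C^{*}$ and apply the Cauchy estimates $\absv{a_{n}}\leq C'R^{2\alpha-2-n}$ valid for every $R>0$. Letting $R\to 0$ kills $a_{n}$ for $n<2\alpha-2$, hence $a_{n}=0$ for all $n\leq -2$ since $\alpha>0$. Letting $R\to\infty$ kills $a_{n}$ for $n>2\alpha-2$; since $\alpha<1$, this handles every $n\geq 0$, and it handles $n=-1$ as well whenever $\alpha<1/2$. In the remaining range $\alpha\geq 1/2$ the residue $a_{-1}$ must be eliminated by the local analysis from Form \ref{uform}: substituting $u=\lambda z+v$ with $v\in r^{1+\epsilon}C^{2,\gamma}_{b}$ into $\phi=\rho(u)u_{z}\bar{u}_{z}$ and bounding $\rho(u)=O(\absv{z}^{2(\alpha-1)})$, $u_{z}=O(1)$, $\bar u_{z}=O(\absv{z}^{\epsilon})$ yields
\begin{equation*}
\phi(z)\ =\ O\bigl(\absv{z}^{2(\alpha-1)+\epsilon}\bigr),
\end{equation*}
which is $o(\absv{z}^{-1})$ precisely when $\alpha\geq 1/2$ (any $\epsilon>0$ suffices). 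In every case $\phi\equiv 0$.

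Finally, since $\rho(u)$ is nonvanishing on $\C^{*}$, the vanishing of $\Phi(u)$ forces $u_{z}u_{\bar z}\equiv 0$ pointwise on $\C^{*}$. Because $u$ is a uniform limit of orientation-preserving homeomorphisms, the Jacobian $J=\absv{u_{z}}^{2}-\absv{u_{\bar z}}^{2}$ satisfies $J\geq 0$, and Form \ref{uform}(2) guarantees that $u:\C^{*}\to\C^{*}$ is a $C^{2,\gamma}$ diffeomorphism, whence $du$ (and therefore $u_{z}$) vanishes nowhere on $\C^{*}$. Thus $u_{\bar z}\equiv 0$ on $\C^{*}$, so $u$ is holomorphic there; the boundedness $u(z)\to 0$ as $z\to 0$ together with Riemann's removable singularity theorem extends $u$ to an entire function with $u(0)=0$. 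Being entire and injective (as $u$ is a global homeomorphism of $\C$), $u$ is affine by Picard, and the normalization $u(0)=0$ forces $u(z)=\lambda z$, a rotation composed with a dilation.

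The single genuinely delicate step is step 2 in the borderline regime $\alpha\geq 1/2$, where global information alone cannot eliminate the possible simple pole of $\phi$ at the origin and one must appeal to the local structure in Form \ref{uform}; everything else is a routine consequence of the global bound on $\absv{\phi}$.
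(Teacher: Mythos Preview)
Your proof is correct and reaches the same conclusion, but by a genuinely different route than the paper's. The paper works with the anti-holomorphic energy density $\ell=\tfrac{\rho(u)}{\sigma}\absv{u_{\bar z}}^{2}$: on $C_{\alpha}^{*}$ the Bochner-type inequality $\Delta\ell\geq 2\kappa_{\rho}J\ell+2\kappa_{\sigma}\ell$ collapses (both curvatures vanish) to $\Delta\ell\geq 0$, and since $\ell\leq e$ is bounded and extends as a weak subsolution across the origin (using the $r^{\epsilon}C^{1,\gamma}_{b}$ regularity of $\ell$ from Form \ref{uform}), the Liouville theorem for entire subharmonic functions forces $\ell\equiv\text{const}$; the constant is zero because $\ell(0)=0$. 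This gives $u_{\bar z}\equiv 0$ directly. You instead exploit the holomorphicity of the Hopf differential together with the growth bound $\absv{\phi}\leq C\absv{z}^{2(\alpha-1)}$ and Cauchy estimates, then pass from $\phi=0$ to $u_{\bar z}=0$ via the pointwise factorization and the nonvanishing of $u_{z}$ coming from Form \ref{uform}(1)--(2). Your argument is more complex-analytic and arguably cleaner once $\phi\equiv 0$ is in hand, but it leans harder on the diffeomorphism clause of Form \ref{uform} to rule out $u_{z}=0$; the paper's $\ell$-argument matches the PDE machinery (the inequalities for $e,h,\ell$) already deployed in the surrounding section and never needs to separate the two factors of $\phi$. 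Both finish identically: $u$ is entire, fixes $0$, and is injective (uniform limit of homeomorphisms plus Hurwitz), hence linear.
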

\begin{proof}
Let $e$ be the energy density function of $u$, and let $h, \ell$, and $J$ be the functions defined in (\ref{hell}).  We will use a differential inequality for $\ell$, similar to those used in the proof of Proposition \ref{loclip}.  Namely, (1.18) from \cite{sy} gives
 \begin{equation*}
		\Delta \ell \geq 2 \kappa_\rho J \ell + 2 \kappa_\sigma \ell,
\end{equation*}
where $\kappa_\rho$ and $\kappa_\sigma$ are the scalar curvatures on
the domain and target, respectively, and the inequality holds only
away from $z = 0$.  Since $C_\al$ is flat away from the cone point,
this gives $\Delta \ell(z) \geq 0 \mbox{ when } z \neq 0$.
The assumption that the energy density of $u$ is uniformly bounded
implies $\ell$ is uniformly bounded ($\ell < e$), and on the other hand, as in
\eqref{eq:esubsoln}, $\ell$ is a subsolution on all of $\C$.  Thus
$\ell$ is identically constant.  (There are no non-constant, bounded
entire subsolutions.)  Since $\ell \in r^{\epsilon}C^{2, \gamma}_{b}$,
$\lim_{z \to 0} \ell = 0$, and thus $\ell \equiv 0$, i.e.\ $\p_{\ov z}
u = 0 \mbox{ when } z \neq 0$ Since $u$ bounded near $0$, it is in
fact entire.  The fact that $u$
is the uniform limit of homeomorphisms and takes Form \ref{uform}
implies that it is $1-1$ on the inverse image of an open ball around
$0$.  Since the only entire holomorphic function with this behavior
that fixes $0$ is
$u(z) = az$ for $a \in \C^*$, the proof is complete.
\end{proof}


\subsection{Uniform $C^{2, \gamma}_{b}$ bounds near $\pp$ and the
  preservation of Form \ref{uform}} \label{uniformbounds}

We will now make precise the sense in which the $w_k$ in (\ref{wk})
are uniformly bounded near $\pp$.  Let $D(R) = \set{ z \in C : \absv{z} < R}$.
We will need the well-known Rellich lemma for $b$-H\"older spaces; given $0 < \g' < \g < 1$, $c' < c$, and non-negative integers $k' \leq k$ the containment
	\begin{align} \label{rellich}
 		r^{c}C^{k, \g}_b(D(R)) \subset 		r^{c'}C^{k', \g'}_b(D(R))
	\end{align}
is compact.
Given a smooth function $f: \C \lra \R$, define
	\begin{equation} \label{funnorm}
		\| f \|_{c, k, \g, R} := \| f \|_{r^c C^{k,\g}_b(D(R))},
	\end{equation}
And for any map $w: D(R) \lra D(R)$ with
 \begin{equation} \label{w0}
		w(z)  =  \lambda z + v(z) 
 \end{equation}
and $v \in r^{c}C^{2, \g}_b(D(R))$, let
 	\begin{align} \label{norm}
				 \displaystyle{
								\fbox{
				$ \braces[c, k, \g, R]{w}  :=  \norm[ c, k, \g, R]{v}
				$}
				}
	\end{align}
Having established the uniform energy density bound, we now set out to prove
	\begin{proposition} \label{closed}
		For the $w_k$ in (\ref{wk}), there exist uniform constants $\sigma, C, \e > 0$ such that 
			\begin{align} \label{bound2}
				\braces[1 + \e, 2, \g, \sigma]{ w_k }< C.
			\end{align}
	\end{proposition}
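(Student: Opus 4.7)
The plan is to argue by contradiction via a blow-up at the cone point, producing in the limit a bounded-energy harmonic self-map of $C_\al$ that contradicts Lemma \ref{globalclass}.

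First I would normalize: Proposition \ref{loclip} gives $0 < c \leq \absv{\lambda_k} \leq C$ uniformly, so after passing to a subsequence $\lambda_k \to \lambda_0 \in \C^*$, and composing targets with the conformal rescaling $u \mapsto u/\lambda_0$ (which preserves harmonicity and the form of $G_k$ up to a global constant) reduces to $\lambda_0 = 1$. Suppose \eqref{bound2} fails for every choice of $\sigma, C, \e$. Unraveling \eqref{c2gb}--\eqref{c2gb2} and \eqref{norm}, there exist a subsequence, points $z_k$ with $r_k := \absv{z_k} \to 0$, and some $(i,j)$ with $i + j \leq 2$ such that
\[ M_k := r_k^{-(1+\e)} \absv{(r\p_r)^i \p_\theta^j v_k(z_k)} \to \infty. \]
Exploiting the conformal symmetry $z \mapsto r_k z$ of $g_\al$, set $\wt w_k(z) := r_k^{-1} w_k(r_k z)$ on $D(R_k)$ with $R_k := \sigma/r_k \to \infty$. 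Since this dilation only multiplies $g_\al$ by a constant, each $\wt w_k$ is harmonic from $(D(R_k), g_\al)$ into a rescaled target with metric $e^{2\mu_k(r_k \wt u)}\absv{\wt u}^{2(\al-1)}\absv{d\wt u}^2$. Using $\mu_k \in r^\nu C^{2,\g}_b$ and the convergence \eqref{eq:metnearp}, these targets converge to $g_\al$ in $C^{2,\g}_{loc}(\C^*)$; Proposition \ref{loclip} together with the conformal invariance \eqref{energyinv} gives a uniform energy density bound on $\wt w_k$, and hence interior elliptic regularity for harmonic maps yields a subsequential limit $\wt w_\infty$ in $C^{2,\g}_{loc}(\C^*)$.

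To extend $\wt w_\infty$ across $0$ in Form \ref{uform}, I would appeal to the $b$-elliptic package of \S\ref{bcalcsect}: the linearizations at $\wt w_k$ form a convergent family of elliptic $b$-operators with common indicial roots \eqref{indrootsp} and uniformly controlled parametrices, so the bootstrap \eqref{expansionrelation}--\eqref{quad} (after renormalizing by a suitable power of $M_k$) provides uniform weighted $b$-H\"older bounds on bounded subsets. The limit is then a harmonic map $C_\al \to C_\al$ in Form \ref{uform}, a uniform limit of orientation-preserving homeomorphisms fixing $0$, with globally bounded energy density. Lemma \ref{globalclass} forces $\wt w_\infty(z) = a z$; but by the choice of $(z_k, r_k, i, j)$ the surviving non-trivial part of the renormalized limit is a non-indicial mode of $v_k$, which cannot be absorbed into $\mathrm{span}\{z\}$, contradicting linearity.

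The main obstacle, I expect, is the last step: arranging the renormalization so that the surviving mode captures a genuine obstruction rather than the indicial term $z$. Since $1 \in \Lambda$ has the single eigenvector $z$ (\S\ref{indroots}) and the next indicial root is at least $3 - 2\al$ (Lemma \ref{thm:loa}), the polyhomogeneous expansion afforded by \eqref{eq:Gcpolyhomo} should guarantee that the next mode is nontrivial and orthogonal to the dilation-rotation ambiguity, which is what makes the contradiction with Lemma \ref{globalclass} go through.
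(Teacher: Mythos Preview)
Your overall strategy---contradiction via blow-up at the cone point, then invoking Lemma \ref{globalclass}---is the paper's strategy, but the scale selection is where your argument breaks down. You rescale at the \emph{location} $r_k = |z_k|$ of a bad point, obtaining $\wt w_k(z) = z + \wt v_k(z)$ with $\wt v_k(z) = r_k^{-1} v_k(r_k z)$. Since the vector fields $r\p_r,\p_\theta$ are dilation-invariant, $|(r\p_r)^i \p_\theta^j \wt v_k|$ at the point $z_k/r_k$ (which has modulus $1$) equals $r_k^{-1}\cdot M_k r_k^{1+\e}=M_k r_k^\e$, and this quantity is uncontrolled: it can tend to $0$, to $\infty$, or anywhere in between. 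So after this rescaling you have neither a uniform upper bound on $\wt v_k$ in $r^{1+\e}C^{2,\g}_b$ (needed for compactness near $0$) nor a uniform lower bound (needed for nontriviality of the limit). Your proposed fix, ``renormalizing by a suitable power of $M_k$'', does not help: dividing the whole map by a blowing-up constant sends the leading coefficient to $0$, so the limit is not in Form \ref{uform}; dividing only $\wt v_k$ produces a \emph{linearized} limit (a Jacobi field on $C_\al$), to which Lemma \ref{globalclass} does not apply. Your closing remark about ``non-indicial modes orthogonal to the dilation-rotation ambiguity'' does not resolve this, because the problem is not identifying a mode but securing both bounds at once.

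The paper's remedy is a different choice of scale. Since $\sigma \mapsto \braces[1+\e,2,\g,\sigma]{w_k}$ is continuous, vanishes as $\sigma\to 0$, and exceeds $k^\e=(1/k)^{-\e}$ at $\sigma=1/k$ along a subsequence, there is a critical radius $\sigma_k<1/k$ with $\braces[1+\e,2,\g,\sigma_k]{w_k}=\sigma_k^{-\e}$; by Lemma \ref{realscalednorm} this is precisely $\braces[1+\e,2,\g,1]{\sigma_k^{-1} w_{k,\sigma_k}}=1$. The rescaled maps are harmonic into targets converging to $C_\al$, have uniformly bounded energy density by scale invariance of $e$, and carry $\wt v_k$ of weighted norm exactly $1$ on the unit ball. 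A $b$-elliptic estimate $\|\wt v_k\|_{1+\e,2,\g,1}\leq C\|\wt v_k\|_{1+\e',2,\g',2}$ (from the generalized inverse of the linearization together with \eqref{qbound}) then bounds the weaker norm from below, and Rellich compactness \eqref{rellich} yields a limit $w_\infty=z+v_\infty$ in Form \ref{uform} with $v_\infty\not\equiv 0$, contradicting Lemma \ref{globalclass}. The critical-radius normalization is exactly what delivers the upper and lower control simultaneously; your point-based rescaling cannot.
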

From this, Proposition \ref{loclip}, and the fact that the containment (\ref{rellich}) is compact, we immediately deduce
	\begin{corollary} \label{wkconverge}
		The $ w_k = \lambda_k z + v_k$ converge to a map $w_0 = \lambda_0 z + v_0$, in the sense that
			\begin{align*}
				\lambda_k &\to \lambda_0 \\
				v_k &\to v_0 \mbox{ in } r^{1 + \e}C^{2, \g}_b,
			\end{align*}
for some $\e, \g > 0$.  Since the $F_k$ in $u_k \circ F_k = w_k $ converge in $C^\infty$ to some univalent conformal map $F_0$, the limit in $u_k \to u_0$ is of Form \ref{uform}.
	\end{corollary}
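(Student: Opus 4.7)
The plan is to prove \eqref{bound2} by contradiction, using a rescaling argument that produces a harmonic map of the standard cone $(C_\alpha, g_\alpha)$ which Lemma \ref{globalclass} then rules out. The argument proceeds in two stages. First, uniform $C^{2,\g}_b$ bounds on $v_k$ on compact subsets of $D(\sigma)\setminus\{0\}$ follow by bootstrapping the uniform energy density bound of Proposition \ref{loclip} through the elliptic theory of Section \ref{bcalcsect} (cf.\ \eqref{eq:holderbounded}) combined with the nonlinear estimate \eqref{qbound} and standard Schauder estimates, applied on dyadic $b$-annuli $\{2^{-j-1}\leq r\leq 2^{-j}\}$ whose geometry is uniform in $j$ in the rescaled metric $g_\alpha/r^{2\alpha}$. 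This is standard and gives uniform $C^{2,\g}_b(D(\sigma))$ bounds (without the weight $r^{1+\e}$) for some uniform $\sigma$. What remains, and what is the real content, is the weighted decay: that $r^{-(1+\e)} v_k$ is uniformly bounded in $C^{2,\g}_b(D(\sigma))$ for some fixed $\sigma, \e>0$.

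Suppose the weighted bound fails. After passing to a subsequence there exist radii $r_k\to 0$ and points $z_k\in D$ with $|z_k|=r_k$ at which the renormalized weighted H\"older norm of $v_k$ saturates; that is, after dividing by a divergent constant, the local seminorm at scale $r_k$ equals one. Exploiting the exact scale-invariance of the cone metric $g_\alpha$ under $z\mapsto r_k\zeta$ (pulling $g_\alpha$ back to $r_k^{2\alpha} g_\alpha$) together with the pointwise conformal invariance of the tension field \eqref{tauconfinv}, I define the rescaled maps $\wt w_k(\zeta) := r_k^{-1} w_k(r_k\zeta)$. Using the convergence $G_k\to G$ in $r^{\nu}C^{2,\g}_b$ \eqref{eq:metnearp} and the normalized conic form \eqref{conemet}, the pullback target metrics to the new coordinate converge on compact subsets of $\C^*$ to the genuine standard metric $g_\alpha$; each $\wt w_k$ is a harmonic map of an exhausting disc $D(1/r_k)\subset C_\alpha$, inherits the uniform energy density bound and uniform lower Jacobian bound of Proposition \ref{loclip} (both of which are scale-invariant), and retains Form \ref{uform} with principal coefficient $\lambda_k$ bounded above and below. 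Applying the local uniform bounds from stage one together with the compact Rellich inclusion \eqref{rellich}, a subsequence converges in $C^{2,\g}_{b,loc}$ to a harmonic map $w_\infty:C_\alpha\to C_\alpha$ of Form \ref{uform}, with bounded energy density, that is the uniform limit of orientation-preserving homeomorphisms fixing the cone point. By Lemma \ref{globalclass} we conclude $w_\infty(\zeta)=a\zeta$ for some $a\in\C^*$, so its ``$v$'' part vanishes identically.

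The contradiction comes from the normalization: the rescaling was engineered so that the renormalized $\wt v_k := \wt w_k - \lambda_k\zeta$ has $r^{1+\e}C^{2,\g}_b$ seminorm equal to one at a distinguished point $\zeta_k\to \zeta_*\in S^1$. Passing to the $C^{2,\g}_{b,loc}$-limit preserves this unit seminorm at $\zeta_*$, contradicting $v_\infty \equiv 0$. The main obstacle is the bookkeeping for the normalization: one must verify that the weight $r^{1+\e}$ is respected through the scaling and that the subtraction of the leading $\lambda_k z$ picks out exactly the right object to rescale. This requires the indicial-root computation of Section \ref{indroots}, which shows that for $\e>0$ sufficiently small the interval $(1,1+\e]$ contains no indicial root of $\wt L$ at $p$, so that the polyhomogeneous expansion of $w_k$ past $\lambda_k z$ begins at an exponent strictly greater than $1+\e$ and cannot generate spurious leading contributions in the rescaled limit. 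This also secures that the principal coefficient $\lambda_k$ of $\wt w_k$ is identified correctly in the limit, so Form \ref{uform} survives and Lemma \ref{globalclass} applies as claimed.
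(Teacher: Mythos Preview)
Your strategy matches the paper's: contradiction via rescaling, producing a harmonic self-map of $C_\alpha$ that Lemma \ref{globalclass} rules out. The paper likewise chooses the scale $\sigma_k$ by an intermediate value argument so that the rescaled remainder satisfies $\|\wt v_k\|_{r^{1+\e}C^{2,\g}_b(D(1))}=1$ exactly, and then uses the scale-invariance of the energy density to pass to a limit.

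Where your argument diverges from the paper, and where there is a genuine gap, is the step ensuring $v_\infty\not\equiv 0$. The phrase ``$r^{1+\e}C^{2,\g}_b$ seminorm equal to one at a distinguished point $\zeta_*\in S^1$'' is not well-defined: a H\"older seminorm is a supremum, not a pointwise quantity, and even if one interprets this as ``the supremum is nearly attained near $\zeta_k$'', there is no mechanism in your normalization forcing $\zeta_k$ to stay on the unit circle rather than drifting to $0$. More seriously, Rellich \eqref{rellich} gives subconvergence only in a strictly weaker topology ($\e'<\e$, $\g'<\g$), so a unit norm in $r^{1+\e}C^{2,\g}_b$ need not persist in the limit; your stage-one unweighted bounds do not upgrade this near the cone point. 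The paper closes this gap with an elliptic estimate
\[
\|\wt v_k\|_{r^{1+\e}C^{2,\g}_b(D(1))}\;\le\;C\,\|\wt v_k\|_{r^{1+\e'}C^{2,\g'}_b(D(2))},
\]
obtained by writing $L_k\wt v_k=-Q_k(\wt v_k)$, applying the generalized inverse $\mathcal G_k$ for the linearized Dirichlet problem on $D(2)$, and invoking the quadratic bound \eqref{qbound}. This converts the unit strong norm into a uniform lower bound for the \emph{weaker} norm, which does survive Rellich convergence and forces $v_\infty\ne 0$. Your observation that $(1,1+\e]$ avoids the indicial roots is correct and is exactly what makes $\mathcal G_k$ available at this weight, but it does not by itself replace the estimate.
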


%

%


%

Before we prove Proposition \ref{closed}, we discuss scaling properties of the norm in (\ref{funnorm}).  Let $R > 0$, $\sigma > 0$, and let $f$ be a function defined on $D(R)$.  Writing  
	\begin{align*}
		f_\sigma(z) = f(\sigma z),
	\end{align*}
it is trivial to verify from (\ref{c2gb0})--(\ref{c2gb2}) that
 \begin{equation}  \label{scaling} 
		\| f_{\sigma} \|_{c, k, \g, R/\sigma} = \sigma^c \| f \|_{c, k, \g, R}
 \end{equation}
For $w: D \lra D$ as in (\ref{w0}), we may define, locally
	\begin{align} \label{uscaled2}
		\frac{1}{\tau} w_\sigma (z) &=  
			\frac{ \lambda \sigma}{\tau} z+ \frac{v_\sigma (z)}{\tau}.
	\end{align}
By (\ref{scaling}) we have
	\begin{lemma} \label{realscalednorm}
If $w(z)  =  \lambda z + v(z)$, then
		\begin{align*}
			\braces[c,  k, \g, R/\tau]{ \frac{1}{\sigma} w_\tau } &= 
			\frac{\tau^c}{\sigma} \braces[c,  k, \g, R]{ w}
		\end{align*}
	\end{lemma}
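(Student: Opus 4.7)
The plan is to observe that this is a direct consequence of chaining the definition \eqref{norm} of $\braces[\cdot]{\cdot}$ with the scalar scaling identity \eqref{scaling}. The computation is essentially bookkeeping, and the main conceptual point is just to identify the ``remainder'' part of the rescaled map $\frac{1}{\sigma} w_\tau$ correctly, so that the definition can be applied.

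First, I would unpack \eqref{uscaled2}: writing $w(z) = \lambda z + v(z)$, we have
\begin{align*}
  \frac{1}{\sigma} w_\tau(z) \;=\; \frac{\lambda \tau}{\sigma}\, z \;+\; \frac{v_\tau(z)}{\sigma},
\end{align*}
so in the decomposition of $\frac{1}{\sigma} w_\tau$ into ``linear leading term plus remainder,'' the leading coefficient is $\lambda\tau/\sigma$ and the remainder is $v_\tau/\sigma$. By definition \eqref{norm}, the bracket-norm only sees the remainder, so
\begin{align*}
  \braces[c,k,\g,R/\tau]{\tfrac{1}{\sigma} w_\tau} \;=\; \norm[c,k,\g,R/\tau]{\tfrac{v_\tau}{\sigma}} \;=\; \frac{1}{\sigma}\,\norm[c,k,\g,R/\tau]{v_\tau},
\end{align*}
using the linearity of the weighted $b$-H\"older norm in its argument.

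Next I would apply \eqref{scaling}, the scalar rescaling identity, to the function $v$ (which is defined on $D(R)$, so $v_\tau$ is defined on $D(R/\tau)$): it yields $\norm[c,k,\g,R/\tau]{v_\tau} = \tau^c\,\norm[c,k,\g,R]{v}$. Substituting and using $\norm[c,k,\g,R]{v} = \braces[c,k,\g,R]{w}$ gives the claimed equality
\begin{align*}
  \braces[c,k,\g,R/\tau]{\tfrac{1}{\sigma} w_\tau} \;=\; \frac{\tau^c}{\sigma}\,\braces[c,k,\g,R]{w}.
\end{align*}
I do not anticipate any obstacle here; the only thing to be careful about is that the scaling \eqref{scaling} was stated for scalar functions, and here we are using it componentwise on $v$ viewed as a $\C$-valued function, which is immediate from the pointwise definitions \eqref{c2gb0}--\eqref{c2gb2}.
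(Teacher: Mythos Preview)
Your proof is correct and is exactly the argument the paper intends: the paper simply writes ``By \eqref{scaling} we have'' before stating the lemma, and your proposal spells out precisely this chain of definitions and the scaling identity. There is nothing to add.
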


We will apply this lemma directly to the $w_k$ in (\ref{wk}).  For these maps $G_k =  e^{2 \mu_k} \absv{ w }^{2(\al - 1)} \absv{ dw }^2$, and the map $ \frac{1}{\sigma} (w_k)_\tau (z)$ is an expression in normalized conformal coordinates (see (\ref{conemet})) of the map
 \begin{equation} \label{scaledmapp}
		w_k :  (D, g_\al/ \tau^{2 \al}) \lra (\Sigma, G_k/ \sigma^{2 \al}),
 \end{equation}
meaning simply that since
	\begin{align*}
		g_\al/ \tau^{2 \al} = \frac{\absv{z}^{2(\al - 1)}}{\tau^{2\al}} \absv{dz}^2  \ \ \& \ \ G_k/ \sigma^{2 \al} =  e^{2 \mu_k(w)}\frac{\absv{w}^{2(\al - 1)}}{\sigma^{2\al} } \absv{dw}^2,
	\end{align*}
doing $z \mapsto \tau z$ and $w \mapsto \sigma w$ gives
	\begin{align*}
		g_\al/ \tau^{2 \al} = \absv{z}^{2(\al - 1)}\absv{dz}^2  \ \ \& \ \ G_k/ \sigma^{2 \al} = e^{2 \mu_k(\sigma w)} \absv{w}^{2(\al - 1)} \absv{dw}^2.
	\end{align*}
Note that it follows immediately from (\ref{hmec}) that
(\ref{scaledmapp}) is harmonic.  Before we begin the proof of
Proposition \ref{closed}, we recall that by (\ref{harnbound}) we know
that $\lambda_k \to \lambda_0$ for some $\lambda_0 > 0$, and, by
setting $G_{k} = G_{k} / \lambda_{k}^{2 \alpha}$, we may assume
without loss of generality that $$\lambda_k \equiv 1$$ so that the normalized conformal coordinate expression is $w_{k} = z + v_{k}(z)$.

\begin{proof}[Proof of Proposition \ref{closed}]

We proceed by contradiction.  Supposing Proposition \ref{closed} false, we will produce a sequence $\sigma_k \to 0$ so that the scaled maps
	\begin{align*}
		 \frac{ 1}{\sigma_k } w_{k, \sigma_k },
	\end{align*}
converge to a harmonic map $w_\infty : C_\al \lra C_\al$ satisfying the assumptions but not the conclusion of Lemma \ref{globalclass}.  This contradiction proves the lemma.

If (\ref{bound2}) does not hold then for all $\sigma, C > 0$ there is a $k$ such that
	\begin{align} \label{tocontradict}
		\braces[1 + \e, 2, \g, \sigma]{  w_k }> C .
	\end{align}
Thus, for every $l \in \N$ there is a $k_l$ such that $\braces[1 + \e,
2, \g, 1 / l]{ w_{k_l} } > l^\e$, and passing to a subsequence, we
assume that $\braces[1 + \e, 2, \g, 1 / k]{ w_{k} } > k^\e$.
Since the L.H.S. is monotone decreasing in the radius, for each $k$ there is a number $ \sigma_k < 1/k$ such that
$\braces[1 + \e, 2, \g, \sigma_k ]{ w_{k} }  =  \sigma_k^{-\e}$.
or, by the scaling properties in Lemma \ref{realscalednorm}
	\begin{align} \label{unibound}
		\fbox{ $
		\displaystyle{
		\braces[{1 + \e}, 2, \g, 1 ]{ \frac{1}{\sigma_k} w_{k, \sigma_k} } = 1
		}
		$ }
	\end{align}
By the remarks immediately preceding the proof, this map, which is the normalized coordinate expression of the map
	\begin{align} \label{scaledmap}
		w_k : (D, g / \sigma_k^{2\al}) \lra (\Sigma , G_k/ \sigma_k^{2\al} )
	\end{align}
is harmonic and in normalized conformal coordinates satisfies $\frac{1}{\sigma_{k} }w_{k, \sigma_{k}}(z) = z+  \frac{v_{k, \sigma_k}(z)}{\sigma_k}$.
Set
\begin{equation}
  \label{eq:1}
  \wt{v}_{k} := \frac{v_{k, \sigma_k}(z)}{\sigma_k}
\end{equation}
By the obvious identity for the energy density, $e(w_k, g / \sigma_k^2, G_k/\sigma_k^2) =  e(w_k, g, G_k)$,
the uniform bound (\ref{edb}) holds for the maps in (\ref{scaledmap}), and thus they
converge on compact subsets of $(D, g_\al / \sigma_k^{2\al})=
(D(1/\sigma_k), g_\al)$ to a map $w_\infty : C_\al \lra C_\al$. The following claim will finish the proof of Proposition \ref{closed} since it contradicts Lemma \ref{globalclass}.
	\begin{claim} \label{toconeClaim}
		The map $w_{\infty}$ is in Form \ref{uform}, with
 \begin{equation} \label{coordexpresh0}
		w_{\infty}(z) = z + v_{\infty}(z)
 \end{equation}
and $\wt{v}_k \to v_{\infty} \in  r^{1 + \e} C^{2, \g}_b(D) $, for $\e' , \g'  > 0$ sufficiently small.  Furthermore 
		\begin{align} \label{nonzero}
			v_{\infty}(z)
			&\not \equiv
			0
		\end{align}
	\end{claim}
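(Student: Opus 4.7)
The plan is to extract $w_\infty$ as a limit using weighted $b$-H\"older compactness near the origin together with interior elliptic estimates on $\C^*$, and then to rule out $v_\infty\equiv 0$ by a second rescaling and a Liouville-type argument based on indicial roots.

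For the convergence, the bound $\norm[r^{1+\e}C^{2,\g}_b(D)]{\wt v_k}=1$ combined with the compact inclusion \eqref{rellich} yields a subsequence $\wt v_k \to v_\infty$ in $r^{1+\e'}C^{2,\g'}_b(D)$ for any $\e'<\e$, $\g'<\g$. Away from the origin, the uniform energy density bound of Proposition \ref{loclip} (valid on $D(1/(2\sigma_k))$ for the rescaled maps by conformal invariance of $e$ under $z\mapsto\sigma_k z$, $w\mapsto\sigma_k w$) combined with standard elliptic regularity for harmonic maps into smooth targets yields uniform $C^{2,\g}_{loc}$ bounds on compact subsets of $\C^*$. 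A diagonal argument and passage to the limit in the PDE---using that the rescaled target metrics $G_k/\sigma_k^{2\al}=e^{2\mu_k(\sigma_k w)}\absv{w}^{2(\al-1)}\absv{dw}^2$ converge to $g_\al$ in $C^2_{loc}(\C^*)$ since $\mu_k\in r^{\nu}C^{2,\g}_b$---produces a harmonic map $w_\infty:C_\al\to C_\al$ of the form $z+v_\infty(z)$ with $v_\infty\in r^{1+\e}C^{2,\g}_b$ by lower semicontinuity of the norm, i.e., in Form \ref{uform}.

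The hard part is $v_\infty\not\equiv 0$. Assume for contradiction $v_\infty\equiv 0$. The norm $\norm[r^{1+\e}C^{2,\g}_b(D)]{\wt v_k}=1$ is a finite sum of seminorms, so at least one fixed seminorm---a sup of $\absv{r^{-(1+\e)}(r\p_r)^i\p_\theta^j \wt v_k}$ for some $i+j\le 2$, or its H\"older quotient---is bounded below by $1/M>0$ along a subsequence. Picking realizing points $z_k\in D$ (or pairs, in the H\"older case), if $\absv{z_k}$ stays bounded away from $0$ then $C^{2,\g}_{loc}$ convergence on $\C^*$ gives the same lower bound for $v_\infty$, a contradiction. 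Otherwise $r_k:=\absv{z_k}\to 0$, and I perform a second rescaling $\tilde v_k(z):=r_k^{-(1+\e)}\wt v_k(r_k z)$. This preserves the weighted norm, $\norm[r^{1+\e}C^{2,\g}_b(D(1/r_k))]{\tilde v_k}\le 1$, while the chosen seminorm for $\tilde v_k$ is bounded below at a fixed point on the unit circle. A direct computation using the scale-invariance of the indicial operator $\tilde L$ and the quadratic bound \eqref{qbound} gives $\absv{L\tilde v_k(z)}\le Cr_k^\e \absv{z}^{1+2\e-2\al}$ on compacts of $\C^*$. Extracting a further subsequence by Rellich compactness near $0$ and interior elliptic estimates on $\C^*$ yields a nontrivial limit $\tilde v_\infty$ on $C_\al$ with $L\tilde v_\infty=0$ on $\C-\set{0}$ and the growth bound $\absv{\tilde v_\infty(z)}\le \absv{z}^{1+\e}$ on all of $\C$.

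Finally, polyhomogeneity of Jacobi fields expands $\tilde v_\infty$ in homogeneous modes $r^s e^{ij\theta}$ with $s\in\Lambda=\bigcup_{j\in\Z}\set{j,2-2\al-j}$, see \eqref{indrootsp}. The bound $\absv{\tilde v_\infty(z)}\le r^{1+\e}$ at the origin forces the coefficient of every mode with $s<1+\e$ to vanish (such $r^s$ would dominate $r^{1+\e}$ as $r\to 0$), and the same bound at infinity forces every mode with $s>1+\e$ to vanish. Choosing $\e>0$ small enough that $1+\e\notin\Lambda$---the indicial set is discrete---leaves no surviving modes, so $\tilde v_\infty\equiv 0$, contradicting the lower bound at the unit circle. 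Hence $v_\infty\not\equiv 0$, completing the proof of Claim \ref{toconeClaim}.
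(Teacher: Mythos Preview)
Your argument is correct and takes a route genuinely different from the paper's. The paper proves \eqref{nonzero} in one step via the elliptic estimate \eqref{ellestimate}: writing $L_k\wt v_k=-Q_k(\wt v_k)$ and applying the generalized inverse $\Gg_k$ of the Dirichlet realization of $L_k$ on $D(2)$, together with interior regularity on the annulus $D(2)\setminus D$, bounds $\norm[r^{1+\e}C^{2,\g}_b(D)]{\wt v_k}$ by the weaker $\norm[r^{1+\e'}C^{1,\g'}_b(D(2))]{\wt v_k}$; Rellich then gives subconvergence in the weaker norm, which is bounded below since the strong norm equals $1$, so the limit is nonzero. Your approach instead runs a second blowup at concentration points $z_k\to 0$ and invokes a Liouville theorem: a global solution of the flat-cone Jacobi equation on $\C^*$ with two-sided bound $\absv{\psi}\le\absv{z}^{1+\e}$ must vanish because $1+\e$ (for small $\e$ chosen off the discrete set $\Lambda$) lies strictly between consecutive indicial roots. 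The paper's route is shorter and delivers convergence in the strong norm directly; yours avoids constructing the parametrix on $D(2)$ and makes the role of the indicial gap explicit, but two points deserve care. First, in the H\"older-seminorm case you should localize to a dyadic annulus where the quotient is nearly realized, so that after rescaling the lower bound sits in a fixed compact of $\C^*$ and passes to the $C^{2,\g'}_{loc}$ limit. Second, the decay $\absv{L\tilde v_k}\le Cr_k^\e\absv{z}^{1+2\e-2\al}$ genuinely requires $Q$ to be quadratic: the doubly rescaled map $z+r_k^\e\tilde v_k$ is harmonic for a target converging to $C_\al$, so the linearization applied to $\tilde v_k$ equals $-r_k^{-\e}Q(r_k^\e\tilde v_k)=O(r_k^\e)$, which the merely linear statement of \eqref{qbound} would not yield.
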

To prove the claim, set
 \begin{equation*}
		\wt{G}_k = G_k / \sigma_k^{2\al}
\end{equation*}
Using the elliptic theory of $b$-differential operators from section
\ref{bcalcsect}, we will prove that, for $ \e' < \e$ as above and $0< \g' < \g <1 $ we have the inequality
 \begin{equation} \label{ellestimate}
		\norm[{1 + \e} , 2, \g, 1]{ \wt{v}_{k} }  \leq C \norm[{1 + \e'} , 2, \g', 2]{ \wt{v}_{k} }.
 \end{equation}
Note the shift in regularity and the fact that the norm on the right is on a ball of larger radius than the norm on the left.  The L.H.S. is bounded from below by (\ref{unibound}), so the right hand side is also bounded from below.  By the compact containment (\ref{rellich}), the $v_{k , \sigma_k}$ converge strongly in the norm on the left, thus they converge to a non-zero function, i.e.\ (\ref{nonzero}) holds.

Thus it remains to prove the estimate (\ref{ellestimate}).  To do so, we apply Taylor's theorem to the Harmonic map operator $\tau$ around the map $id: (D, g_\al) \lra (D, \wt{G}_k)$.  Let $r^c \dot{C}^{k, \g}_b(C_\al)$ denote the set of maps $v \in r^cC^{k, \g}_b(D(2))$ which vanish on $\p D(2)$.  By section \ref{sec:linearization} we have
	\begin{align*}
		\tau(w_k, g_\al, \wt{G}_k) &= \tau(id, g_\al, \wt{G}_k) + \Ll_k \wt{v}_k + Q_k( \wt{v}_k) \\
		\Ll_k \wt{v}_k &= - Q_k( \wt{v}_k) 
	\end{align*}
All the materlial in section \ref{bcalcsect} applies to
	\begin{align}\label{eq:lkdirichlet}
		\Ll_k : r^{1 + \e}\dot{C}^{j, \g}_b(D(2)) \lra r^{1 +  \e - 2\Aa } C_b^{j - 2, \g}(D(2)).
	\end{align}
In particular, it is Fredholm for any $\g$, $j$, and $\e$ small, and
it has an generalized inverse $\mathcal{G}_{k}$ which satisfies the
mapping properties analogous to \eqref{eq:Gcpolyhomo}.  By Lemma
\ref{pairLemma}, \eqref{eq:lkdirichlet} is injective, so $\Gg_k \Ll_k = I$.
From (\ref{qbound}), for $\e' < \e$ as above, the inequality $\norm[1
+ 2\e' - 2\al, j, \g, 2]{Q_k(\wt{v}_k)} \leq C\norm[1 + \e' , j + 1,
\g, 2]{ \wt{v}_k }$.  Let $\chi(r)$ be a cutoff
function that is $1$ on the $D$ and supported in $D(2)$.  Then we have
	\begin{align*}
		\Gg_k \chi Q_k(\wt{v}_k) &= -  \Gg_k \chi \Ll_k(\wt{v}_k)  = - \Gg_k[\Ll_k, \chi]\wt{v}_k  - \chi \wt{v}_k,
	\end{align*}
so	
 \begin{equation*}
		\chi \wt{v}_k = \Gg_k \chi Q_k(\wt{v}_k) - \Gg_k[\Ll_k, \chi]\wt{v}_k.
\end{equation*}
Since $[\Ll_k, \chi]$ is the zero operator near the cone point, so satisfies 
 \begin{equation*}
	[\Ll_k, \chi]: r^{ 1 + \e } C_b^{j , \g}(D(2)) \lra r^{N}C^{j-1, \g}_b(D(2))
\end{equation*}
for any $N > 0$.
Tracing through all of the boundedness properties above, we get
	\begin{align*}
		\norm[1 + 2 \e', 2, \g, 1]{\wt{v}_k} &\leq \norm[1 + 2 \e', 2, \g, 2]{ \chi \wt{v}_k} \\
		&= \norm[1 + 2 \e', 2, \g, 2]{\Gg_k \chi Q_k(\wt{v}_k)} - \norm[ 1 + 2 \e', 2 , \g, 2]{\Gg_k[\Ll_k, \chi]\wt{v}_k} \\
		&\leq 
			\norm[1 + \e', 1, \g, 2]{\wt{v}_k} \\
	\end{align*}
But $\e'$ is an arbitrary positive number less than $\e$, so
regardless of the $\e$ present in (\ref{unibound}), we can choose $\e
> \e' > \e / 2$ and \eqref{ellestimate} is proven.

This completes the proof of Claim \ref{toconeClaim} and thus the proof of Proposition \ref{closed}.
\end{proof}



%
%
%
%
%
%
%
%
%
%
%
%
%
%

\section{Cone angle $\pi$} \label{coneanglepisect}

We now discuss the case
 \begin{equation*}
		\pe \neq \varnothing,			
\end{equation*}
which involves only minor modifications of the above arguments.

Let $id = u_0: (\Sp, g) \lra (\Sp, G)$ be energy minimizing and fix $p
\in \pe$.  Let $\phi_1: D \lra (\Sp, g)$ and $\phi_2: D \lra (\Sp,
G)$, so that the $\phi$ are conformal and $\phi_i(0) = p$.  As usual,
pick conformal coordinates $z$ and $w$, resp$.$.  The double cover $f: D
\lra D$ with $f(z) = z^2$ can be used to pull back $G$ to a metric
$\ov{G} : =f^* \phi_2^* G$ on $D$ with (not necessarily smooth) cone
angle $2 \pi$.  The map $w = \phi_2 \circ u_0 \circ \phi_1^{-1}$ lifts
to a harmonic map
	\begin{align} \label{w}
	\xymatrix{
		   D  \ar[d]^f \ar[r]^{\wt{w}} & (D, \ov{G}) \ar[d]^f \\
		   D   \ar[r]^{w} & (D, \phi_2^* G)  
	}
	\end{align}
Since $\ov{G}$ has cone angle $2\pi$, by section \ref{metricssect}, in
conformal coordinates $v$ we can write
 \begin{equation*}
		\ov{G} = e^{2 \mu} \absv{d\wt{w}}^2.
\end{equation*}
Generically, the metric $\ov{G}$ is not smooth near $0$.

The reason we treat this case separately is that the form of these harmonic maps near $\pe$ is different than Form \ref{uform}.  We have
\begin{form}[$\pe \neq \varnothing$] \label{uformpi}
We say that $u: (\Sp, g) \lra (\Sp, G)$ is in Form \ref{uformpi} (with respect to $g$ and $G$) if 
\begin{enumerate}
	\item $ u \in \Dsp$
	
	\item $ u \sim id$

	\item  Near $p \in \pe$, if $w$ is defined as in (\ref{w}), then
 \begin{equation*}
			\wt{w}({\wt{z}})  =  a z  + b \ov{\wt{z}} + v({\wt{z}})
\end{equation*}
where $a, b \in \C$, $a \neq 0$, and 
 \begin{equation*}
			v \in r^{1 + \e} C^{2,\g}_b(D(R)),
\end{equation*}
for some sufficiently small $\e  > 0$.  
\end{enumerate}
\end{form}
It is easy to check that Lemma \ref{hopfpoles} holds, i.e.\ that harmonic maps in Form \ref{uformpi} have Hopf differentials that are holomorphic with at worst simple poles at $p$.  In fact, since $\Phi(\wt{w})$ is invariant under the deck transformation, we can just compute $\Phi(\wt{w})$ and use $\Phi(w) = f_* \Phi(\wt{w})$.
	\begin{align*}
		\Phi(\wt{w}) &= e^{2\mu} v_{\wt{z}} \ov{v}_{\wt{z}} \\
		&= (a \ov{b} + \mc{O}(\absv{\wt{z}})) d\wt{z}^2.
	\end{align*}
By holomorphicity and the invariance of $\Phi$ under the deck transformation, 
 \begin{equation*}
		\Phi(\wt{w}) = (a \ov{b} + g(\wt{z}^2)) d\wt{z}^2,
\end{equation*}
where $g$ is a holomorphic function with $g(0) = 0$.  Thus
	\begin{align*}
		\Phi(w) &= f_* \Phi(\wt{w}) \\
		&= \frac{1}{4}\lp \frac{ a \ov{b}}{z} + g(z) \rp dz^2 .
	\end{align*}
Thus, we have shown more than Lemma \ref{hopfpoles}, namely,
\begin{lemma} \label{formreduce}
Suppose $u : (\Sp, g) \lra (\Sp, G)$ is harmonic and is in Form \ref{uformpi} with $a \neq 0$, then $u$ is actually in Form \ref{uform} if and only if $\Phi(u)$ extends holomorphically over $\pe$.
\end{lemma}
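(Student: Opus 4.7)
The plan is to read the equivalence directly off the explicit Hopf differential formula just derived, and then to verify the implication at the level of leading-order local normal forms. The first step is purely algebraic: the computation just above yields
\begin{align*}
\Phi(w) = \tfrac{1}{4}\left(\tfrac{a\bar b}{z} + g(z)\right)dz^2
\end{align*}
with $g$ holomorphic and $g(0)=0$, so holomorphic extension of $\Phi(u)$ across $p$ is equivalent to the vanishing of the residue, i.e.\ to $a\bar b=0$. Since $a\neq0$ by hypothesis this is the same as $b=0$. The remainder of the argument is to prove the equivalence: $b=0 \Longleftrightarrow u$ is in Form \ref{uform}.

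For the direction (Form \ref{uform} $\Rightarrow b=0$), I would use the defining relation $\tilde w(\tilde z)^2 = w(\tilde z^2)$ from \eqref{w}. Writing $w(z)=\lambda z+\tilde v(z)$ with $\lambda\in\mathbb{C}^{*}$ and $\tilde v \in r^{1+\epsilon}C^{2,\gamma}_b$ gives
\begin{align*}
\tilde w(\tilde z)^2 \;=\; \lambda\tilde z^{2}\left(1 + \tfrac{\tilde v(\tilde z^{2})}{\lambda\tilde z^{2}}\right),
\end{align*}
in which the correction is $O(|\tilde z|^{2\epsilon})\to 0$. The standard branch of the square root (the one vanishing at $\tilde z=0$) then yields $\tilde w(\tilde z)= \pm\sqrt\lambda\,\tilde z + O(|\tilde z|^{1+2\epsilon})$, and comparing with the order-$|\tilde z|$ part $a\tilde z + b\bar{\tilde z}$ of Form \ref{uformpi} --- which is the unique degree-one real-linear contribution --- forces $a=\pm\sqrt\lambda$ and $b=0$.

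For the converse, assume $b=0$, so $\tilde w(\tilde z)=a\tilde z+v(\tilde z)$ with $v\in r^{1+\epsilon}C^{2,\gamma}_b$. The symmetry $\tilde w(-\tilde z)=-\tilde w(\tilde z)$ (which must hold because $\tilde w$ lifts the single-valued $w$ and vanishes at $0$) forces $v$ to be odd, so that $\tilde z v(\tilde z)$ and $v(\tilde z)^{2}$ are both even in $\tilde z$ and therefore descend to functions $V_{1}(z), V_{2}(z)$ of $z=\tilde z^{2}$. Then
\begin{align*}
w(z) \;=\; \tilde w(\sqrt z)^{2} \;=\; a^{2}z + 2aV_{1}(z) + V_{2}(z).
\end{align*}
For this to realize Form \ref{uform} with $\lambda=a^{2}$, I need $V_{1},V_{2}$ to lie in some $r^{1+\epsilon'}C^{2,\gamma}_b$ space in the $z$-variable. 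This regularity bookkeeping is the only nonroutine point, and it is what I expect to be the main obstacle. It follows from the observation that under $z=\tilde z^{2}$ the $b$-derivatives $r\partial_{r}$, $\partial_{\theta}$ in $z$-polar coordinates pull back on deck-invariant functions to $\tfrac12\rho\partial_{\rho}$, $\tfrac12\partial_{\phi}$ in $\tilde z$-polar coordinates, so that $\rho^{c}C^{2,\gamma}_b$ in $\tilde z$ pushes down to $r^{c/2}C^{2,\gamma}_b$ in $z$. Applied to $\tilde z v(\tilde z)\in \rho^{2+\epsilon}C^{2,\gamma}_b$ and $v(\tilde z)^{2}\in \rho^{2+2\epsilon}C^{2,\gamma}_b$, this gives $V_{1}\in r^{1+\epsilon/2}C^{2,\gamma}_b$ and $V_{2}\in r^{1+\epsilon}C^{2,\gamma}_b$, so taking $\epsilon'=\epsilon/2$ completes the equivalence. (As a sanity check, had $b\neq 0$ been allowed, the expansion of $w$ would pick up extra terms $b^{2}\bar z$ and $2ab|z|$ of order exactly $|z|$, incompatible with Form \ref{uform} --- recovering the same obstruction from a different angle.)
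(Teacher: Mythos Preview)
Your proof is correct and follows the same route as the paper: both read the residue $a\bar b/4$ off the explicit formula $\Phi(w)=\tfrac{1}{4}\bigl(\tfrac{a\bar b}{z}+g(z)\bigr)dz^2$ derived just above the lemma, so holomorphic extension is equivalent to $b=0$. The paper stops there, treating the equivalence ``$b=0 \Longleftrightarrow$ Form \ref{uform}'' as self-evident; you supply this step explicitly by passing through the covering relation $\tilde w(\tilde z)^2=w(\tilde z^2)$ and checking that the $b$-H\"older regularity survives the push-down $\tilde z\mapsto \tilde z^2$, which is a useful elaboration but not a different idea.
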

\noindent it follows trivially that \emph{all of the results of section
  \ref{uniquesect} hold}, since the only way Form \ref{uform} came
into the picture was in proving Lemma \ref{hopfpoles}.

Given a harmonic $u_0$ in Form \ref{uformpi}, the space $\Bb^{1 +
  \e}(u_0)$ is defined so that $u \in \Bb^{1 + \e}(u_0)$ if and only
if near $q \in \pp$, $u - u_0 \in r^{1 + \e}C^{2, \g}_b$.  For $p \in
\pe$, writing the lift of $w_{0}$ as
 \begin{equation*}
		\wt{w}_{0}(\wt{z})  =  a_0 \wt{z}  + b_0 \ov{\wt{z}} + v_0(\wt{z})
\end{equation*}
we see that
 \begin{equation*}
		u \in \Bb^{1 + \e}(u_0) \implies \wt{w}(z) = a_0 \wt{z}  + b_0 \ov{\wt{z}} + v(\wt{z}) \mbox{ for some } v \in r^{1 + \e} C^{2,\g}_b
\end{equation*}
where $w$ is the localized lift of $u$ from \eqref{w}.  As above, we
allow the tension field operator $\tau$ to act on a space of geometric
perturbations.  Near $p \in \pe$ they can be described simply; given $\lambda = (\lambda_1, \lambda_2) \in \C^2$, consider the (locally defined) map
 \begin{equation} \label{dnearpe}\begin{split}
		\wt{w}(\wt{z}) &= (a_0 + \lambda_1) \wt{z}  + (b_0 + \lambda_2) \ov{\wt{z}} + v(\wt{z}).
\end{split}\end{equation}
Following the above arguments we let $\Vv'$ be a space of automorphisms which look like \eqref{eq:cplxmult} near $q \in \pp - \pe$ and which lift to look like (\ref{dnearpe}) near each $p \in \pe$.

\subsection{$\mc{H}(\qq)$ is open ($\pe \neq \varnothing$)}

We state and sketch the proof of the main lemma
	\begin{lemma}
The tension field operator $\tau$ acting on $\Bb^{1 + \e}(u_0) \circ \Vv' \circ \mc{T}_{> \pi}$ is $C^1$.  Its linearization $L = D_u \tau$ is bounded as a map
 \begin{equation*}
		L : r^{1 + \e}\Xx^{2, \g}_b \oplus T_{id}\Vv' \oplus T_{id} \mc{T}_{> \pi} \lra r^{1 + \e - 2\al}\Xx^{0, \g}_b,
\end{equation*}
and is transverse to $(\CKK)^\perp$.
	\end{lemma}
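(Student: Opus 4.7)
The plan is to reproduce the two-part argument of Propositions \ref{Hcontinuity} and \ref{mainlemma} in this borderline setting, with the modification that all local analysis near each $p \in \pe$ is performed on the double cover of \eqref{w}. On that cover the pulled-back target $\ov{G} = e^{2\mu}|d\wt{w}|^{2}$ has cone angle $2\pi$ (so the conformal factor is merely bounded above and below but not smooth), while the domain $f^{*}g$ is similarly $2\pi$-conic. Thus the lifted operator $\wt{L}$ near the fold is a genuine elliptic operator with bounded measurable coefficients whose leading part equals the flat Laplacian, and the indicial equation on the cover reduces to that of the flat Laplacian on $\C$.

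First, for the $C^{1}$ property and the mapping property of $L$: expanding $\tau$ as in Section \ref{sec:linearization} and lifting, a perturbation $u \in \Bb^{1+\e}(u_{0}) \circ \Vv' \circ \mc{T}_{>\pi}$ lifts near $p \in \pe$ to
\begin{equation*}
  \wt{w}(\wt{z}) = (a_{0}+\lambda_{1})\wt{z} + (b_{0}+\lambda_{2})\ov{\wt{z}} + v(\wt{z}), \qquad v \in r^{1+\e}C^{2,\g}_{b},
\end{equation*}
so the proof that $\tau$ lands in $r^{1+\e-2\Aa}\Xx^{0,\g}_{b}$ and the differentiability of $\tau$ in $(u,g,G)$ go through verbatim, using $\p\log\rho/\p u = \mc{O}(|\wt{z}|^{-1})$ on the cover (since $\alpha = 1/2$) together with $v \in r^{1+\e}$ to absorb the singular factor. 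Boundedness of $L$ from the direct sum into $r^{1+\e-2\Aa}\Xx^{0,\g}_{b}$ is then the linearization of this computation, exactly as in \eqref{Lll}--\eqref{AAA}.

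Second, for the transversality to $\CKK$: I would compute the indicial set of the lifted linearization on each double cover near $p \in \pe$. Since $\wt{L}$ there has leading $b$-model equal to $(\wt{z}\p_{\wt{z}})(\ov{\wt{z}}\p_{\ov{\wt{z}}})$ (the $\alpha-1$ term of \eqref{il} now vanishes because $\alpha=1/2$ on the cover is replaced by $\alpha = 1$), the indicial roots are exactly $\Z$. The homogeneous solutions invariant under the deck transformation $\wt{z} \mapsto -\wt{z}$ at weights $\leq 1$ are the constants together with the two-dimensional space $\mbox{span}\{\wt{z}, \ov{\wt{z}}\}$. The constant modes are killed by declining to add $\mc{T}$-translations at $\pe$ (this is the content of \eqref{thets} and the choice $\mc{T}_{>\pi}$ in the statement), while the modes $\wt{z}, \ov{\wt{z}}$ are precisely the tangent space $T_{id}\Vv'$ at $\pe$. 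Feeding this indicial information into the partial expansion \eqref{expdecomp1}--\eqref{expdecomp2} and running the cokernel computation of Lemma \ref{cokerminuse} then gives a cokernel $\mc{K}$ of $L$ on $r^{1-\e}\Xx^{2,\g}_{b}$ whose elements admit Lemma \ref{stfsjfs}-type expansions near each type of cone point: unchanged near $\pl$ and $\pg$, and of the form $w + \ov{a}\wt{z} + \ov{b}\ov{\wt{z}} + O(|\wt{z}|^{1+\delta})$ on the double cover near $\pe$ (where the residue computation of Lemma \ref{stfsjfs} goes through by Remark \ref{rm:coneanglepi}).

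Third, the endgame of Proposition \ref{mainlemma}: pairing $L\psi$ for $\psi \in T_{id}(\Vv' \circ \mc{T}_{>\pi})$ against a basis $J_{a^{j}}, C_{k}$ of $\mc{K}$ gives, via integration by parts and the residue formula \eqref{resis}, a non-degenerate map onto $\mbox{span}\la J_{a^{j}}\ra$; and Corollary \ref{positiveindex} (whose proof invokes only the $\CK$-decomposition and the boundary-term computation of Section \ref{confkillsect}, both unaffected by the borderline case per Remark \ref{rm:coneanglepi}) again identifies the $C_{k}$ with elements of $\CKK$. The main obstacle is verifying that the deck-invariance constraint on the cover does not introduce spurious indicial roots or spoil the pairing--residue identity; this reduces to checking the $\Z_{2}$-parity of the homogeneous solutions listed above, which is immediate since $\wt{z}$ and $\ov{\wt{z}}$ are odd under $\wt{z}\mapsto-\wt{z}$ exactly as needed for the lift of a $\pe$-perturbation.
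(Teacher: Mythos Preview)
Your overall strategy is the same as the paper's: lift to the local double cover near each $p\in\pe$, observe that the indicial model of the lifted linearization is $(\wt z\p_{\wt z})(\ov{\wt z}\p_{\ov{\wt z}})$, read off the low-order homogeneous solutions, and then rerun the cokernel argument of Section~\ref{pertsection1} with $\Vv'$ in place of $\Vv$. The $C^{1}$ and boundedness claims are handled correctly, and your invocation of Remark~\ref{rm:coneanglepi} for the boundary-term cancellations in the $C_{j}$ argument is apt.

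However, your treatment of the $\Z_{2}$-equivariance is muddled and, as written, leads you to a wrong intermediate claim. A vector field $\psi$ over $u_{0}$ lifts to $\wt\psi$ satisfying $\wt\psi(-\wt z) = -\wt\psi(\wt z)$, i.e.\ the lift is \emph{odd}, not invariant, under the deck transformation. Consequently the constant mode on the cover is excluded by parity, and the correct leading expansion for an element of $\mc K$ near $p\in\pe$ is
\[
\wt\psi(\wt z) = \lambda_{1}\wt z + \lambda_{2}\ov{\wt z} + O(|\wt z|^{1+\delta}),
\]
with no constant term; this is exactly the paper's \eqref{eq:pisjfs}. Your assertion that the constants are ``killed by declining to add $\mc T$-translations at $\pe$'' is the wrong mechanism: if constants really did appear in the cokernel expansion, omitting $\mc T_{=\pi}$ would leave $L$ non-surjective, not fix it. What actually kills them is the oddness constraint, which you only mention at the very end and in a way that contradicts your earlier ``invariant'' claim. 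Once the parity is stated correctly, the rest of your argument (that $T_{id}\Vv'$ saturates the weight-one modes on the cover, and that Sections~\ref{confkillsect}--\ref{pertsection1} then go through verbatim) matches the paper.
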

By Remark \ref{rm:coneanglepi}, the lemma implies the openness
statement exactly as it did in the case $\pe = \varnothing$.

The map 	
	\begin{align} \label{mappi}
		L : r^{1 + \e}\Xx^{2, \g}_b \lra r^{1 + \e - 2\al}\Xx^{0, \g}_b
	\end{align}	
is Fredholm for small $\e$, as is $L : r^{1 - \e}\Xx^{2, \g}_b \lra r^{1 - \e  - 2 \al}\Xx^{2, \g}_b $.  The latter map has cokernel $\mc{K} = \krn{ 1 + \e - 2\Aa}$.  The cokernel of (\ref{mappi}) can again be written as $\wt{W} \oplus \mc{K}$ where $\wt{W}$ consists of vectors $\psi \in r^{1 - \e}\Xx$ with $L\psi \in r^{1 + \e - 2\al}\Xx$, and again such vectors have expansions determined by the indicial roots.  Near $p \in \pe$, using the lifts at the beginning of this section makes the asymptotics easy to calculate.  Note that
 \begin{equation*}
		H(u, g, G) = (\phi_1)_* f_* H(\wt{w}, D, \ov{G}),
\end{equation*}
where $\wt{z}$ is defined by (\ref{w}), which immediately implies
(initially on $\Xx^\infty$), that near $p \in \pe$,
 \begin{equation*}
		\Ll_{u_0, g, G} = (\phi_1)_* f_* D_{\wt{w}} H \rvert_{\wt{w}_0, D, \ov{G}}
\end{equation*}
If $\ov{L} := D_{\wt{z}} H \rvert_{\wt{w}_0, D, \ov{G}}$, then setting
$\wt{\ov{L}} = (\absv{\wt{z}}^2 / 4) \ov{L}$ we have
		\begin{align*}
		\wt{\ov{L}} &=\lp \wt{z}\p_{\wt{z}} \rp \lp  \overline{\wt{z}} \p_{\overline{\wt{z}}} \rp +
		E(\wt{z}).
	\end{align*}
Any $\psi \in r^{1 + \e - 2\al}\Xx$ is in $r^\e C^{2, \g}_b$ near $p \in \pe$ since $\al_p = 1/2$.  It is now easy to see that a solution $L \psi = 0$ has lift
 \begin{equation}\label{eq:pisjfs}
   \begin{split}
     \ov{\psi}(\wt{z}) &= \lambda_1 \wt{z} + \lambda_2
     \ov{\wt{z}} + \ov{\psi}' \\
     \ov{\psi}' &\in r^{1 + \epsilon} C^{2, \gamma}_{b}
   \end{split}
\end{equation}
near such a $p$, and this shows that
 \begin{equation*}
		L \lp  r^{1 + \e}\Xx^{2, \g}_b \oplus \Vv' \rp \oplus \mc{K} =  r^{1 + \e - 2\Aa} \Xx^{0, \g}_b
\end{equation*}
In analogy with Lemma \ref{stfsjfs}, we have that, near $p \in \pe$, elements of $\mc{K}$
look like (\ref{eq:pisjfs}).  All of the material in section \ref{pertsection1} follows after
replacing $\Vv$ by $\Vv'$.

\subsection{$\mc{H}(\qq)$ is closed ($\pe \neq \varnothing$).}

Since by Lemma \ref{formreduce} solutions to (\ref{hme2}) are in Form \ref{uform}, the proof the $\mc{H}_2$ is closed is identical to the proof in the $\pe = \varnothing$ case.

For (\ref{hme2}), the proof again requires only minor modifications.
The main difference is the following; consider a sequence of maps
$u_k$ and converging metrics metrics $G_k \to \horrible$ and $g_{k}
\to g_{0}$ as in the statement of Theorem \ref{closedthm}.  In the
same way as in the $\pe = \varnothing$ case, we reduce to the  local
analysis of the $u_{k}$ near a cone point $p \in \pe$.  Here we assume
that the maps are in Form \ref{uformpi} with \begin{align} \label{abigb} \absv{a_k} > \absv{b_k}. \end{align}
By analogy with the treatment of the $\lambda_k$ in the $\pe =
\varnothing$ case, we want want to show that the condition
\eqref{abigb} persists in the limit, and for this we need some
uniform control of the $a_k, b_k$.  In fact, we claim that
	\begin{align*}
		\absv{a_k} \leq c \mbox{ and } \absv{a_k} - \absv{b_k} \geq c > 0,
	\end{align*}
for some uniform constant $c$.  As in the previous case, the $a_k$ and
$b_k$ are relate to the energy density $e_k$ and the function $h_k$
defined in (\ref{hell}).  We have the following
 \begin{equation} \label{piclosedineq}\begin{split}
		h_k(z) &\sim  \absv{a_k}^{2\al} \absv{1 + \frac{b_k}{a_k} e^{- 2 i \theta}}^{2 (\al - 1)} + f_2 \\	
		e_k(z) &\sim \absv{a_k}^{2 (\al - 1)}  \absv{1 + \frac{b_k}{a_k} e^{- 2 i \theta}}^{2 (\al - 1)} (\absv{a_k}^2 + \absv{b_k}^2 ) + f_1\\
		f_i &\in r^{\e}C^{2, \g}_b
\end{split}\end{equation}
If follows that the $e_k$ are uniformly bounded, since they are still subsolutions.  In the second line, choosing $\theta$ so that $\frac{b_k}{a_k} e^{- 2 i \theta} = \absv{\frac{b_k}{a_k}} < 1$ and using the uniform bound $e_k < c$ gives $\absv{a_k} < c$.  We apply Lemma \ref{genharn} to the $\log h_k / \delta_k$ for $\delta_k$ defined as in (\ref{deltak}), noting that the hypotheses are satisfied by (\ref{piclosedineq}).  As above, this leads to the lower bound
	\begin{align*}
		\inf_{z \in D - 0} h_k(z) \geq c > 0,
	\end{align*}
so choosing $\theta$ such that 
	\begin{align*}
		1 + \frac{b_k}{a_k} e^{- 2 i \theta} = 1  - \absv{ \frac{b_k}{a_k} }
	\end{align*}
we get that
	\begin{align*}
		\absv{a_k} - \absv{b_k} \geq c > 0
	\end{align*}

The rest of the argument proceeds as in the $\pe = \varnothing$ case,
with $a_{k}$ playing the role of $\lambda_{k}$.  Assuming the same
type of blow-up near $p \in \pe$, and rescaling in the exact same way,
on the local double cover a harmonic map of $w_{\infty} : \C \lra \C$
results with $w_{\infty} = a_{\infty}z + b_{\infty}\ov{z} +
v_{\infty}$ with $v_{\infty} \in r^{1 + \epsilon}C^{2, \gamma}_{b}$
not identically zero.  This is a contradiction by the following the
argument from \cite{d}, which we outline briefly; an orientation
preserving harmonic mapping map of $\C$ can be written, globally, as a
sum $f + \ov{g}$ where $f$ and $g$ are holomorphic.  The ratio
$\p_{z}g / \p_{z}f$ is bounded by the orientation preserving property and
is clearly holomorphic, hence constant.  Integrating proves the statement.



%

%

\section{$H$ is continuously differentiable} \label{seccont}

We now discuss in detail the map (\ref{mapquest}).  To study its
properties we trivialize the bundle $\EB \lra \Bb^{1 + \e}_{2,\g}(u_0)
\circ \mc{C} \times  \Mm^*_{2,\g, \nu}(g_{0}, \pp, \Aa) \times
\Mm^*_{2,\g, \nu}(\horrible, \pp, \Aa)$, and use the trivializing map
to define the topology of $\EB$.  We define a map
        \begin{equation} \label{themap}
		\Xi : \EB \lra \Xx^{1 + \e - 2\Aa}_{0, \g}(u_0)
	\end{equation}
as follows.  Let $((u \circ C, g, G), \psi) \in \EB$ where $\psi \in
\Xx^{1 + \e - 2\Aa}_{0, \g}(u \circ C)$.  By the definition of $u \in
\Bb^{1 + \e}(u_0)$, there is a unique $\wt{\psi} \in \Xx^{1 +
  \e}(u_0)$ so that
	\begin{equation*}
		u = \mbox{exp}_{u_0}(\wt{\psi})
	\end{equation*}
Assuming for the moment that $C = id$, let $\Xi((u, g, G), \psi) = \Xi_u(\psi) $ where
	\ben \label{xiu}
		 \Xi_u(\psi) &:=& \begin{array}{c}
		 \mbox{ parallel translation of $\psi$ along }  \\
		 \gamma_t :=  \exp_{u_0}(t\wt{\psi}) \mbox{ from $t=1$ to $t = 0$.}
		 \end{array}
	\een
In general, motivated by pointwise conformal invariance of $\tau$ (
see \eqref{tauconfinv}), define $\Xi((u \circ C, g, G), \psi) = \Xi_u(
\psi \circ C^{-1})$.  Obviously, $\Xi$ is an isomorphism on each
fiber, and we endow $\EB$ with the pullback topology induced by
$\Sigma$.  Thus a section $\sigma$ of $\EB$ is $C^1$ if and only if
$\Xi \circ \sigma$ is $C^1$.  The purpose of this section is to prove
Proposition \ref{Hcontinuity}, which states that if $g_{0}$ and
$\horrible$ satisfy Assumption \ref{assump}, then the map \ref{themap}
is $C^1$.

We reduce the proposition to a computation in local coordinates.  Near
$p \in \pp - \pe$, we heave
	\begin{align*}
		H(u \circ C, g, G) &= \Xi \lp \tau(u , C^* g, G) \rp \\
		&= \Xi_u \lp \tau^i (u, C^* g, G) \p_i \rp \\
		&= \tau^i (u, C^* g, G) {\lp \Xi_u \rp_i}^j \p_j,
	\end{align*}
where ${\lp \Xi_u \rp_i}^j$ is the local coordinate expression for parallel translation of $\p_\al$ along the path in (\ref{xiu}).

Since $(u_0, g_0, \horrible)$ solves (\ref{hme2}) we have $\tau(u_0, g_0, \horrible) = 0$, and since $G$ and $\horrible$ have the same the same conformal coordinates near $p \in \pp$, we have
	\begin{equation} \label{metricscont}
	\begin{split}
		\horrible &=  \rho_0(u) \absv{du}^2 = c_0 e^{2 \mu_0} \absv{u}^{2(\al - 1)} \absv{du}^2 \\
		G &= \rho(u) \absv{du}^2 = c e^{2 \mu} \absv{u}^{2(\al - 1)} \absv{du}^2
	\end{split}
	\end{equation}
and
\begin{align*}
			g_{0} &=  \sigma_0(z) \absv{dz}^2 = c_0 e^{2 \lambda_0} \absv{z}^{2(\al - 1)} \absv{dz}^2 \\
		g &= \sigma(z) \absv{dz}^2 = c e^{2 \lambda} \absv{z}^{2(\al - 1)} \absv{dz}^2.
              \end{align*}
Recall that, in local coordinates
	\begin{equation*}
		\tau(u,g,G)(z) =  \tau(u, g, G) = \frac{4}{ \sigma} \lp u_{z \overline{z}} + \frac{\p \log \rho}{\p u} u_z u_{\overline{z}}  \rp
	\end{equation*}
By (\ref{metricscont}),
	\begin{equation*}
		\frac{\p \log \rho(u_0)}{\p u} = 2 \frac{ \p \mu}{ \p u} + \frac{ \al - 1}{u_0} .
	\end{equation*}
Thus we have
\begin{equation}\label{eq:11}
		 \tau^i (u, C^* g, G) \in \frac{4}{ \sigma} r^{ \e - 1 } C^{0, \g}_b = r^{1 + \e - 2\Aa}C^{0, g}_b
               \end{equation}

Near $p \in \pe$, recall that $C = D'_{\lambda_1, \lambda_2} \circ T_{w}$ where
$T_{w}(z) = z - w$ and $D'_{\lambda_1, \lambda_2}(\wt{z}) = \lambda_{1}\wt{z} + \lambda_{2}\ov{\wt{z}}$, for
$\wt{z}^{2} = z$ and $\wt{u}^{2} = u$ coordinates on the local double cover.  Again we
have, locally
	\begin{align*}
		H(u \circ D_{\lambda_1, \lambda_2} \circ T_{w}, g, G) &= \Xi \lp
                \tau(u \circ D_{\lambda_1, \lambda_2} , T_{w}^* g, G) \rp \\
		&= \Xi_u \lp \tau^i (u \circ D_{\lambda_1, \lambda_2}, T_{w}^* g, G) \p_i \rp \\
		&= \tau^i (u \circ D_{\lambda_1, \lambda_2}, T_{w}^* g, G) {\lp \Xi_u \rp_i}^j \p_j,
	\end{align*}
The local computation of $\tau$ can now be done in the lifted
coordinates $\wt{z}$, where $\wt{u} = a \wt{z} + b \ov{\wt{z}} + v$ for $v \in r^{1 +
  \epsilon}C^{2, \gamma}_{b}$
coordinates.  The pulled back tension field is
	\begin{equation*}
                \frac{4}{
                  \sigma} \lp \wt{u}_{\wt{z} \overline{\wt{z}}} + \frac{\p
                  \log \rho}{\p \wt{u}} ((a + \lambda_{1} ) +
                \wt{u}_{\wt{z}}) ((b + \lambda_{2}) + \wt{u}_{\overline{\wt{z}}})
                  \rp
	\end{equation*}
So by $\frac{\p \log \rho(\wt{u}_0)}{\p \wt{u}} = 2 \frac{ \p
                  \wt{\mu}}{ \p \wt{u}}$, we have $\tau^{i} \in
                r^{\epsilon}C^{0, \gamma}_{b}$, which is \eqref{eq:11}
                in this context.

As for the expression ${\lp \Xi_u \rp_i}^j \p_j$, a simple exercise in
ODEs shows that (if we assume $u - z \in r^{1 + \e}C^{2,
\gamma}_{b}$), then ${\lp \Xi_u \rp_i}^j \p_j - \p_{i} \in r^{1 + \e}C^{0,
\gamma}_{b}$.  This immediately implies

Thus we have established

	\begin{lemma} \label{c1}
		If $(u_0, g_0, \horrible)$ satisfy (\ref{hme2}) and $u_0$ is in Form \ref{uform}, then
			\be
				\tau: \Bb^{1 + \e}_{2,\g}(u_0) \circ \Vv \times \Mm^*_{2,\g, \nu}(\horrible, \pp, \Aa) &\lra& \EB \\
				(u \circ C, g, G) &\lra&   \tau(u
                                \circ C, g, G)
			\ee
is $C^1$ near $u_0$.
	\end{lemma}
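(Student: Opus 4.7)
The plan is to reduce everything to a local computation in conformal coordinates near each cone point, since away from $\pp$ all data is smooth and classical arguments show $\tau$ is $C^{1}$ (indeed smooth) on the regular set. Using the trivialization $\Xi$ defined in (\ref{xiu}), it suffices to show that $\Xi \circ \tau$ is $C^{1}$ as a map into the fixed Banach space $\Xx^{0,\g}_{b}(u_{0})$ weighted by $r^{1+\e-2\Aa}$. First I would verify that $\Xi$ itself is a $C^{1}$ bundle isomorphism on the relevant neighborhood; this reduces to showing that the Christoffel symbols of $u_{0}^{*}\Gfixed$, computed in local frames, generate an ODE whose flow depends $C^{1}$ on the initial data $\wt{\psi}\in r^{1+\e}\Xx^{2,\g}_{b}$, which follows from the observation that in Form \ref{uform} the map $u_{0}$ satisfies $u_{0}\sim\lambda z$ and the connection coefficients of $\Gfixed$ are $\mc{O}(r^{-1})$ paired against exponentials of $\mc{O}(r^{1+\e})$ data --- exactly the bookkeeping already present in the $b$-H\"older calculus of \S\ref{bcalcsect}.

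Next I would exploit the explicit formula \eqref{hmec}, namely $\tau(u,g,G) = \tfrac{4}{\sigma}\lp u_{z\ov{z}} + (\p_{u}\log\rho)\,u_{z}u_{\ov{z}}\rp$, to decompose $\tau$ as a polynomial expression in the first two derivatives of $u$, the functions $\sigma,\mu,\lambda$, and the reciprocal $1/u$. The key point is that the definition of $\Mm^{*}_{2,\g,\nu}$ forces $g$ and $G$ to share conformal coordinates with the base metrics near $\pp$, so that the cone-angle singularities $\absv{z}^{2(\al-1)}$ and $\absv{u}^{2(\al-1)}$ factor out uniformly as the metrics vary; only the smooth factors $e^{2\lambda}$ and $e^{2\mu}$ (and constants $c$) really change. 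This trivial looking but crucial point is the whole reason for the definition of $\Mm^{*}$, and guarantees that in local coordinates $(u,g,G)\mapsto \tau(u,g,G)$ is manifestly a polynomial-with-inversion expression in arguments that vary in fixed $b$-H\"older spaces.

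The core estimate then follows from two observations. One, the linearization in the $u$ direction was already computed in \S\ref{sec:linearization}, see (\ref{Lll})--(\ref{AAA}) and the quadratic remainder bound (\ref{qbound}); these give boundedness of $L$ on $r^{c}\Xx^{2,\g}_{b}\to r^{c-2\Aa}\Xx^{0,\g}_{b}$ and the estimate $\|Q(\wt{v})\|\le C\|\wt{v}\|^{2}$, which together with the parallel transport bound above imply that $\Xi\circ\tau$ has a Fr\'echet derivative in the $u$ direction that depends continuously on $(u,g,G)$. Two, an entirely analogous but simpler computation for variations $\dot g = \dot\sigma\absv{dz}^{2}$ and $\dot G = \dot\rho\absv{du}^{2}$ (with $\dot\sigma \in r^{\nu}C^{2,\g}_{b}$ and $\dot\rho = e^{2\mu}\absv{u}^{2(\al-1)}\cdot 2\dot\mu$) shows that the $g$- and $G$-derivatives are also bounded in the correct weighted spaces: each summand of $\tau$ picks up an extra factor of $\dot\lambda$, $\dot\mu$, or $\p_{u}\dot\mu$ which lies in $r^{\nu-1}C^{0,\g}_{b}$ or better, and the total vanishing order $1+\e-2\Aa$ is preserved because $u_{z}\p_{\ov{z}}u_{0}\sim r^{\e}$ and $u_{z\ov{z}}\sim r^{\e-1}$.

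The main obstacle will be the case $\pe\neq\varnothing$, where Form \ref{uformpi} replaces Form \ref{uform} and the map has a non-trivial antiholomorphic leading term $b\ov{\wt{z}}$ on the local double cover. There one must redo the entire local computation on the lift, check that parallel transport respects the $\Z/2$ deck action, and verify that the combined expression $a\wt{z}+b\ov{\wt{z}}$ still yields the requisite $r^{\e}$ vanishing in $\tau$; the cross terms involving $b$ and $\p_{u}\mu$ require care but are essentially identical in form to the holomorphic case once one works in the $\wt{z}$-coordinate. Assembling these local pieces with a partition of unity subordinated to a cover of $\Sigma$ by conformal coordinate patches around each $p\in\pp$ together with the regular interior yields the global $C^{1}$ statement.
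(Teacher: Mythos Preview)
Your proposal is correct and follows essentially the same route as the paper: trivialize $\EB$ via parallel translation $\Xi$, reduce to local computations in shared conformal coordinates near each cone point (this being exactly the purpose of the $\Mm^{*}$ definition), verify that both $\tau^{i}$ and the parallel-transport matrix $(\Xi_{u})_{i}{}^{j}$ land in the correct weighted $b$-H\"older spaces, and handle $\pe$ on the local double cover. If anything, your outline is more explicit than the paper's about the Fr\'echet differentiability in each of the $u$-, $g$-, and $G$-directions, whereas the paper largely checks membership in $r^{1+\e-2\Aa}C^{0,\g}_{b}$ and appeals to the structure of the formula; the underlying computations are the same.
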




\newpage

\printnomenclature

\end{document}